\documentclass[10pt]{amsart}
\usepackage{amsmath,amssymb,amsthm}
\usepackage{float}
\usepackage{tikz}
\usetikzlibrary{matrix, fit}
\usetikzlibrary{backgrounds}
\usepackage{braket}
\usepackage{tikz-cd}
\usetikzlibrary{matrix}
\usetikzlibrary{cd}
\usepackage{mathtools}
\usepackage{comment}
\usepackage[colorlinks,hyperindex,linkcolor=blue]{hyperref}
\hypersetup{
pdfauthor = {L. Nakamura, A. del Pino Gomez},
 pdftitle= {Small energy isotopies of Legendrians in the complement of an overtwisted disc},
 pdfsubject = {}} 

\usepackage{pinlabel}
\usepackage[export]{adjustbox}

\usepackage{subcaption}
\usepackage{graphicx}
\newcommand{\D}{{\mathbb{D}}}
\newcommand{\R}{{\mathbb{R}}}
\newcommand{\N}{{\mathbb{N}}}

\newcommand{\NS}{{\mathbb{S}}}

\newcommand{\SC}{{\mathcal{C}}}

\renewcommand{\SS}{{\mathcal{S}}}

\newcommand{\eps}{\varepsilon}

\newcommand{\id}{{\operatorname{id}}}

\newcommand{\can}{{\operatorname{can}}}
\newcommand{\Op}{{\mathcal{O}p}}

\newcommand{\std}{{{\operatorname{std}}}}
\newcommand{\ev}{{{\operatorname{ev}}}}

\newcommand{\LagMon}{{\operatorname{Mon}_{Lag}}}

\newcommand{\LegEmb}{{\operatorname{Emb}_{Leg}}}

\newcommand{\OT}{{\operatorname{ot}}}

\newcommand{\Orange}{{\mathbb{O}}}

\newtheorem{proposition}{Proposition}[section]
\newtheorem{theorem}[proposition]{Theorem}
\newtheorem{definition}[proposition]{Definition}
\newtheorem{lemma}[proposition]{Lemma}

\newtheorem{corollary}[proposition]{Corollary}
\newtheorem{remark}[proposition]{Remark}
\newtheorem{question}[proposition]{Question}

\newcommand{\lu}[1]{{\color{orange} \small {#1}}}

\title{The size of an overtwisted disk}
\subjclass[2020]{Primary: 58A30. Secondary: 58A17, 57R57.}

\author{Lukas Nakamura}
\address{IMJ-PRG, CNRS, Sorbonne Université, Université Paris Cité}
\email{nakamura@imj-prg.fr}

\author{\'Alvaro del Pino}
\address{Utrecht University, Department of Mathematics, Budapestlaan 6, 3584 Utrecht, The Netherlands}
\email{a.delpinogomez@uu.nl}

\begin{document}

\begin{abstract}
We introduce a notion of size for an overtwisted disk, defined in terms of the Shelukhin-Hofer energy required to stabilise Legendrians in its vicinity. We compute this size for the standard $3$-dimensional overtwisted disk, which we use to prove that certain explicit product hyperdisks in higher-dimensional contact manifolds are overtwisted (and their overtwisted size can be computed explicitly as well).

We then use the overtwisted size to establish a number of quantitative results on contact flexibility. Concretely, we (1) obtain quantitative $h$--principles for Legendrians in the complement of an overtwisted disk, (2) construct overtwisted-at-infinity contact manifolds whose Legendrian Shelukhin-Hofer metric vanishes, (3) estimate overtwisted size under contact products and under products with Liouville manifolds, and (4) derive action bounds for contractible Reeb orbits in closed overtwisted contact manifolds.
\end{abstract}
\maketitle
\setcounter{tocdepth}{1}
\tableofcontents

\section{Introduction} \label{sec:introduction}

\subsection{Context}

In contact topology, the standard overtwisted disk is a certain germ of a contact structure along a codimension-$1$ disk. The precise definition of an overtwisted disk is not as important as its defining property: Contact structures containing it (which are said to be overtwisted) are classified by the $h$--principle \cite{El89,BEM}. This is one of the cornerstone results in the field.

That a concrete model is not so important was made precise by Eliashberg in \cite{El89}: he noted that any model that is formally trivial and implies the h-principle is equivalent to the overtwisted disk (in the sense that, for a closed contact manifold, the presence of one of them implies the presence of the other).

In dimension $3$, the standard overtwisted disk can be defined as follows. We take cylindrical coordinates $(\rho,\phi,z)$ in $\R^3$ and define the standard overtwisted-at-infinity contact form
\[ \alpha_\OT^\std := \cos(\rho) dz + \rho \sin(\rho) d \phi. \]
Then, the standard overtwisted disk is the germ of $\ker(\alpha_\OT^\std)$ along the smooth $2$-disk $\{z=0, \rho \leq \pi\}$.

In higher dimensions, the story is more subtle. The model introduced in \cite{BEM} is a germ, defined in terms of a certain universal constant, along a \emph{piecewise smooth} codimension-$1$ disk. It is a rather complicated object, but more tractable, equivalent, alternatives are known to exist \cite{Nie,Mur,MNPS,CMP,Hu17,CN25}. We single out a few:
\begin{itemize}
    \item[a. ] a Legendrian unknot with a loose chart,
    \item[b. ] a plastikstufe,
    \item[c. ] a sufficiently large ball in $\R^3 \times \R^{2n}$ endowed with the contact structure $\ker(\alpha_\OT^\std + \sum_i y_i dx_i)$, 
    \item[d. ] an overtwisted disk in $(\R^3,\alpha^\textnormal{std}_{\textnormal{ot}})$ times a sufficiently large ball in $(\R^{2n},\sum_i y_i dx_i)$, 
    \item[e. ] an overtwisted orange.
\end{itemize}

\subsection{An overtwisted hypersurface of explicit radius}

The object in item (d), as introduced in \cite{CN25}, is of particular importance in this article. One of our contributions is to make the size of the ball precise:
\begin{definition} \label{def:standardDisc}
Consider the action
\begin{equation*}
C^* := 2 \pi \max_{\rho \in [0,\pi]} \rho \sin(\rho) = 11.43\dots
\end{equation*}
of the unique simple closed Reeb orbit of $\alpha_\OT^\std$ contained in the standard $3$-dimensional overtwisted disk.

Let $n \geq 0$. We let $\Delta_\OT^\std$ be the germ of $\ker(\alpha_\OT^\std + \sum_i y_i dx_i)$ along the hyperdisk
\begin{equation*}
 \{z=0, \rho \leq \pi\} \times \left([-1,1] \times \left[-\frac{C^*}{2},\frac{C^*}{2}\right]\right)^n \subset \R^3 \times \R^{2n}.
\end{equation*}
\end{definition}

Our first theorem reads: 
\begin{theorem}\label{thm:NbhdOfModelOtDiskIsOT}
$\Delta_\OT^\textnormal{std}$ is overtwisted.
\end{theorem}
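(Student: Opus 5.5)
The plan is to reduce Theorem~\ref{thm:NbhdOfModelOtDiskIsOT} to criterion (a) from the introduction. We will exhibit, inside an arbitrarily small neighbourhood of the hyperdisk underlying $\Delta_\OT^\std$, a Legendrian unknot carrying a loose chart. The overtwistedness criterion of Casals--Murphy--Presas then shows that the germ is overtwisted. The loose chart will come from a $3$-dimensional stabilisation in the $\R^3$ factor, spread out over the $\R^{2n}$ factor by a contact Hamiltonian cut off in the $x$-directions. The explicit constant $C^*$ will appear as the amount of $y$-drift that this cut-off isotopy produces.

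\emph{Step 1 (three-dimensional input).} In $(\R^3,\alpha_\OT^\std)$, near the disk $\{z=0,\rho\le\pi\}$, I would take a standard Legendrian arc $\gamma$ close to the characteristic foliation. I would then show that $\gamma$ can be isotoped, by a compactly supported contact Hamiltonian $H_t$ supported near the disk, to an arc with a zig-zag (a stabilisation), relative to its endpoints. The essential quantitative point is that $\int_0^1 \max |H_t|\,dt$ can be made arbitrarily close to $C^*$, which is the action of the Reeb orbit $\{\rho=\rho_0\}$. Concretely, I would push $\gamma$ once around the closed Reeb orbit of the disk, a Legendrian Reidemeister-type move whose Hamiltonian cost is the action of that orbit. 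This is the $3$-dimensional ``size'' computation.

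\emph{Step 2 (product Legendrian).} In $\R^3\times\R^{2n}$ with $\alpha=\alpha_\OT^\std+\sum_i y_i\,dx_i$, consider $\Lambda_0=\gamma\times\{y=0,\ x\in[-1,1]^n\}$. This is Legendrian because $\sum y_i dx_i$ vanishes on the zero section. Next consider the contact Hamiltonian $K_t(p,x,y)=\chi(x)H_t(p)$, where $\chi$ is a cut-off in $x$ equal to $1$ near $x=0$, supported in $(-1,1)^n$, and with $|\partial_{x_i}\chi|\le \tfrac12+\eps$ (linear profile across an interval of length $2$ in each coordinate). The contact vector field of $K_t$ projects to the $\R^3$ factor as $\chi X_{H_t}$. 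It also contains a component $-H_t\,\partial_{x_i}\chi\,\partial_{y_i}$, up to terms proportional to the Reeb field that are controlled in the same way. Hence the $y_i$-coordinates of $\Lambda_0$ drift by at most
\[
\int_0^1 \max|H_t|\,|\partial_{x_i}\chi|\,dt \;\lesssim\; \tfrac12\,C^* + \eps' .
\]
This stays inside $[-C^*/2,C^*/2]$, which is exactly why the box in Definition~\ref{def:standardDisc} has these side lengths. The image $\Lambda_1$ therefore lies in a neighbourhood of the hyperdisk.

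\emph{Step 3 (loose chart and closing up).} Over $\{\chi=1\}$, $\Lambda_1$ is the product of a stabilised arc with an $(n)$-dimensional Lagrangian slab. Using the $z$/Reeb direction to shrink the arc's action, the ratio condition in Murphy's definition of a loose chart holds. Finally, I would cap off $\Lambda_0$ (completing $\gamma$ to a standard unknot and the slab to a sphere, away from the support of $K_t$) so that $\Lambda_0$ is a standard Legendrian unknot. Then $\Lambda_1$ is a Legendrian unknot with a loose chart inside $\Op$ of the hyperdisk, which yields the result.

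The main obstacle is Step 1 together with the precise drift estimate in Step 2. One needs the stabilising Hamiltonian to have energy at most $C^*$, not merely finite, and one must check that the Reeb-proportional part of the lifted vector field does not push the Legendrian out of the $z$-thin neighbourhood. For Step 1, I would first try an explicit rotationally symmetric Hamiltonian $H=H(\rho,z)$ that sweeps $\gamma$ across the orbit $\{\rho=\rho_0\}$. Its maximum is attained along that orbit, so the energy bound follows from a direct computation.
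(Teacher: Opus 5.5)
Your overall strategy matches the paper's idea. You produce a loose unknot in every neighbourhood by spreading a $3$-dimensional stabilising isotopy over the $\R^{2n}$ factor with a cut-off in $x$. But the quantitative bookkeeping, which is the whole content of the theorem, does not close. Your two numerical claims are each off by a factor of two, and they cancel only on paper.

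\textbf{The cut-off slope.} A cut-off with $\chi\equiv1$ near $x=0$ and support in $(-1,1)$ must climb from $0$ to $1$ on $[-1,0]$. Hence $\max|\partial_{x_i}\chi|\geq 1$, and the bound $\tfrac12+\eps$ cannot hold.

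\textbf{The consequence for the slab.} Once the slope is admissible, the box $[-1,1]\times[-C^*/2,C^*/2]$ forces the $3$-dimensional isotopy to cost at most $C^*/2$, not $C^*$. Indeed, in the paper's trace formulation $y=-\zeta'(x)\,\alpha(\partial_s f_s)|_{s=\zeta(x)}$. Since $\zeta$ rises by $1$ over an interval of length $1$, $\max|y|$ is at least the average of $|y|$, which is $\int_0^1\max|\alpha(\partial_sf_s)|\,ds$, the energy of the isotopy. Your Step~1 (one arc pushed once around the orbit, Hamiltonian $\approx2\pi\rho\sin\rho$, energy $\approx C^*$) therefore makes the Legendrian leave the slab $|y_i|\le C^*/2$. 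As written, your argument only proves overtwistedness of the thicker germ with $y_i\in[-C^*,C^*]$. You have identified the stabilisation cost with the action $C^*$ of the orbit, whereas it is $C^*/2$; the overtwisted size $C^*$ is twice the cost.

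\textbf{The missing ingredient.} What you need is Proposition~\ref{prop:controlledIsotopy}: a stabilising isotopy whose Hamiltonian along the moving Legendrian is at most $C^*/2+\eps$. The paper first drags a piece of the arc radially to the centre, creating two radial arcs. It then rotates these in opposite directions through angle $\pi$ each until they meet. The Hamiltonian along the arcs is then $\pi\rho\sin\rho\le C^*/2$. The radial phases and the pieces on $\{\rho=\pi\}$ cost arbitrarily little, since they stay in thin Weinstein neighbourhoods of Legendrian arcs. Finally, time is reparametrised so that $|\alpha(\partial_tf_t)|\le C^*/2+\eps$ holds pointwise in $t$, not just on average.

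\textbf{A second problem in Step~2.} For time-dependent $H_t$, the $\partial_{y_i}$-component of the contact vector field of $\chi(x)H_t$ is $\chi\,(R\cdot H_t)\,y_i-H_t\,\partial_{x_i}\chi$, where $R$ is the Reeb field. The first term is not a Reeb-direction correction. It makes the $y$-drift pick up the conformal factor of the flow, which the energy does not control. The paper avoids ambient flows at this point. It defines the product Legendrian as an iterated Legendrian trace; in one variable this is $(q,x)\mapsto(f_{t\zeta(x)}(q),\,x,\,-\alpha(\partial_x f_{t\zeta(x)}(q)))$. Its $y$-coordinate is $-t\zeta'(x)\,\alpha(\partial_sf_s)|_{s=t\zeta(x)}$, the Legendrian Hamiltonian itself with no conformal factor. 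With $|\zeta'|<1+\eps$ and the pointwise bound above, this gives $|y|\le(1+\eps)(C^*/2+\eps)$ for every $t$. So the whole isotopy, not only its endpoint, stays in the neighbourhood. That is what you need to transfer the loose chart of the final Legendrian back to the unknot.
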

In particular, according to the $h$-principle, the presence of $\Delta_\OT^\textnormal{std}$ in a given contact structure is equivalent to the presence of a standard overtwisted disk in the sense of \cite{BEM}.

\subsubsection{Idea of the proof}

In order to establish Theorem \ref{thm:NbhdOfModelOtDiskIsOT} we will show that all neighbourhoods of $\Delta_\OT^\textnormal{std}$ contain a loose unknot \cite{CMP}. The starting point of the argument is the following result \cite{MNPS}: a Legendrian segment in a neighbourhood of the standard 3-dimensional overtwisted disk is Legendrian isotopic relative endpoints to its connected sum with $\partial \Delta_\OT^\std$ stabilised once. 

We show that one can construct such a Legendrian isotopy with precise quantitative control. To state it, we recall some terminology. Fix a strict contact manifold $(M,\alpha)$; we do not assume $M$ to be compact. We write $||-||_\alpha$ for the corresponding Shelukhin-Hofer norm for contact isotopies \cite{shelukhin2017hofer}. This induces a distance $d_\alpha$ between contactomorphisms (resp.\,Legendrians) by infimisation over all compactly-supported isotopies connecting them.

\begin{proposition} \label{prop:controlledIsotopy}
Let $f: L \rightarrow (\R^3,\alpha_\OT^\std)$ be a proper Legendrian embedding disjoint from the standard overtwisted disk $\Delta_\OT^\std$. Let $\eps >0$.

Then, there is a Legendrian $g: L \rightarrow (\R^3,\alpha_\OT^\std)$, obtained from $f$ by connect-summing with $\partial \Delta_\OT^\std$ stabilised once, and a compactly-supported contact isotopy between $f$ and $g$ of energy at most $\frac{C^*}{2}+\eps$.
\end{proposition}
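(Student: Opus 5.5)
We will make the isotopy of \cite{MNPS} explicit and then bound its Shelukhin--Hofer energy. Recall that for a contact isotopy $\varphi_t$ with contact Hamiltonian $H_t = \alpha(X_t)$, the norm is $\|\varphi\|_\alpha = \int_0^1 \max_M |H_t|\,dt$ (with the conformal-factor convention of \cite{shelukhin2017hofer}). The plan is to build the isotopy as a concatenation of pieces whose Hamiltonians we can compute in the coordinates $(\rho,\phi,z)$. The costly part should be a single sweep across the overtwisted disk. Everything else should have arbitrarily small energy: pieces supported near Legendrian arcs, or pieces realised by Legendrian isotopies which can be generated by Hamiltonians supported in thin neighbourhoods and of small size.

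\textbf{Step 1 (reduction).} By a preliminary Legendrian isotopy of $f$ of small energy, we will bring a short arc of $f(L)$ close to the boundary $\partial\Delta_\OT^\std=\{z=0,\rho=\pi\}$. Note that $\partial\Delta_\OT^\std$ is Legendrian, since $\alpha_\OT^\std=-dz$ there. We then perform a Reidemeister-type move creating the connected sum with $\partial\Delta_\OT^\std$ stabilised once, minus the part that must pass through the disk.

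We expect the energy of these moves to be at most $\eps$. The reason is that Legendrian isotopies supported in a Darboux chart of size $\delta$ can be generated by Hamiltonians of size $O(\delta)$: rescale the standard chart by $(x,y,z)\mapsto(\delta x,\delta y,\delta^2 z)$, so the Hamiltonians scale by $\delta^2$.

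\textbf{Step 2 (the sweep).} The essential piece is an isotopy that pushes a Legendrian arc across the disk $\{z=0,\rho\le\pi\}$. We will use the contact vector field of the Hamiltonian $H=\chi(\rho,z)\cdot c$, adapted to the $\phi$-rotation: the Reeb-like flow along the $\phi$-direction has Hamiltonian $\alpha(\partial_\phi)=\rho\sin\rho$. The arc is swept around the circles $\{\rho=r, z=0\}$, and the total "area" traversed equals the action of these circles. The maximum of this action over $r\le\pi$ is $C^*$.

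We must do better than $C^*$ and obtain $\frac{C^*}{2}$. For this we will split the sweep symmetrically: half the disk is swept in the positive $\phi$-direction and the other half in the negative direction, simultaneously. Equivalently, we will choose a Hamiltonian taking values in $[-\frac{C^*}{2}-\eps,\frac{C^*}{2}+\eps]$ instead of $[0,C^*]$, by normalising with a constant shift. This is permissible because the arc only needs to be displaced relative to the disk, and a constant added to a Hamiltonian changes the isotopy only by a Reeb flow. That Reeb flow can be absorbed, since $L$ can be repositioned on the complement. We will then cut off $H$ outside a neighbourhood of the swept region, paying only $\eps$ in the cutoff.

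\textbf{Main obstacle.} The hard step is Step 2: making the halving rigorous. Concretely, we need a compactly supported Hamiltonian, not a constant shift (which is not compactly supported), whose sup norm is $\frac{C^*}{2}+\eps$ and which still realises the move. First we would try to work on a neighbourhood of the swept annulus only, where $\alpha$-preserving rotation in $\phi$ is generated by $\rho\sin\rho$. There we would use $H=\rho\sin\rho - \frac{C^*}{4\pi}\cdot(\text{const})$ with a cutoff, and check that the Legendrian front moves as required while $\max|H|$ is attained near $\rho\sin\rho$ maximal. We would then verify compatibility with the connected-sum picture of \cite{MNPS}.
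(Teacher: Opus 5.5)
There is a genuine gap, and it sits where you place your main obstacle: the halving mechanism you actually pursue does not work. Adding a constant $c$ to the Hamiltonian changes the isotopy by the Reeb flow of $\alpha_\OT^\std$. For $H=2\pi\rho\sin\rho$, which is Reeb-invariant, you get exactly $\rho_{ct}\circ\mathrm{Rot}_{2\pi t}$. This flow is not compactly supported. It moves every point of $f(L)$, including the parts that should stay put and the non-compact ends of the proper Legendrian. So the shifted sweep ends at a Reeb translate of $g$, not at $g$. "Repositioning" it is itself an isotopy of comparable cost, and you give no reason that $d_\alpha(g,\rho_{T}\circ g)$ is small. If you instead cut the constant off, the part of $L$ inside the cut-off region is displaced and you never reach $g$. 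For the same reason, your sentence "equivalently, by a constant shift" is false: two opposite sweeps are not a renormalisation of one Hamiltonian. Separately, a cut-off constant $\chi\cdot c$ generates a cut-off Reeb flow, which has a $\partial_z$-component away from the circles where $(\rho\sin\rho)'=0$. It does not generate the rotation $\partial_\phi$, whose Hamiltonian is $\rho\sin\rho$.

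The idea you mention in passing, two simultaneous opposite half-sweeps, is the right one, but it has to be realised at the level of the Legendrian. Measure energy by $\int\max_L|\alpha(\partial_t f_t)|\,dt$; then no ambient cut-off questions arise. Your proposal never says which arcs are swept or why the endpoint is the connected sum with $\partial\Delta_\OT^\std$ stabilised once. That connected sum is not a move inside a small Darboux chart, so your Step 1 cannot produce it.

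The paper's construction runs as follows.
\begin{enumerate}
\item Arrange $f$ to be tangent to $\partial\Delta_\OT^\std$ at $(\pi,0,0)$. Since $\alpha_\OT^\std(\partial_\rho)=0$, the radial segments in $\{z=0\}$ are Legendrian. Push a doubled radial finger of $f$ in to the $z$-axis; this happens near a Legendrian arc and costs arbitrarily little.
\item For $t\in[1,2]$, split the finger into two radial arcs at angles $\pm\pi(t-1)$. They are joined through arcs of the Legendrian circle $\{\rho=\pi,z=0\}$, along which points move in Legendrian directions.
\item Retract the merged arc at $\theta=\pi$.
\end{enumerate}
During $[1,2]$, each radial arc turns through angle $\pi$ at unit time, so $|\alpha_\OT^\std(\partial_t)|=\pi\rho\sin\rho\le C^*/2$. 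All other pieces and all smoothings happen near Legendrian arcs, so they cost arbitrarily little by Corollary \ref{cor:boudnednessOfStrictWeinsteinNbhdOfHeightC}. The single stabilisation comes from the twisting and untwisting of the cylindrical front (Figure \ref{fig:Stabilizing_isotopy_3d}). This explicit family is the missing content of your Step 2.
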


\begin{remark} \label{rem:manystabilizations}
Proposition \ref{prop:controlledIsotopy} is a quantitative h-principle for loose Legendrians in dimension 3. A key observation is that, in order to establish flexibility in this setting, one requires arbitrarily many stabilizations, not just one. As we explain in Section \ref{sec:QuantitativeIsotopiesInDimension3}, this can be done by applying the proposition to many parallel copies of the overtwisted disk at once, so the cost is still bounded above by $\frac{C^*}{2}$.
\end{remark}

Crucially, the quantitatively controlled Legendrian isotopy of Proposition \ref{prop:controlledIsotopy} can be used to sweep out a loose unknot in $\Delta_\OT^\std \subset \R^{2n+1}$, $n>1$, establishing Theorem \ref{thm:NbhdOfModelOtDiskIsOT}. In fact, this loose unknot is contained in a neighbourhood of a certain stratified subset of $\Delta_\OT^\std \subset \R^{2n+1}$,  see Subsection  \ref{sssec:OTorange}, that turns out to be an overtwisted orange \cite{Hu17,Hu20}. In this manner, we recover the fact that a neighbourhood of an overtwisted orange is overtwisted. 

\subsubsection{Conventions in the paper} \label{sssec:conventions}

Before we continue, let us state some of the running assumptions throughout the paper. We will try to be explicit about these as we go along. The contact manifolds we study are assumed to be path-connected, but we do not ask that they are compact. We allow them to have boundary. The Legendrian submanifolds they contain need not be compact, but we ask that they are proper. We allow them to be disconnected, but they may only have finitely many components.

Whenever we work with diffeomorphisms and isotopies (including formal Legendrian isotopies) we ask that these are compactly supported. When the ambient manifold has boundary, this should be interpreted as being supported in the interior.

A $K$-family of maps $(f_k: X \to Y)_{k \in K}$ is compactly supported if there is some compact $A \subset K \times X$ such that $f_k(x) = f(x)$ for some fixed map $f$ whenever $(k,x) \notin A$. 

If $K$ is compact and $U \subseteq Y$ is an open subset, we will also say that $(f_k:X \to Y)_{k\in K}$ has compact support in $U$ if there exists a compact set $A \subseteq U$ such that for all $(k,x) \in K\times X$ with $f_k(x) \notin A$, it holds that $f_k(x) = f(x)$ for some fixed map $f$. 

\subsection{The size of an overtwisted disk}

The model overtwisted disk presented in Definition \ref{def:standardDisc} uses a very concrete choice of contact form. A large part of our work is dedicated to studying how overtwistedness is seen quantitatively as the contact form varies. We do this from a ``Legendrian perspective''.

First we define what it means for a Legendrian to have a ``visible'' stabilization of a certain size: 
\begin{definition}\label{def:stabilization}
Fix a strict contact manifold $(M,\alpha)$. Consider a proper and connected Legendrian embedding $g: L \rightarrow (M,\alpha)$.

Suppose $\dim M = 3$. If $g$ is oriented, we say that $g$ is an \textbf{$(N_+,N_-)$-fold $\eps$-stabilization} if it takes values in a strict Weinstein neighbourhood $W$ of height $\eps$ (Definition \ref{def:WeinsteinNeighbourhood}) of some other proper Legendrian embedding $h$ and, moreover, $g$ is obtained from $h$ by adding $N_+$ positive and $N_-$ negative stabilizations inside of $W$.

Suppose otherwise that $\dim M \geq 5$. Then, $g$ is an \textbf{$\eps$-stabilization} if $g$ takes values in a strict Weinstein neighbourhood $W$ of height $\eps$ of some other proper Legendrian embedding $h$ and, moreover, $g$ is a stabilization of $h$ in $W$.
\end{definition}
Recall that the theory of Legendrians is very different in dimension $3$ compared to higher dimensions, since stabilizations cannot ``self-replicate'' in dimension $3$, hence the distinction in the definition.

Following \cite{Nak24}, we can then discuss the cost of exhibiting such explicit stabilizations: 
\begin{definition}\label{def:stabilizationCost}
Fix a strict contact manifold $(M,\alpha)$ and an open subset $U \subseteq M$. Consider a proper and connected Legendrian embedding $f: L \rightarrow (M,\alpha)$.

Suppose $\dim M = 3$. The \textbf{stabilization cost} in $U$ of $f$ is
\[ \SC(f;M,U) := \limsup\limits_{N \to \infty} \SC^{N,N}(f;M,U) \]
where
\[ \SC^{N_+,N_-}(f;M,U) := \limsup\limits_{\eps \to 0} \inf\limits_{g} d_\alpha(f,g), \]
where $g: L \rightarrow (U,\alpha) \subseteq (M,\alpha)$ ranges over all $(N_+,N_-)$-fold $\eps$-stabilizations inside of Weinstein neighbourhoods contained in $U$.

Suppose $\dim M \geq 5$. The \textbf{stabilization cost} in $U$ of $f$ is
\[ \mathcal{C}(f;M,U) := \limsup\limits_{\eps \to 0} \inf\limits_{g} d_\alpha(f,g), \]
where $g: L \rightarrow (U,\alpha) \subseteq(M,\alpha)$ ranges over all $\eps$-stabilizations inside of Weinstein neighbourhoods within $U$.

If $U = M$, we simply write $\SC(f;M) \coloneqq \SC(f;M,M)$.
\end{definition}
We remark that the isotopies we infimise over, in order to define $d_\alpha$, are allowed to use the whole of $M$ and are compactly supported. They need not be the identity in the complement of $U$.

\begin{remark} \label{rem:Nakamura}
A notion of loose size for higher-dimensional Legendrians was defined in \cite{Nak24}. The results of \cite{Nak24} show that the loose size agrees with the stabilization cost we defined above, up to a factor of $2$. We discuss this in Lemma \ref{lem:Nakamura}.
\end{remark}

\begin{remark}\label{rmk:stabilizationCostDisconnected}
Suppose $L$ is disconnected and has finitely many components. Then we generalize Definitions \ref{def:stabilization} and \ref{def:stabilizationCost} as follows: an ($(N_+,N_-)$-fold) $\eps$-stabilization requires ($(N_+,N_-)$-fold) stabilizing every component of the given Legendrian $h$ within a strict Weinstein neighbourhood of height $\eps$.
\end{remark}

\begin{remark} \label{rmk:bothStabilizations}
The quantity $\mathcal{C}^{(N,N)}(f;M,U)$ does not depend on the chosen orientation of $L$ and is thus well-defined for $3$-dimensional, non-oriented Legendrians. This is why no orientation assumptions appear in Definition \ref{def:stabilizationCost}.
\end{remark}

We can then define the overtwisted size of a hyperdisk to be the supremum, over all Legendrians in the vicinity, of the stabilization/loose size. For the sake of generality (indeed, recall that some of the objects characterizing overtwistedness have more complicated topology), we allow more general subsets in the definition, not just hyperdisks.
\begin{definition}\label{def:OTsize}
Fix a strict contact manifold $(M,\alpha)$ and a closed subset $\Delta \subset M$ not equal to $M$. We require that $\Delta$ has a neighbourhood basis consisting of path-connected opens.

The \textbf{overtwisted size} of $\Delta$ is
\[ \SS(\Delta) := 2\sup\limits_{U} \sup\limits_{f} \mathcal{C}(f;U,U\setminus \Delta), \]
where $U$ ranges over all path-connected open neighbourhoods of $\Delta$ and $f : L \to (U,\alpha)$ ranges over all proper Legendrian embeddings, with finitely many components, that avoid $\Delta$.

Given $(M,\alpha)$ path-connected we define its overtwisted size as:
\[ \SS(M,\alpha) := \inf\limits_{\Delta} \SS(\Delta), \]
as $\Delta \subset M$ ranges over all codimension-1 disks.
\end{definition}
The factor of $2$ in the definition is introduced so that the overtwisted size compares more directly to various other quantities. The most obvious one is that the size then becomes an upper bound for the cost of Legendrian isotopies, see Theorem \ref{thm:BoundednessInComplementOfOtDiskOfSizeC}. Additionally, it compares well to spectral quantities, see Theorem \ref{thm:ClosedReebOrbitsInOTCtcMflds} and Remark \ref{rem:Nakamura} above.

In Subsection \ref{ssec:oneStabilization} we show that the overtwisted size is local, i.e.\,it depends solely on the germ of the contact form around $\Delta$. For now we can deduce the following related statement:
\begin{lemma}
If $\SS(\Delta)$ is finite, the germ of $\ker(\alpha)$ along $\Delta$ is overtwisted.
\end{lemma}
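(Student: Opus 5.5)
The idea is to use finiteness of $\SS(\Delta)$ only qualitatively: it gives, inside every neighbourhood of $\Delta$, a Legendrian isotopy from some Legendrian to a stabilization of itself. From this we build a loose Legendrian unknot (in dimension $\geq 5$) or an overtwisted disk (in dimension $3$) in that neighbourhood. Overtwistedness of the germ means that every neighbourhood $U$ of $\Delta$ is overtwisted. So fix a path-connected open $U \supset \Delta$; it suffices to find a loose unknot in $U$ (using \cite{CMP} when $\dim M\ge 5$), or an overtwisted disk in $U$ when $\dim M=3$.

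\emph{Step 1 (a stabilization in the complement).} Choose a small Darboux ball $B \subset U\setminus\Delta$ and let $f$ be a standard Legendrian unknot inside $B$. This is a proper, compact Legendrian avoiding $\Delta$. Since $\SS(\Delta)<\infty$, we have $\SC(f;U,U\setminus\Delta)<\infty$. In particular the infimum in Definition~\ref{def:stabilizationCost} is over a nonempty set, so there is a compactly supported contact isotopy of $U$ taking $f$ to some $g$ with the following property: $g$ lies in a strict Weinstein neighbourhood $W\subset U\setminus\Delta$ of height $\eps$ of some Legendrian $h$, and $g$ is a stabilization of $h$ inside $W$. In dimension $3$ we take $g$ to be an $(N,N)$-fold stabilization with $N\geq 1$. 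Only the existence of this isotopy is used, not the energy bound.

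\emph{Step 2 (from the isotopy to overtwistedness).} The ambient isotopy $\varphi_t$ of $U$ carries $f$ to $g$. Pulling back, $\varphi_1^{-1}(h)$ is a Legendrian in $U$ that is Legendrian isotopic (in $U$) to the unknot $f$ after stabilization; more precisely, $f$ is isotopic to a stabilization of $h'=\varphi_1^{-1}(h)$.
\begin{itemize}
\item In dimension $\geq 5$, $f=\varphi_1^{-1}(g)$ is a stabilization of $h'$ inside the Weinstein neighbourhood $\varphi_1^{-1}(W)$. A stabilized Legendrian contains a loose chart (in the sense of Murphy), so $f$ is a loose unknot in $U$. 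By \cite{CMP}, $U$ is overtwisted.
\item In dimension $3$, $f$ is the standard unknot, and a Legendrian isotopy from it to a stabilized knot changes the Thurston--Bennequin invariant. Concretely, $\varphi_1^{-1}(W)$ is a standard neighbourhood of $h'$ and $f$ is the $(N,N)$-stabilization of $h'$ there. Since $h'$ destabilizes $f$, we get $tb(h')=tb(f)+2N=-1+2N\geq 1$. But $h'$ is null-homologous, because it is isotopic through smooth knots in a ball to $f$; in fact $h'$ is itself an unknot contained in a ball-like region. Tight manifolds satisfy Bennequin's inequality $tb\le -1$ for unknots, so $U$ is overtwisted.
\end{itemize}
To stay within the ball picture I would phrase this with the Seifert disk transported by $\varphi_1^{-1}$; the contradiction with the Bennequin bound in tight manifolds (Eliashberg) then applies inside $U$.

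\emph{Main obstacle.} The delicate point is Step 2 in dimension $3$: making sure the Bennequin-type contradiction is legitimate in a noncompact, possibly boundary-carrying $U$. The transported Seifert disk lies in $U$, and Eliashberg's bound holds for any disk in a tight manifold, so this should be fine; alternatively one invokes the $3$-dimensional loose-unknot characterization. In higher dimension, the step needing care is checking that a stabilization inside a strict Weinstein neighbourhood yields a genuine loose chart, and that the unknot $f$ lies in the same connected piece; path-connectedness of $U$ handles the latter.
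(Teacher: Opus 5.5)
Your proposal is correct and is essentially the paper's argument. In dimension $\geq 5$, finiteness of $\SS(\Delta)$ makes the standard unknot loose, so \cite{MNPS,CMP} apply; in dimension $3$, it makes the unknot a stabilization of another Legendrian unknot $h'$ with $tb(h')=-1+2N\geq 1$, which contradicts the Bennequin inequality in a tight manifold (your observation that one destabilization already suffices, versus the paper's ``arbitrarily many'', is a harmless sharpening).
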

\begin{proof}
In higher dimensions, $\SS(\Delta)$ being finite implies that the unknot in a germ around $\Delta$ is loose, so overtwistedness follows from \cite{MNPS,CMP}. In dimension $3$, the assumption implies that the unknot can be destabilised arbitrarily many times, which contradicts the Thurston-Bennequin inequality, proving overtwistedness.
\end{proof}

Conversely, we can show that overtwisted disks have finite overtwisted size (showing thus that this is a meaningful notion to consider):
\begin{theorem} \label{thm:SizeOfOtDisk}
The overtwisted size of $\Delta^\std_\OT \subseteq (\R^{2n+1},\alpha^\textnormal{std}_\OT + \sum_i y_i dx_i)$ is exactly $C^*$.

In particular, the overtwisted size of the image of a contact embedding $\Delta_\OT^\textnormal{std} \rightarrow (M,\alpha)$ with conformal factor $\chi$ is bounded above by $C^* \max\limits_{\Delta_\OT^\std} |\chi|$.
\end{theorem}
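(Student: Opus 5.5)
We prove the equality $\SS(\Delta^\std_\OT) = C^*$ as two inequalities, and then deduce the embedded bound from the behaviour of the Shelukhin-Hofer norm under a change of contact form.

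\emph{Upper bound $\SS(\Delta^\std_\OT)\le C^*$.} By definition it suffices to show that every proper Legendrian $f$ avoiding $\Delta^\std_\OT$, in any path-connected neighbourhood $U$, satisfies $\mathcal{C}(f;U,U\setminus\Delta)\le C^*/2$. We treat dimension $3$ first. Choose $\delta>0$ so small that a thickened copy of the disk lies in $U$. Using Remark \ref{rem:manystabilizations}, we then apply Proposition \ref{prop:controlledIsotopy} simultaneously to $2N$ parallel copies of the overtwisted disk. These are obtained by flowing $\Delta^\std_\OT$ slightly in $z$, which is Reeb-invariant since $\alpha_\OT^\std$ is $z$-invariant, so the copies stay inside $U\setminus\Delta$. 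The copies have disjoint supports stacked in $z$, so the Hamiltonians can be arranged to have disjoint supports and the norms do not add. This gives a single isotopy of energy at most $C^*/2+\eps$ from $f$ to a Legendrian with $N$ positive and $N$ negative stabilizations.

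The result is not yet an $(N,N)$-fold $\eps$-stabilization in the sense of Definition \ref{def:stabilization}, because the connected sums with the boundaries of the copies of $\partial\Delta$ are far from $f$. The key point is that $f\#\partial\Delta$ is itself Legendrian isotopic, inside a thin Weinstein neighbourhood of a suitable $h$, to a stabilization of $h$. So we take $h$ to be $f$ connect-summed with the destabilised boundary loops (which requires proper care of orientations), and take $g$ to be its stabilization. Since $\eps$ can be chosen after $\delta$, the limsups in Definition \ref{def:stabilizationCost} give cost at most $C^*/2$.

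For higher dimensions we use the product structure of $\Delta^\std_\OT$. Here $f$ is $n$-dimensional, and we want a single stabilization of it. Following the idea of proof of Theorem \ref{thm:NbhdOfModelOtDiskIsOT}, we sweep the $3$-dimensional isotopy along the $\R^{2n}$ factor: over a Legendrian product chart $\{y=\text{const}\}\times[-1,1]^n$ in the $x$-directions, we cut off using the width $C^*/2$ in $y$. The contribution $\sum y_i\,dx_i$ to the Hamiltonian must be controlled. I expect the $y$-box of half-width $C^*/2$ to be exactly what absorbs the translation needed, keeping the energy at most $C^*/2+\eps$. This produces a loose chart, and hence, via Lemma \ref{lem:Nakamura} and \cite{Nak24}, an $\eps$-stabilization at the same cost.

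\emph{Lower bound $\SS\ge C^*$.} We need a Legendrian $f$ near $\Delta$ whose stabilization costs at least $C^*/2-\eps$. We take a small Legendrian near the boundary circle of the disk; in higher dimensions we take its product with a Legendrian plane in $\R^{2n}$. The natural obstruction is spectral: any isotopy of energy $E$ moving $f$ onto a stabilization must create, or pass through, a Reeb chord or orbit of action comparable to $C^*$, namely the orbit $\{z=0,\rho=\rho_{\max}\}$. I would try an energy--capacity argument in the spirit of \cite{Nak24}, relating loose size to lengths of Reeb chords displaced by the isotopy, with the factor $2$ accounting for $C^*$ versus $C^*/2$.

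\textbf{Main obstacle.} This lower bound is the step I expect to be hardest, since it requires a rigidity tool (Legendrian spectral invariants or a Chekanov-type displacement bound) in an overtwisted, non-compact setting.

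\emph{Embedded bound.} For an embedding $\phi$ with $\phi^*\alpha=\chi\,\alpha^\std$, contact Hamiltonians transform as $H\mapsto \chi H$. Supports can be taken in an arbitrarily small neighbourhood of the image of $\Delta$, and the overtwisted size is local, so the norm scales by at most $\max_{\Delta}|\chi|$ up to $\eps$. Together with the upper bound this gives $\SS(\phi(\Delta^\std_\OT))\le C^*\max_{\Delta}|\chi|$.
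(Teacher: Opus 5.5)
Your upper bounds and the embedded bound follow the paper's route. In dimension $3$, running Proposition \ref{prop:controlledIsotopy} on many $z$-translated copies of the disk is Lemma \ref{lem:OTsizeViaStdModel}, together with Lemma \ref{lem:IntoStdPosNearOTDisk} and Remark \ref{rmk:StdPosOrientation} to put $f$ in standard position near each copy, with prescribed orientation, at negligible cost. Two small corrections: $z$-translation is a strict contact flow, not the Reeb flow. And your detour about $h$ is unnecessary, since the output of Proposition \ref{prop:controlledIsotopy} is already a stabilization of $h=f\#\partial\Delta'$ ($\Delta'$ a translated copy) inside an arbitrarily thin Weinstein neighbourhood of $h$ avoiding $\Delta$. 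In higher dimensions your sweeping is the iterated Legendrian trace of Theorem \ref{thm:OtDiskOfSizeCTimesBall}. What you leave as an expectation is the computation there. The trace has Hamiltonian $\zeta(x_n)$ times the previous one, so the energy stays below $C^*/2+\eps$. Its new coordinate $y_n=-t\zeta'(x_n)H$ is bounded by $(1+\eps)(C^*/2+\eps)$, which is why $y$-width $C^*/2$ suffices.

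The genuine gap is the lower bound $\SS(\Delta^\std_\OT)\ge C^*$. You give no mechanism, and the direct route you suggest (a specific Legendrian near the disk that is expensive to stabilize) has no tool behind it. There is no energy--capacity inequality in $\R^{2n+1}$ or in a neighbourhood of the disk; on open overtwisted manifolds $d_\alpha$ can even vanish identically (Corollary \ref{cor:LegegendriansWithVanishingd_alpha}). Even in a closed ambient manifold, Theorem \ref{thm:energy-capacity inequality} says nothing about an isotopy ending at a stabilization of $f\#\partial\Delta'$, since that endpoint lies near $f$ and has short chords to it.

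The paper's argument is indirect and global. Lemma \ref{lem:Closed ctc mfds with std ot disk} embeds a neighbourhood of $\Delta^\std_\OT$ strictly into a closed contact manifold with two properties:
its minimal Reeb period is exactly $C^*$, attained by the orbit $\{\rho=\rho_{\max}\}$ you mention;
and it contains a strict Darboux ball $B(R)$ of arbitrary radius away from the disk.
In dimension $3$ this is a controlled Lutz twist in a rescaled Hopf fibration; in general it is a local model glued into a closed overtwisted manifold whose form is scaled up elsewhere.

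Theorem \ref{thm:ClosedReebOrbitsInOTCtcMflds} then argues by contradiction, supposing the size were $C<C^*$. Take formally isotopic $f,g$ in $B(R)$ such that:
$f$ has only long Reeb chords and its short-time Reeb flow stays in the ball;
$g$ has no chords to $f$, and in dimension $\ge 5$ has a tiny loose chart.
The size bound plus the quantitative $h$-principle give $d_\alpha(f,g)\le C/2+\eps$. In dimension $\ge 5$ this uses Lemma \ref{lem:Nakamura} and Theorem \ref{thm:LooseLegQuantitativeFlexibility}; in dimension $3$ a variant of the proof of Theorem \ref{thm:BoundednessInComplementOfOtDiskOfSizeC}. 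Since $\Vert H\Vert_{osc}\le 2\Vert\phi^H\Vert_\alpha$, Theorem \ref{thm:energy-capacity inequality} forces a contractible closed orbit of action at most $C<C^*$, a contradiction.

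So the test Legendrians live far from the disk, and the disk only serves to make them cheaply isotopic. The rigidity is imported from a closed manifold with known Reeb spectrum. This is the idea your proposal is missing.
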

The proof of this result relies on two ingredients. Showing that the size is bounded above by $C^*$ boils down to analysing the cost of the explicit Legendrian isotopy of Proposition \ref{prop:controlledIsotopy}. Proving that $C^*$ is a lower bound requires us to relate the overtwisted size to the Reeb spectrum via the energy-capacity inequality \cite{DRS24}. We discuss this further in the next Subsection \ref{ssec:SizeAndReeb}.

Observe that Theorem \ref{thm:SizeOfOtDisk} implies that $\Delta^\std_\OT$ is overtwisted, establishing Theorem \ref{thm:NbhdOfModelOtDiskIsOT}. In fact, we can also establish the following generalization:
\begin{theorem}\label{thm:OtDiskOfSizeCTimesBall}
Fix a strict contact manifold $(M,\alpha)$ of arbitrary dimension. Consider a compact subset $\Delta \subset (M,\alpha)$ of overtwisted size $C$. Then for any $n \geq 1$,
\[  \Delta^{(n)} := \Delta \times \left([-1,1] \times \left[-\frac{C}{2},\frac{C}{2}\right]\right)^n \subseteq \left(M \times \R^{2n}, \alpha + \sum_{i=1}^n y_i dx_i\right) \]
has overtwisted size at most $C$.
\end{theorem}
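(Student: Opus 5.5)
By induction on $n$ it suffices to treat $n=1$. Indeed, $\Delta^{(n)} = (\Delta^{(n-1)})^{(1)}$ inside $(M\times\R^{2n-2})\times\R^2$, so once the $n=1$ case is known, $\Delta^{(n-1)}$ has size at most $C$ and hence so does $\Delta^{(n)}$. The ball factor grows with $C$, so a monotone induction is consistent.

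Set $\hat M = M\times\R^2$ with $\hat\alpha = \alpha + y\,dx$, and $\hat\Delta = \Delta\times[-1,1]\times[-C/2,C/2]$. Fix a path-connected open neighbourhood $\hat U$ of $\hat\Delta$ and a proper Legendrian $F: \hat L\to\hat U\setminus\hat\Delta$ with finitely many components. We must show that for every $\delta>0$ and every small $\eps$ there is an $\eps$-stabilization $G$ of $F$ inside $\hat U\setminus\hat\Delta$ with $d_{\hat\alpha}(F,G)\le C/2+\delta$.

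\textbf{Step 1: reduce to a slice.} By a small Legendrian isotopy of $F$ of arbitrarily small energy, and by shrinking $\hat U$, we may assume $\hat U\supseteq U\times(-1-\eta,1+\eta)\times(-C/2-\eta,C/2+\eta)$ for a path-connected neighbourhood $U$ of $\Delta$. We may also assume that a small piece of $F$ is standard near a point: it looks like a product of a Legendrian arc $f$ in $U\setminus\Delta$ (of dimension $\dim M$ Legendrian, i.e.\ a slice $L\subset M$) with the $y=$const line in the $\R^2$ factor, placed at the level $y=y_0$ just above $C/2$. This uses that $F$ avoids $\hat\Delta$ and can be moved with small energy into such a position near the boundary $\{y=C/2\}$.

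\textbf{Step 2: suspend the $3$-dimensional isotopy.} By hypothesis, $\Delta$ has size $C$. Hence there is a compactly supported contact isotopy $\phi_t$ of $U$, generated by a contact Hamiltonian $H_t$ with $\int\max|H_t|\le C/2+\delta$, taking $f$ to a stabilization $g$ (avoiding $\Delta$). The naive product $\phi_t\times\id$ is not contact for $\hat\alpha$. Instead we take the Hamiltonian $\hat H_t(p,x,y) = \chi(x)\,H_t(p)$ with a cutoff $\chi$ supported in $(-1-\eta,1+\eta)$ and equal to $1$ on $[-1,1]$. This can be arranged compactly supported by also cutting off in $y$.

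The key point is to exploit the $y\,dx$ term. The Reeb/contact flow of $\chi(x)H$ has a $\partial_y$-component proportional to $\chi'(x)H$. This drifts $y$ by at most $\int|H_t|\,dt\le C/2+\delta$, and it is exactly the height of the box that keeps the trajectory from entering $\hat\Delta$. Starting the slice at $y_0>C/2$ (or symmetrically below $-C/2$) and choosing the sign appropriately, the flowed Legendrian stays in $\{y\ge C/2 - (C/2+\delta)\}$. Here lies the main obstacle: one must check carefully that the swept Legendrian never meets $\hat\Delta$, and in fact that it does not need to. The stabilization only needs to happen in the region $|x|\le1$, where $\chi\equiv1$ and the flow is the product flow; near $|x|=1$ the Legendrian is pushed in $y$ but lies outside $\hat\Delta$ in the $x$-direction. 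I would first try to realise this with an explicit front-projection picture in the $(x,y)$ plane, mimicking the \cite{CN25} construction, and verify that the $y$-drift is bounded by the energy $C/2$. This matching of drift and energy is what explains the factor $C/2$ in the box.

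\textbf{Step 3: conclude.} The energy of $\hat H_t$ is $\int\max|\chi H_t|\le C/2+\delta$. The result is a Legendrian $G$ that, over $|x|<1$, is the product of the stabilization $g$ with an interval. This is a stabilization in the higher-dimensional sense, since stabilizing times an interval yields a loose chart and hence a stabilization. It lies in an $\eps$-Weinstein neighbourhood of $F$ within $\hat U\setminus\hat\Delta$. Taking suprema gives $\SS(\hat\Delta)\le 2(C/2+\delta)$, and letting $\delta\to 0$ gives $\SS(\hat\Delta)\le C$.
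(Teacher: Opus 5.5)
\textbf{There is a genuine gap in Step 2, in the drift estimate.} This is exactly where the dimensions of the box enter.

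For $\hat H_t=\chi(x)H_t(p)$, the contact vector field with respect to $\hat\alpha=\alpha+y\,dx$ is $\chi X_{H_t}+\bigl(\chi\,y\,dH_t(R_\alpha)-\chi'(x)H_t\bigr)\partial_y$. Starting from $y=0$, the $y$-coordinate therefore drifts by roughly $|\chi'(x)|\int_0^1|H_t|\,dt$, not by $\int_0^1|H_t|\,dt$ as you claim.

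You take $\chi\equiv 1$ on $[-1,1]$ with support in $(-1-\eta,1+\eta)$. Then $|\chi'|\gtrsim 1/\eta$, and the drift is of order $C/(2\eta)$. This is genuinely attained: in the model isotopy of Proposition~\ref{prop:controlledIsotopy}, points on the rotating radial arcs accumulate $\int H_t\,dt\approx C^*/2$.

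But in $\SC(F;\hat U,\hat U\setminus\hat\Delta)$ the isotopy must be compactly supported in $\hat U$. Since $\hat U$ may be an arbitrarily thin neighbourhood of $\hat\Delta$, your isotopy leaves it. Insisting on $|\chi'|\le 1$ with $\chi\equiv1$ on $[-1,1]$ instead pushes the $x$-support out to $|x|\approx 2$, which again leaves $\hat U$.

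The map $(x,y)\mapsto(\lambda x,y/\lambda)$ is strict. So what counts is the product (half-width)$\times$(half-height)$=1\cdot C/2$, and this has to dominate the Hamiltonian bound. The only way to exploit it is to let the cutoff run across the whole unit interval, inside the box. You cannot have all three of the following at once: the transition region outside $[-1,1]$, support in a thin neighbourhood of $\hat\Delta$, and drift $\lesssim C/2$.

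\textbf{How the paper's proof avoids this.}
\begin{itemize}
\item It starts from the slice $f\times[-1-\eps,1+\eps]\times\{0\}$ at $y=0$. Your product with a horizontal line $\{y=y_0\}$, $y_0\neq 0$, in Step 1 is not Legendrian, since $\hat\alpha(\partial_x)=y_0$.
\item It reparametrizes time so that $|\alpha(\partial_t f_t)|\le C/2+\eps$ pointwise.
\item It uses the reparametrized trace $(q,x)\mapsto\bigl(f_{t\zeta(x)}(q),\,x,\,-t\zeta'(x)\,\alpha(\partial_s f_s)|_{s=t\zeta(x)}\bigr)$, with $\zeta=1$ near $0$, $\zeta=0$ near $\pm1$, and $|\zeta'|<1+\eps$.
\end{itemize}
The $y$-coordinate is then bounded by $(1+\eps)(C/2+\eps)$, so everything stays in an arbitrarily thin thickening of the box. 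The generating Hamiltonian is $\zeta(x)\,\alpha(\partial_s f_s)|_{s=t\zeta(x)}$, so the energy is below $C/2+\eps$.

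The sweep does cross $\hat\Delta$, and this is harmless. The isotopies in the definition of $\SC$ may use the whole neighbourhood, and the lower-dimensional isotopy already crosses $\Delta$. What lies off $\hat\Delta$ is the small loose chart near $x=0$, where the Legendrian is $f_1\times(\text{interval})$ at $y=0$. So your worry about keeping the sweep out of $\hat\Delta$ is unnecessary, and trying to enforce it (placing the cutoff outside the box) is what breaks the argument.

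Your reduction to $n=1$ is fine, provided the $n=1$ step is stated for $\SS(\Delta)\le C$. That upper bound is all its proof uses.
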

Theorem \ref{thm:OtDiskOfSizeCTimesBall} stands in contrast to the results of \cite{HCMMP20}, that show that the product of any closed contact manifold with a sufficiently small ball is tight; see also \cite[Theorem G]{Gir20} for a partial result in this direction. While it was known that the product of an overtwisted contact manifold with a sufficiently large ball is overtwisted \cite{NiederkrugerPresas2010,CMP,CN25}, Theorem \ref{thm:OtDiskOfSizeCTimesBall} provides an explicit size for the ball.

The stabilised model presented in Theorem \ref{thm:OtDiskOfSizeCTimesBall} can be found in products with Liouville manifolds, showing:
\begin{corollary}\label{lem:OTdiskInLiouvilleProd}
Suppose $(M,\alpha)$ is a strict contact manifold with complete Reeb flow and overtwisted size $C$. Let $(N, \omega = d\lambda)$ be an exact symplectic manifold with complete Liouville flow. Then $(M \times N, \alpha+\lambda)$ has overtwisted size $\leq C$.
\end{corollary}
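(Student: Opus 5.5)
We will deduce the corollary from Theorem \ref{thm:OtDiskOfSizeCTimesBall} by embedding a copy of the stabilised model $\Delta^{(n)}$ into $(M \times N, \alpha + \lambda)$ as a strict contact embedding, i.e.\ with conformal factor $1$. Together with the locality of overtwisted size proven in Subsection \ref{ssec:oneStabilization}, this gives the bound.

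Fix $\eta>0$ and a codimension-$1$ disk $\Delta \subset M$ with $\SS(\Delta) \le C+\eta$. We may assume $\Delta$ is compact. By Theorem \ref{thm:OtDiskOfSizeCTimesBall}, the subset $\Delta^{(n)} \subset (M\times\R^{2n}, \alpha+\sum y_i dx_i)$ has overtwisted size at most $C+\eta$, where $2n=\dim N$. Now choose a point $p\in N$ and Darboux coordinates near $p$ in which $\omega=\sum dy_i\wedge dx_i$, so that $\lambda = \sum y_i dx_i + dh$ locally.

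The point is that the Darboux chart is small, while we need a box of side $C+\eta$ in the $y$-directions. This is where the complete Liouville flow $\phi_t$ enters. It satisfies $\phi_t^*\lambda = e^t\lambda$. We compose the Darboux chart with $\phi_t$ for large $t$, and correspondingly rescale the $M$-factor using the Reeb flow, which is complete by assumption. More precisely, consider the map
\[ (m, q) \mapsto (\varphi^R_{s(q)}(m), \Phi(q)), \]
where $\Phi$ is a symplectic embedding of the box $B=([-1,1]\times[-\tfrac{C+\eta}{2},\tfrac{C+\eta}{2}])^n$ into $N$, and $s$ is a function on the box. For this map to be strict, we need $\Phi^*\lambda = \sum y_i dx_i + d(-s)$, so that the Reeb translation absorbs the exact discrepancy.

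To get $\Phi$, first take the conformally symplectic embedding obtained from the Darboux chart scaled by $\phi_t$. A Darboux ball of radius $r$ pushed by $\phi_t$ carries $e^t\omega$, so it contains a symplectic copy of a ball of radius $e^{t/2}r$, and hence the box $B$ for $t$ large. Since $B$ is simply connected, the pullback of $\lambda$ differs from $\sum y_i dx_i$ by an exact form $dF$. Setting $s=-F$ then gives a strict contact embedding of $\Op(\Delta^{(n)})$ into $M\times N$; the Reeb flow is needed precisely so that this is defined for arbitrary values of $F$.

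It remains to transfer the size. Overtwisted size is invariant under strict contact embeddings, since $\SS$ only depends on the germ of the contact form (Subsection \ref{ssec:oneStabilization}). Stabilization cost computed in small neighbourhoods $U$ of the image is then identical. This gives $\SS(M\times N,\alpha+\lambda)\le C+\eta$ for every $\eta>0$, hence $\le C$.

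One caveat: $\Delta^{(n)}$ is a product of disks, so it is a codimension-$1$ disk only after smoothing corners, or by appealing to it being contained in one. So we take a slightly larger codimension-$1$ disk containing it; enlarging the subset only decreases the sup over neighbourhoods in Definition \ref{def:OTsize}, up to the sup over avoiding Legendrians. This monotonicity is the step I expect needs care. We handle it by noting that any Legendrian avoiding the larger disk also avoids $\Delta^{(n)}$, and that its neighbourhoods are neighbourhoods of $\Delta^{(n)}$, with stabilization cost restricted to a smaller $U\setminus\Delta$.
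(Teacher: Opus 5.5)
Your main argument is correct, but you build the strict embedding differently from the paper.

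\textbf{The paper's route.} It first proves Lemma \ref{lem:bigLiouvilleBall}. There it takes a small hypersurface $Y$ transverse to the Liouville field, a contact Darboux chart in $(Y,\lambda|_Y)$, and the collar $Y\times[1,1+\delta]$ on which $\lambda=s\lambda|_Y$. Flowing for a long time and changing coordinates gives an embedding of $([-1,1]\times[-C,C])^{k-1}\times([-1,1]\times[a,a+D])$ that preserves Liouville forms on the nose. The last momentum coordinate is thus forced into $[a,a+D]$ with $a$ large, so the Reeb flow is used only for the linear shear $(p,x,y)\mapsto(\rho_{-\gamma x_k}(p),\dots,x_k,y_k+\gamma)$.

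\textbf{Your route.} You take an arbitrary symplectic embedding $\Phi$ of the box, namely a Darboux ball blown up using $\phi_t^*\omega=e^t\omega$. You then absorb the entire exact discrepancy $\Phi^*\lambda-\sum_i y_i\,dx_i=dF$ with a $q$-dependent Reeb translation. Your map $(m,q)\mapsto(\varphi^R_{-F(q)}(m),\Phi(q))$ does pull $\alpha+\lambda$ back to $(\alpha-dF)+(\sum_i y_i\,dx_i+dF)$. It is an injective equidimensional immersion, hence an open embedding. This is shorter and isolates a more general principle: once the Reeb flow is complete, any symplectic embedding of a simply connected domain $Q\subset\R^{2n}$ into $N$ yields a strict contact embedding of $M\times Q$ into $M\times N$. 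The paper's explicit construction keeps exact control of the Liouville form on the $N$ side, which is not needed here. The transfer of size through Lemma \ref{lem:OTsizeIsLocal} and Theorem \ref{thm:OtDiskOfSizeCTimesBall} is the same in both.

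\textbf{The ``caveat'' paragraph.} This part does not establish the monotonicity it claims. Take $\Delta^{(n)}\subset\Delta'$. Your first two ingredients, fewer neighbourhoods and fewer avoiding Legendrians, go in the right direction. The third goes the wrong way. Forcing the stabilization and its Weinstein neighbourhood into $U\setminus\Delta'\subsetneq U\setminus\Delta^{(n)}$ shrinks the set over which the infimum in $\SC$ is taken. Hence $\SC(f;U,U\setminus\Delta')\geq\SC(f;U,U\setminus\Delta^{(n)})$, and $\SS(\Delta')\leq\SS(\Delta^{(n)})$ does not follow. You would need an extra cheap isotopy pushing the final stabilization off $\Delta'\setminus\Delta^{(n)}$, which you have not provided.

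The paper does not need this step at all. It regards cornered products like $\Delta^{(n)}$ as hyperdisks (compare Definition \ref{def:standardDisc}) and applies Theorem \ref{thm:OtDiskOfSizeCTimesBall} directly. You should drop that paragraph rather than rest the proof on it.
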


Lastly, we pose:
\begin{question}
    What is the minimal value of $C$ such that
    \begin{equation*}
         \{z=0, \rho \leq \pi\} \times \left([-1,1] \times \left[-\frac{C}{2},\frac{C}{2}\right]\right)^n \subseteq \left(\R^{3+2n}, \alpha^\textnormal{std}_\OT + \sum_{i=1}^ny_i dx_i\right)
    \end{equation*}
    is overtwisted?
\end{question}
Ajij, Eliashberg, Mj, Pancholi, and Polterovich have announced work on a contact homology theory that can be used to provide a lower bound for $C$. It would be interesting to compare their bound with the upper bound provided by Theorem \ref{thm:OtDiskOfSizeCTimesBall}.

\subsection{Size and Reeb orbits} \label{ssec:SizeAndReeb}

According to Theorem \ref{thm:SizeOfOtDisk}, the model $\Delta^\std_\OT$, equipped with the standard overtwisted contact form, contains a simple closed Reeb orbit whose action is equal to the overtwisted size. Now we explore how the overtwisted size, for a general contact form, relates to the action of Reeb orbits.

While an open overtwisted contact manifold may contain no closed Reeb orbits (see \cite{PP15} for instance), it is known that any closed overtwisted contact manifold contains a closed contractible Reeb orbit. Indeed, it was shown in \cite{Ho} (see \cite{Nie,ah09,NR} for the higher dimensional case) that a Bishop family of pseudoholomorphic disks, born at the center of a $3$-dimensional overtwisted disk $(\Delta,\alpha)$, will explode and produce a finite energy plane bounding a contractible Reeb orbit. The action of this Reeb orbit is bounded above by the action of the boundaries of the Bishop disks, which we can bound above by:
\[ c_\textrm{Hofer} := \int_0^{2\pi} \max_{\rho \in [0,\pi]} |\alpha(\rho,\theta)(\partial_\theta)| d\theta,  \]
where $(\rho,\theta)$ are polar coordinates. Our argument in Theorem \ref{thm:SizeOfOtDisk} can readily be adapted to show:
\begin{proposition} \label{prop:SizeOfOTDisk}
Let $\Delta \subset (M,\alpha)$ be an overtwisted disk in a $3$-dimensional contact manifold. Its overtwisted size is bounded above by $c_\textrm{Hofer}$. 
\end{proposition}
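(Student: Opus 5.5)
Let $\Delta\subset(M,\alpha)$ be the $3$-dimensional overtwisted disk. Since overtwisted size is local (Subsection \ref{ssec:oneStabilization}), we may work in a neighbourhood $U$ of $\Delta$ and assume the germ is given in polar coordinates $(\rho,\theta)$ on $\Delta$ together with a transverse coordinate $z$. We write $\alpha|_{\Delta} = a(\rho,\theta)\,d\rho + b(\rho,\theta)\,d\theta$ and normalise so that $\Delta$ has Legendrian boundary and a single elliptic singularity at the centre. The plan is to rerun the proof of Proposition \ref{prop:controlledIsotopy}, and the upper bound in Theorem \ref{thm:SizeOfOtDisk}, for this general form, replacing the rotationally symmetric quantity $2\pi\max_\rho \rho\sin\rho$ by $\int_0^{2\pi}\max_\rho|b(\rho,\theta)|\,d\theta = c_{\textrm{Hofer}}$.

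Concretely, let $f$ be a Legendrian avoiding $\Delta$. As in Proposition \ref{prop:controlledIsotopy}, the isotopy first pushes a small arc of $f$ next to $\partial\Delta$. It then sweeps this arc across the disk along the Legendrian rays $\theta=\mathrm{const}$ of the characteristic foliation. The arc is stretched through $\Delta$ in the $\theta$ direction so that it ends up as the connected sum of $f$ with $\partial\Delta$, stabilised once. The isotopy is generated by a contact Hamiltonian $H_t$. Its Shelukhin--Hofer norm is computed as $\int\max|\alpha(X_t)|\,dt$, and along the sweep this is the $\alpha$-length of the moving front. In the standard case, the front at angle $\theta$ costs $\max_\rho \rho\sin\rho\,d\theta$, which integrates to $C^*$, i.e.\ twice the bound of Proposition \ref{prop:controlledIsotopy}. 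Here I would parametrise the isotopy by $\theta\in[0,2\pi]$ and arrange that at "time" $\theta$ the Hamiltonian is supported near the ray $\theta$, with $|H|\le\max_\rho|b(\rho,\theta)|+\eps$. The total energy of the sweep is then at most $\tfrac12 c_{\textrm{Hofer}}+\eps$ after the same halving trick as in Proposition \ref{prop:controlledIsotopy}; the halving sends the two halves of the arc around in opposite directions simultaneously, which reparametrises $\theta$ so that each point is swept only once. Applying this to $N$ parallel copies of $\Delta$ simultaneously, as in Remark \ref{rem:manystabilizations}, gives $(N,N)$-fold $\eps$-stabilizations at the same cost. This yields $\SC(f;U,U\setminus\Delta)\le \tfrac12 c_{\textrm{Hofer}}$ and hence $\SS(\Delta)\le c_{\textrm{Hofer}}$.

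The main obstacle is the energy estimate without rotational symmetry. We must construct the cut-off Hamiltonian along a non-radial foliation while keeping the supremum at each time controlled by the pointwise $\max_\rho|b|$ up to $\eps$. The corrections needed near the centre singularity and near the Legendrian boundary must contribute only $O(\eps)$. I would first reduce to a normal form: by a Gray/Moser argument relative to $\Delta$, with conformal factor $\eps$-close to $1$, make $\alpha = dz + \beta$ near $\Delta$ with $\beta=b\,d\theta$ after a radial reparametrisation. In this form the computation of Proposition \ref{prop:controlledIsotopy} goes through verbatim, with $\rho\sin\rho$ replaced by $b(\rho,\theta)$.
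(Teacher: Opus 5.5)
\textbf{The gap: balancing the two arcs.} The energy estimate for the sweep is the only point where this proposition differs from Proposition \ref{prop:controlledIsotopy}, and there the "same halving trick" does not give $\frac12 c_{\textrm{Hofer}}$. With two arcs moving simultaneously, the Hamiltonian at time $t$ is bounded by the \emph{maximum} of the two arcs' contributions, not their average. Suppose you use the symmetric sweep of Proposition \ref{prop:controlledIsotopy}: arcs at angles $\pm\pi(t-1)$, meeting at the ray opposite the starting one. Writing $c(\theta)=\max_\rho|\alpha(\rho,\theta)(\partial_\theta)|$, the cost is $\int_0^\pi \max\{c(\theta),c(-\theta)\}\,d\theta$. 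Since $\max\{a,b\}\ge \frac{a+b}{2}$, this is at least $\frac12 c_{\textrm{Hofer}}$, with equality only when $c$ is symmetric. In general you only get the cost bound $c_{\textrm{Hofer}}$, hence the size bound $2c_{\textrm{Hofer}}$. Your remark that ``each point is swept only once'' holds for the symmetric sweep too, so it does not fix this.

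\textbf{The missing idea.} Choose $\theta_0$ with $\int_0^{\theta_0}c=\int_{\theta_0}^{2\pi}c=\frac12 c_{\textrm{Hofer}}$. Place the arcs at angles $\phi_+(t)\in[0,\theta_0]$ and $\phi_-(t)\in[\theta_0,2\pi]$, reparametrised so that $c(\phi_+)\phi_+'=-c(\phi_-)\phi_-'\equiv\frac12 c_{\textrm{Hofer}}$. Both instantaneous costs are then equal at every time. The arcs meet at $\theta_0$, not at the antipodal ray, so the final retraction step must be performed near $\theta=\theta_0$. This is exactly how the paper obtains $\frac12 c_{\textrm{Hofer}}$.

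\textbf{The normal form is impossible.} Suppose $\alpha=dz+\beta$ near $\Delta=\{z=0\}$ with $\beta$ a $z$-independent form on $\Delta$. Then $\alpha\wedge d\alpha=dz\wedge d\beta$, so $d\beta$ would be an area form on $\Delta$. This contradicts $\int_\Delta d\beta=\int_{\partial\Delta}\alpha=0$, which holds because $\partial\Delta$ is Legendrian. Already the standard form $\cos\rho\,dz+\rho\sin\rho\,d\phi$ has a $dz$-coefficient that changes sign.

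\textbf{No normal form is needed.} Take polar coordinates adapted to the radial characteristic foliation, so the rays are Legendrian. A ray rotating with angular speed $\phi'$ then has Hamiltonian $\phi'\,\alpha(\partial_\theta)$ along it, bounded by $|\phi'|\,c(\theta)$. The corrections near the centre, near $\partial\Delta$ and at the smoothing points are supported near Legendrian arcs. They therefore cost $O(\eps)$ by Corollary \ref{cor:boudnednessOfStrictWeinsteinNbhdOfHeightC}, exactly as in the standard case.
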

In higher dimensions, the arguments in \cite{ah09} also yield an upper bound for the action of the closed Reeb orbit, but it is harder to compare to the overtwisted size, since the model used in \cite{ah09} is that of a plastikstufe.

Whereas this proposition provides an upper bound for the overtwisted size, the striking feature of Theorem \ref{thm:SizeOfOtDisk} is that we are able to compute the size exactly. This relies on the following result:
\begin{theorem}\label{thm:ClosedReebOrbitsInOTCtcMflds}
    For any $C>0$ and $n \geq 1$, there exists a number $R > 0$ such that any closed strict contact manifold $(M,\alpha)$ of dimension $2n+1$, containing a hyperdisk $\Delta$ of overtwisted size $C$ and a strict Darboux ball $B(R)$ of radius $R$ with $\Delta \cap B(R) = \emptyset$, contains a contractible closed Reeb orbit of action at most $C$. Moreover, for fixed $n$, the constant $R(C)$ can be assumed to go to zero as $C$ goes to zero.
\end{theorem}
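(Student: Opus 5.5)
We argue by contradiction, using the energy--capacity inequality of \cite{DRS24}. That result says that a contact isotopy displacing a subset from itself must have Shelukhin--Hofer energy at least a spectral quantity. Under the absence of short contractible Reeb orbits, this quantity is bounded below by the minimal action of such orbits. So suppose $(M,\alpha)$ has no contractible closed Reeb orbit of action $\leq C$. We will use $\Delta$ to cheaply displace a Legendrian that lives inside the Darboux ball $B(R)$, and show this contradicts the energy--capacity inequality once $R$ is large enough in terms of $C$.

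The key steps, in order, are as follows.

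\emph{Step 1.} Inside $B(R)$, fix a Legendrian $\Lambda$ whose displacement energy, or whose Legendrian spectral invariants, are bounded below in terms of $R$, say by $cR^2$. A standard Legendrian unknot, or a large Chekanov-type torus/sphere in the Darboux chart, would serve. Because there are no short contractible orbits, only Reeb chords of $\Lambda$ contribute below action $C$. We choose the Darboux model so that all short chords of $\Lambda$ are controlled, and \cite{DRS24} then gives a lower bound on $d_\alpha(\Lambda,\Lambda')$ for any $\Lambda'$ separated from $\Lambda$ by a Legendrian isotopy invariant that is detected at action level below $cR^2$.

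\emph{Step 2.} Bring $\Lambda$ to a path-connected neighbourhood $U$ of $\Delta$ in $M\setminus \Delta$. Conjugating by a fixed contact isotopy costs nothing net, since the forward and backward transports cancel in the estimate. By Definition \ref{def:OTsize}, $\Lambda$ can then be moved to an $\eps$-stabilization, with arbitrarily many stabilizations in dimension $3$, at cost at most $C/2+\eps$.

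\emph{Step 3.} Conclude by contradiction. The stabilized $\Lambda'$ lies in a tiny Weinstein neighbourhood of height $\eps$ of $\Lambda$ and is loose. Hence the Legendrian spectral invariants that distinguish $\Lambda$ vanish or collapse for $\Lambda'$: the Chekanov--Eliashberg algebra of a loose Legendrian is trivial, and the corresponding filtered invariant collapses at action comparable to $\eps$. The filtered persistence-type inequality then gives $C/2+\eps \geq$ (half) the spectral gap of $\Lambda$. This gap is at least $cR^2$ minus the contribution of orbits of action $\leq C$, and by assumption there are none. Choosing $R$ so that $cR^2 > C$ gives a contradiction, so some contractible orbit of action $\leq C$ must exist. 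The factor $2$ in Definition \ref{def:OTsize} is exactly what converts the cost $C/2$ into the threshold $C$, since a round trip or two-sided spectral bound doubles it. Since $R$ depends on $C$ only through $cR^2>C$, we get $R(C)\to 0$ as $C\to 0$.

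The main obstacle is Step 3. One must show precisely that the spectral invariant of \cite{DRS24}, taken relative to a Legendrian in the Darboux ball, drops by at least the action window $\sim R^2$ after stabilization. One must also show that closed contractible Reeb orbits of action $\leq C$ are the only obstruction to applying the energy--capacity inequality. I would first try to phrase this via the Legendrian (or SFT-type) filtered invariant that vanishes for loose Legendrians. The isotopy from $\Lambda$ to $\Lambda'$ has energy $\leq C/2$, so it yields an interleaving between the invariant of $\Lambda$ and the zero invariant. If no orbit of action $\leq C$ interferes, this interleaving forces the bar length $cR^2$ of the unknot to be $\leq C$.
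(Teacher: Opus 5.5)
Your Step 3 is a genuine gap, not just a missing detail. Your mechanism needs an action-filtered Chekanov--Eliashberg (or SFT-type) algebra for Legendrians in an arbitrary closed contact manifold, with energy-shifted interleavings under Legendrian isotopy. No such invariant is available in this generality, and you do not construct one. This is especially problematic in an overtwisted $M$, where curves with interior punctures at contractible orbits cannot be ignored.

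Moreover, looseness of $\Lambda'$ by itself distinguishes nothing. $\Lambda\subset B(R)\subset M\setminus\Delta$ is already loose in $M$ and Legendrian isotopic to $\Lambda'$. So only quantitative information (the actions of short chords) separates them, and that is exactly what Theorem~\ref{thm:energy-capacity inequality} encodes through $\hbar$. That theorem, however, is blind to looseness: when $\Vert H_t\Vert_{osc}<\hbar$ it only produces a Reeb chord from $\Lambda$ to $\phi^H_1(\Lambda)$ of action at most $\Vert\phi^H_t\Vert_\alpha$.

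Applied to your isotopy, this gives no contradiction. By Definition~\ref{def:stabilization}, $\Lambda'$ lies in a thin Weinstein neighbourhood of some \emph{other} Legendrian $h$, not of $\Lambda$ as you write, and you have no control over $h$. If $h$ is $C^0$-close to $\Lambda$, short chords from $\Lambda$ to $\Lambda'$ certainly exist. (Your Step 2 is harmless and in fact unnecessary: $U=M$ is allowed in Definition~\ref{def:OTsize}, so every Legendrian in $M\setminus\Delta$ has stabilization cost at most $C/2$. The claim that conjugation ``costs nothing net'' is not needed, and it is false for non-strict transports, which rescale Hamiltonians by their conformal factors.)

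The missing idea is to pick the loose target yourself: inside the ball, with no Reeb chords to $\Lambda$, reached via the quantitative $h$-principle rather than a bare stabilization. For $R$ large in terms of $C$, the paper takes formally isotopic closed Legendrians $f,g$ in $B(R)$ with four properties: the Reeb flow of $f(L)$ stays in $B(R)$ for $|t|\le C/2+\eps/2$; chords of $f$ in the model have action $>C+\eps$; there are no Reeb chords between $f(L)$ and $g(L)$; and $g$ has a loose chart of size $\le\eps/2$. The confinement condition handles chords of $f$ that might leave the ball, a point your Step 1 leaves open: a chord of action $T$ is covered by the forward and backward Reeb flows of $f(L)$ for time $T/2$. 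It yields $\hbar_c>C+\eps$ in $M$.

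Since $i\circ f$ has loose size $\le C$ (Definition~\ref{def:OTsize} and Lemma~\ref{lem:Nakamura}) and $i\circ g$ has loose size at most $\eps/2$, Theorem~\ref{thm:LooseLegQuantitativeFlexibility} gives $d_\alpha(i\circ f,i\circ g)\le C/2+\eps/4$. In Theorem~\ref{thm:energy-capacity inequality}, the chord alternative would produce a chord between $f(L)$ and $g(L)$ of action $<C/2+\eps/2$. Such a chord is contained in $B(R)$, so it is excluded. Hence $\hbar\le\Vert H_t\Vert_{osc}\le 2\Vert\phi^H_t\Vert_\alpha<C+\eps$, forcing $\hbar_o<C+\eps$. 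The inequality $\Vert H_t\Vert_{osc}\le 2\Vert\phi^H_t\Vert_\alpha$ is the real source of the factor $2$.

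In dimension $3$ there are no loose charts. Instead, one drags a point of $i\circ g$ along an arc $\gamma$ to $\Delta$ and adds many tiny stabilizations there, choosing $\gamma$ so that no short chords from $i\circ f$ to the resulting Legendrian are created.
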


Theorem \ref{thm:ClosedReebOrbitsInOTCtcMflds} can be used to prove the existence of closed Reeb orbits for contact manifolds with boundary if we assume that the contact form is large away from the overtwisted disk.
\begin{corollary}\label{cor:closedReebOrbit}
    Let $(M,\alpha)$ be a strict contact manifold, possibly with boundary, $\Delta \subseteq M$ a hyperdisk of overtwisted size $C$, and $U \subseteq M$ an open neighbourhood of $\Delta$ with $\overline U \subseteq M^\circ$.
    
    Then, there exists $\eps > 0$ such that for any positive function $\chi: M \to \R$ satisfying $\chi|_{M \setminus U} = 1/\eps$ and $\chi|_\Delta = 1$, the contact form $\chi \alpha$ admits a contractible closed Reeb orbit in $M$ of action at most $C$.
\end{corollary}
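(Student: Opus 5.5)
The plan is to reduce Corollary~\ref{cor:closedReebOrbit} to Theorem~\ref{thm:ClosedReebOrbitsInOTCtcMflds} by building a closed manifold $(\hat M,\hat\alpha)$ that contains a copy of $(M,\chi\alpha)$. This copy will carry a hyperdisk of overtwisted size $C$ and, disjoint from it, a strict Darboux ball of radius $R=R(C)$. Reeb orbits of $\chi\alpha$ that stay inside $M$ are then caught by the theorem.

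\textbf{Step 1: locality of the size and the ball.} First I would use the locality of overtwisted size (Subsection~\ref{ssec:oneStabilization}). Since $\chi|_\Delta=1$, one can arrange that $\chi\equiv1$ on a neighbourhood of $\Delta$; if needed, shrink $U$ slightly and choose $\eps$ afterwards. Then $\Delta$ still has overtwisted size $C$ for $\chi\alpha$. Next, pick a point $p\in M^\circ\setminus\overline U$ and a small strict Darboux ball $B(r)\subset M^\circ\setminus\overline U$ for $\alpha$. Rescaling the form by the constant $1/\eps$ gives a strict Darboux ball of radius $r/\sqrt{\eps}$ for $\alpha/\eps$. Choose $\eps$ small enough that $r/\sqrt\eps\ge R(C)$. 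This fixes $\eps$.

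\textbf{Step 2: closing up.} Take a compact domain $K\subset M^\circ$ containing $\overline U$ and the ball. On $M\setminus U$ the form is the constant multiple $\alpha/\eps$, so its Reeb flow is a reparametrisation of that of $\alpha$, with actions multiplied by $1/\eps$. I want to embed $K$ into a closed contact manifold $\hat M$ so that no new short contractible orbit is created, or at least none that the theorem could pick up instead of one in $K$. My first attempt would be to double $K$, or to cap off its boundary with a filling-type piece, and extend the form there by a very large multiple. Since $M\setminus U$ already carries the huge scaling $1/\eps$, any closed Reeb orbit of action $\le C$ that leaves $U$ must spend Reeb time at most $C$ in a region where the form is $\alpha/\eps$. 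It therefore travels only a tiny $\alpha$-Reeb distance, of order $\eps C$. An orbit starting near $\partial U$ stays within an $\eps C$-neighbourhood of $U$, which lies inside $K$ once $\eps$ is small. So short orbits never reach the cap. For the same reason, the Reeb orbit of action $\le C$ produced by Theorem~\ref{thm:ClosedReebOrbitsInOTCtcMflds} lies in $K\subset M$. Its contractibility is in $\hat M$; I would aim to upgrade this to contractibility in $M$.

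\textbf{Main obstacle.} The delicate step is Step 2. One must glue in a cap while keeping the form $\alpha/\eps$-like near it, so that no short orbits appear there. One must also make sure contractibility in $\hat M$ implies contractibility in $M$. The latter fails for a general cap; if it fails, I would instead go back into the proof of Theorem~\ref{thm:ClosedReebOrbitsInOTCtcMflds}. That proof presumably comes from an energy--capacity argument: a stabilization isotopy of energy $\le C$ displaces or unknots inside the Darboux ball. The isotopy is supported near $\Delta$ and the ball, and the argument should go through directly in $M$, working with compactly supported isotopies in $M^\circ$. The orbit it produces would then be contractible within a neighbourhood of $U\cup B$, and hence in $M$.
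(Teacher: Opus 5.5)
Your overall strategy matches the paper's. You place a compact piece of $M$ inside a closed contact manifold and use the factor $1/\eps$ twice: to obtain a strict Darboux ball of radius $R(C)$ away from $U$, and to confine every closed orbit of action at most $C$ to a small neighbourhood of $\overline U\subseteq M^\circ$. You then apply Theorem~\ref{thm:ClosedReebOrbitsInOTCtcMflds}. The genuine gap is exactly the step you flag as the main obstacle, in two parts.

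\begin{itemize}
\item You never actually produce $\hat M$. The double of $K$ is a closed manifold but carries no contact structure a priori, and you give no construction of a contact cap.
\item Your fallback for contractibility would not work. The energy--capacity inequality (Theorem~\ref{thm:energy-capacity inequality}) is only available on closed manifolds; on open ones $d_\alpha$ can even vanish (cf.\ Corollary~\ref{cor:LegegendriansWithVanishingd_alpha}). So the argument cannot be run ``directly in $M$''. On a closed manifold, the proof of Theorem~\ref{thm:ClosedReebOrbitsInOTCtcMflds} argues by contradiction on the minimal action among \emph{all} contractible orbits of the whole manifold. It therefore gives no information about where a null-homotopy of the orbit lives, and nothing confines it to a neighbourhood of $U\cup B$.
\end{itemize}

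The missing ingredient is the paper's Lemma~\ref{lem:contactDouble}, which settles both issues at once. Take $K'$ a slight enlargement of $K$ and round the corners of $K'\times[0,1]$. There the $2$-form $d\alpha+dt\wedge\alpha$ is nondegenerate, so the boundary (the double) inherits an almost contact structure that is genuinely contact on one copy of $K$. Inserting an overtwisted disk in the other half and applying the relative $h$-principle of \cite{BEM} turns it into a contact structure extending $\xi|_K$. Because $\hat M$ is a double, projection onto the first factor gives a left inverse of the inclusion on $\pi_1$. Hence $\pi_1(K)\to\pi_1(\hat M)$ is injective, and an orbit in $K$ that is contractible in $\hat M$ is contractible in $K\subseteq M$.

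With this $\hat M$ fixed, extend $\alpha|_K$ to $\hat\alpha$, choose $\eps$ only afterwards, and extend $\chi$ by $1/\eps$. Your action estimate then works. You should also exclude closed orbits lying entirely outside $U$; this is immediate from the positive lower bound on Reeb periods of $\hat\alpha$ on the closed manifold $\hat M$.

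One smaller correction to Step~1: you cannot arrange $\chi\equiv 1$ near $\Delta$, since $\chi$ is an arbitrary function with $\chi|_\Delta=1$ chosen after $\eps$. Instead, combine locality (Lemma~\ref{lem:OTsizeIsLocal}), continuity of $\chi$, and $d_{\chi\alpha}\le(\sup\chi)\,d_\alpha$ on small neighbourhoods of $\Delta$. This shows the overtwisted size of $\Delta$ for $\chi\alpha$ is still at most $C$, which is all Theorem~\ref{thm:ClosedReebOrbitsInOTCtcMflds} needs.
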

In particular, if we apply Corollary \ref{cor:closedReebOrbit} to the case where $M$ is a small closed neighbourhood of $\Delta$, then we can conclude the existence of closed Reeb orbits close to $\Delta$ for contact forms that grow sufficiently fast near $\Delta$.

\begin{question}
    Does any closed strict contact manifold $(M,\alpha)$ of overtwisted size $C$ contain a contractible closed Reeb orbit of action at most $C$?
\end{question}

 

\subsection{Consequences regarding Legendrian isotopies}

In overtwisted contact manifolds, Legendrians in the complement of an overtwisted disk are loose, and they satisfy the h-principle \cite{Dy,etn13,BEM,Cha22,cf24}; see Theorem \ref{thm:h-principleLooseLegDim3}. By using that the cost of stabilising Legendrians in the complement of an overtwisted disk is bounded by the overtwisted size, we can quantify this h-principle.

\begin{theorem}\label{thm:BoundednessInComplementOfOtDiskOfSizeC}
Consider a hyperdisk $\Delta \subset (M,\alpha)$ of overtwisted size $C$ with $M$ path-connected. Then, any two proper Legendrian embeddings $f_0,f_1:L \to M$ that avoid $\Delta$ and that are formally Legendrian isotopic satisfy
\[ d_\alpha(f_0,f_1) \leq C \,. \]
\end{theorem}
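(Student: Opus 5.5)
The plan is to route both Legendrians through stabilized copies of themselves, using the definition of overtwisted size to bound the cost of stabilizing, and then to connect the two stabilized Legendrians for free by a contact isotopy supported in an arbitrarily thin neighbourhood.

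\textbf{Step 1: cheap stabilizations.} Fix $\delta>0$. By Definition \ref{def:OTsize}, for every path-connected open neighbourhood $U$ of $\Delta$ and every proper Legendrian $f$ in $U$ avoiding $\Delta$, we have $\mathcal{C}(f;U,U\setminus\Delta)\le C/2$. We want to apply this to $f_0$ and $f_1$, which live in $M$ rather than in a neighbourhood of $\Delta$. We take $U=M$, which is a path-connected neighbourhood of $\Delta$; if the supremum is only taken over small $U$, we instead invoke the locality of overtwisted size proved in Subsection \ref{ssec:oneStabilization}.
\begin{itemize}
\item In dimension $\ge 5$ this yields, for every small $\eps$, $\eps$-stabilizations $g_0,g_1$ of $f_0,f_1$ in $M\setminus\Delta$ with $d_\alpha(f_j,g_j)\le C/2+\delta$.
\item In dimension $3$ it yields instead $(N,N)$-fold $\eps$-stabilizations, with $N$ as large as we like, at the same cost.
\end{itemize}

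\textbf{Step 2: the $h$-principle for loose Legendrians.} The $g_j$ are loose in $M\setminus\Delta$. This is automatic in higher dimensions. In dimension $3$ they have many stabilizations of both signs and lie in the complement of an overtwisted disk, since $\Delta$ itself is overtwisted because $C<\infty$. Moreover $g_j$ is formally isotopic to $f_j$ after compensating formal data. More precisely, $g_j$ is formally isotopic to a stabilization of $f_j$, and stabilizations with the same numbers $N_+,N_-$ change the formal class in the same way. Hence $g_0$ and $g_1$ are formally Legendrian isotopic, in $M\setminus\Delta$ if we use that $f_0,f_1$ are formally isotopic there; otherwise we must argue that the formal isotopy can be pushed off $\Delta$. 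The $h$-principle (Theorem \ref{thm:h-principleLooseLegDim3} and its higher-dimensional analogue) then gives a genuine Legendrian isotopy from $g_0$ to $g_1$.

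\textbf{Step 3: the key quantitative point.} The Legendrian isotopy of Step 2 must be realized with arbitrarily small energy. The idea is to exploit the freedom of the $h$-principle. We first isotope $f_0$ to a $C^0$-close Legendrian approximation of any smooth isotopy path. We then keep the loose chart fixed inside a tiny Weinstein neighbourhood of height $\eps$, and perform the isotopy in a way whose contact Hamiltonian is small. The most promising route is the trick from \cite{Nak24}: a loose Legendrian of small loose size can be moved by a Legendrian isotopy generated by Hamiltonians of size $O(\eps)$. This works because the wrinkling and stabilization happen inside a height-$\eps$ neighbourhood, and the Shelukhin norm measures $\max|H|$, which can be made small after conjugating by the squeezing that comes from working inside a strict Weinstein neighbourhood of height $\eps$. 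If this is available, then $d_\alpha(g_0,g_1)\to 0$ as $\eps\to0$. Otherwise we would argue directly: the isotopy lies in the complement of $\Delta$, and loose Legendrians are isotopic through Legendrians staying close to the original front, up to stabilizations supported in small charts.

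\textbf{Conclusion.} By the triangle inequality,
\[
d_\alpha(f_0,f_1)\le d_\alpha(f_0,g_0)+d_\alpha(g_0,g_1)+d_\alpha(g_1,f_1)\le C+2\delta+o(1).
\]
Letting $\delta,\eps\to0$ gives $d_\alpha(f_0,f_1)\le C$.

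The main obstacle is Step 3: showing that formally isotopic loose, or sufficiently stabilized, Legendrians are at distance zero. I would first try to prove this by reducing to the vanishing of the Legendrian Shelukhin–Hofer metric on loose Legendrians established via small loose charts in \cite{Nak24}, adapting it to dimension $3$ with the many stabilizations supplied by Remark \ref{rem:manystabilizations}.
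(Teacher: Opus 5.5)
\textbf{Dimension $\geq 5$.} Your outline goes through here once Step 3 is replaced by the correct black box. Each $g_j$ is an $\eps$-stabilization, so its loose size is at most $2\eps$ (Corollary \ref{cor:boudnednessOfStrictWeinsteinNbhdOfHeightC}, as in Lemma \ref{lem:Nakamura}). Theorem \ref{thm:LooseLegQuantitativeFlexibility} then gives $d_\alpha(g_0,g_1)\le 2\eps$. This is the paper's argument up to bookkeeping: the paper bounds the loose sizes of $f_0,f_1$ by $C$ and applies that theorem once.

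Your justification of the formal isotopy is off, though. $g_j$ is formally isotopic to $f_j$ simply because it is Legendrian isotopic to it. It is not a stabilization of $f_j$: the overtwisted size moves $f_j$ itself to a stabilization of some uncontrolled Legendrian $h_j$. In dimension $3$ a stabilization of $f_j$ would even lie in a different formal class.

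\textbf{Dimension $3$: the genuine gap.} Here Step 3 has no mechanism. You cannot invoke an analogue of Theorem \ref{thm:LooseLegQuantitativeFlexibility}. Nakamura's argument relies on Murphy's $h$-principle inside sets of bounded cost, i.e.\ on loose charts. In dimension $3$, every use of the $h$-principle consumes stabilizations, and these cannot be regenerated inside a small chart. Squeezing inside a height-$\eps$ Weinstein neighbourhood does not help either, since an isotopy from $g_0$ to $g_1$ is not confined to a thin neighbourhood of a single Legendrian.

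The missing ingredient is Corollary \ref{cor:SmallEnergyIsotopiesOfLegWithManyStabilizations}. If $|\alpha(\partial_t f_t)|<C'$, then $N$-fold stabilizations of $f_0$ and $f_1$ are $\eps$-close once $N\geq N_0(C',\eps)$. The reason is that the zig-zags can be pushed along the Legendrian, inside a moving Weinstein neighbourhood, so as to cancel the Hamiltonian. This is done after splitting the Hamiltonian into non-negative and non-positive pieces (Lemmas \ref{lem:ApproxByPWPosNegHamiltonian} and \ref{lem:ApproximatePositiveHamiltonians}).

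Even granting this corollary, your order of choices is circular. $N$ must be chosen after fixing the isotopy one wants to make cheap. Your $g_j$, and the Legendrians $h_j$ they stabilize, only exist after $N$ is fixed, and nothing relates $h_0$ to $h_1$.

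The paper breaks the circle in four steps.
\begin{enumerate}
\item It uses Lemma \ref{lem:FormalLegendrianStandardPosition} to replace $f_0,f_1$, at cost $<\eps$, by Legendrians $f_0',f_1'$. These coincide on their intersection with a small neighbourhood $V$ of $\Delta$.
\item It takes, via Theorem \ref{thm:hprincipleLegLinks}, a Legendrian isotopy between $(N,N)$-stabilizations of the parts outside $V$. It then enlarges $N$ so that the corollary makes this isotopy $\eps$-cheap.
\item Only then does it spend the overtwisted size, whose cost is below $C/2+\eps$ for all large $N$. This is done through a single contact isotopy supported in $V$, applied to both $f_0'$ and $f_1'$.
\item Finally, it slides the stabilizations out of $V$ at cost $<\eps$.
\end{enumerate}
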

The reader should compare this to the main result of \cite{Nak24}, which is a quantitative version of Murphy's h-principle for loose Legendrians \cite{Mur}. For disks modeled on $\Delta_\OT^\std$ it then follows that: 
\begin{corollary} \label{thm:small energy isotopies in the complement of an overtwisted disk}
Consider $\Delta_\OT^\std \rightarrow (M,\alpha)$ with conformal factor $\chi$ and $M$ path-connected. Then any two formally isotopic proper Legendrian embeddings $f_0,f_1: L \to M$ avoiding $\Delta_\OT^\std$ satisfy
\[ d_\alpha(f_0,f_1) \leq C^*\max\limits_{\Delta_\OT^\std} |\chi|. \]
\end{corollary}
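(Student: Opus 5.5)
The corollary follows by combining Theorem \ref{thm:BoundednessInComplementOfOtDiskOfSizeC} with the second half of Theorem \ref{thm:SizeOfOtDisk}. The statement considers a contact embedding $\iota:\Delta_\OT^\std \rightarrow (M,\alpha)$ with conformal factor $\chi$, meaning that $\iota^*\alpha = \chi\,(\alpha_\OT^\std + \sum_i y_i dx_i)$ on a neighbourhood of the model hyperdisk. Write $\Delta := \iota(\Delta_\OT^\std)$. This is a closed hyperdisk in $M$; it is not all of $M$, and it has a neighbourhood basis of path-connected opens, namely the images of small tubular neighbourhoods. So the overtwisted size $\SS(\Delta)$ is defined.

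First I would invoke Theorem \ref{thm:SizeOfOtDisk}, which gives
\[ \SS(\Delta) \leq C^* \max_{\Delta_\OT^\std}|\chi| =: C. \]
If I had to justify this inequality rather than quote it, I would use the locality of overtwisted size. Any path-connected neighbourhood $U$ of $\Delta$ can be shrunk into the image of the embedding. Within that image, a contact isotopy for the form $\chi\alpha_0$, where $\alpha_0$ is the model form, has Shelukhin--Hofer energy at most $\sup|\chi|$ times its energy for $\alpha_0$, provided the isotopy is supported near the disk where $\chi$ is close to its maximum on $\Delta_\OT^\std$. Shrinking neighbourhoods lets $\sup\chi$ approach $\max_{\Delta_\OT^\std}|\chi|$. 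The upper bound $C^*$ for the model then transfers to the bound $C$.

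Second, I would apply Theorem \ref{thm:BoundednessInComplementOfOtDiskOfSizeC} to the hyperdisk $\Delta$. Here $M$ is path-connected, and $f_0,f_1$ are formally isotopic proper Legendrians avoiding $\Delta$. That theorem is stated for a disk of size exactly $C$. It only uses the size as an upper bound on stabilization costs, so it holds verbatim for $\SS(\Delta)\leq C$. Alternatively, monotonicity of the conclusion in $C$ gives the same thing. This yields
\[ d_\alpha(f_0,f_1)\leq \SS(\Delta) \leq C^*\max_{\Delta_\OT^\std}|\chi|. \]

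The only potential subtlety is checking that the hypotheses match. Theorem \ref{thm:BoundednessInComplementOfOtDiskOfSizeC} requires $\Delta$ to be a hyperdisk, and the image of $\Delta_\OT^\std$ is one. The sizes must also be measured with respect to $\alpha$ itself, not the pulled-back model form, and this is exactly what the conformal factor in Theorem \ref{thm:SizeOfOtDisk} accounts for. I therefore expect no real obstacle: the corollary is a direct composition of the two earlier results.
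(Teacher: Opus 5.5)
Your proposal is correct and is essentially the paper's argument: the corollary follows by feeding the bound $\SS(\iota(\Delta_\OT^\std)) \leq C^*\max_{\Delta_\OT^\std}|\chi|$ from the second half of Theorem \ref{thm:SizeOfOtDisk} into Theorem \ref{thm:BoundednessInComplementOfOtDiskOfSizeC}. Your remarks fill in details the paper leaves implicit: the latter theorem only uses the size as an upper bound on stabilization costs, and the conformal bound comes from locality (Lemma \ref{lem:OTsizeIsLocal}) combined with \eqref{eq:DistanceAndConformalFactor}.
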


By \cite{DRS24}, $d_\alpha$ defines non-degenerate metric on the space of Legendrian submanifolds, for any closed contact manifold. In contrast, Corollary \ref{thm:small energy isotopies in the complement of an overtwisted disk} gives a general construction of Legendrian isotopy classes with vanishing Shelukhin-Chekanov-Hofer metric on open contact manifolds:
\begin{corollary}\label{cor:LegegendriansWithVanishingd_alpha}
Let $(M,\xi)$ be an open path-connected contact manifold that is overtwisted at infinity. Then $(M,\xi)$ admits a contact form $\alpha$ for which the Shelukhin-Chekanov-Hofer metric vanishes on all isotopy classes of closed Legendrian embeddings into $M$.
\end{corollary}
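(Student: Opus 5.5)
We prove Corollary \ref{cor:LegegendriansWithVanishingd_alpha} by rescaling the contact form so that overtwisted disks sitting near infinity become arbitrarily small, and then applying Corollary \ref{thm:small energy isotopies in the complement of an overtwisted disk}. Since $(M,\xi)$ is overtwisted at infinity, the complement of every compact set contains an overtwisted disk. Fix an exhaustion $K_1 \subset K_2 \subset \dots$ of $M$ by compact sets. Inductively we choose embeddings $\Delta_j := \Delta_\OT^\std \hookrightarrow M\setminus K_j$ whose images are pairwise disjoint and escape to infinity. This is possible because every neighbourhood of infinity contains an overtwisted disk, and hence (by the $h$-principle) a copy of the germ $\Delta_\OT^\std$, which is overtwisted by Theorem \ref{thm:NbhdOfModelOtDiskIsOT}.

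We take a reference contact form $\alpha_0$ for $\xi$ and write $\chi_j$ for the conformal factor of the $j$-th embedding with respect to $\alpha_0$. The family $\{\Delta_j\}$ is locally finite, so we can pick a positive function $h$ on $M$ with $h\cdot\max_{\Delta_j}|\chi_j| \le 1/j$ on a neighbourhood of $\Delta_j$ for each $j$. We then set $\alpha := h\alpha_0$. With respect to $\alpha$, the conformal factor of the $j$-th embedding is $h\chi_j$. Corollary \ref{thm:small energy isotopies in the complement of an overtwisted disk} therefore gives the following: any two formally isotopic proper Legendrians avoiding $\Delta_j$ are at $d_\alpha$-distance at most $C^*/j$.

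Now let $f_0,f_1$ be closed Legendrian embeddings that are Legendrian isotopic. An isotopy of closed Legendrians is compactly supported, so its trace lies in some $K_{j_0}$. For every $j\ge j_0$, both $f_0$ and $f_1$ avoid $\Delta_j$, and they are formally isotopic because they are genuinely isotopic. The previous paragraph then gives $d_\alpha(f_0,f_1)\le C^*/j$ for all $j \ge j_0$. Letting $j\to\infty$ yields $d_\alpha(f_0,f_1)=0$. The distance in Corollary \ref{thm:small energy isotopies in the complement of an overtwisted disk} is computed with compactly supported isotopies in all of $M$, which is consistent with how $d_\alpha$ is defined here. Path-connectedness of $M$ is used when invoking that corollary.

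The step needing the most care is the first one: producing infinitely many disjoint standard overtwisted germs escaping to infinity. We should also confirm that "overtwisted at infinity" supplies a copy of $\Delta_\OT^\std$ and not merely some overtwisted disk in each end. If only a BEM-type disk is available, we would use the equivalence of the models given by the $h$-principle. This lets us find $\Delta_\OT^\std$ inside any open overtwisted region, at the cost of an arbitrary conformal factor, and that factor is absorbed by $h$ anyway.
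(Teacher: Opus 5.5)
Your proposal is correct and follows the paper's proof. Both arguments choose a locally finite family of disjoint overtwisted disks escaping to infinity, pick a contact form for which their overtwisted sizes tend to zero, and apply Theorem \ref{thm:BoundednessInComplementOfOtDiskOfSizeC} to a disk that misses the two given closed Legendrians. Your version also spells out what the paper leaves implicit: the sizes are made small by using copies of the germ $\Delta_\OT^\std$ and rescaling the form near them, with the bound supplied by Theorem \ref{thm:SizeOfOtDisk}.
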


To the authors' knowledge, all previously known examples of Legendrian isotopy classes with vanishing $d_\alpha$ arise as lifts of exact, displaceable Lagrangians to the contactization \cite{Can23,Nak24a}. In symplectizations of overtwisted contact manifolds, such Lagrangians always exist \cite{Mur13}. In this case, Corollary \ref{cor:LegegendriansWithVanishingd_alpha} states that not only these lifts, but in fact all closed Legendrians have vanishing $d_\alpha$. 

In higher dimensions, two Legendrians that are formally isotopic and lie in the complement of two different overtwisted disks are Legendrian isotopic, since they are loose. In dimension 3, this is no longer true in general \cite{Vo3}, and we need to require that the two overtwisted disks are contact isotopic. Correspondingly, the following generalization of Theorem \ref{thm:BoundednessInComplementOfOtDiskOfSizeC} holds.
\begin{theorem}\label{thm:BoundMovingOtDisk}
    Let $(M,\alpha)$ be a path-connected strict contact manifold. Let $\Delta_0, \Delta_1 \subseteq (M,\alpha)$ be hyperdisks of overtwisted sizes $C_0$ and $C_1$. Let $f_0,f_1:L \to M$ be two proper Legendrian embeddings such that $f_i$ avoids $\Delta_i$ and assume that $f_0$ is formally isotopic to $f_1$. If $\dim M =3$, we also assume that $\Delta_0$ and $\Delta_1$ are contact isotopic. Then we have that
    \begin{equation*}
        d_\alpha(f_0,f_1) \leq \frac{C_0}{2}+\frac{C_1}{2}.
    \end{equation*}
\end{theorem}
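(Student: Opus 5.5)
Since $\Delta_0$ has size $C_0$, I apply Definition \ref{def:OTsize} with $U = M$ (path-connected, so allowed). I get that $f_0$ is at distance at most $\frac{C_0}{2}+\delta$ from some $\eps$-stabilization $g_0$. In dimension $3$ this is an $(N,N)$-fold stabilization with $N$ as large as I like. The stabilization lives in a height-$\eps$ Weinstein neighbourhood $W_0$ of some Legendrian $h_0$ avoiding $\Delta_0$. Symmetrically, $f_1$ is at distance at most $\frac{C_1}{2}+\delta$ from an $\eps$-stabilization $g_1$ of some $h_1$. By the triangle inequality for $d_\alpha$, it then suffices to show that $d_\alpha(g_0,g_1)$ can be made arbitrarily small. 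The bound follows by letting $\delta,\eps\to 0$.

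The key step is to connect $g_0$ and $g_1$ at arbitrarily small cost. Since $f_0$ and $f_1$ are formally isotopic, and stabilizing within a Weinstein neighbourhood changes the formal class in a controlled way, $g_0$ and $g_1$ are formally isotopic once the stabilizations are chosen compatibly. In higher dimensions this is arranged by the choice of stabilization. In dimension $3$ it is arranged by the choice of $N_\pm$ and by adding extra stabilizations in $W_i$, which is free because $N$ can be taken arbitrarily large. Both $g_i$ are loose and contain stabilized arcs of size $\eps$. The plan is to use the $h$-principle for loose Legendrians (Theorem \ref{thm:h-principleLooseLegDim3} in dimension $3$, and Murphy's $h$-principle in higher dimensions) in its quantitative form (\cite{Nak24}, Lemma \ref{lem:Nakamura}): a loose Legendrian whose loose chart has size $\eps$ can be moved within its formal class at cost $O(\eps)$.

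The difficulty with this plan is that moving from $g_0$ to $g_1$ requires a global isotopy of $h_0$ to $h_1$, which is not small. I therefore plan to rearrange the argument. First apply the formal-to-genuine step in the complement of $\Delta_0$: there is a Legendrian isotopy from $f_0$ to $f_1'$, a Legendrian avoiding both disks and formally isotopic to $f_1$. Its cost is controlled by $C_0$ alone, via the mechanism of Theorem \ref{thm:BoundednessInComplementOfOtDiskOfSizeC}: stabilize at cost $C_0/2$, perform an arbitrary loose isotopy at almost zero cost (loose Legendrian isotopies are conjugated into the thin Weinstein neighbourhood, where their Hofer cost scales with $\eps$), then destabilize. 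My idea is to split the path as stabilize near $\Delta_0$ (cost $C_0/2$), move cheaply among loose Legendrians, and destabilize near $\Delta_1$ (cost $C_1/2$). The middle step needs the stabilized Legendrian to be loose relative to both disks simultaneously, that is, an $\eps$-stabilization $g$ that is reachable from $f_1$ by the $\Delta_1$-stabilization procedure as well. In higher dimensions this holds because any two formally isotopic loose Legendrians are isotopic, and the quantitative version gives small cost when both are $\eps$-stabilized. In dimension $3$ I use the contact isotopy $\Delta_0\simeq\Delta_1$ to ensure that $g_0$ and $g_1$ lie in the same Legendrian isotopy class of loose knots, with the same number of stabilizations.

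The main obstacle is this middle step: proving that an isotopy between two $\eps$-stabilized loose Legendrians, within a fixed formal class, costs $o(1)$ as $\eps\to0$. I would first try to reduce it to \cite{Nak24} by realizing the isotopy from $g_0$ to $g_1$ inside a thin neighbourhood of a stabilized core: shrink the isotopy by the conformal scaling in the Weinstein neighbourhood $J^1$ of the stabilized portion, so that its Hofer norm becomes proportional to $\eps$. In dimension $3$, I would handle it by transporting $\Delta_0$ to $\Delta_1$ by the given contact isotopy while keeping the Legendrian disjoint, using the (non-quantitative) loose $h$-principle of \cite{Dy,cf24} inside the scaled neighbourhood.
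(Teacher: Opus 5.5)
\textbf{Dimension $\geq 5$.} Your overall decomposition matches the paper's: stabilize near $\Delta_0$ at cost $\approx C_0/2$, connect cheaply, then destabilize near $\Delta_1$ at cost $\approx C_1/2$. In dimension $\geq 5$ your argument is fine. The $g_i$ have loose size $O(\eps)$, so Theorem \ref{thm:LooseLegQuantitativeFlexibility} connects them at cost $O(\eps)$. The paper shortcuts this: by Lemma \ref{lem:Nakamura}, $f_i$ has loose size at most $C_i$, and it applies that theorem directly. Also, $g_0$ and $g_1$ are automatically formally isotopic, being Legendrian isotopic to $f_0$ and $f_1$, so no compatible choice of stabilizations is needed.

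\textbf{Dimension $3$: the middle step fails.} There is no $3$-dimensional statement saying that formally isotopic Legendrians carrying stabilizations of height $\eps$ are at distance $o(1)$. Making the stabilizations small does not make it cheaper to move the underlying cores. Your proposed mechanism does not work. Rescaling inside a Weinstein neighbourhood (Corollary \ref{cor:boudnednessOfStrictWeinsteinNbhdOfHeightC}) only makes cheap those isotopies that stay inside the neighbourhood, i.e.\ that fix the core. Going from $g_0$ to $g_1$ must move your $h_0$ to your $h_1$, which is exactly the obstacle you identified and never overcome. Nor can you run the loose $h$-principle ``inside the scaled neighbourhood'': a thin neighbourhood of a Legendrian knot is tight, and in any case \cite{Dy} gives no energy control. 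This is also not how Theorem \ref{thm:BoundednessInComplementOfOtDiskOfSizeC} works in dimension $3$.

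\textbf{What is actually needed.} In dimension $3$ the cost is controlled by the \emph{number} of stabilizations, not their height. This is Corollary \ref{cor:SmallEnergyIsotopiesOfLegWithManyStabilizations}, built from Lemmas \ref{lem:ApproxByPWPosNegHamiltonian}, \ref{lem:ApproximatePositiveHamiltonians} and \ref{lem:ApproximatePositiveHamiltoniansOnR}. Sliding many same-sign zig-zags along the Legendrian produces a compensating Hamiltonian. Consequently, an isotopy with $|\alpha(\partial_t f_t)|<C$ induces an isotopy of cost $<\eps$ between $N$-fold stabilizations once $N\geq N(C,\eps)$.

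\textbf{Order of quantifiers and the role of $\psi_t$.} To use this corollary you need a Hamiltonian bound that is independent of $N$. This is where the contact isotopy $\psi_t$ from $\Delta_0$ to $\Delta_1$ enters quantitatively, not merely to match Legendrian isotopy classes. Write $G_t$ and $\chi_t$ for its Hamiltonian and conformal factor. The paper proceeds as follows:
\begin{enumerate}
\item It pulls the stabilized Legendrian near $\Delta_1$ back by $\psi_1^{-1}$ into the complement of $\Delta_0$.
\item Theorem \ref{thm:BoundednessInComplementOfOtDiskOfSizeC} together with Lemma \ref{lem:EquiparametrizedHamiltonian} then gives a connecting Hamiltonian $H_t$ with $|H_t|<C_0+\eps$, uniformly in the number of stabilizations.
\item Composing with $\psi_t$ gives an isotopy whose Hamiltonian is bounded by $C:=\max|G_t|+(C_0+\eps)\max\chi_t$.
\item It fixes $N=N(C,\eps)$, and only afterwards chooses the $N$-fold stabilizations furnished by the definition of overtwisted size.
\end{enumerate}
Without this order (uniform bound first, number of stabilizations second), your dimension-$3$ argument does not close.
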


\begin{remark}\label{rmk:BoundMovingOtDiskFixedHmtpClass}
    For $\eps > 0$, the Legendrian isotopy from $f_0$ to $f_1$ of energy at most $\frac{C_0}{2} + \frac{C_1}{2} + \eps$ guaranteed by Theorem \ref{thm:BoundMovingOtDisk} can be chosen to have any prescribed (formal) Legendrian isotopy class, see Remark \ref{rmk:SmallEnergyIsotopiesWithFixedHmtpClassMovingOtDisk}. In other words, Theorem \ref{thm:BoundMovingOtDisk} extends to the universal cover of the space of Legendrians in $M$.
\end{remark}

It is known that bounds on the distances of Legendrians related by the Reeb flow are directly related to the existence of positive loops \cite{HS24}. Thus, Theorem \ref{thm:BoundMovingOtDisk}  and Remark \ref{rmk:BoundMovingOtDiskFixedHmtpClass} imply the following.
\begin{corollary}\label{cor:PosLoops}
    Let $\Delta \subseteq (M,\alpha)$ be a hyperdisk of overtwisted size $C$ in a path-connected strict contact manifold $(M,\alpha)$ with complete Reeb flow. Then for any $\eps > 0$, any closed Legendrian embedding $f:L \to M$ avoiding $\Delta$ is part of a contractible positive loop of Legendrian embeddings of energy at most $2C + \eps$.
\end{corollary}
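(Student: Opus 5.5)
The plan is to combine Theorem \ref{thm:BoundMovingOtDisk}, in the strengthened form of Remark \ref{rmk:BoundMovingOtDiskFixedHmtpClass}, with the correspondence of \cite{HS24} between bounded Reeb-flow distances and positive loops.

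First I would apply the theorem with $\Delta_0=\Delta_1=\Delta$, where the contact isotopy hypothesis in dimension $3$ is trivially satisfied. Let $T>0$ be large and set $f_T:=\varphi^\alpha_T\circ f$, where $\varphi^\alpha_t$ is the Reeb flow, which is complete by assumption. Since $f$ is closed and avoids the closed set $\Delta$, there are two cases.

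\begin{itemize}
\item If $f_T$ still avoids $\Delta$, the theorem applies directly to the pair $f, f_T$.
\item If the Reeb flow pushes $f$ through $\Delta$, I would instead use a compactly supported contact isotopy that is the identity near $f$ and displaces $\Delta$ to $\Delta'$. Here $\Delta'$ is a hyperdisk contact isotopic to $\Delta$, avoiding $f_T$, with the same germ of contact form up to a strict contactomorphism, and hence of overtwisted size $C$ by locality. Note that the bound in the theorem does not depend on the isotopy used to move the disk.
\end{itemize}

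The path $t\mapsto f_t$, $t\in[0,T]$, is formally a Legendrian isotopy. By Remark \ref{rmk:BoundMovingOtDiskFixedHmtpClass}, there is a Legendrian isotopy $\gamma$ from $f_T$ back to $f$ of energy at most $C+\eps/2$ in the homotopy class of the reversed Reeb path. Concatenating the Reeb path with $\gamma$ gives a contractible loop of Legendrians based at $f$.

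Next I would make this loop positive, following \cite{HS24}. The Reeb segment has contact Hamiltonian identically $1$. The return path $\gamma$ is generated by a Hamiltonian $H_t$ whose oscillation-type norm is at most $C+\eps/2$. I would reparametrize and superimpose: run the Reeb flow for total time $T$ concurrently with the return isotopy. The composed Hamiltonian is $1\cdot(\text{const}) + H_t$, up to conformal-factor corrections, and it is then positive as soon as $T$ dominates $\int\max|H_t|$. More precisely, I would conjugate $\gamma$ by the Reeb flow, which is strict and so preserves the norm, and take the loop generated by $T/\tau + H$ over a normalized time interval. Its energy is at most the Reeb contribution plus the cost of $\gamma$.

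The main obstacle is getting exactly the constant $2C$. The naive count gives $T + C$, which is not bounded. The fix I would try is to choose $T$ comparable to $C$: a Reeb push of time slightly more than $C$ already dominates $-\min H$ once the negative part of $\gamma$ is controlled by $C/2+C/2$ via the symmetric decomposition of Theorem \ref{thm:BoundMovingOtDisk}. This produces a positive loop of energy at most $C + (C+\eps) = 2C+\eps$. Contractibility follows because $\gamma$ was chosen in the class inverse to the Reeb path. Verifying that the Shelukhin norm in \cite{HS24} uses precisely $\max|H|$, so that this positivity bookkeeping yields $2C$, is the step I expect to require the most care.
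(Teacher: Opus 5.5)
Once you settle on a Reeb push of time $T=C+\eps$, your argument is the paper's. There one sets $f_1:=\rho_{-(C+\eps)}\circ f$, with $\rho_t$ the Reeb flow. Theorem~\ref{thm:BoundMovingOtDisk} and Remark~\ref{rmk:BoundMovingOtDiskFixedHmtpClass} give an isotopy $f_t$ from $f$ to $f_1$ that is homotopic rel endpoints to $\rho_{-(C+\eps)t}\circ f$ and satisfies $|\alpha(\partial_t f_t)|<C+\eps$. The paper then takes the loop $\rho_{(C+\eps)t}\circ f_t$, whose Hamiltonian $C+\eps+\alpha(\partial_t f_t)$ lies in $(0,2C+2\eps)$. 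Nothing from \cite{HS24} is used, so the step you expected to need the most care is not part of the argument.

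Two of your steps should be repaired along these lines.

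First, replace your case distinction by the single choice $\Delta_1:=\rho_T(\Delta)$. In your second case you rely on a disk-moving isotopy that is the identity near $f$; you never construct it, and that condition is not needed. The choice $\Delta_1=\rho_T(\Delta)$ works in all cases:
\begin{itemize}
\item it avoids $\rho_T\circ f$ because $\rho_T$ is a diffeomorphism;
\item it has overtwisted size $C$ because $\rho_T$ is strict;
\item cutting off the constant Hamiltonian $1$ outside a neighbourhood of the compact trace $\bigcup_{t\in[0,T]}\rho_t(\Delta)$ gives the compactly supported contact isotopy from $\Delta$ to $\Delta_1$ required in dimension $3$.
\end{itemize}

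Second, positivity of the superimposed loop is a pointwise condition, so having $T$ dominate $\int\max|H_t|$ is not enough as stated. You need $\max_{t,x}|H_t|<C+\eps$, which Lemma~\ref{lem:EquiparametrizedHamiltonian} extracts from $d_\alpha(f,\rho_T\circ f)\le C_0/2+C_1/2=C$. It is this full bound on $|H_t|$, not a separate control of its negative part, that makes $T+H_t$ both positive and at most $2C+2\eps$.
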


\subsection{Consequences for contact products and the Legendrian graph}

Given two strict contact manifolds $(M_1,\alpha_1)$ and $(M_2,\alpha_2)$, their contact product is defined as $(M_1 \times M_2 \times (0,\infty), \alpha_2-s \alpha_1)$, where s denotes the coordinate on $(0,\infty)$.
\begin{theorem}\label{thm:vanihsingOTsizeInCtcProd}
    Assume that $(M_1,\alpha_1)$ is overtwisted with complete Reeb flow. Then the complement of any compact subset in $(M_1 \times M_2 \times (0,\infty), \alpha_2-s \alpha_1)$ has vanishing overtwisted size.
\end{theorem}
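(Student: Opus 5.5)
Since $(M_1,\alpha_1)$ is overtwisted, it contains an overtwisted hyperdisk $\Delta_1$ of some finite size $C_1$; by the $h$-principle we may take it modelled on $\Delta_\OT^\std$ and use Theorem \ref{thm:SizeOfOtDisk}. The strategy is to exploit the extra factor $s$ in the contact form $\alpha_2 - s\alpha_1$: on a slice $\{s = s_0\}$, the form restricted to the $M_1$ directions is $s_0\alpha_1$. Hence copies of $\Delta_1$ placed at small $s_0$ have small size, and we want to realise these copies as product hyperdisks inside the contact product. The size scaling is read off from the second part of Theorem \ref{thm:SizeOfOtDisk}, since the conformal factor is $s_0$.

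\textbf{Step 1: the local model.} Fix a point $p_2 \in M_2$ and a Darboux chart $(\R^{2m+1}, dz + \sum_j y_j dx_j)$ around it, with $\alpha_2 = dz + \sum y_j dx_j$. In the chart, $\alpha_2 - s\alpha_1 = dz + \sum_j y_j dx_j - s\alpha_1$. We then exploit the symplectization-like structure on the $(s, M_1)$ factor. The form $-s\alpha_1$ is a Liouville form on $M_1 \times (0,\infty)$, but it is negative, so we reverse the sign of $s$ or reparametrise. The flow of the Reeb field of $\alpha_1$ is complete, and the Liouville field $s\partial_s$ on the $s$ factor is complete toward $0$ and $\infty$ where needed.

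\textbf{Step 2: rewriting as a Liouville product.} The contact product locally looks like $(\R \times \R^{2m} \times (M_1\times(0,\infty)), dz + \lambda_{\R^{2m}} + \lambda_{1})$ with $\lambda_1 = -s\alpha_1$. This is the contactization of the Liouville manifold $\R^{2m}\times \mathrm{Symp}(M_1)$. I would instead view the contact product as $M_1' \times N$, where the strict contact manifold $M_1'$ carries an overtwisted form and $N$ is exact with complete Liouville flow, and then apply Corollary \ref{lem:OTdiskInLiouvilleProd}.

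The natural way to do this is the coordinate change $t = $ the $\alpha_1$-Reeb time combined with $z$. Concretely, for a hyperdisk $\Delta_1\subset M_1$ at level $s_0$, one considers $\Delta_1 \times (\text{slab in } z, s)$. The key identity is $d(z) - s_0\alpha_1 - (s-s_0)\alpha_1$. Here $-s_0\alpha_1$ is a (negatively oriented; fix orientation by the contactomorphism reversing the coorientation of $M_1$) overtwisted contact form, and $(s-s_0)\alpha_1$ pairs with $z$ to form a $y\,dx$ term. Using the Reeb flow of $\alpha_1$ to straighten $\alpha_1$ near $\Delta_1$, one finds a neighbourhood of $\Delta_1\times\{s_0\}$ strictly contactomorphic to a neighbourhood of $\Delta_1 \times ([-1,1]\times[-c,c])$ in $(M_1\times\R^2, s_0\alpha_1 + y\,dx)$ for some explicit $c$.

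\textbf{Step 3: scaling and conclusion.} By Theorem \ref{thm:OtDiskOfSizeCTimesBall}, applied with $\Delta = $ the rescaled disk of size $s_0C_1$, a product of it with a box of size $s_0C_1$ has overtwisted size at most $s_0C_1$. Such a box fits at level $s_0$ provided $s_0$ is small, since the needed box shrinks with $s_0$. Given a compact $K$ in the product, we translate by the Reeb flow of $\alpha_1$ and pick $p_2$ and $s_0$ so that the disk avoids $K$; this is possible because $s_0$ can be taken arbitrarily close to $0$, and $K$ lies in $s \ge \delta > 0$. The overtwisted size of the complement of $K$ is then at most $s_0C_1 \to 0$.

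\textbf{Main obstacle.} The hardest step is Step 2: producing the explicit strict contact embedding with the correct box dimensions. In particular, one must use completeness of the Reeb flow of $\alpha_1$ to make the $z$-direction length unconstrained, and handle the sign/coorientation of $-s\alpha_1$. I expect that the cleanest route is to compare with Corollary \ref{lem:OTdiskInLiouvilleProd} directly.
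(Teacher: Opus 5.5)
There is a genuine gap in Step 2, which is the step you yourself flag as the crux: the identification it asserts cannot exist. A strict contact embedding carries Reeb orbits to Reeb orbits. The Reeb field of $\alpha_2 - s\alpha_1$ is $R_2$, since $R_2$ annihilates $d\alpha_2 - ds\wedge\alpha_1 - s\,d\alpha_1$ and pairs to $1$ with $\alpha_2 - s\alpha_1$. In your Darboux chart on $M_2$ this field is $\partial_z$, which has no closed orbits. The Reeb field of $\pm s_0\alpha_1 + y\,dx$ on $M_1\times\R^2$ is $\pm s_0^{-1}R_1$. Already for $(M_1,\alpha_1)=(\R^3,\alpha_\OT^\std)$, whose Reeb flow is complete, every neighbourhood of $\Delta_\OT^\std\times\{(x,y)\}$ contains a closed Reeb orbit. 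So no neighbourhood of $\Delta_1\times([-1,1]\times[-c,c])$ embeds strictly into your chart.

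At the level of formulas, in $dz - s\alpha_1$ the coefficient of $\alpha_1$ is $-s$, and it must vary along your two-dimensional $(z,s)$-slab. A product form $\pm s_0\alpha_1 + y\,dx$ needs this coefficient to be constant. Precomposing with the Reeb flow of $\alpha_1$ only adds terms $-s\,dg$ and leaves the coefficient $-s$ unchanged, so $(s-s_0)\alpha_1$ never turns into a $y\,dx$ term. Relatedly, your observation that $-s\alpha_1$ is a Liouville form on $M_1\times(0,\infty)$ puts the symplectization on the wrong factor. Corollary \ref{lem:OTdiskInLiouvilleProd} needs the overtwisted manifold as the contact factor and the Liouville manifold as the other factor.

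The missing idea is to divide by $s$: write $\alpha_2 - s\alpha_1 = s\,(u\alpha_2 - \alpha_1)$ with $u = 1/s$. Up to the conformal factor $s$, the contact product becomes the product of $(M_1,-\alpha_1)$ with the symplectization $(SM_2,u\alpha_2)$. Here $-\alpha_1$ has the same contact structure as $\alpha_1$, $\Delta_1$ still has size $C_1$, and the Liouville flow of $SM_2$ is complete. This is exactly the paper's proof. Corollary \ref{lem:OTdiskInLiouvilleProd} together with Remark \ref{rmk:OTdiskInSymplectizationProd} gives disks of size at most $C_1$ for $u\alpha_2-\alpha_1$ inside $\{u\ge R\}=\{s\le 1/R\}$. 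Since the conformal factor there is $s\le 1/R$, their size for $\alpha_2-s\alpha_1$ is at most $C_1/R$.

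If you prefer to stay in a Darboux chart of $M_2$, the correct replacement for Step 2 is the shear $(p,x,y)\mapsto(\rho_{\gamma\lambda x}(p),\, z=\lambda x,\, u=\gamma+y/\lambda)$, where $\rho_t$ is the Reeb flow of $\alpha_1$. It pulls $u\,dz-\alpha_1$ back to $y\,dx-\alpha_1$. The remaining chart directions now appear as $u\sum_j y_j\,dx_j$ and must be rescaled accordingly; this is the role of Lemma \ref{lem:bigLiouvilleBall} in the paper. The smallness then comes from $\sup s\approx 1/\gamma$ via \eqref{eq:DistanceAndConformalFactor}, not from a strict identification at level $s_0$. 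With this change your Step 3 goes through.
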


By applying Theorem \ref{thm:vanihsingOTsizeInCtcProd} to $M_2$ replaced by a pre-compact open ball $U$, we see that also the complement of any compact subset of $M_1 \times \overline{U} \times \R$ contains disks of arbitrarily small overtwisted size. We can conclude from this observation that the geometry of the contactomorphism group of $(M,\alpha)$ and geometry of the isotopy classes of Legendrian graphs of contactomorphisms of $(M,\alpha)$ can be very different. Namely, recall that to any contactomorphism $\phi: (M,\alpha) \to (M,\alpha)$ we can associate the proper Legendrian embedding
\begin{align*}
    \Gamma_\phi: M &\to (M\times M \times (0,\infty), \pi_2^*\alpha- s \pi_1^*\alpha )\,,\\
    x &\mapsto (x,\phi(x),\chi(x))
\end{align*}
where $\pi_1$ and $\pi_2$ denote the projections onto the first and second factor of $M$, $s$ denotes the coordinate on $(0,\infty)$, and $\chi$ denotes the conformal factor of $\phi$. It then immediately follows from Theorem \ref{thm:BoundednessInComplementOfOtDiskOfSizeC} that:
\begin{corollary}\label{cor:ContactomorphismGroupVSLegendrianGraph}
Let $(M,\alpha)$ be a path-connected overtwisted strict contact manifold with complete Reeb flow. Then:
\begin{itemize}
    \item the pseudo-metric $d_{\pi_2^*\alpha - s \pi_1^*\alpha}$ vanishes on the isotopy class of the graph $\Gamma_\textnormal{id}$ of the identity,
    \item but $d_\alpha$ is non-degenerate on the identity component of the group of compactly supported contactomorphisms of $(M,\alpha)$.
\end{itemize} 
\end{corollary}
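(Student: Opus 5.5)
We treat the two bullets of Corollary \ref{cor:ContactomorphismGroupVSLegendrianGraph} separately; the first carries the content.

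\textbf{First bullet.} Write $\beta := \pi_2^*\alpha - s\pi_1^*\alpha$ on $M\times M\times(0,\infty)$. We must show that $d_\beta(\Gamma_{\id}, f)=0$ for every $f$ Legendrian isotopic to $\Gamma_{\id}$; this suffices, since $d_\beta$ satisfies the triangle inequality. By the Conventions (Subsection \ref{sssec:conventions}) all isotopies are compactly supported, so a Legendrian isotopy from $\Gamma_{\id}$ to $f$ is generated by a compactly supported contact isotopy. Its support, together with the part of $\Gamma_\id$ it moves, lies in some compact set $K$. In particular $f$ agrees with $\Gamma_{\id}$ outside $K$.

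Fix $\delta>0$. By Theorem \ref{thm:vanihsingOTsizeInCtcProd}, applied with $(M_1,\alpha_1)=(M_2,\alpha_2)=(M,\alpha)$, the complement of $K\cup K'$ contains a hyperdisk $\Delta$ of overtwisted size at most $\delta$. Here $K'$ is a compact neighbourhood of $K$, chosen large enough to contain the compact piece of $\Gamma_\id$ we will need to avoid. We must also ensure $\Delta$ avoids $\Gamma_\id$ entirely. Since $\Delta$ is a small hyperdisk that can be chosen in any prescribed open region near infinity, we do this either by choosing $\Delta$ inside a Darboux-type region disjoint from $\Gamma_{\id}$, or by displacing it off the codimension-$(2n+1)$ submanifold $\Gamma_\id$ through a small contact isotopy. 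The overtwisted size is invariant under strict contactomorphisms and changes only by the conformal factor otherwise, so this displacement costs arbitrarily little. This is the step I expect to be the main obstacle: one has to check that the disks produced in the proof of Theorem \ref{thm:vanihsingOTsizeInCtcProd} can be taken disjoint from the given proper Legendrian $\Gamma_\id$. A general position argument suffices, since the dimensions satisfy $(4n+2)+(2n) < 2(2n+1)+\dots$ for $\dim M=2n+1$.

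Once $\Delta$ avoids both $\Gamma_\id$ and $f$ (they agree outside $K$), the two embeddings are Legendrian isotopic, hence formally Legendrian isotopic. Theorem \ref{thm:BoundednessInComplementOfOtDiskOfSizeC}, applied in the path-connected manifold $M\times M\times(0,\infty)$, then gives $d_\beta(\Gamma_\id,f)\le\delta$. Letting $\delta\to0$ yields $d_\beta(\Gamma_\id,f)=0$.

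\textbf{Second bullet.} This is the known non-degeneracy of the Shelukhin--Hofer norm for compactly supported contactomorphisms \cite{shelukhin2017hofer}, which holds on any strict contact manifold, closed or open. Given $\phi\ne\id$, pick a small Darboux ball $B$ with $\phi(B)\cap B=\emptyset$. The standard energy--capacity argument, localised to a compact neighbourhood of the support, then gives $\|\phi\|_\alpha>0$. Overtwistedness plays no role here, which is exactly what produces the contrast between the two bullets.
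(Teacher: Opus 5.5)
Your route is the paper's: for the first bullet you combine Theorem \ref{thm:vanihsingOTsizeInCtcProd} with Theorem \ref{thm:BoundednessInComplementOfOtDiskOfSizeC}, and the second bullet is Shelukhin's theorem. The one step that fails as written is the one you flagged yourself: making $\Delta$ disjoint from all of $\Gamma_{\id}$.

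\textbf{Why general position fails.} In $M\times M\times(0,\infty)$, which has dimension $4n+3$, the hyperdisk has dimension $4n+2$. The graph $\Gamma_{\id}$ has dimension $2n+1$, so codimension $2n+2$, not $2n+1$. Since $(4n+2)+(2n+1)>4n+3$, the two generically meet along a $2n$-dimensional set. Transverse intersections are stable, so no small (contact) perturbation of $\Delta$ separates them. The \emph{statement} of Theorem \ref{thm:vanihsingOTsizeInCtcProd} does not help either: it only gives disks in complements of compact sets, and $\Gamma_{\id}$ is non-compact when $M$ is. Your ``Darboux-type region'' alternative is not justified by anything available.

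\textbf{The fix.} Use where the disks produced in the proof of Theorem \ref{thm:vanihsingOTsizeInCtcProd} actually sit. They are preimages, under the map induced by $s\mapsto 1/s$, of disks in $M\times M\times[R,\infty)$. Hence they lie in $M\times M\times(0,1/R]$ and have overtwisted size at most $C/R$.

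The conformal factor of the identity is $1$, so $\Gamma_{\id}(x)=(x,x,1)$ lies in $\{s=1\}$. Your $f$ has image in $\Gamma_{\id}(M)\cup K$, and the compact set $K$ is contained in $\{s\ge s_0\}$ for some $s_0>0$. So any $R>\max(1,1/s_0)$ gives a disk of size at most $C/R$ avoiding both $\Gamma_{\id}$ and $f$. With that replacement, the rest of your argument goes through unchanged: Theorem \ref{thm:BoundednessInComplementOfOtDiskOfSizeC} in the path-connected product, then $R\to\infty$.
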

The second item is established in \cite{shelukhin2017hofer}. Corollary \ref{cor:ContactomorphismGroupVSLegendrianGraph} should be compared to a result of Allais and Arlove \cite{AA25a}, who showed that if $M$ is compact and $\Gamma_\textnormal{id}$ is orderable (and thus non-loose \cite{liu2020positive}), then for any two contactomorphisms $\phi_0, \phi_1:M \to M$, the distance $d_\alpha(\phi_0,\phi_1)$ is equal to $d_{\pi_2^*\alpha - s \pi_1^*\alpha}(\Gamma_{\phi_0},\Gamma_{\phi_1})$, provided that $\phi_0$ and $\phi_1$ are sufficiently $C^1$-close to the identity.



\subsection{Outline of the paper}

We recall the relevant contact topological background in Section \ref{sec:Preliminaries}.

The first milestone in the paper is Proposition \ref{prop:controlledIsotopy}, which tells us that there is a universal bound for the cost of isotoping a loose Legendrian in dimension $3$ so that it exhibits a stabilization; we establish this in Section \ref{sec:QuantitativeIsotopiesInDimension3}. To get there we first need some preliminary results about isotoping Legendrians so part of them is close to the overtwisted disk; this is technical but rather standard and is done in  Section \ref{sec:bringing Legendrian in standard position}.

With Proposition \ref{prop:controlledIsotopy} at hand, we can establish Theorem \ref{thm:NbhdOfModelOtDiskIsOT} (i.e.\,show that our hypersurface $\Delta_\OT^\std$ is overtwisted) and its generalization Theorem \ref{thm:OtDiskOfSizeCTimesBall} (i.e.\,overtwisted size behaves well if we multiply with a Liouville ball of controlled size). These are established in Section \ref{sec:ProofStabOtSize}.

We then tackle the most technical part of the paper, which is showing that the overtwisted size provides universal control for the cost of isotopies between loose Legendrians. We do this in Sections \ref{sec:smallEnergyIsotopies} and \ref{sec:energyBoundsLegendrianIsotopies}. This includes the proofs of Theorem \ref{thm:BoundednessInComplementOfOtDiskOfSizeC} and Theorem \ref{thm:BoundMovingOtDisk}.

We then study the relationship between overtwisted size and Reeb spectrum (Section \ref{sec:ClosedReebOrbitsInOTCtcMflds}). This interplay is what allows us to compute the exact size of the standard $\Delta_\std^\OT$ (Theorem \ref{thm:SizeOfOtDisk}).

We close the article (Section \ref{sec:products}) discussing the results on contact products and products with Liouville manifolds.

\subsection*{Disclosure of use of AI}

This paper was developed and written fully by its authors. Once the paper was complete, had been polished by us, and we were ready to upload to arXiv, we asked Chat-GPT Astra to produce a referee report. Said report found typos, small mistakes in computations and estimates, and missing hypotheses in statements. We iterated this a couple of times until we were satisfied. All mistakes that may remain in the paper are due to us.

\subsection*{Acknowledgements} The second author is funded by the NWO grant Vidi.223.118 “Flexibility and rigidity of tangent distributions”. The second author is supported by the Fondation Sciences Mathématiques de Paris.

\section{Preliminaries}\label{sec:Preliminaries}

We now recall some of the fundamentals of contact topology. We do so to set notation. We follow the conventions introduced in Subsection \ref{sssec:conventions}.

\subsection{Contact manifolds}

A strict contact manifold is a pair $(M^{2n+1},\alpha)$ consisting of a $(2n+1)$-dimensional manifold $M$ and a contact form $\alpha$, i.e.\,$\alpha \wedge d \alpha^n$ is a volume form. Its kernel $\xi = \ker(\alpha)$ is thus a contact structure, that is to say, a hyperplane field that is maximally non-integrable.

\subsubsection{Contactomorphisms}

A contactomorphism $\phi: (M,\alpha) \rightarrow (N,\beta)$ is a diffeomorphism such that $\phi_*\ker(\alpha) = \ker(\beta)$. Equivalently, $\phi^*\beta = \chi\alpha$, for some nowhere vanishing function $\chi: M \to \R$, which we call \emph{the conformal factor} of $\phi$. If $\chi$ is positive, it means that $\phi$ preserves coorientations. We say that $\phi$ is \emph{strict} if its conformal factor is $1$.

\subsubsection{Contact isotopies}

The contact form $\alpha$ induces a one-to-one correspondence between compactly supported time-dependent functions $(H_t)_{t \in [0,1]}: M \to \R$ and compactly supported contact isotopies $\psi_t: (M,\alpha) \to (M,\alpha)$ starting at the identity. Namely, to a contact isotopy $\phi^H_t$ we associate the function $H_t$ defined via $H_t \circ \phi^H_t = \alpha(\partial_t \phi^H_t)$. The conformal factor of $\phi^H_t$ will be denoted as $\chi^H_t$.

Fix two contact isotopies $\phi_t$ and $\psi_t$ with Hamiltonians $H_t$ and $G_t$. We write $\phi_t * \psi_t$ for their concatenation, with the convention that $\phi_t$ is performed first (i.e.\,we concatenate left to right). The corresponding Hamiltonian satisfies
\[ (H * G)_t \circ (\phi_t * \psi_t) = 2H_{2t} \circ\phi_{2t} \quad\text{ if } \quad t \in [0,1/2] \]
and
\[ (H * G)_t \circ (\phi_t * \psi_t) = 2G_{2t-1} \circ \psi_{2t-1} \circ \phi_1 \quad\text{ for} \quad t \in [1/2,1]. \]
In practice, we often assume that the isotopies are constant whenever $t$ is close to $0$ and $1$, so the concatenation is also smooth in the parameter at $t=1/2$.

\subsection{Legendrian embeddings}

A Legendrian embedding $f:L \to (M^{2n+1},\alpha)$ is an embedding of an $n$-manifold $L$ such that $f^*\alpha = 0$. A Legendrian isotopy is a smooth family of Legendrian embeddings $(f_t)_{t \in [0,1]}: L \to (M,\alpha)$.

\subsubsection{Weinstein neighbourhoods}

Fix an $n$-dimensional manifold $L$. Its $1$-jet space $(J^1L = T^*L \times \R, dz + \lambda_\can)$ is a strict contact manifold in which the inclusion of the zero-section $\iota: L \rightarrow J^1L$ is Legendrian. Moreover, it is a universal model for neighbourhoods of Legendrian embeddings:
\begin{definition} \label{def:WeinsteinNeighbourhood}
Fix a manifold $K$ (possibly with boundary and corners) and a Riemannian metric on $L$. Consider a compactly-supported family of proper Legendrian embeddings $(f_t)_{t \in K}: L \to (M,\alpha)$.

A (parametric) \textbf{strict Weinstein neighbourhood} of $f_t$ is a  compactly supported family
\[ (\phi_t)_{t \in K}: \{|z| < \delta^z_t, |p| < \delta^p_t \} \subseteq (J^1L,dz + \lambda_\can) \to (M,\alpha) \]
of strict contact embeddings satisfying $f_t = \phi_t|_L$, for some continuous families of functions $(\delta^z_t)_{t \in K},(\delta^p_t)_{t \in K}: L \to (0,\infty)$. We say that $\phi_t$ has \textbf{height} $2 \sup_L \delta^z_t$.

We will sometimes also omit the embeddings $\phi_t$ from the notation and call their images $(\phi_t(\{|z| < \delta^z_t, |p| < \delta^p_t \}))_{t \in K}$ a (parametric) strict Weinstein neighbourhood. 
\end{definition}

Then:
\begin{proposition}\label{thm:StrictWeinsteinNeighbrouhoods}
Any compactly-supported family of proper Legendrian embeddings $(f_t)_{t \in K}: L \to (M,\alpha)$ admits a strict Weinstein neighbourhood.
\end{proposition}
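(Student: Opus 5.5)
We will prove the existence of strict Weinstein neighbourhoods in two steps. First we produce a (non-strict) contact embedding of a neighbourhood of the zero section. Then we correct it to a strict one using a parametric Moser/Gray argument.

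\emph{Step 1: a contact embedding.} Fix a Riemannian metric on $L$. For each $t\in K$, choose a complement to $T f_t(L)$ inside $\xi|_{f_t(L)}$ that is Lagrangian for $d\alpha|_\xi$. We can take it to be $J\, df_t(TL)$ for a compatible complex structure $J$ on $\xi$, chosen once on $M$ so that it is independent of $t$. Together with the Reeb direction $R_\alpha$, this identifies the normal bundle of $f_t(L)$ with $T^*L\oplus\R$ continuously in $t$. The symplectic form identifies $J\,df_t(TL)\cong T^*L$. Exponentiating (for instance with a metric on $M$) gives a family of embeddings $\psi_t$ of a neighbourhood of the zero section of $J^1L$ into $M$. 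These satisfy $\psi_t|_L=f_t$, and $\psi_t^*\alpha$ agrees with $\beta:=dz+\lambda_\can$ to first order along $L$, i.e.\ $\psi_t^*\alpha|_{TJ^1L|_L}=\beta|_{TJ^1L|_L}$ and $d(\psi_t^*\alpha)|_L=d\beta|_L$. Where $f_t=f$ is constant (outside a compact set of $K\times L$), we make the same choices, so $\psi_t=\psi$ there.

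\emph{Step 2: making it strict.} Consider $\alpha_s=(1-s)\beta+s\psi_t^*\alpha$. This is contact near $L$, because the two forms agree to first order there, and $\alpha_s=\beta$ along $L$. We look for a vector field $X_s$ vanishing on $L$ with $\mathcal{L}_{X_s}\alpha_s=-\dot\alpha_s$, which is the strict version of Moser's trick. Writing $X_s\in\ker\alpha_s$ plus a Reeb component, the equation reduces to $\iota_{X_s}d\alpha_s+d(\alpha_s(X_s))=-\dot\alpha_s$. Evaluating on the Reeb field $R_s$ gives the ODE $R_s(\alpha_s(X_s))=-\dot\alpha_s(R_s)$. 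We solve it along Reeb trajectories with initial condition $0$ on the hypersurface $\{z=0\}$ near the zero section, which is transverse to $R_s$ near $L$ since $R_s=\partial_z$ along $L$.

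With $\alpha_s(X_s)$ fixed, the remaining equation determines the $\xi_s$-component uniquely by nondegeneracy of $d\alpha_s|_{\xi_s}$. Since $\dot\alpha_s$ vanishes to first order along $L$, $X_s$ vanishes along $L$. Hence its flow exists for $s\in[0,1]$ on a smaller neighbourhood of $L$, and it fixes $L$ pointwise. Composing, $\phi_t:=\psi_t\circ\Phi^{X}_1$ is strict and restricts to $f_t$. Everything depends continuously on $t$, and on the constant region $X_s$ is the same for all $t$, so the resulting family is compactly supported.

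\emph{Step 3: neighbourhoods of the required shape.} Since $L$ may be noncompact and the flow only exists on a neighbourhood $V_t$ of $L$ varying continuously with $t$, we choose continuous positive functions $\delta^z_t,\delta^p_t$ on $L$ such that $\{|z|<\delta^z_t,|p|<\delta^p_t\}\subseteq V_t$ and $\phi_t$ is an embedding there. Injectivity near $L$ follows from properness of $f_t$ and $\phi_t$ being an immersion along $L$, by the standard tubular neighbourhood argument. Over the compact part of $K\times L$ these functions can be chosen by compactness. On the constant region we use a single choice for $f$, and we glue the two choices by taking minima via a partition of unity.

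The main obstacle is controlling the Moser flow uniformly in the noncompact, parametric setting: ensuring that the domain on which the flow exists contains a tube of the stated form with continuous radii. We handle this by working with exhaustion functions and using that $X_s$ vanishes along $L$.
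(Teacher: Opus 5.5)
Your proposal is correct and takes essentially the same route as the paper: a first-order model along $L$, then a strict Moser argument whose vector field vanishes along $L$ (so its flow exists on a neighbourhood of $L$), then the bookkeeping for noncompact $L$ and for a family that is constant outside a compact set. The differences are only in detail: the paper defers the compact case to Geiges' neighbourhood theorem and uses a compact exhaustion of $K$, whereas you write out the strict Moser step explicitly (solving for the Reeb component along Reeb trajectories from $\{z=0\}$) and use the compact-support hypothesis directly.
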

\begin{proof}
The case when $K = \{*\}$ and $L$ is closed is proven in \cite[Theorem 6.2.2]{Gei2}. It is easy to verify that all choices in said construction are contractible so that the proof also works when $K$ and $L$ are compact. To deal with non-compactness, recall that the proof relies on Moser's method. The vector fields involved in the proof vanish along $L$, so by restricting to sufficiently small neighbourhoods of the zero-section in $J^1L$, we can ensure that their flows exist on a given compact parameter space. As $f_t$ does not depend on $t$ outside of a compact subset of $L$, we can fix such a neighbourhood near infinity, and on each compact subset of $K$, we can find functions $\delta^z_t, \delta^p_t,\phi_t$, which depend only on contractible choices. Using a compact exhaustion of $K$, the construction extends to the general case.
\end{proof}

\subsubsection{Contact isotopy extension}

According to the contact isotopy extension theorem, a Legendrian isotopy $f_t$ is always induced by an ambient contact isotopy $\phi_t$, meaning that $\phi_t \circ f_0 = f_t$. This is also true, more generally, for isotropic isotopies. As in the smooth case, these statements are established by extending the vectors tangent (in the time direction) to the isotopy to global contact vector fields and integrating. As such, the contact isotopy extension theorem is parametric in nature. 

The parametric nature of contact isotopy has the following useful consequence:
\begin{lemma} \label{lem:Serre}
Let $\LegEmb(L;M,\alpha)$ be the space of Legendrian embeddings of $L$ in $(M,\alpha)$ endowed with the weak $C^\infty$ topology. Consider the evaluation map
\[ \ev: \LegEmb(\D^n;M,\alpha) \rightarrow M \]
sending an embedding $\psi$ to $\psi(0) \in M$. This is a Serre fibration.
\end{lemma}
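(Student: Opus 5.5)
To prove that $\ev$ is a Serre fibration, I will verify the homotopy lifting property for disks. Fix $k \geq 0$, a family $(\psi_s)_{s \in \D^k}$ of Legendrian embeddings $\D^n \to (M,\alpha)$, and a homotopy $(p_{s,t})_{(s,t) \in \D^k \times [0,1]}$ in $M$ with $p_{s,0} = \psi_s(0)$. I must produce Legendrian embeddings $\psi_{s,t}$ with $\psi_{s,0} = \psi_s$ and $\psi_{s,t}(0) = p_{s,t}$, depending continuously on $(s,t)$ in the weak $C^\infty$ topology. Since $[0,1]\times\D^k$ deformation retracts onto $\{0\}\times\D^k$ in the usual way, it is enough to construct, for each $s$, a contact isotopy $\Phi_{s,t}$ of $M$ with $\Phi_{s,0} = \id$ and $\Phi_{s,t}(p_{s,0}) = p_{s,t}$, continuous in $s$. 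Then I set $\psi_{s,t} := \Phi_{s,t}\circ\psi_s$. These are Legendrian, because contactomorphisms preserve $\xi$, and they satisfy $\psi_{s,t}(0) = p_{s,t}$ as required.

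The construction of $\Phi_{s,t}$ is the parametric contact isotopy extension theorem applied to $0$-dimensional isotropic submanifolds, that is, points. This is the statement recalled above for isotropic isotopies. First I smooth the homotopy $p$ in $t$ by a standard approximation, since a Serre fibration only needs the lift up to homotopy of this kind. Alternatively I work with a smooth family homotopic rel $t=0$ and adjust afterwards. Concretely, I choose a time-dependent family of compactly supported functions $H_{s,t}$ whose contact vector fields $X_{H_{s,t}}$ satisfy $X_{H_{s,t}}(p_{s,t}) = \partial_t p_{s,t}$. A contact vector field is determined by $H$ via $\alpha(X_H) = H$ and $\iota_{X_H} d\alpha|_\xi = (dH(R)\alpha - dH)|_\xi$. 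So prescribing $X_H$ at a single point only requires prescribing the value $H(p)$ together with $dH|_{\xi_p}$. This can be done with an explicit formula in a Darboux chart around $p_{s,t}$, cut off by a bump function, and patched over $(s,t)$ with a partition of unity; the prescribed data are linear, so partitions of unity are compatible. Integrating the resulting vector field gives $\Phi_{s,t}$, and its flow exists because the support is compact.

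Continuity in the weak $C^\infty$ topology follows because $H_{s,t}$ depends smoothly (or continuously in $C^\infty$) on the parameters, and flows depend continuously on their generating vector fields. I expect the main obstacle to be only technical: the homotopy $p$ is merely continuous. I would handle this by first reducing to the case of smooth homotopies. Fibration properties can be checked on a cofinal class of lifts, and any continuous homotopy $\D^k\times[0,1]\to M$ is homotopic rel $\D^k\times\{0\}$ to a smooth one, with the small straight-line correction in charts also being liftable by the same construction. Alternatively, I can subdivide $[0,1]\times\D^k$ finely so that each piece lies in a Darboux ball. On each piece I lift using explicit contact translations of the ball, which act transitively and continuously in the target point, and then glue inductively over the cubes as in the standard proof that locally trivial maps are Serre fibrations.
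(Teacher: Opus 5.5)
Your proposal is correct and is essentially the paper's argument. The paper presents the lemma as an immediate consequence of the parametric contact isotopy extension theorem for isotropic isotopies, applied to paths of points. That is exactly your construction of $\Phi_{s,t}$ (prescribing $H_{s,t}(p_{s,t})$ and $dH_{s,t}|_{\xi}$ at $p_{s,t}$, patching linearly in the parameters, integrating), followed by $\psi_{s,t}=\Phi_{s,t}\circ\psi_s$.

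The paper does not discuss homotopies that are merely continuous. Your aside that a Serre fibration only needs lifts up to homotopy is false, since the lift must cover $p$ exactly. Both of your concrete remedies are nevertheless sound. The straight-line correction is smooth in its own homotopy parameter, so your ODE construction lifts it with $(s,t)$ as parameters. Alternatively, the cut-off Heisenberg translations $\tau_q$ in a Darboux chart make $\psi\mapsto(\ev(\psi),\tau_{\ev(\psi)}^{-1}\circ\psi)$ a local trivialisation of $\ev$.
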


Let $\LagMon(\R^n,\ker(\alpha))$ be the space of Lagrangian frames; i.e.\,monomorphisms $\R^n \rightarrow \ker(\alpha)$ from the trivial $n$-bundle to the contact structure. The following stronger statement is true:
\begin{lemma} \label{lem:Serre2}
The evaluation map
\[ \ev: \LegEmb(\D^n;M,\alpha) \rightarrow \LagMon(\R^n,\ker(\alpha)) \]
sending an embedding $\psi$ to $d_0\psi$ is a Serre fibration and a weak homotopy equivalence.
\end{lemma}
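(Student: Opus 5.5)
The plan is to prove the two assertions of Lemma~\ref{lem:Serre2} separately. The Serre fibration property will be deduced from the parametric contact isotopy extension theorem, in the same way as Lemma~\ref{lem:Serre}. The weak homotopy equivalence will then follow by identifying the fibre with a contractible space of Legendrian germs.

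\emph{Step 1: Serre fibration.} Fix a disk $K=\D^k$, a family $\psi_t\in\LegEmb(\D^n;M,\alpha)$ for $t\in K$, and a homotopy $F_{t,s}$, $s\in[0,1]$, of Lagrangian frames with $F_{t,0}=d_0\psi_t$. Denote by $p_{t,s}$ the basepoints of $F_{t,s}$.
\begin{enumerate}
\item Choose a $(K\times[0,1])$-family of Legendrian disk germs $\gamma_{t,s}$ at $p_{t,s}$ with $d_0\gamma_{t,s}=F_{t,s}$ and $\gamma_{t,0}$ equal to the germ of $\psi_t$ at $0$. I would build these in parametric Darboux charts along the path of basepoints: take the Legendrian linear subspace spanned by the frame, and precompose with the linear map realising the frame. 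The lifting property holds relative to the boundary in $s=0$, so this choice can be made compatibly.
\item Shrink to a small disk $\D_\delta$, uniformly in the compact parameter space. The family $\gamma_{t,s}|_{\D_\delta}$ is then a $(K\times[0,1])$-family of Legendrian (hence isotropic) embeddings starting at $\psi_t|_{\D_\delta}$.
\item Apply parametric contact isotopy extension to obtain contactomorphisms $\Phi_{t,s}$, compactly supported near the track, with $\Phi_{t,0}=\id$ and $\Phi_{t,s}\circ\psi_t=\gamma_{t,s}$ on $\D_\delta$.
\item The embeddings $\Phi_{t,s}\circ\psi_t$ on all of $\D^n$ are not quite the lift, because their differential at $0$ is the derivative of $\gamma_{t,s}$, which already equals $F_{t,s}$ by step 1. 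They therefore lift $F_{t,s}$, which proves the Serre property.
\end{enumerate}

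\emph{Step 2: weak equivalence.} Since $\ev$ is a Serre fibration, it suffices to show each fibre is weakly contractible, and that $\ev$ is surjective. Surjectivity holds because every Lagrangian frame is realised by a linear Legendrian disk in a Darboux chart. Let $\mathcal F$ be the fibre over a frame $F$ at $p$: the Legendrian embeddings $\psi$ with $\psi(0)=p$ and $d_0\psi=F$. Given a sphere $S^k\to\mathcal F$, I would contract it in two stages.
\begin{enumerate}
\item Contract the disk radially. Set $\psi^{(r)}(x)=\psi(rx)$ for $r\in(0,1]$. In a Darboux chart centred at $p$, compose with the contact dilation $(x,y,z)\mapsto(x/r,y/r,z/r^2)$. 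As $r\to0$ this converges smoothly to the linear Legendrian $x\mapsto F x$. The differential at $0$ is preserved, since the dilation rescales by $1/r$ and the reparametrisation by $r$.
\item The dilation is not globally defined on $M$, so I would not use it literally. Instead, I would use Step 1's extension theorem to realise the family ``shrink, then straighten'' by ambient contact isotopies fixing $p$ and $d_p$ on the frame. This gives a homotopy of the sphere to a constant map within $\mathcal F$.
\end{enumerate}

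The main obstacle is making the fibre contraction genuinely stay in the space of embeddings of the whole disk $\D^n$, rather than of germs. The shrinking $\psi(rx)$ changes the image, so it must be paired with the reverse step of isotoping the small linear-looking disk back out to full size, canonically in the parameters. I expect to handle this by observing that the space of Legendrian embeddings of $\D^n$ deformation retracts, by reparametrisation, onto embeddings of arbitrarily small subdisks. The evident restriction maps are weak equivalences since $\D_r\hookrightarrow\D^n$ is isotopic to the identity through self-embeddings fixing $0$ and $d_0$. This reduces everything to the germ level, where the Darboux dilation argument applies.
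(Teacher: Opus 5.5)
\textbf{The gap is in Step 1, item 1.}

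The germs you describe are linear subspaces in parametric Darboux charts, precomposed with the frame. They are linear at $s=0$ as well, so they do not restrict to the germ of $\psi_t$, which is in general curved. The sentence ``the lifting property holds relative to the boundary in $s=0$'' is exactly the germ-level version of the homotopy lifting you are trying to prove, so item 1 is circular as written. Items 2--4 are fine once item 1 is available.

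What is missing is a way to \emph{transport} the given germ along the path of frames. One fix is as follows.
\begin{itemize}
\item Choose Darboux charts $\Theta_{t,s}$ centred at $p_{t,s}$ for the symmetric form $dz+\frac12\sum_i(x_i\,dy_i-y_i\,dx_i)$.
\item For this form the linear maps $(w,z)\mapsto(Aw,cz)$, with $A$ conformally symplectic of factor $c>0$, are contactomorphisms. They act transitively on Lagrangian frames at the origin, and the orbit map is a principal bundle.
\item Since $K\times[0,1]$ is contractible, you can pick such maps $B_{t,s}$ sending a fixed reference frame to $d\Theta_{t,s}^{-1}F_{t,s}$.
\item Then $\gamma_{t,s}:=\Theta_{t,s}\circ B_{t,s}B_{t,0}^{-1}\circ\Theta_{t,0}^{-1}\circ\psi_t$, restricted to a small disk, equals $\psi_t$ at $s=0$ and has $d_0\gamma_{t,s}=F_{t,s}$.
\end{itemize}

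A cleaner route also sidesteps a second issue: isotopy extension wants families smooth in $s$, while $F_{t,s}$ is only continuous. Cut off the Hamiltonians generating Heisenberg translations composed with such linear maps. This gives, near any frame $F_0$, a neighbourhood $U$ and a continuous map $F\mapsto\sigma_F$ into compactly supported contactomorphisms with $\sigma_{F_0}=\id$ and $d\sigma_F\circ F_0=F$. Then $(F,\psi)\mapsto\sigma_F\circ\psi$ trivialises $\ev$ over $U$. So $\ev$ is locally trivial, hence a Serre fibration.

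\textbf{Step 2 is sound.}

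It is essentially the paper's ``Legendrian Alexander trick''. Like the paper, you reduce to an arbitrarily small ball by reparametrising the domain, as in your last paragraph. The point there is a self-embedding of $\D^n$ into $\D_\delta$, isotopic to the identity through self-embeddings fixing the $1$-jet at $0$; this makes restriction to $\D_\delta$ a homotopy equivalence of fibres.

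You differ from the paper only at the germ level. The paper connects any two maps with the same first-order data by linear interpolation of generating functions in a Weinstein neighbourhood of one of them. You instead straighten a single germ using the conformal dilation $(w,z)\mapsto(w/r,z/r^2)$ of a Darboux chart. Your version contracts canonically onto a model depending only on $F$, but it needs a chart with a homogeneous form. The interpolation needs no homogeneity and compares two arbitrary families directly.

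Two small remarks on Step 2:
\begin{itemize}
\item The limit is the linear Legendrian $x\mapsto Fx$ only for the symmetric form above. For $dz-\sum_i y_i\,dx_i$ the limit is a quadratic Legendrian determined by $F$, which works equally well.
\item The ambient-isotopy detour in item 2 is unnecessary. For $\delta$ small (uniformly over the compact family), the whole dilation path of $\psi_k(r\,\cdot)|_{\D_\delta}$, $r\in[0,1]$, stays inside the chart, so it is already a homotopy in the small-disk fibre.
\end{itemize}
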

The statement follows from the $h$-principle, although it can also be proven via the following elementary ``Legendrian Alexander-trick '' argument: use dilations in $\D^n$ in order to ``restrict'' any finite dimensional family of embeddings to an arbitrarily small ball; then, use linear interpolation in a Weinstein neighbourhood to match any two maps that share the same first order data at the origin.

We will use these results to carry out various constructions. Applying these produces families of Legendrian embeddings that are continuous for the Whitney topology, but not necessarily smooth. To address this, we will invoke:
\begin{lemma} \label{lem:smoothingFamilies}
A finite-dimensional family of Legendrian embeddings, continuous for the weak $C^\infty$-topology, can be smoothed. This can be done relatively in parameter and domain.
\end{lemma}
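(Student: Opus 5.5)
The plan is to reduce the problem to a pointwise smoothing and then glue with a partition of unity in the parameter space, using Weinstein neighbourhoods to turn "Legendrian embeddings near a given one" into "sections of $J^1L$ near the zero section". Here a section should be read as a $1$-jet $j^1 u$ of a function $u$. Let $(f_k)_{k\in K}$ be the given family, continuous in the weak $C^\infty$-topology, with $K$ a finite-dimensional manifold (possibly with corners). Suppose it is already smooth on a neighbourhood of a closed set $A\subseteq K\times L$ (the relative part). Since the family is compactly supported, $f_k$ equals a fixed smooth $f$ outside a compact set. So it suffices to work over compact pieces and to keep every modification inside that compact set.

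\textbf{Step 1 (local linearisation).} Fix $k_0\in K$. By Proposition \ref{thm:StrictWeinsteinNeighbrouhoods} (applied with a point as parameter space) there is a strict Weinstein neighbourhood $\phi:\{|z|<\delta^z,|p|<\delta^p\}\to M$ of $f_{k_0}$.

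Now restrict to a compact subset of $L$ containing the support. By continuity in the weak $C^\infty$-topology, for $k$ in a small neighbourhood $V$ of $k_0$ the embedding $\phi^{-1}\circ f_k$ is $C^1$-close to the zero section. Hence it is a Legendrian graph, namely $j^1u_k$ for a unique function $u_k:L\to\R$, possibly after reparametrising the domain by the diffeomorphism $\pi\circ\phi^{-1}\circ f_k$.

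This reparametrisation is the one subtle point. I would handle it by writing $f_k=\phi\circ j^1u_k\circ\sigma_k$, where $\sigma_k$ is a family of diffeomorphisms of $L$. Then $k\mapsto u_k$ and $k\mapsto\sigma_k$ are continuous into $C^\infty$, because the map from $C^1$-small Legendrian graphs to generating functions is a homeomorphism in the $C^\infty$-topology.

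\textbf{Step 2 (smoothing the data linearly).} Both $u_k$ and $\sigma_k$ now live in spaces where convex combinations make sense. For $u_k$ this is the vector space $C^\infty(L)$. For $\sigma_k$ we are near the identity, so we can use an exponential chart for a metric on $L$.

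Cover $K$ by such charts $V_i$ and choose a smooth partition of unity. On each chart, convolve in the $k$-variable to get smooth families $\tilde u_k,\tilde\sigma_k$ close to $u_k,\sigma_k$ in $C^\infty$ on compacts. Then form the smoothing globally by induction over the charts: on the overlap with the previously smoothed region, interpolate the function $u$ linearly via a cutoff in $k$, and do the same for $\sigma$ in the chart.

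Linear interpolation of generating functions keeps everything Legendrian. This is the key reason the argument works: the Legendrian condition is linear in the generating-function chart. Interpolating $\sigma$ near the identity stays within diffeomorphisms by $C^1$-closeness.

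\textbf{Step 3 (relative version).} Near $A$ the family is already smooth, so I take a cutoff $\beta$ on $K\times L$ equal to $1$ near $A$ and supported where the family is smooth. I then use $\beta u+(1-\beta)\tilde u$, and similarly for $\sigma$. Because $\beta$ depends on $x\in L$, the result is still smooth jointly and still Legendrian, since it is again the $1$-jet of a function.

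Both the convolution and the cutoffs are supported in the compact region, so the smoothed family remains compactly supported. It also agrees with $f$ near infinity.

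\textbf{Main obstacle.} The step I expect to be hardest is the bookkeeping in Step 1 when $L$ is non-compact. Specifically, I need uniform $C^1$-closeness on the compact support and the continuity of $k\mapsto(u_k,\sigma_k)$ with respect to the weak topology. I would address this by applying the parametric Weinstein neighbourhood over a compact exhaustion of $K$, exactly as in the proof of Proposition \ref{thm:StrictWeinsteinNeighbrouhoods}, and then using that outside a fixed compact subset of $L$ nothing needs to change.
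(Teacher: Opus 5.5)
Your proposal is correct and takes the same route as the paper: in a Weinstein neighbourhood of one member of the family, nearby members are $1$-jets of functions, and one smooths those functions, using that the Legendrian condition is linear in this chart. The paper gives only this two-line sketch. You additionally work out the domain reparametrisation $\sigma_k$, the gluing over parameter charts, and the relative cutoff, which you correctly apply to the generating function rather than to its $1$-jet.
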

The proof is standard: given any Legendrian in the family we consider a Weinstein neighbourhood, where it is seen as the zero section. Nearby Legendrians in the family are then described as $1$-jets of functions, meaning that they can be smoothed by smoothing the underlying  Whitney-continuous family of functions.

\subsection{Flexibility for Legendrians}

\subsubsection{Formal Legendrians}

Flexibility results in contact topology are often stated in terms of h-principles. Roughly, these assert that some formal data already determines the genuine data we are interested in. We will need the notion of formal Legendrians: A \emph{formal Legendrian embedding} into a contact manifold $(M,\xi)$ is a pair $(f,F_s)$ consisting of an embedding $f:L \to M$ and a homotopy of injective tangent maps $F_s: TL \to f^*TM$ with $s \in [0,1]$ covering $f$ such that $F_0 = df$ is the differential of $f$ and $F_1(T_xL) \subseteq T_{f(x)}M$ is a Lagrangian subspace of $\xi_{f(x)}$ with respect to its conformal symplectic structure for all $x \in L$. A Legendrian embedding $f:L\to M$ is identified with the formal Legendrian embedding $(f,df)$.

\subsubsection{Stabilized Legendrians in dimension 3} 

Given a Legendrian knot in a contact 3-manifold, there is a local operation that changes the Legendrian knot type, called \emph{stabilization}.
\begin{definition}
    Let $f:L \to (M^3,\xi)$ be a Legendrian embedding. Let $x \in L$, and choose a Weinstein neighbourhood for $f$ near the point $x$. The Legendrian embedding obtained from $f$ by replacing the front of $f$ in this Weinstein neighbourhood by a zig-zag is called a \textbf{stabilization of $f$}.
\end{definition}

\begin{remark}
    We can also define stabilizations of \emph{formal} Legendrians as follows: let $(f,F_s)$ be a formal Legendrian embedding. After a $C^0$-small isotopy, we may assume that $(f,F_s)$ is Legendrian near a point, where we can now perform the stabilization as before. By using parametric Weinstein neighbourhoods of a family of (formal) Legendrians, stabilizations can also be defined parametrically.
\end{remark}

There are two non-equivalent types of stabilizations. Given an orientation of the Legendrian, these two types are called \textbf{positive} and \textbf{negative}, see Figure \ref{fig:Legendrian stabilizations in dimension 3.}. Note that these are interchanged if the orientation of $L$ is flipped.

\begin{figure}
\centering
\begin{subfigure}{0.30\textwidth}
    \includegraphics[width=\textwidth]{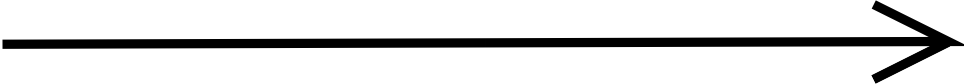}
    \caption{An oriented Legendrian arc $\gamma$.}
    \label{fig:arc}
\end{subfigure}
\hfill
\begin{subfigure}{0.30\textwidth}
    \includegraphics[width=\textwidth]{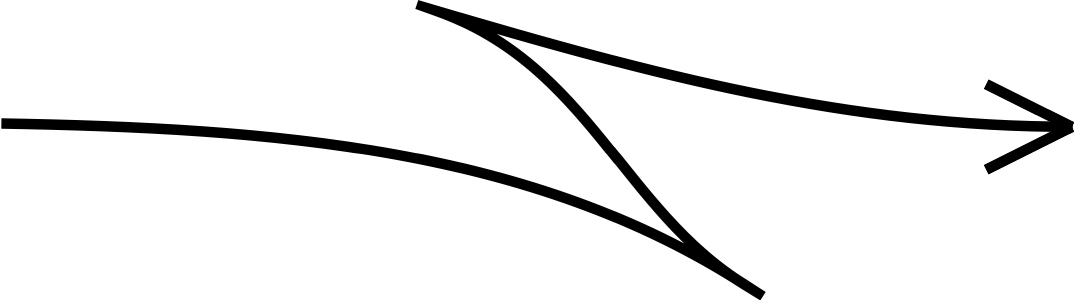}
    \caption{A positive stabilization of $\gamma$.}
    \label{fig:positive_stabilization}
\end{subfigure}
\hfill
\begin{subfigure}{0.30\textwidth}
    \includegraphics[width=\textwidth]{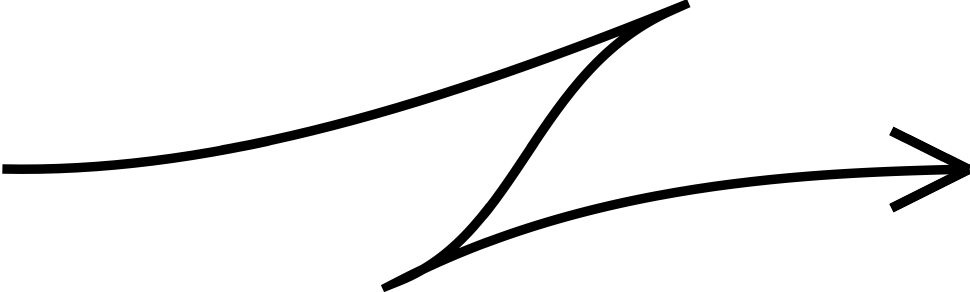}
    \caption{A negative stabilization of $\gamma$.}
    \label{fig:negative_stabilization}
\end{subfigure}

\caption{The two types of stabilizations of an oriented Legendrian knot in the front projection.}
\label{fig:Legendrian stabilizations in dimension 3.}
\end{figure}

\subsubsection{The h-principle for Legendrian knots}

Fuchs and Tabachnikov \cite{FT} showed that two Legendrian knots in standard contact $\R^3$ that are isotopic as smooth knots become Legendrian isotopic after sufficiently many stabilizations. We will need a stronger version of this h-principle, dealing with families of Legendrian links. A proof can be found in \cite[Theorem 7]{CP} for the case of knots, which generalizes in a straightforward fashion:
\begin{theorem}\label{thm:hprincipleLegLinks}
    Let $(M,\xi)$ be a $3$-dimensional contact manifold and $(f_t,F_{s,t})$ be a family of proper formal Legendrian embeddings with $t \in I$, where $I$ is a compact manifold. Assume that $(f_t,F_{s,t})$ is Legendrian in a neighbourhood of a closed subset $A \subseteq I$. For any $\eps > 0$, there exists, after stabilizing $(f_t,F_{s,t})$ sufficiently often, a homotopy of formal isotopies from $(f_t,F_{s,t})$, fixed near $A$, to a family of Legendrian embeddings $g_t$ whose underlying family of isotopies is $\eps$-close to $f_t$ in the $C^0$-norm.
\end{theorem}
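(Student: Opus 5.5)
We want to prove Theorem \ref{thm:hprincipleLegLinks}, a parametric, relative h-principle for stabilized Legendrian links in a contact $3$-manifold. We follow the strategy of \cite[Theorem 7]{CP} for knots and check that each step is insensitive to the number of components and to properness.

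First, we reduce to a local problem. Since the family is compactly supported and $I$ is compact, only a compact part of each $f_t$ moves. We choose a fine triangulation of $I \times L$, or equivalently a cover of the relevant compact part of $L$ by small arcs, fine enough that each image $f_t(\text{arc})$ lies in a Darboux chart of $(M,\xi)$. The sizes are chosen in terms of $\eps$, so that every modification stays $C^0$-$\eps$-close to $f_t$. By genericity we may assume the family is in general position in the fronts of these charts.

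Next, we make the family Legendrian near the $0$-skeleton and then extend over arcs by induction on cells. The key local tool is the front construction. Inside a Darboux chart, a smooth arc whose formal Legendrian data is prescribed can be $C^0$-approximated by a Legendrian arc. We do this by replacing the front with a zig-zag curve that follows the $xz$-projection and has slope matching the $y$-coordinate. Zig-zags are precisely stabilizations, and the formal homotopy $F_{s,t}$ records the rotation class. This rotation class is matched by choosing how many positive and negative zig-zags to use.

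Parametrically, we work relative to the closed set $A$ and to the previously treated cells. We extend the homotopy cell by cell over $I$, using Lemma~\ref{lem:Serre2} together with the contractibility of choices for Weinstein neighbourhoods (Proposition \ref{thm:StrictWeinsteinNeighbrouhoods}) to interpolate between local Legendrian approximations. Stabilizations are added uniformly in $t$, in a fixed small chart per component, which is what "after stabilizing sufficiently often" means. The family is smoothed at the end via Lemma~\ref{lem:smoothingFamilies}.

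For links the argument is identical. Components are disjoint, so the charts can be chosen to separate the components, and the stabilizations and formal homotopy classes are handled per component. This is why finitely many components suffice.

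The main obstacle is the parametric step. In a family, the number of zig-zags needed changes with $t$, because cusps are born and die in pairs. One needs to show that the extra stabilizations added at the start can absorb all such births and deaths coherently over the whole compact family $I$. Here we use the fact that a stabilized arc can be isotoped to move its zig-zags freely along the arc, and that sufficiently many of them allow any finite-dimensional family of fronts to be realized. This is exactly the content of \cite{CP}, and it relies on compactness of $I$ to bound the number needed.
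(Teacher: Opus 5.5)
Your proposal takes essentially the same route as the paper, which gives no independent argument but refers to \cite[Theorem 7]{CP} for knots and asserts that the proof generalizes in a straightforward fashion to links. Your reduction to that argument is exactly this: you sketch the local zig-zag approximation and defer the parametric bookkeeping of births and deaths of zig-zags to \cite{CP}, with one small imprecision. Relative to its endpoints, the formal class of an arc carries two integers, a formal Thurston--Bennequin framing as well as the rotation number, and you match both of them (not only the rotation class) through the freedom to choose the number of positive and the number of negative zig-zags.
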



\subsubsection{Loose Legendrians in dimension 3}

Suppose $(M,\xi)$ is an overtwisted contact 3-manifold. Then, a Legendrian embedding $L \rightarrow (M,\alpha)$ is \emph{loose} if its complement is overtwisted (as defined in the introduction). Such loose Legendrians satisfy the h-principle \cite{EliFras,Dy}:

\begin{theorem}\label{thm:h-principleLooseLegDim3}
    Let $\Delta \subseteq (M,\xi)$ be an overtwisted disk and $f_0,f_1: L \rightarrow M \setminus \Delta$ two Legendrian embeddings. Any formal Legendrian isotopy from $f_0$ to $f_1$ is homotopic with fixed end-points to a genuine Legendrian isotopy.
\end{theorem}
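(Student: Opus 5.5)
The statement to prove is Theorem \ref{thm:h-principleLooseLegDim3}: for Legendrians avoiding an overtwisted disk $\Delta$, any formal Legendrian isotopy from $f_0$ to $f_1$ is homotopic, with fixed endpoints, to a genuine one. I would reduce it to two earlier results. The first is Theorem \ref{thm:hprincipleLegLinks}, the parametric Fuchs--Tabachnikov statement: a formal isotopy becomes genuine after enough stabilizations. The second is the destabilization phenomenon near an overtwisted disk: a Legendrian arc passing near $\Delta$ is Legendrian isotopic to its connected sum with $\partial\Delta$ stabilised once. This is the result of \cite{MNPS} quoted before Proposition \ref{prop:controlledIsotopy}. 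Using it, the extra stabilizations can be absorbed, provided the connected sums with copies of $\partial \Delta$ are themselves undone in the complement of the disk.

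\textbf{Step 1.} Let $(f_t,F_{s,t})$, $t\in[0,1]$, be the given formal isotopy. Since $M\setminus\Delta$ is open and $\Delta$ is compact, a $C^0$-small perturbation relative to the endpoints arranges that all $f_t$ avoid a neighbourhood $U$ of $\Delta$. Choose $N$ disjoint parallel copies $\Delta_1,\dots,\Delta_N$ of $\Delta$ inside $U$; these exist because the overtwisted condition is open in the germ, i.e.\ slightly pushed copies are still overtwisted disks. Apply Theorem \ref{thm:hprincipleLegLinks} with $A=\{0,1\}$ and the stabilization count $N$ it demands. After stabilizing the whole family $N$ times, with positive and negative stabilizations in the amounts required, the formal isotopy is homotopic rel $A$ to a genuine Legendrian isotopy $g_t$ between $S(f_0)$ and $S(f_1)$. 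Here $S$ denotes the stabilization, and $g_t$ stays $C^0$-close to $f_t$, hence outside the $\Delta_i$.

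\textbf{Step 2.} Realize each stabilization genuinely at both ends. For each copy $\Delta_i$, use a Legendrian arc running from $f_0$ to near $\Delta_i$ and apply the \cite{MNPS} move: this gives a genuine Legendrian isotopy from $f_0$ to $f_0\#\partial\Delta_i$ stabilised once. The knot $\partial\Delta_i$ bounds an overtwisted disk, and connect-summing with it is formally trivial, so the isotopy $f_0\#\partial\Delta_i\simeq f_0$ is formally the identity up to a computable rotation. In this way $f_0$ is genuinely isotopic to $S(f_0)\#(\#_i \partial \Delta_i)$. To match signs, orient $\partial\Delta_i$ so that it contributes a positive or a negative stabilization as needed; this also handles the non-oriented case, cf.\ Remark \ref{rmk:bothStabilizations}. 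Doing the same at $f_1$ with the same disks, and concatenating with $g_t \# (\#_i\partial\Delta_i)$, which is legitimate because $g_t$ avoids the $\Delta_i$, gives a genuine isotopy $f_0\rightsquigarrow f_1$.

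\textbf{Step 3.} Check that the composite is formally homotopic rel endpoints to the original $(f_t,F_{s,t})$. This is the main obstacle. The formal class of each \cite{MNPS} move has to be computed: it is a loop in the formal Legendrian space, and it could a priori differ from the trivial one by a rotation class or by an element of $\pi_1$ of the space of formal embeddings. My first attempt would be to use the freedom of choosing where, and with which orientation, the arcs attach, together with Lemma \ref{lem:Serre2} and the local nature of the moves. The idea is to show that the loops produced by the moves at $f_0$ and at $f_1$, transported along $g_t$, cancel formally. Any leftover discrepancy lives in a local model near $\Delta$ and can be corrected by inserting a loop of the \cite{MNPS} moves with the opposite formal class.

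Alternatively, if this bookkeeping becomes unwieldy, I would instead cite the established results \cite{EliFras,Dy}, which carry out exactly this argument, since the statement is a known theorem recalled for later use.
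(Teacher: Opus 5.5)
\paragraph{Verdict.} The paper gives no proof of this statement: it is quoted from \cite{EliFras,Dy}, so your fallback is exactly the paper's route. As a self-contained argument, however, your proposal has a genuine gap, and it sits precisely where the statement is an $h$-principle rather than a classification result.

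\paragraph{The gap.} Steps 1--2 at best produce \emph{some} genuine Legendrian isotopy from $f_0$ to $f_1$. The theorem asks that the \emph{given} formal isotopy be homotopic to one with fixed endpoints, and Step 3 leaves exactly this open. Your suggested repair is not justified: nothing guarantees that loops of \cite{MNPS} moves realize an arbitrary class in $\pi_1$ of the space of formal Legendrian embeddings. Nor is there any reason the discrepancy should be supported near $\Delta$.

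Two inaccuracies feed into this. First, connect-summing with $\partial\Delta_i$ is not formally trivial. Since $\partial\Delta_i$ has $\mathrm{tb}=0$ and $\mathrm{rot}=\pm1$, the knot $f_0\#\partial\Delta_i$ has $\mathrm{tb}$ one larger than $f_0$ (for closed knots). It is a formal \emph{destabilization} of $f_0$, not formally isotopic to it. What is formally trivial is $f\mapsto S(f\#\partial\Delta_i)$, with the stabilization sign dictated by the orientation, and that is what \cite{MNPS} realizes. Second, the middle piece $g_t\#(\#_i\partial\Delta_i)$ needs a continuous family of connecting arcs that agrees at $t=0,1$ with the arcs used for the moves at $f_0$ and $f_1$. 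Transporting an arc along $g_t$ need not return the one you chose at $f_1$, so the three pieces do not obviously concatenate.

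\paragraph{The missing idea.} Perform the \cite{MNPS} moves uniformly along the formal family itself, so that no formal class ever has to be computed.
\begin{enumerate}
\item Using Lemma \ref{lem:FormalLegendrianStandardPosition} and the argument of Lemma \ref{lem:IntoStdPosNearOTDisk} (with Remark \ref{rmk:StdPosOrientation} for the signs), write $(f_t,F_{s,t})\simeq a_t*(f'_t,F'_{s,t})*b_t$ rel endpoints. Here $a_t,b_t$ are genuine, and $(f'_t,F'_{s,t})$ is Legendrian, $t$-independent and equal to $f^{(k)}_\std$ near the parallel copies $\Delta_i$. The $k$ passes are created inside a fixed Darboux ball, so the family outside it does not depend on $k$.
\item Let $\Phi_u$ be a contact isotopy supported near $\bigcup_i\Delta_i$ realizing the moves (e.g.\ Proposition \ref{prop:controlledIsotopy}). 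The square $(t,u)\mapsto\Phi_u\circ(f'_t,F'_{s,t})$ is then a homotopy rel endpoints from $(f'_t,F'_{s,t})$ to $(\Phi_u\circ f'_0)*(\Phi_1\circ(f'_t,F'_{s,t}))*\overline{(\Phi_u\circ f'_1)}$. This is the device in the first half of Remark \ref{rmk:SmallEnergyIsotopiesWithFixedHmtpClass}. Only that half may be borrowed, since the second half invokes the theorem you are proving.
\item Slide the new stabilizations out of the fixed region along the $t$-independent Legendrian piece; this is another such square.
\item Apply Theorem \ref{thm:hprincipleLegLinks} to the part of the family outside that region, viewed as a proper family in its complement. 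The number of stabilizations it demands depends only on that outside part, which is independent of $k$. So fix that number first and only then choose $k$; this avoids circularity.
\end{enumerate}

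\paragraph{A smaller point in Step 1.} If the formal isotopy is allowed to cross $\Delta$, no $C^0$-small perturbation removes this, because the trace of a moving curve meets a $2$-disk stably. Instead, push the isotopy off a point of $\Delta$, then shrink a neighbourhood of $\Delta$ by a diffeomorphism supported away from $f_0$ and $f_1$, correcting the formal data accordingly.
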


\subsubsection{Loose Legendrians in higher dimensions}

In higher dimensional contact manifolds, loose Legendrians were introduced by Murphy \cite{Mur}. 

\begin{definition}
    Let $(M,\xi)$ be a contact manifold of  dimension $3 + 2n \geq 5$, and let $L \subseteq M$ be a connected Legendrian submanifold. A \textbf{loose chart} for $L$ is a chart in $M$ contactomorphic to 
    \begin{equation*}
        [0,1]^3 \times \{|q| < \rho, |p| < \rho\} \subseteq \left(\R^3 \times T^* \R^{n}, dz + ydx + \sum_ip_i dq_i\right)
    \end{equation*}
    for some $\rho > 1$, in which $L$ is equal to $L_0 \times Z_\rho$, where $L_0 \subseteq \R^3$ is a Legendrian arc that agrees with $\{z=y=0\}$ near the boundary and whose front is a zig-zag , and $Z_\rho = \{p = 0\} \subseteq \{|q| < \rho, |p| < \rho\}$ is the zero section. 

    If $L$ is disconnected, a loose chart for $L$ is a collection of mutually disjoint loose charts, one for each component of $L$. If $f:L \to M$ is a Legendrian embedding, a loose chart for $f$ is a loose chart for its image $f(L)$. A Legendrian (embedding) is called \textbf{loose} if it admits a loose chart.
\end{definition}

Murphy showed that loose Legendrians satisfy an h-principle:

\begin{theorem}[\cite{Mur}]\label{thm:LooseLegh-principle}
    Let $f_0,f_1:L \to (M,\xi)$ be two proper loose Legendrian embeddings that are compactly supported formally Legendrian isotopic. Then there exists a compactly supported Legendrian isotopy from $f_0$ to $f_1$.
\end{theorem}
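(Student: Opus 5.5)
The statement is Murphy's h-principle for loose Legendrians, quoted from \cite{Mur}. In the paper it serves as a black box, so the natural "proof" is a sketch of Murphy's argument, arranged so the later quantitative sections can follow it. I would proceed in three stages.

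\textbf{Stage 1: reduce to isotopies supported near the Legendrian.} Fix a compactly supported formal Legendrian isotopy $(f_t,F_{s,t})$ from $f_0$ to $f_1$. By a generic $C^0$-small perturbation, and by Lemma \ref{lem:Serre2} applied to small disks, assume $f_t$ is genuinely Legendrian near a finite collection of points. Use Lemma \ref{lem:Serre} to transport the loose chart of $f_0$ along the path of those points, so that it stays disjoint from the support of the formal deformation. Then triangulate $L$ (relative to the compact support). The goal becomes: make $f_t$ Legendrian over successively higher skeleta, using holonomic approximation (Eliashberg--Mishachev) to Legendrianise a neighbourhood of a slightly wiggled codimension-$\geq 1$ skeleton.

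\textbf{Stage 2: the top cells.} Over each top-dimensional cell, the remaining obstruction is a formal Legendrian disk that is genuinely Legendrian near its boundary. The local model is a disk whose Lagrangian frame at the boundary differs by a loop in the Lagrangian Grassmannian. Murphy's key construction is to realise this homotopical correction with a \emph{wrinkled} Legendrian, i.e.\ a Legendrian with controlled cusp/swallowtail singularities (zig-zag wrinkles). Then "resolve" each wrinkle: a wrinkle is Legendrian isotopic, relative to its boundary, to a standard Legendrian once a zig-zag of the appropriate size is available. The loose chart supplies exactly this. Because $\rho>1$ in a loose chart, the chart can be shrunk and self-replicated: by a Legendrian isotopy supported in the chart, one produces arbitrarily many small disjoint loose charts. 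These can be moved by contact isotopy near any wrinkle locus (Lemma \ref{lem:Serre} again), and each is used to absorb one wrinkle.

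\textbf{Stage 3: the endpoints.} With the family now genuinely Legendrian, apply contact isotopy extension to obtain the compactly supported Legendrian isotopy. Compactness of support is kept throughout because every modification happens inside compact charts.

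I expect the \textbf{main obstacle} to be Stage 2: showing that a single loose chart suffices for arbitrarily many wrinkles, parametrically in $t$. This is precisely where $n\geq 2$ is used. The zig-zag times a ball of radius $\rho>1$ can be deformed to contain two such charts, which fails in dimension $3$. I would first verify this self-replication carefully via an explicit front computation in $\R^3\times T^*\R^n$, and only then feed it into the wrinkle-resolution step. Everything else is the standard flexible machinery (holonomic approximation, Serre fibration arguments, smoothing via Lemma \ref{lem:smoothingFamilies}).
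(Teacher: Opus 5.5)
There is a genuine gap: you never use the hypothesis that $f_1$ is loose, and that hypothesis cannot be dropped. Looseness is invariant under Legendrian isotopy. In dimension $\geq 5$, however, non-loose Legendrians such as the standard unknot in $(\R^{2n+1},\xi_{\mathrm{std}})$ are formally isotopic to loose ones. So an argument that uses only a chart of $f_0$ would prove a false statement.

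The failure sits in Stage 1. Your homotopy of the formal isotopy must be relative to the endpoints. You therefore cannot arrange that the loose chart of $f_0$ ``stays disjoint from the support of the formal deformation'' unless $f_1$ already contains that very chart.
\begin{itemize}
\item If instead you transport the chart along $t$, by isotopy extension of a path of Legendrian disks, the genuine family you build does not end at $f_1$. It ends at $f_1$ with the transported zig-zag inserted, together with all the copies you replicated in Stage 2.
\item If you let the chart slide off before $t=1$, nothing is left to absorb the wrinkle events that still occur close to $t=1$.
\end{itemize}
Stage 3, despite its title, does not address this.

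What is missing is a step that brings in the chart of $f_1$. One way to do it:
\begin{enumerate}
\item Shrink and slide the loose charts of $f_0$ and $f_1$ so that, for a small disk $D\subset L$, each $f_i(D)$ is a fixed standard loose disk inside a Darboux ball that meets $f_i(L)$ only in $f_i(D)$.
\item Carry the ball of $f_1$ onto that of $f_0$ by a contact isotopy. This is a Legendrian isotopy of $f_1$, which you append to the final isotopy. Afterwards $f_0$ and $f_1$ agree on $D$ and share a loose chart.
\item Homotope the formal isotopy, rel endpoints, to be constant on $D$. The only obstruction is a loop of formal Legendrian disks. You can cancel it by choosing the path of Darboux balls appropriately (cf.\ Lemma \ref{lem:Serre2}).
\end{enumerate}
Only after this is a chart available at every time $t$, and only then do your Stages 2--3 make sense.

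Two smaller corrections. First, the wrinkling theorem you rely on is Eliashberg--Mishachev's, not Murphy's. Second, a wrinkle is not an embedding, so ``a wrinkle is Legendrian isotopic, relative to its boundary, to a standard Legendrian'' is not meaningful as stated. Replace it with Murphy's local statement: wrinkles can be traded for genuine Legendrian pieces, continuously in $t$, using zig-zags borrowed from a nearby loose chart.

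For comparison, the paper does not reprove the theorem. It cites \cite{Mur}, and for proper and disconnected $L$ (one chart per component) it cites \cite[Theorem 3.1]{Nak24}.
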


\begin{remark}
    In \cite{Mur}, only the case of closed and connected Legendrians is considered. In \cite[Theorem 3.1]{Nak24}, it is shown that the h-principle also applies to proper Legendrian embeddings.  Furthermore, the proof of the h-principle involves only the loose charts and a $C^0$-small neighbourhood of the formal Legendrian isotopy; see the proof of \cite[Theorem 3.1]{Nak24}. Therefore, it also applies to possibly disconnected Legendrian embeddings as long as we have disjoint loose charts for all components of $L$.
\end{remark}

In higher dimensions there exists an operation, called stabilization, that can change the Legendrian isotopy class of a Legendrian embedding. We do not need the precise construction, but only the following result of Murphy.
\begin{proposition}[\cite{Mur}]
    A Legendrian is stabilized if and only if it is loose.
\end{proposition}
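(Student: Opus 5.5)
The two implications need different tools: the forward one is a local model computation, while the converse uses Murphy's $h$--principle (Theorem \ref{thm:LooseLegh-principle}) together with contact isotopy extension.

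\emph{Stabilized $\Rightarrow$ loose.} Suppose $g$ is obtained from $h$ by a stabilization inside a Weinstein neighbourhood $W$. In the front of $J^1L$, such a stabilization is, near a point, a zig-zag in one direction times a small ball in the remaining $n-1$ directions. I would show that this piece can be put in the normal form of a loose chart, in two steps.

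First, shrink the zig-zag. Inside the stabilization region, isotope the front so that the zig-zag becomes a very small zig-zag $L_0^\delta$, whose height and width are of order $\delta$. Its cross-section is still a zig-zag times a ball of fixed radius $r$. This is done by a Legendrian isotopy supported in $W$, so $g$ stays a stabilization of $h$.

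Second, rescale. Apply the contactomorphism
\[ (x,y,z,q,p)\mapsto (x/\delta,\,y/\delta,\,z/\delta^2,\,q/\delta,\,p/\delta) \]
of $(\R^3\times T^*\R^{n-1},\, dz+ydx+\sum_i p_i dq_i)$. Since every summand of the form has weight $\delta^2$, this map scales the form by $\delta^{-2}$, so it is a genuine contactomorphism. It sends $L_0^\delta$ to a unit-size zig-zag and sends the $q$-ball of radius $r$ to one of radius $r/\delta$. For $\delta<r$ this radius is $\rho=r/\delta>1$. The $p$-width of the chart scales in the same way, so it also exceeds $1$. This gives a loose chart, so $g$ is loose. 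The only real point to check is that the standard Murphy stabilization indeed contains such a product region (zig-zag times ball) in its front.

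\emph{Loose $\Rightarrow$ stabilized.} Let $f$ be loose. Inside a small Weinstein neighbourhood of $f$, disjoint from its loose chart, perform a \emph{formally trivial} stabilization to obtain $f'$. Here I would use Murphy's stabilization along a region $\Omega\subset L$ with $\chi(\Omega)=0$, for instance $\Omega=S^1\times\D^{n-1}$, for which $f'$ is formally Legendrian isotopic to $f$ by a compactly supported formal isotopy.

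Both $f$ and $f'$ are loose: $f'$ still contains the untouched loose chart of $f$, or is loose by the first part. Theorem \ref{thm:LooseLegh-principle} then gives a compactly supported Legendrian isotopy from $f'$ to $f$. By contact isotopy extension, there is a compactly supported contactomorphism $\phi$ with $\phi\circ f'=f$. Therefore $f$ is the stabilization of $\phi\circ f$ performed inside the Weinstein neighbourhood $\phi(W)$, so $f$ is stabilized.

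The main obstacle is the formal triviality of the stabilization along $\Omega$. This requires Murphy's computation that the change in formal class is governed by $\chi(\Omega)$, which I would quote rather than reprove. I would also check the noncompact and disconnected cases, doing the construction componentwise as in the paper's remark on proper and disconnected Legendrians.
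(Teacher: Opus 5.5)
The paper gives no proof of its own here and simply quotes the statement from \cite{Mur}; your outline is the standard argument from that source. In the forward direction, a stabilization contains a zig-zag $\times$ ball region which rescales to a loose chart once the zig-zag's action is small compared to $r^2$; conversely, a formally trivial stabilization along $\Omega$ with $\chi(\Omega)=0$, combined with Murphy's $h$-principle and contact isotopy extension, exhibits a loose $f$ as a stabilization of $\phi\circ f$.

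There is one minor slip. For your map $(x,y,z,q,p)\mapsto(x/\delta,y/\delta,z/\delta^2,q/\delta,p/\delta)$ to produce a unit zig-zag, $L_0^\delta$ must be the contact dilate $(x,y,z)\mapsto(\delta x,\delta y,\delta^2 z)$ of a unit one, so its height is of order $\delta^2$ and its slopes of order $\delta$, not "height and width of order $\delta$". Also, shrinking in place over a ball of radius $r$ forces $|p|$ of order $\mathcal{A}/r$, with $\mathcal{A}$ the zig-zag's action, so it is cleaner to choose the stabilization's size parameter small from the start than to claim the isotopy stays in $W$.
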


As in dimension three, Legendrians in the complement of an overtwisted disk are loose. Thus, the h-principle holds.
\begin{theorem}[\cite{BEM}]\label{thm:h-principleLooseLegInComplementOfOTDisk}
    Let $(M,\xi)$ be an overtwisted contact manifold with overtwisted disk $\Delta$.  For any two Legendrian embeddings $f_0,f_1: L \rightarrow M \setminus \Delta$, any formal Legendrian isotopy from $f_0$ to $f_1$ is homotopic, relative endpoints, to a genuinely Legendrian isotopy.
\end{theorem}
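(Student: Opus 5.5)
The plan is to deduce the statement from the parametric, relative $h$-principle for overtwisted contact structures of \cite{BEM}. That result says that, on a manifold with a fixed overtwisted disk, the inclusion of contact structures into formal contact structures (relative to a closed set and with the disk fixed) is a weak homotopy equivalence. The strategy is to convert the Legendrian problem into a problem about families of contact structures on the complement of a standard neighbourhood of $f_0(L)$. Throughout we work with compact supports, following the conventions of Subsection \ref{sssec:conventions}.

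\textbf{Step 1: from the formal isotopy to a path of formal contact structures.} Let $(f_t,F_{s,t})_{t\in[0,1]}$ be the given formal Legendrian isotopy.
\begin{itemize}
\item After shrinking $\Delta$ slightly inside its germ, we may assume the underlying smooth isotopy $f_t$ avoids a neighbourhood $V$ of $\Delta$; this is a general position argument, since $\dim L + 1 < \dim M$ away from the $3$-dimensional case, and is handled separately in dimension $3$.
\item By smooth isotopy extension, choose a compactly supported diffeotopy $\varphi_t$ with $\varphi_t\circ f_0 = f_t$, $\varphi_0=\id$, and $\varphi_t = \id$ on $V$.
\item Set $\eta_t := \varphi_t^*\xi$. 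The homotopy $F_{s,t}$ pulled back by $d\varphi_t$ gives a homotopy of Lagrangian frames along $f_0$ from $d f_0$ to a frame in $\eta_t$.
\item Using a Weinstein neighbourhood of $f_0$ (Proposition \ref{thm:StrictWeinsteinNeighbrouhoods}), deform the pair $(\eta_t, $ its conformal symplectic structure$)$ through almost contact structures. The deformation is supported near $f_0(L)$, and its result agrees with $\xi$ on a neighbourhood $N$ of $f_0(L)$ in which $f_0$ is Legendrian.
\end{itemize}
This yields a path of formal contact structures $(\tilde\eta_t)$ on $M$ with the following properties:
\begin{itemize}
\item $\tilde\eta_t = \xi$ on $N\cup V$;
\item $\tilde\eta_0 = \xi$;
\item $\tilde\eta_1$ is the genuine contact structure $\varphi_1^*\xi$, which is standard near $f_0(L)$ because $f_1 = \varphi_1\circ f_0$ is Legendrian. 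After a further Weinstein-neighbourhood adjustment, $\tilde\eta_1$ equals $\xi$ on $N$.
\end{itemize}

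\textbf{Step 2: apply the $h$-principle.} Work on $M' := M\setminus N$, relative to $\partial N$ and to $V\supset\Delta$. This is a path of formal contact structures which is genuine at both endpoints and contains the fixed overtwisted disk $\Delta$ throughout. The relative parametric $h$-principle of \cite{BEM}, applied with parameter space $[0,1]$ and relative to $\{0,1\}$, homotopes it, relative to endpoints, $N$ and $V$, to a path of genuine contact structures $\xi_t$.

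Gray's theorem, which applies since all deformations are compactly supported, then gives a compactly supported isotopy $\psi_t$ with $\psi_t^*\xi_t=\xi$ and $\psi_t = \id$ on $N$. One checks that $\varphi_t\circ\psi_t^{-1}$ carries $\xi$ correctly: at $t=1$ we have $\xi_1 = \varphi_1^*\xi$ only up to the endpoint correction above. The resulting isotopy $g_t := \varphi_t\circ(\text{correction})\circ f_0$ is therefore a genuine Legendrian isotopy from $f_0$ to $f_1$. Finally, the homotopy between $(\tilde\eta_t)$ and $(\xi_t)$ translates back, by reversing Step 1, into a homotopy of formal Legendrian isotopies between $(f_t,F_{s,t})$ and $g_t$ with fixed endpoints.

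\textbf{Main obstacle.} The delicate step is Step 1, together with the bookkeeping at $t=1$. One must make the pulled-back structures standard near $f_0(L)$ uniformly in $t$ while keeping the overtwisted disk fixed and untouched, so that \cite{BEM} applies with a \emph{fixed} disk and relative to both ends. Concretely, the first thing to try is to use the homotopy $F_{s,t}$ to straighten $\eta_t$ along $f_0$ via the linear-algebra fact that the space of (conformal) symplectic structures on a fibre admitting a given Lagrangian frame is contractible, as in Lemma \ref{lem:Serre2}. After that, the correspondence between formal Legendrian isotopies and relative formal contact homotopies needs to be checked to be bijective on homotopy classes, so that the final homotopy really is rel endpoints.

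Alternatively, in dimension at least $5$ one could note that each $f_i$ is loose, since a neighbourhood of $\Delta$ contains a loose chart that can be connected to $f_i$ \cite{CMP}. Theorem \ref{thm:LooseLegh-principle} then applies, but it only yields existence. This is why the statement is instead derived from the parametric overtwisted $h$-principle, which also covers dimension $3$ in the form of Theorem \ref{thm:h-principleLooseLegDim3}.
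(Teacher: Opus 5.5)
The paper gives no argument for this statement; it is quoted from \cite{BEM}. Your reduction to the relative parametric $h$-principle for overtwisted contact structures is the standard way to derive it. As written, however, Step 2 does not produce a Legendrian isotopy from $f_0$ to $f_1$.

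Since $\xi_t=\xi$ on $N$ for all $t$, the Gray isotopy $\psi_t$ (normalised by $d\psi_t(\xi)=\xi_t$) is the identity on $N$. So the only $\xi$-Legendrian isotopy it yields, $\psi_t^{-1}\circ f_0$, is the constant one. What you actually obtain is the contactomorphism $\Psi=\varphi_1\circ\psi_1$ of $(M,\xi)$, with $\Psi\circ f_0=f_1$. This $\Psi$ is only known to be \emph{smoothly} isotopic to the identity. The maps $\varphi_t\circ\psi_t^{\pm1}$ do not preserve $\xi$ for $0<t<1$, and nothing you have built makes ``$\varphi_t\circ(\text{correction})\circ f_0$'' Legendrian.

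The missing ingredient is a second, two-parameter application of \cite{BEM}. The genuine paths $\eta_t=\varphi_t^*\xi$ and $\xi_t$ from $\xi$ to $\varphi_1^*\xi$ are homotopic rel endpoints through almost contact structures standard near $\Delta$. You need them to be homotopic rel endpoints through \emph{genuine} contact structures standard near $\Delta$. This uses parameter space $[0,1]^2$ relative to its boundary, relative only to $\Op(\Delta)$ and not to $N$. It is the $\pi_1$-injectivity part of the weak equivalence, which a one-parameter application relative to $N$ cannot supply.

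With this in hand, lift the resulting square $\zeta_{t,u}$ through the Gray fibration $\mathrm{Diff}(M)\to\{\text{contact structures}\}$ to $\Theta_{t,u}$ with $\Theta_{t,u}^*\xi=\zeta_{t,u}$, prescribing $\Theta_{t,0}=\varphi_t$ and $\Theta_{0,u}=\id$. Then $\Theta_{t,1}\circ f_0$ followed by $\Theta_{1,1-u}\circ f_0=(\Theta_{1,1-u}\circ\varphi_1^{-1})\circ f_1$ is a genuine Legendrian isotopy from $f_0$ to $f_1$. The square $\Theta_{t,u}\circ f_0$, with transported formal data, is the required homotopy rel endpoints to $(f_t,F_{s,t})$.

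Two smaller points.

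First, the general-position claim in Step 1 is false in every dimension, including $3$. The $(n+1)$-dimensional trace of $f_t$ generically meets the $2n$-dimensional disk $\Delta\subset M^{2n+1}$. What works is to make the trace miss a point $p\in\Delta$, hence a small ball $B\ni p$. Then transport the formal isotopy by the inverse of a smooth isotopy that is supported near $\Delta$, stays away from $f_0(L)\cup f_1(L)$, and compresses $\Delta$ into $B$; correct the Lagrangian part of the formal data along the induced path of almost contact structures.

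Second, the linear-algebra fact you propose for Step 1 is false for $n\geq 2$. Conformal symplectic structures on a fixed hyperplane $H$ making a given $n$-plane $P$ Lagrangian are not a contractible family: the induced isomorphism $H/P\to P^*$ ranges over $GL(n)$ up to scale. What is true is that the stabiliser of $\xi|_{f_0(L)}$, among automorphisms of $TM|_{f_0(L)}$ fixing $df_0(TL)$, is contractible. That is what makes the Weinstein correction at $t=1$ canonical. The remaining, non-contractible, linear data has to be taken from $F_{s,t}$ itself. Lift the path of monomorphisms $s\mapsto(d\varphi_t)^{-1}F_{1-s,t}$ to $GL(TM|_{f_0(L)})$ relative to $t\in\{0,1\}$, and realise the resulting linear maps at $t=1$ by diffeomorphisms fixing $f_0(L)$.
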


\subsection{The energy of isotopies}

Given a compactly supported contact isotopy $(\phi^H_t)_{t \in [0,1]}$ with Hamiltonian $H_t$, we define its energy as 
\begin{equation*}
    \Vert \phi^H_t \Vert_\alpha \coloneqq \int_0^1 \max_M |H_t| dt.
\end{equation*}
This notion of energy induces a pseudo-norm $\Vert \cdot \Vert_\alpha$ and a pseudo-distance $d_\alpha$ on the identity component of the (compactly-supported) contactomorphism group by taking the infimum over the energies of all paths either from the identity to a given contactomorphism or from one contactomorphism to another. These are called the \emph{Shelukhin-Hofer norm} and the \emph{Shelukhin-Hofer distance}. By a theorem of Shelukhin \cite{shelukhin2017hofer}, $\Vert \cdot \Vert_\alpha$ and $d_\alpha$ are non-degenerate.

If the contact form $\alpha$ is replaced by $\chi \alpha$ for some positive function $\chi$, then the Hamiltonian $H_t$ (with respect to $\alpha$) of a contact isotopy $\psi_t$ is changed to $\chi H_t$. Consequently, we have that
\begin{equation}\label{eq:DistanceAndConformalFactor}
    \Vert \cdot \Vert_{\chi \alpha} \leq \left(\sup_M \chi \right)\Vert \cdot \Vert_\alpha, \quad d_{\chi \alpha} \leq \left( \sup_M \chi \right) d_\alpha. 
\end{equation}

\subsubsection{Legendrian isotopies}

We define the energy of a Legendrian isotopy $f_t$ similarly, as the infimum of the energies of all contact isotopies that induce $f_t$. This is equivalent to 
\begin{equation*}
    \Vert f_t \Vert_\alpha = \int_0^1 \max_L |\alpha(\partial_t f_t)| dt.
\end{equation*}
As before, the notion of energy induces a pseudo-metric $d_\alpha$ on each isotopy class of Legendrian embeddings, called the \emph{Shelukhin-Chekanov-Hofer distance}.

Observe that the distance $d_\alpha$ between two Legendrian embeddings that are isotopic via domain reparametrisations is zero; this can be shown directly or invoking Corollary \ref{cor:boudnednessOfStrictWeinsteinNbhdOfHeightC}. The question of non-degeneracy for $d_\alpha$ (as studied in Corollary \ref{cor:LegegendriansWithVanishingd_alpha}) becomes meaningful once one quotients out said reparametrisations (or, even further one mods out by all diffeomorphisms of the domain, not just isotopies, yielding thus a distance in the space of Legendrian submanifolds).

\subsubsection{Reparametrizations of contact isotopies}

The Shelukhin-Chekanov-Hofer metric was defined by minimizing the integral of the $L_\infty$-norm of the generating Hamiltonian. It turns out that is enough to minimize the $L_\infty$-norm of the Hamiltonians.

\begin{lemma}\label{lem:EquiparametrizedHamiltonian}
    For two contactomorphisms $\phi_0$ and $\phi_1$ isotopic to the identity, we have that
    \begin{equation*}
        d_\alpha(\phi_0,\phi_1) = \inf \left\{ \max_{(t,x) \in [0,1]\times M} |H_t|  \Big|\, \phi^H_1 \circ \phi_0 = \phi_1 \right\}.
    \end{equation*}
    An analogous formula holds for Legendrian embeddings.
\end{lemma}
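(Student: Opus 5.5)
The idea is to reparametrize in time so that the Hamiltonian has constant sup-norm in $t$. Once that holds, the integral defining the energy equals the pointwise maximum. I treat the contactomorphism case first and then the Legendrian case.

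First, the easy inequality. For any $H$ with $\phi^H_1\circ\phi_0=\phi_1$,
\[
\int_0^1 \max_M |H_t|\,dt \leq \max_{(t,x)}|H_t|.
\]
The distance $d_\alpha(\phi_0,\phi_1)$ is the infimum of the left-hand side over all such paths. Hence $d_\alpha(\phi_0,\phi_1)$ is at most the infimum on the right-hand side in the statement.

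For the reverse inequality, fix $\eps>0$ and a compactly supported path $\phi^H_t$ with $\phi^H_1\circ\phi_0=\phi_1$ and $\int_0^1 \max_M|H_t|\,dt < d_\alpha(\phi_0,\phi_1)+\eps$. I reparametrize time by a diffeomorphism $\tau:[0,1]\to[0,1]$, $\tau(0)=0$, $\tau(1)=1$. The path $\phi^H_{\tau(t)}$ is generated by $\tau'(t)H_{\tau(t)}$; this follows from $H_t\circ\phi^H_t=\alpha(\partial_t\phi^H_t)$ and the chain rule. Set $E := \int_0^1\max_M|H_s|\,ds$ and $m(s):=\max_M|H_s|$, which is continuous in $s$. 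The ideal choice is the inverse of $\sigma(s)=\frac1E\int_0^s m$. With it, $\tau'(t)\,m(\tau(t)) = E$ for all $t$. Then the new Hamiltonian has $\max_{(t,x)}|\tau'(t)H_{\tau(t)}| = E < d_\alpha+\eps$. Letting $\eps\to0$ gives the claim.

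The main obstacle is that $m$ may vanish on intervals, so $\sigma$ need not be a diffeomorphism. To get around this, replace $m$ by $m+\delta$ for small $\delta>0$ in the definition of $\sigma$. Then $\sigma$ is a smooth-enough strictly increasing homeomorphism, and $\tau'(t)\,m(\tau(t))\leq E+\delta$. Since $m$ is only continuous, $\tau$ is $C^1$ but perhaps not smooth. I would either smooth $m+\delta$ from above by a smooth function $\tilde m \geq m$ with $\int\tilde m \leq E+2\delta$, or note that $C^1$ time reparametrizations suffice, possibly after a final approximation by smooth isotopies with $C^0$-close Hamiltonians. Either way the sup-norm bound becomes $E+2\delta$, and the endpoint condition is preserved because $\tau$ fixes $0$ and $1$.

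For Legendrian embeddings the same argument applies verbatim, with $m(s):=\max_L|\alpha(\partial_s f_s)|$. Reparametrizing gives $\alpha(\partial_t f_{\tau(t)}) = \tau'(t)\,\alpha(\partial_s f_s)|_{s=\tau(t)}$. The easy inequality likewise holds with the Legendrian energy formula.
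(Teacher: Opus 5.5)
Your proof is correct and takes the same route as the paper. You reparametrize time so that the reparametrized Hamiltonian $\tau'(t)H_{\tau(t)}$ has sup-norm essentially constant in $t$, equal up to $\delta$ to the energy; the paper delegates exactly this reparametrization step to \cite[Lemma 3.1]{Nak23}, applied to $t \mapsto \max_{x\in M}|H_t|$, whereas you prove it directly via $\sigma(s)=\frac{1}{\int\tilde m}\int_0^s\tilde m$ with a smooth $\tilde m \geq m+\delta$.
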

\begin{proof}
    This follows immediately from \cite[Lemma 3.1]{Nak23} applied to $t \mapsto \max_{x \in M} |H_t|$.
\end{proof}

\subsubsection{The energy-capacity inequality}

On some classes of contact manifolds, including all closed contact manifolds and contactizations of geometrically bounded symplectic manifolds \cite{DRS20,DRS24}, and for orderable Legendrian isotopy classes \cite{AA25,Nak23} this distance is non-degenerate, but in general it is not \cite{Can23,Nak24a}, see Corollary \ref{cor:LegegendriansWithVanishingd_alpha}. In fact, the following special case of the main result of \cite{DRS24} can provide lower bounds on the distance of two Legendrians.
\begin{theorem}[\cite{DRS24}]\label{thm:energy-capacity inequality}
    Let $(M,\alpha)$ be a closed strict contact manifold and $L \subseteq M$ be a closed Legendrian submanifold. We let $\hbar$ be the infimum of the actions of all non-trivial Reeb chords of $L$ and all closed, contractible Reeb orbits. Then, for any Hamiltonian $H_t:M \to \R$, $t \in [0,1]$, such that 
    \begin{equation*}
        \Vert H_t \Vert_{osc} \coloneqq \int_0^1 \left(\max_{x \in M} H_t(x) - \min_{x \in M} H_t(x) \right) dt < \hbar,
    \end{equation*}
    there exists a Reeb chord between $L$ and each $\phi^H_t(L)$ of action not larger than $\Vert \phi_t^H\Vert_\alpha$.
\end{theorem}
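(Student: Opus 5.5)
The plan is to obtain the statement as a direct specialisation of the main theorem of \cite{DRS24}, not to reprove it. I will first recall that theorem in the generality in which it is stated there. For a closed strict contact manifold $(M,\alpha)$, a closed Legendrian $L$, and a compactly supported contact isotopy $\phi^H_t$, it gives a Reeb chord from $L$ to $\phi^H_1(L)$ of action at most $\Vert \phi^H_t\Vert_\alpha$. The hypothesis is that the oscillation of $H$ is smaller than the minimal action of the holomorphic objects that could bubble off. In our setting those objects are non-trivial Reeb chords of $L$ and contractible closed Reeb orbits. The first task is therefore bookkeeping: check that their ``$\hbar$'' coincides with, or is bounded below by, the quantity $\hbar$ in our statement, and that their energy convention matches $\Vert\cdot\Vert_\alpha$ from the energy subsection above.

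The second step passes from the time-one map to every intermediate time $t$. For fixed $t\in[0,1]$, I would reparametrise the truncated path $(\phi^H_s)_{s\in[0,t]}$ to a path on $[0,1]$. Its Hamiltonian is $tH_{ts}$, whose oscillation norm is $\int_0^t(\max H_s-\min H_s)\,ds \le \Vert H\Vert_{osc}<\hbar$. Its energy is $\int_0^t \max|H_s|\,ds\le \Vert\phi^H_t\Vert_\alpha$. Applying the time-one statement to this truncated isotopy then gives a chord between $L$ and $\phi^H_t(L)$ of action at most the full energy.

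The third step concerns the argument behind \cite{DRS24}, in case the bookkeeping needs justification. The trace of the Legendrian isotopy gives an exact Lagrangian cobordism in the symplectisation, and one compares it with the trivial cylinder $\R\times L$. The Floer/Chekanov--Eliashberg-type complex of the pair is non-trivial for the trivial cylinder, and it is invariant under the deformation as long as no bubbling occurs. The oscillation bound $<\hbar$ is exactly what rules out breaking along short chords of $L$ or contractible orbits. A non-trivial generator then survives and is represented by a holomorphic strip. Its action is controlled by the energy of the isotopy through the standard estimate $\int\max|H_t|\,dt$ on the positive part of the cobordism, and this yields the required chord.

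The main obstacle is the compactness and bubbling analysis in this step. It is where the assumption that $M$ is closed and the precise form of $\hbar$ enter. I would not redo it here but cite \cite{DRS24}, making sure only that orientation and sign conventions for $H$ agree with ours.
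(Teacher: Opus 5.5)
Your truncation argument for intermediate times is correct. The gap is elsewhere: the proposal assumes the one thing that has to be proved. You take as given that the main theorem of \cite{DRS24} already yields a chord of action at most $\Vert\phi^H_t\Vert_\alpha$, so that only bookkeeping of $\hbar$ and sign conventions remains. It does not. \cite[Theorem 1.4]{DRS24} gives the existence of a Reeb chord under the oscillation hypothesis, and the action bound is exactly the part not stated there; it must be extracted from the proof. Your third step is where this would have to happen, but it only asserts that the action is controlled ``through the standard estimate $\int\max|H_t|\,dt$ on the positive part of the cobordism''. It also describes a Lagrangian-cobordism/Chekanov--Eliashberg argument, which is not the argument of \cite{DRS24}. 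So neither the citation nor the sketch actually delivers the bound.

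Here is what is needed. In \cite{DRS24}, the chord comes from an infinite bar in the barcode of a filtered relative Rabinowitz--Floer complex for the pair $(L,\phi^H_t(L))$, taken in a suitable action window. Its action is the starting value $l(t)$ of that bar.
\begin{itemize}
\item By \cite[Lemma 5.1]{DRS24}, $|\partial_t l(t)|\le\max_M|H_t|$.
\item Moreover $l(0)$ can be chosen arbitrarily close to $0$.
\item Hence $l(t)\le l(0)+\int_0^t\max_M|H_s|\,ds\le l(0)+\Vert\phi^H_t\Vert_\alpha$. This handles every $t$ at once, so your truncation step becomes unnecessary.
\end{itemize}
You also do not address that this complex is only defined for generic Hamiltonians. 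To fix this, approximate $H$ in $C^\infty$ by generic Hamiltonians whose oscillation stays below $\hbar$, and let $l(0)\to 0$. Then use compactness of $L$ to extract a limiting Reeb chord from $L$ to $\phi^H_t(L)$ that still satisfies the action bound.
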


\begin{proof}
    The only part that is not explicitly stated in \cite[Theorem 1.4]{DRS24} is the bound on the action. For this, recall the proof of \cite[Theorem 1.4]{DRS24}. One constructs barcodes associated to the filtered Rabinowitz-Floer complex with respect to certain action windows, and the existence of a Reeb chord follows from the existence of an infinite bar. By \cite[Lemma 5.1]{DRS24}, the starting value $l(t)$ of that infinite bar (and thus the action of the Reeb chord) satisfies $|\partial_t l(t)| \leq \max\limits_{x \in M} |H_t(x)|$, and $l(0)$ can be chosen to be arbitrarily close to zero. The construction required the use of a generic Hamiltonian but, by compactness of $L$, if we take a sequence of Hamiltonians that $C^\infty$-converge to $H_t$ and if we let $l(0)$ go to zero, then we will find a Reeb chord from $L$ to $\phi^H_t(L)$ with the required action bound.
\end{proof}

\subsubsection{Contact manifolds of bounded cost}

\begin{definition} \label{def:boundedCost}
We say that a strict contact manifold $(M^{2n+1},\alpha)$ is of \textbf{bounded cost $C$} if any compactly supported contact isotopy $\psi_t:M \to M$ with $t \in [0,1]$ can be homotoped with fixed endpoints to a contact isotopy of energy at most $C$.
\end{definition}

The following statement is proven in \cite{Nak24}, but we briefly recall the proof:
\begin{lemma} \label{lem:boundedCost}
Consider a Liouville domain $(W,\lambda)$ and a finite interval $(-C,C)$. Then, the (piece of) contactization $(W \times (-C,C), ds + \lambda)$ has bounded cost equal to $2C$.
\end{lemma}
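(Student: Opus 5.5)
The statement has two halves: an upper bound (every compactly supported contact isotopy of $W \times (-C,C)$ can be homotoped rel endpoints to one of energy at most $2C$) and sharpness (some isotopy class cannot be realised with energy below $2C$). I treat them separately; the upper bound carries the content.

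\textbf{Upper bound.} I would conjugate by a squeezing. Since $\lambda$ is Liouville, the Liouville flow $\varphi^\lambda_\tau$ satisfies $(\varphi^\lambda_\tau)^*\lambda = e^\tau \lambda$. Hence for $c \in (0,1]$ the map
\[ \sigma_c(x,s) = (\varphi^\lambda_{\log c}(x), cs) \]
is a contact embedding of $(W\times(-C,C), ds+\lambda)$ into itself with conformal factor $c$. It shrinks both the $s$-interval and the $W$-direction. Given $\psi_t$ with Hamiltonian $H_t$, I would consider the family $\sigma_c\circ\psi_t\circ\sigma_c^{-1}$, extended by the identity outside the image of $\sigma_c$. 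Its Hamiltonian is $cH_t\circ\sigma_c^{-1}$, so its energy tends to $0$ as $c\to 0$.

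Conjugating is not a homotopy with fixed endpoints, so I have to compare $\psi_t$ with its conjugate. The natural route is the following.
\begin{itemize}
\item Choose a compactly supported contact isotopy $\Phi_\tau$ that agrees with $\sigma_{c}$ on $\operatorname{supp}\psi$. One can take $\tau\mapsto\sigma_{e^{-\tau}}$, cut off by a Hamiltonian supported in $W\times(-C,C)$.
\item Write $\psi_1 = \Phi_1^{-1}\circ(\Phi_1\psi_1\Phi_1^{-1})\circ\Phi_1$. This is realised by the concatenation $\Phi^{-1} * (\text{conjugated }\psi) * \Phi$, and it is homotopic to $\psi_t$ rel endpoints by the obvious two-parameter family of conjugations.
\item Bound the energy: it is at most $\|\Phi\|+\|\Phi^{-1}\|+O(c)$.
\end{itemize}
The squeezing Hamiltonian is $\alpha(\partial_\tau\sigma)$, which at $(x,s)$ equals $-s + (\text{Liouville term})$. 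Since $\lambda(Z)=0$ for the Liouville field $Z$, this is essentially $-s$, of absolute value less than $C$. If the cut-off is done carefully, so that the absolute value stays below $C$, then $\Phi$ and its inverse each cost at most $C$, giving a total of $2C+\eps$.

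To remove the $\eps$, I would reorganise: run the squeezing and the unsqueezing simultaneously with the conjugated isotopy, using Lemma~\ref{lem:EquiparametrizedHamiltonian} to measure the $L^\infty$ norm. Alternatively, I would simply accept bounded cost as meaning at most $2C+\eps$ for every $\eps>0$, in the spirit of \cite{Nak24}.

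\textbf{Main obstacle.} The hardest step is the cut-off. The Hamiltonian $-s\cdot\rho$ with a bump function $\rho$ does not automatically keep its contact vector field tangent to the region, and the Reeb component could push points across $s=\pm C$. I would take Hamiltonians $-s\,\rho(x)\,\beta(s)$ with $|s\beta(s)|<C$ and $\beta$ vanishing near $\pm C$. The price is that the isotopy is no longer exactly $\sigma_c$ on the whole support; it only needs to be a strict-up-to-scaling embedding on $\operatorname{supp}\psi$. That suffices once $\rho\equiv1$ and $\beta\equiv1$ on a neighbourhood of the support, after first translating the support of $\psi$ in $s$ so that it lies in $|s|<C-\delta$.

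\textbf{Sharpness.} I would use the energy-capacity inequality, Theorem~\ref{thm:energy-capacity inequality}, or compare with the translation $s\mapsto s+(2C-\delta)$ cut off. Any isotopy realising the cut-off translation must displace a Legendrian of the form $L\times\{-C+\delta\}$ across the whole $s$-interval, and Reeb chord lengths then force energy at least $2C-2\delta$. This part is not needed for the upper bound and I would treat it briefly.
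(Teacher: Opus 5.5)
Your scheme is the paper's scheme: squeeze with the lifted Liouville flow $\sigma_c$, conjugate $\psi_t$ into the squeezed region, and note that ``squeeze, conjugated isotopy, unsqueeze'' is homotopic to $\psi_t$ rel endpoints. But the one estimate that carries the content of the lemma, $\|\Phi\|\le C$, fails for the squeeze you actually build.

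The energy of $\Phi$ is $\int\max_M|K_\tau|\,d\tau$ over the whole squeezing time $\tau\in[0,\log(1/c)]$. So a pointwise bound $|K_\tau|<C$ only gives $\|\Phi\|\le C\log(1/c)$; reparametrising to unit time just multiplies $K$ by $\log(1/c)$. Your cut-off $K=-s\,\rho(x)\beta(s)$, with $\beta\equiv1$ on $|s|\le C-\delta$, is time-independent. Hence $\max_M|K|\ge C-\delta$ at every time, and the isotopy you construct really has energy at least $(C-\delta)\log(1/c)$. For the conjugated piece $c\,H_t\circ\sigma_c^{-1}$ to be cheap, $c$ must be of order $\eps/\max|H|$. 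So you get no bound independent of $\psi$, which is what bounded cost requires.

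The missing idea is to shrink the cut-off together with the region being squeezed.
\begin{itemize}
\item Since $\operatorname{supp}\psi$ is compact in the open set, it lies in $W\times(-C',C')$ for some $C'<C$. No preliminary translation in $s$ is needed, and it would cost energy anyway.
\item At time $\tau$ the image of the support lies in $\{|s|\le e^{-\tau}C'\}$.
\item Take $K_\tau=-s\,\rho(x)\beta(e^{\tau}s)$, where $0\le\beta\le1$, $\beta\equiv1$ near $[-C',C']$, and $\beta$ is supported in $(-C'-\delta,C'+\delta)$ with $C'+\delta<C$.
\item The resulting isotopy still agrees with $\sigma_{e^{-\tau}}$ on $\operatorname{supp}\psi$.
\item Now $\max_M|K_\tau|\le e^{-\tau}(C'+\delta)$, hence $\|\Phi\|\le\int_0^\infty e^{-\tau}(C'+\delta)\,d\tau=C'+\delta$.
\end{itemize}
This is precisely the paper's computation $\int_{-\infty}^0e^tC\,dt=C$. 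Passing from $C$ to $C'<C$ is also how the paper gets total cost $2C'+2\delta<2C$ exactly, so your proposed reorganisation to remove the $\eps$ is unnecessary.

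Finally, by Definition~\ref{def:boundedCost}, ``bounded cost $2C$'' is only an upper bound, and that is all the paper proves. Your sharpness paragraph is therefore not needed. As sketched it is also not an argument: Theorem~\ref{thm:energy-capacity inequality} is stated for closed manifolds, and nothing shows that an isotopy must push a Legendrian across the whole interval.
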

\begin{proof}
The Liouville vector field $X$ lifts to a contact vector field $s\partial_s + X$ in the contactisation $W \times \R$. Its Hamiltonian function is $s$. Its flow $\psi_t$ is defined for $t \in (-\infty,0]$ and satisfies $\psi_t(W \times (-C,C)) \subset W \times e^t(-C,C)$. It is not compactly-supported.

At time $t$, we can cut-off the time-independent Hamiltonian $s$ outside of $W \times e^t[-C,C]$, producing a time-dependent Hamiltonian $H_t$ in $W \times \R$ that is compactly-supported. It still compresses $W \times (-C,C)$ into $W \times e^t(-C,C)$. The energy of this Hamiltonian can be assumed to be arbitrarily close to $\int_{-\infty}^0 e^tC dt = C$.

According to our conventions (Subsection \ref{sssec:conventions}), a compactly-supported isotopy of $W \times (-C,C)$ is supported in the interior. As such, it extends to the full contactization as the identity. We can thus conjugate it by the aforementioned compression to produce another compactly-supported isotopy, in the same isotopy class with fixed endpoints, and energy arbitrarily close to $2C$.

Since the support of $\psi_t$ is, in fact, contained in $W\times  (-C',C')$ for some $C' < C$, the lemma follows by applying the above argument to $C'$.
\end{proof}
Concrete examples are bounded tubular neighbourhoods of the zero section in $1$-jet spaces $J^1(L,\R)$ endowed with the standard contact form. Darboux balls (and thus loose charts) are special cases. In particular, Lemma \ref{lem:boundedCost} applies to strict Weinstein neighbourhoods of Legendrian embeddings of height $C$:

\begin{corollary}\label{cor:boudnednessOfStrictWeinsteinNbhdOfHeightC}
    A strict Weinstein neighbourhood of a proper Legendrian embedding of height $C$ has bounded cost equal to $C$.
\end{corollary}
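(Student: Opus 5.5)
The plan is to identify a strict Weinstein neighbourhood of height $C$ with a piece of contactization and then invoke Lemma~\ref{lem:boundedCost}. Write the neighbourhood as the image of a strict contact embedding
\[ \phi: \{|z| < \delta^z, |p| < \delta^p\} \subseteq (J^1L, dz+\lambda_\can) \to (M,\alpha), \qquad 2\sup_L \delta^z = C. \]
Because $\phi$ is strict, both Hamiltonians and energies transfer exactly. So it is enough to prove that the domain $V := \{|z|<\delta^z(q), |p|<\delta^p(q)\}$ has bounded cost $C$ for the form $dz+\lambda_\can$. The constant $C$ here corresponds to the $2C$ of Lemma~\ref{lem:boundedCost}, since the height is the full $z$-width, i.e.\ $2\sup\delta^z$.

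The model $V$ is not quite a product $W\times(-C/2,C/2)$, for two reasons. The $p$-radius $\delta^p$ may vary, and so may the $z$-radius $\delta^z(q)$. In addition, $L$ may be noncompact, so $\{|p|<\delta^p\}$ is not a Liouville domain.

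To handle this, I would not use the product structure literally. Instead I would rerun the proof of Lemma~\ref{lem:boundedCost} directly in $V$.
\begin{enumerate}
\item The Liouville field $p\partial_p$ on $T^*L$ lifts to the contact vector field $Y = z\partial_z + p\partial_p$, whose Hamiltonian is $z$.
\item The backward flow of $Y$ is $(q,p,z)\mapsto(q,e^tp,e^tz)$ for $t\le 0$. It maps $V$ into itself, and it contracts $V$ into $\{|z|<e^t\delta^z, |p|<e^t\delta^p\}$. Note that $q$ is fixed, so varying radii cause no problem.
\item A compactly supported isotopy $\psi_s$ of $V$ has support in a compact set $K\subset V$. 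Hence on $K$ we have $|z|\le c'<\delta^z$ pointwise and $\sup_K |z| \le C'/2$ with $C'<C$.
\item Cut off the Hamiltonian $z$ near the image of $K$ under the compression, and apply the compression up to time $-T$. The cost of this step is at most $\int_{-T}^0 e^t C'/2\,dt < C'/2$, up to an arbitrarily small error.
\item Conjugate $\psi_s$ by the compression. The conjugated isotopy has Hamiltonian $e^{-T}H\circ(\cdot)$ scaled accordingly, so its energy is $e^{-T}\|\psi\|$, which tends to $0$.
\item Undo the compression at a further cost of $C'/2$.
\end{enumerate}

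These steps give an isotopy in the same fixed-endpoint homotopy class with energy at most $C' + \eps \le C$. The homotopy comes from the parameter $T$ of the conjugation, exactly as in Lemma~\ref{lem:boundedCost}. Since the cutoffs are compactly supported in $V$, everything stays supported in the interior of the neighbourhood.

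The main technical point is the cutoff. The Hamiltonian $z$ must be modified outside a compact set containing all the compressed images of $K$, and the compression must still act correctly there. Since $K$ is compact and the flow of $Y$ preserves $q$, I would cut off by a function of $(q,p,z)$ that equals $1$ on the compact set $\bigcup_{t\in[-T,0]}\phi^Y_t(K)$ and is supported in $V$. On that set the truncated flow agrees with the flow of $Y$. Outside it the truncated Hamiltonian still satisfies $|H_t|\le \sup_{\phi^Y_t(K)}|z|$ if the cutoff region is chosen inside $\{|z|\le e^t c'\}$ up to a small error. I expect this step to take the most care.

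The reverse inequality (that the cost is exactly $C$, not less) is not what is used. If required, "equal to $C$" should be read as in Lemma~\ref{lem:boundedCost}, where the upper bound is what is established.
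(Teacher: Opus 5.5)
Your proposal is correct and follows the paper's route. The paper simply asserts that Lemma~\ref{lem:boundedCost} applies, and you rerun its compress--conjugate--decompress argument directly in $\{|z|<\delta^z,\,|p|<\delta^p\}$ via the fibrewise scaling $z\partial_z+p\partial_p$; this is what is needed to cover non-compact $L$ and non-constant radii, and using $\max_K|z|<C/2$ yields energy at most $C$ rather than merely close to $C$.

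The one thing to fix is your cutoff paragraph. The cutoff must be time-dependent, equal to $1$ only near $\phi^Y_t(K)$ at time $t$, as in your step 4 and in the paper. A single function equal to $1$ on all of $\bigcup_{t\in[-T,0]}\phi^Y_t(K)\supseteq K$ forces $\max|H_t|\geq C'/2$ for every $t$, so the compression would cost about $TC'/2$. With the time-dependent cutoff, ODE uniqueness shows the truncated flow still agrees with $\phi^Y_t$ on $K$.
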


\subsection{Quantitative flexibility of loose Legendrians}

In Section \ref{sec:introduction}, we defined the stabilization cost of a connected, higher-dimensional, loose Legendrian as the energy that is needed to produce a loose chart inside of a strict Weinstein neighbourhood of arbitrarily small height. If the Legendrian embedding $f:L \to M$ is disconnected, we instead require that we can produce a loose chart inside of a Weinstein neighbourhood of arbitrarily small height for all components of $L$ at the same time, see Remark \ref{rmk:stabilizationCostDisconnected}.

We now relate the stabilization cost to the notion of \emph{loose size} introduced in \cite{Nak24}. It is defined as follows:
\[ \mathcal{S}(f;M) := 2\limsup\limits_{\eps \to 0} \inf\limits_{g} d_\alpha(f,g), \]
where $(M,\alpha)$ is a strict contact manifold of dimension at least $5$, $f$ is a proper Legendrian embedding into $M$, and $g$ ranges over all Legendrians that have a loose chart contained in a strict Darboux ball of radius $\eps$.

\begin{lemma} \label{lem:Nakamura}
$\mathcal{S}(f;M) = 2\mathcal{C}(f;M)$. 
\end{lemma}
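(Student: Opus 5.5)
To prove $\mathcal{S}(f;M) = 2\mathcal{C}(f;M)$, I will compare the two classes of target Legendrians $g$. Write $\mathcal{S}(f;M) = 2\limsup_{\eps\to 0}\inf_{g \in A_\eps} d_\alpha(f,g)$, where $A_\eps$ is the set of Legendrians with a loose chart in a strict Darboux ball of radius $\eps$. Similarly $\mathcal{C}(f;M) = \limsup_{\eps \to 0}\inf_{g\in B_\eps} d_\alpha(f,g)$, where $B_\eps$ is the set of $\eps$-stabilizations. It suffices to show two things: every element of $B_\eps$ is $\delta(\eps)$-close in $d_\alpha$ to an element of some $A_{\eps'}$, and every element of $A_\eps$ is $\delta(\eps)$-close to an element of some $B_{\eps'}$. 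Here $\delta(\eps),\eps'\to 0$ as $\eps\to 0$. The triangle inequality for $d_\alpha$ then gives equality of the two $\limsup$'s, and the factor $2$ is just the normalisation in the definition of $\mathcal{S}$.

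\emph{From stabilizations to small loose charts.} Let $g$ be a stabilization of $h$ inside a strict Weinstein neighbourhood $W$ of height $\eps$. Then $g$ is loose inside $W$, and by Murphy's h-principle (Theorem \ref{thm:LooseLegh-principle}, applied inside $W$ using the proper version) we can isotope $g$ within $W$, compactly supported in $W$, to a Legendrian $g'$. This $g'$ is again a stabilization of $h$, but its loose chart is contained in an arbitrarily small Darboux ball. A stabilization is a local modification, so the chart can be taken of radius $\eps$, and the formal class is unchanged because it is the same stabilization. By Corollary \ref{cor:boudnednessOfStrictWeinsteinNbhdOfHeightC}, $W$ has bounded cost $\eps$, so $d_\alpha(g,g') \le \eps$.

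\emph{From small loose charts to stabilizations.} Let $g$ have a loose chart in a Darboux ball $B$ of radius $\eps$. I want to exhibit $g$, up to small energy, as a stabilization of some $h$ inside a thin Weinstein neighbourhood. Unstabilizing inside the chart is not possible in general, so instead I use the chart to build one. Take a strict Weinstein neighbourhood $W$ of $g$ itself, of height $\eps$. Inside $W$, the stabilization $s(g)$ of $g$ supported near a point of the loose chart is formally isotopic to $g$ relative to the complement of a compact set of $W\cup B$. Both $g$ and $s(g)$ are loose in $W\cup B$. Murphy's h-principle, applied in the open set $W\cup B$, gives a compactly supported isotopy between them. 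The set $W\cup B$ is contained in a strict Weinstein neighbourhood of $g$ of height $O(\eps)$, since $B$ has radius $\eps$ and intersects $g$. Hence, by Corollary \ref{cor:boudnednessOfStrictWeinsteinNbhdOfHeightC}, the isotopy can be chosen with energy $O(\eps)$. This shows $s(g)\in B_{O(\eps)}$ and $d_\alpha(g,s(g)) = O(\eps)$.

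I expect the main obstacle to be making this last step rigorous. Loose charts in $B$ lie within distance $\eps$ of $g$, but the height of the Weinstein neighbourhood containing $B$ must be controlled in the $z$-direction while remaining a genuine strict Weinstein neighbourhood. The fallback is to cite the corresponding comparison in \cite{Nak24}, where loose size is shown to be computable from stabilizations in thin neighbourhoods. For disconnected $L$, I will apply the argument componentwise with disjoint neighbourhoods, as in Remark \ref{rmk:stabilizationCostDisconnected}.
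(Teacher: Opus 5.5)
Your first direction is correct and is exactly the paper's argument: you shrink a stabilization inside the height-$\eps$ neighbourhood $W$ and pay at most $\eps$ by Corollary~\ref{cor:boudnednessOfStrictWeinsteinNbhdOfHeightC}. The converse direction breaks at the step you flagged, and the problem is not technical. The claim that $W\cup B$ lies in a strict Weinstein neighbourhood of $g$ of height $O(\eps)$ is false for every height. By hypothesis $B$ contains a loose chart for $g$. A strict Weinstein neighbourhood of $g$ is a neighbourhood of the zero section in $J^1L$, and the zero section admits no loose chart there. To see this, double a compact piece of $L$ containing the projection of the chart. This gives a loose chart for the zero section of $J^1$ of a closed manifold. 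That zero section has no Reeb chords, so its Legendrian contact homology is non-vanishing, whereas loose Legendrians have vanishing Legendrian contact homology. The fact that $B$ has radius $\eps$ and meets $g$ is a metric statement with no bearing on this. Corollary~\ref{cor:boudnednessOfStrictWeinsteinNbhdOfHeightC} says nothing about $W\cup B$, so your bound $d_\alpha(g,s(g))=O(\eps)$ is unsupported. There is also a secondary slip. A stabilization near a point (along a ball) changes the formal Legendrian class. For $s(g)$ to be formally isotopic to $g$ you need a stabilization along some $U$ with $\chi(U)=0$, as in Murphy's work.

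Your approach can be rescued by not enlarging the region at all. Perform the $\chi(U)=0$ stabilization inside an arbitrarily thin Weinstein neighbourhood of $g$, at a point of the same component of $g\cap B$ as the loose chart but away from it. Then the formal isotopy and the loose chart both live in $B$. The proper h-principle (Theorem~\ref{thm:LooseLegh-principle}) produces an isotopy compactly supported in $B$. The Darboux ball $B$ itself has bounded cost $O(\eps)$ by Lemma~\ref{lem:boundedCost}. This repaired route uses only the qualitative h-principle.

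The paper argues differently. Given $g$ of loose size $\eps$, it takes an $\eps'$-stabilization $g'$ of some $h$ in the Legendrian isotopy class of $g$; this is possible since loose Legendrians are stabilized. By the first direction, $g'$ has loose size at most $2\eps'$. The paper then applies the quantitative h-principle, Theorem~\ref{thm:LooseLegQuantitativeFlexibility}, to get $d_\alpha(g,g')\le \eps/2+\eps'$. Your fallback of citing \cite{Nak24} points toward this, but as written it is not an argument.
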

\begin{proof}
Suppose we have an $\eps$-stabilization $g$. Then, the loose size of $g$ is bounded from above by $2\eps$, according to Corollary \ref{cor:boudnednessOfStrictWeinsteinNbhdOfHeightC}. It follows that, if $f$ is isotopic with cost $c$ to $g$, then the cost of isotoping $f$ to some $g'$ with a small loose chart is bounded above by $c+2\eps$. As $\eps$ approaches zero we can let $c$ approach the stabilization cost of $f$, showing that $c$ is an upper bound for half the loose size.

Conversely, assume that $g$ has loose size $\eps$. Let $g'$ be an $\eps'$-stabilization of $h$ in the same Legendrian isotopy class as $g$, then $d_\alpha(g,g') \leq \eps/2+\eps'$, by Theorem \ref{thm:LooseLegQuantitativeFlexibility} below. Since $\eps'$ can be assumed arbitrarily small, the stabilization cost of $g$ is bounded above by $\eps/2$. If now $f$ is isotopic with cost $c$ to such a $g$, we have that the stabilization cost of $f$ is at most $c+\eps/2$. As we let $\eps \to 0$, $2c$ can be assumed to approach loose size of $f$ which is thus double of the stabilization cost.
\end{proof}

The following result was established in \cite{Nak24} for the special case where $L$ is closed and connected. Using the previous lemma one could rephrase it using the stabilization cost.
\begin{theorem}\label{thm:LooseLegQuantitativeFlexibility}
    Let $(M,\alpha)$ be a strict contact manifold of dimension $\geq5$. Let  $f_0,f_1:L \to M$ be proper Legendrian embeddings that are formally Legendrian isotopic and have loose sizes $C_0$ and $C_1$, respectively. We have that:
    \begin{equation*}
        d_\alpha(f_0,f_1) \leq C_0/2 + C_1/2.
    \end{equation*}
\end{theorem}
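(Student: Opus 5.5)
The plan is to follow the scheme of \cite{Nak24}. First reduce each $f_i$ to a Legendrian carrying a tiny loose chart, at cost about $C_i/2$. Then connect the two stabilized Legendrians through a Legendrian isotopy supported where it can be made arbitrarily cheap.

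Fix $\delta>0$. By the definition of loose size, there is a Legendrian $g_i$ with $d_\alpha(f_i,g_i)\le C_i/2+\delta$ such that $g_i$ has a loose chart contained in a strict Darboux ball of radius $\eps$, with $\eps$ as small as we like. By Lemma~\ref{lem:EquiparametrizedHamiltonian} and the triangle inequality it then suffices to show
\[ d_\alpha(g_0,g_1)\le \eta(\eps), \qquad \eta(\eps)\to 0 \text{ as } \eps\to 0, \]
for suitable representatives $g_0,g_1$ in the formal class. The formal class is preserved, since the $g_i$ are Legendrian isotopic to the $f_i$. For disconnected $L$, all statements below are applied component by component, using disjoint loose charts as in the remark after Theorem~\ref{thm:LooseLegh-principle}.

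The key step compares $g_0$ and $g_1$. The idea is that a loose chart can be "self-replicated" inside an arbitrarily thin Weinstein neighbourhood: a stabilized Legendrian in a Darboux ball of radius $\eps$ is Legendrian isotopic, within a region of comparable size, to one carrying many small loose charts spread along the whole Legendrian. I would argue in three moves.
\begin{enumerate}
\item Use Murphy's h-principle (Theorem~\ref{thm:LooseLegh-principle}) to get a Legendrian isotopy $g_0\simeq g_1$, homotopic to the given formal isotopy. Write it as a concatenation of many $C^0$-short pieces, each supported in a small ball.
\item On each short piece, first move a copy of the small loose chart (transported along the Legendrian inside a Weinstein neighbourhood of height $O(\eps)$) to the location of the piece. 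Then perform the piece as a loose isotopy localized in a Weinstein neighbourhood of height $O(\eps)$ of the current Legendrian, where the cost is bounded by the height via Corollary~\ref{cor:boudnednessOfStrictWeinsteinNbhdOfHeightC}.
\item Sum the contributions. Done naively, the costs add up over the many pieces, so the pieces must be organised to run simultaneously.
\end{enumerate}

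The main obstacle is exactly this accumulation in the third move. I would handle it as in \cite{Nak24}: realise the whole isotopy as a parametric family of small Legendrian moves in a single Weinstein neighbourhood of a fixed (wrinkled/stabilized) Legendrian. This relies on the fact that a formal isotopy can be approximated by a $C^0$-small genuine isotopy after stabilizing, mirroring the parametric Theorem~\ref{thm:hprincipleLegLinks} in high dimension. The total Hamiltonian is then supported in one neighbourhood of height $O(\eps)$, and Corollary~\ref{cor:boudnednessOfStrictWeinsteinNbhdOfHeightC} bounds its energy by $O(\eps)$. Letting $\eps,\delta\to 0$ gives $d_\alpha(f_0,f_1)\le C_0/2+C_1/2$.
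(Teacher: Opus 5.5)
\textbf{There is a genuine gap: the core step has no cancellation mechanism.} Your reduction to representatives $g_i$ with loose charts in Darboux balls of radius $\eps$, at cost $C_i/2+\delta$, is fine. But nothing in your sketch actually proves the claim $d_\alpha(g_0,g_1)\to 0$.

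Corollary \ref{cor:boudnednessOfStrictWeinsteinNbhdOfHeightC} only bounds isotopies supported in a \emph{fixed} Weinstein neighbourhood of height $O(\eps)$. In general no such neighbourhood of a fixed Legendrian contains both $g_0$ and $g_1$, which can be far apart (think of $g_1$ as a long Reeb push-off of $g_0$). If instead the neighbourhood is carried along by the isotopy, its motion is generated by the original Hamiltonian, whose energy is exactly what you are trying to avoid.

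Subdividing does not help either. A piece can be performed inside a height-$O(\eps)$ neighbourhood of the current Legendrian only if its endpoint lies in that neighbourhood, i.e.\ it is $C^0$-small at scale $\eps$ in the Reeb direction. So the number of pieces grows like $1/\eps$. These pieces are consecutive, each starting where the previous one ends, so they cannot be "run simultaneously", and summing their costs just recovers the original energy.

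Finally, $C^0$-approximating a formal isotopy by a genuine one gives no energy control at all. What is missing is a \emph{cancellation} mechanism that lets the moving submanifold absorb the ambient Hamiltonian. Loose charts that are merely transported around do not provide one.

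\textbf{How the paper supplies this mechanism.} Following \cite{Nak24}, the paper uses the vanishing of the Shelukhin--Chekanov--Hofer pseudo-distance for non-Legendrian submanifolds \cite{RoZhChekanovsDichotomy,DRSC0Limits}. The steps are:
\begin{enumerate}
\item Take a contact isotopy $\Phi_t$ with $\Phi_1\circ g_0=g_1$, given by Murphy's h-principle.
\item Perturb $g_0$ to a formal Legendrian $g$ that is non-Legendrian almost everywhere near the support of $\Phi_t$ and unchanged near infinity.
\item Find a contact isotopy $\phi_t$ of arbitrarily small energy with $\phi_1\circ g=\Phi_1\circ g$ and $\phi_t\circ g$ $C^0$-close to $\Phi_t\circ g$.
\item $C^0$-approximate $g$ by a loose Legendrian $\chi$. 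The isotopy $\phi_t\circ\chi$ is then cheap, because $\phi_t$ is cheap as an ambient isotopy.
\item Only the two short bridges $g_0\simeq\chi$ and $\phi_1\circ\chi\simeq g_1$ are done with Murphy's h-principle, inside thin Weinstein neighbourhoods. This is where bounded cost legitimately applies.
\end{enumerate}
The $3$-dimensional analogue is Lemma \ref{lem:SmallEnergyIsotopiesOfNon-negLegWithManyStabilizations}, where sliding zig-zags produce an internal motion whose Hamiltonian approximately cancels the ambient one. Your idea of spreading many small loose charts along the Legendrian could replace this only if you actually built such a cancelling internal motion, and the proposal does not do this.
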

\begin{proof}
     We explain how to generalize the arguments in \cite{Nak24} to the present case. The two ingredients in the proof are Murphy's h-principle for loose Legendrians \cite{Mur} (see Theorem \ref{thm:LooseLegh-principle} above) and the vanishing of $d_\alpha$ for non-Legendrians \cite{RoZhChekanovsDichotomy,DRSC0Limits}. The latter holds for properly embedded and possibly disconnected non-Legendrians; see \cite[Theorem 2.4, Remark 2.5]{Nak24}. Note further that \cite[Corollary 2.6]{Nak24} also applies in the case that $f$ is Legendrian outside of a compact set and non-Legendrian almost everywhere in a neighbourhood of the support of $\Phi_t$, i.e.\,given such an embedding $f:L \to M$ and a contact isotopy $\Phi_t:M \to M$, we can find for any $\delta  > 0$ another contact isotopy $\phi_t$ of arbitrarily small energy such that $\Phi_1 \circ f = \phi_1 \circ f$ and $\phi_t \circ f$ is $\delta$-close to $\Phi_t \circ f$ in the $C^0$-norm.

    The proof of the main result of \cite{Nak24} follows the following steps:
    \begin{itemize}
        \item Perturb $f_0$ to a formal Legendrian isotopy $(g,G_s)$ so that $g$ is non-Legendrian almost everywhere.

        \item Pick a compactly supported small energy contact isotopy $\phi_t$ so that $\phi_1 \circ g = \Phi_1 \circ g$ and $\phi_t \circ g$ is $C^0$-close to $\Phi_t\circ g$.

        \item $C^0$-approximate $g$ by a loose Legendrian $\chi$.

        \item Apply Murphy's h-principle inside of sets of bounded cost to find Legendrian isotopies of bounded energy from $f_0$ to $\chi$ and from $\phi_1 \circ \chi$ to $f_1$.
    \end{itemize}

    We now modify the first step so that $(g,G_s)$ agrees with $(f_0,Df_0)$ outside of a compact subset and the intersection of $g(L)$ with a neighbourhood of the support of $\Phi_t$ is non-Legendrian almost everywhere. After this modification, the rest of the argument works as before. 
\end{proof}


\section{(Formal) Legendrians in standard position}\label{sec:bringing Legendrian in standard position}

In this section we discuss standard models for Legendrian arcs in the vicinity of the standard overtwisted disk (Subsection \ref{ssec:standardPosition}). We then explain how a (formal) Legendrian can be isotoped at one point to create a Legendrian arc in such a standard position (Subsections \ref{ssec:standardisingPoint} and \ref{ssec:standardisingOTDisc}). This is then used to prove that it is sufficient to consider a single stabilization when computing (an upper bound for) the overtwisted size (Subsection \ref{ssec:oneStabilization}). All of this boils down to drawing Legendrian isotopies, which we do using a cylindrical front projection; we introduce it in Subsection \ref{ssec:cylindricalProjection}.

\subsection{The cylindrical projection in overtwisted $\R^3$} \label{ssec:cylindricalProjection}

Consider $\R^3$ endowed with the standard overtwisted contact form $\alpha_\OT^\std = \cos(\rho) dz + \rho \sin(\rho) d \phi$, expressed in cylindrical coordinates.

\begin{definition}
The map $\R^3 \setminus \{\rho=0\} \rightarrow \NS^1 \times \R$ given by $(\rho,\phi,z) \mapsto (\phi,z)$ is called the \textbf{cylindrical projection}.
\end{definition}

Observe that the cylindrical projection is a Legendrian fibration (just like the usual front projection). Standard transversality arguments show that:
\begin{lemma}
For a Legendrian $L \subset (\R^3,\alpha_\OT^\std)$, the following property is open (in the $C^2$-topology) and dense (in the $C^\infty$-topology): $L$ is disjoint from the $z$-axis and its cylindrical projection is a curve whose only singularities are cusps and transverse self-intersections of multiplicity $2$.
\end{lemma}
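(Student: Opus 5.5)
Because the cylindrical projection $\pi(\rho,\phi,z)=(\phi,z)$ is a Legendrian fibration on $\R^3\setminus\{\rho=0\}$, I will treat it exactly like the usual front projection. The plan is to write $L$ locally as the Legendrian lift of a front, and to express every genericity condition as a transversality condition on jets of this front. Then Thom's jet transversality theorem gives density, and the stability of transverse intersections gives openness.

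First I handle the $z$-axis. Avoiding it is open in $C^0$, since $L$ is closed as a subset (proper) and the condition is checked on compact pieces where the Legendrian changes only near a compact set; this is consistent with the compactly supported conventions of Subsection \ref{sssec:conventions}. For density, note that the $z$-axis is a $1$-dimensional curve and $L$ is $1$-dimensional in a $3$-manifold, so a generic small contact isotopy makes them disjoint. Concretely, I take a contact Hamiltonian $H$ that is small and supported near the finitely many intersection points (finite on compact pieces after a preliminary perturbation), and I use transversality of the map $(s,x)\mapsto \phi^{sH}(x)$ to see that for generic small $s$ the image misses the axis. The transversality holds because the contact vector fields span $TM$ pointwise.

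Next, away from the axis, I use the front description. Along $\rho\neq 0$, the fibers of $\pi$ are the radial lines $\{\phi,z \text{ fixed}\}$, and they are Legendrian. The local model is the following. Near any point I choose Darboux-type coordinates in which $\pi$ becomes the standard front projection $(x,y,z)\mapsto (x,z)$ of $J^1\R$. This works because any Legendrian fibration is locally contactomorphic to the standard one, by a fibered Darboux/Arnold theorem. The key step is to check this carefully away from $\rho\in\{0\}$. At radii where $\cos\rho=0$, the roles of the coordinates $\phi$ and $z$ switch, but the fibration is still Legendrian, so the local model still applies.

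With the local model in hand, the genericity becomes standard Legendrian singularity theory (Arnold). For a $1$-parameter family of front germs, the generic singularities are semicubical cusps, and these are stable because they are $A_2$ Legendre singularities. Multiple-point behaviour is controlled by multijet transversality. Generically, self-intersections of the front are transverse double points, with no triple points, no tangencies, and no cusp lying on another branch, since each of these is a codimension $\geq 1$ condition on a $1$-dimensional source. Openness in $C^2$ follows because cusps and transverse double points are determined by $2$-jets: a cusp in the front corresponds to a fold of the map $L\to$ fiber direction, which is a $C^2$-stable condition on the Legendrian. Transverse double points persist under $C^1$-small perturbations of the front, which is guaranteed by $C^2$-small perturbations of $L$. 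Properness gives local finiteness, so the open and dense conditions assemble into global ones in the Whitney sense.

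I expect the main obstacle to be making the local normal form rigorous uniformly near $\cos\rho=0$ and near $\sin\rho=0$ with $\rho\ne 0$. To handle this, I would first compute $\ker\alpha_\OT^\std$ explicitly: the radial direction $\partial_\rho$ always lies in it, and this directly exhibits the fibers of $\pi$ as Legendrian curves. I would then invoke the general fact that any Legendrian fibration is locally the front projection, rather than writing coordinates by hand.
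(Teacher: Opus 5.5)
Your route is the one the paper has in mind; the paper gives no details beyond ``standard transversality arguments''. You reduce locally to the usual front via Arnold's normal form for Legendrian fibrations, which applies because the radial rays are Legendrian ($\alpha_\OT^\std(\partial_\rho)=0$), and then use (multi)jet transversality. You are also right to treat tangential double points as a genericity condition. Unlike for the usual front, two branches of an embedded Legendrian can be tangent over the same point of $\NS^1\times\R$ when their lifts have different $\rho$; this is move RV.

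For compact $L$ the sketch is fine, modulo two points of precision. First, at a cusp $L$ is tangent to a fibre, so the projection of $L$ to the fibre coordinate is a local diffeomorphism there, not a fold. The $2$-jet condition you want is that this tangency is quadratic: in a local front chart with $\alpha=dz-y\,dx$, it reads $x'(t_0)=0\neq x''(t_0)$. Second, openness is not only persistence of the existing cusps and crossings. You must also rule out the creation of new tangencies, triple points or degenerate cusps, and this is exactly where compactness enters. Off a neighbourhood of the diagonal, the $2$-fold $1$-jet image of $L$ stays at positive distance from the closed set of bad configurations. Near the diagonal, the front is locally injective, including near a nondegenerate cusp.

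The step that genuinely fails is your last sentence, ``properness gives local finiteness, so the conditions assemble into global ones''. Properness of $L$ in $\R^3$ does not make its front proper in $\NS^1\times\R$, because the fibres of $\pi$ are non-compact rays. Parts of $L$ escaping to $\rho\to\infty$ can therefore have fronts accumulating in $C^1$ on a compact part of the front. Here is an example:
\begin{itemize}
\item Let $L$ contain a compact branch whose front is the segment $\{z=0,\ |\phi|<\delta\}$, lifted at $\rho=\pi$.
\item Let $L$ also contain arcs $A_k$ near $\rho=k\pi$ whose fronts are $z=(1+\phi^2)/k$ over $|\phi|<\delta$. These lift, since $\rho\mapsto-\rho\tan\rho$ is a local diffeomorphism near $k\pi$.
\item Chain the $A_k$ into a proper embedded curve whose front is otherwise generic.
\end{itemize}
Now replace the segment by the graph of $\eta\beta(\phi)$, where $\beta$ is a bump with nondegenerate maximum $\beta(0)=1$ and $\eta=1/k$. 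This is a $C^\infty$-small perturbation supported in a fixed compact set, yet it creates a tangential double point with the front of $A_k$.

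So for non-compact $L$ the openness claim fails even in the strong topology. Density can also only hold for strong, not compactly supported, perturbations: a radial half-line contained in $L$ has constant front outside any compact set. Your argument, like the lemma itself, should therefore be stated for compact $L$, or for $L$ whose front is proper near infinity. This is how the paper actually uses the lemma, for compact arcs and knots in front pictures.
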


\begin{definition}
    The \textbf{front} of a Legendrian $L \subseteq (\R^3,\alpha_\OT^\std)$ is its image under the cylindrical projection.
\end{definition}

\begin{remark}
    In contrast to the usual front projection of Legendrians in standard contact $\R^3$, vertical tangencies may appear.
\end{remark}

If $L$ is a generic Legendrian with cylindrical projection $\pi(L)$, we readily see that the slope of $\pi(L)$ can be computed from the missing radial coordinate $\rho$. Conversely, one can recover $\rho$ from the slope of $\pi(L)$, but not uniquely. Concretely, zero slope corresponds to the values $\rho = k\pi$ with $k\geq 1$. All other slopes correspond similarly to countably many choices of $\rho$, which are roughly spaced an amount $\pi$. To remove this ambiguity in pictures, we will indicate the regions $\{(k-1)\pi \leq \rho \leq k \pi\}$ if there is a risk of confusion.

Similarly, we can use parametric transversality to yield a list of Legendrian Reidemeister moves. The list contains the three standard moves, plus a fourth move that describes how a generic isotopy crosses the $z$-axis, and a fifth move that describes embedded Legendrians whose fronts have self-intersections with tangencies (so the underlying $\rho$ coordinates are necessarily distinct).

\begin{lemma} \label{lem:Reidemeister}
Let $(L_s)_{s \in [0,1]} \subset (\R^3,\alpha_\OT^\std)$ be a generic, compactly-supported Legendrian isotopy. Then, the cylindrical projections of the $L_s$, up to ambient isotopy in the cylinder $\NS^1 \times \R$, differ by a finite sequence of moves. These moves are of five types and are depicted in Figure \ref{fig:Reidemeister}.
\end{lemma}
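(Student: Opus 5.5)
The plan is to reduce Lemma~\ref{lem:Reidemeister} to the standard parametric transversality argument for front projections, adapted to the cylindrical projection $\pi:(\rho,\phi,z)\mapsto(\phi,z)$ on $\R^3\setminus\{\rho=0\}$. First I will record the local structure of $\pi$ as a Legendrian fibration: its fibres are the radial rays $\{\phi,z \text{ fixed}\}$, and a Legendrian is recovered from its front via the slope relation $dz/d\phi = -\rho\tan(\rho)$ (from $\cos(\rho)dz+\rho\sin(\rho)d\phi=0$). Away from the $z$-axis, locally around any point with $\cos\rho\neq 0$, the map $\rho\mapsto -\rho\tan\rho$ is a local diffeomorphism onto slopes. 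Where $\cos\rho = 0$ the contact planes contain $\partial_z$, and vertical tangencies of the front occur. In both cases I will exhibit a local contactomorphism to $(\R^3, dz - y\,dx)$, possibly after exchanging the roles of $\phi$ and $z$, that intertwines $\pi$ with the usual front projection. This shows that, locally in the complement of the axis, the singularity theory of cylindrical fronts is exactly that of ordinary fronts.

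Next I will apply the usual multijet transversality (Thom's theorem) to the one-parameter family $(L_s)$, viewed as a map $[0,1]\times L\to\R^3$. For a generic compactly supported isotopy, the codimension-one degenerations occurring at isolated times are the following. A swallowtail or cusp birth gives Reidemeister I. A tangency of two front branches with \emph{equal} $\rho$ gives Reidemeister II. A triple point gives Reidemeister III. The last two are the new phenomena: first, the Legendrian passing through the $z$-axis, which is codimension one for curves in a 3-manifold ($\{\rho=0\}$ is a line, and a 1-parameter family of curves generically meets it at isolated times); second, the front branches becoming tangent while their $\rho$-coordinates differ by a nonzero amount. Because slopes determine $\rho$ only up to the countable ambiguity, this tangency is an embedded configuration rather than a double point, giving the fifth move. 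At all other times the front is generic by the preceding lemma, so it changes only by ambient isotopy of $\NS^1\times\R$.

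The remaining step is to identify the local model of the axis-crossing move. I will write the family near a crossing time $s_0$ in Cartesian coordinates near the origin, where $\alpha^\std_\OT \approx dz + (x\,dy - y\,dx)$ to leading order. A generic Legendrian arc crossing the axis is transverse to it inside the contact plane $\{dz=0\}$. Its cylindrical projection therefore looks like a curve at height $z\approx z_0$ whose $\phi$-coordinate jumps by $\pi$ as the arc sweeps through the origin. Before and after the crossing, the front is a nearly horizontal arc that wraps around $\NS^1$ on one side or the other; this is the move drawn in the figure. I expect this to be the main obstacle, because $\pi$ is singular on the axis and the local-diffeomorphism argument breaks down there. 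To make it rigorous I would blow up the axis (use coordinates $(\rho,\phi,z)$ with $\rho\in\R$) and carry out an explicit first-order computation. Finally, compactness of support and of $[0,1]$ gives finitely many events.
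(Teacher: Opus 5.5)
Your overall route is the parametric transversality argument the paper has in mind. You reduce the cylindrical projection off the axis to an ordinary front projection by a local change of fibre coordinate, apply multijet transversality to the family, and single out the axis crossing and the tangencies between branches at different $\rho$ as the two new events. Your local reduction and your treatment of RIV and RV are fine. However, one entry in your list of codimension-one events is wrong, and as written the list is not exhaustive.

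A tangency of two front branches with \emph{equal} $\rho$ does not give Reidemeister II. The slope $-\rho\tan\rho$ of the front is a function of $\rho$. So two branches through the same point $(\phi_0,z_0)$ of $\NS^1\times\R$ with the same value of $\rho$ are lifts of the same point of $\R^3$. This is a double point of the Legendrian, which never occurs in an isotopy of embeddings; it is precisely the configuration that is forbidden for ordinary fronts, where the slope determines $y$.

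The Legendrian RII move comes instead from a cusp of the front passing through another branch (the multigerm $A_2A_1$). The two lifts then necessarily have different $\rho$, otherwise there would be a double point, and the picture in the front is the ordinary one. Your enumeration omits this event.

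The correct list off the axis is:
\begin{itemize}
\item the swallowtail $A_3$ (RI), which is the only way cusps are created or cancelled;
\item a cusp lying on another branch, $A_2A_1$ (RII);
\item a transverse triple point, $A_1^3$ (RIII);
\item a simple tangency $A_1^2$ of two branches, which is impossible when their $\rho$ values agree and yields RV when they differ.
\end{itemize}
Together with the axis crossing (RIV), this gives the five moves of the figure. With this correction your argument goes through.
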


\begin{figure}
\centering
\begin{subfigure}{0.51\textwidth}
    \includegraphics[width=0.4\textwidth]{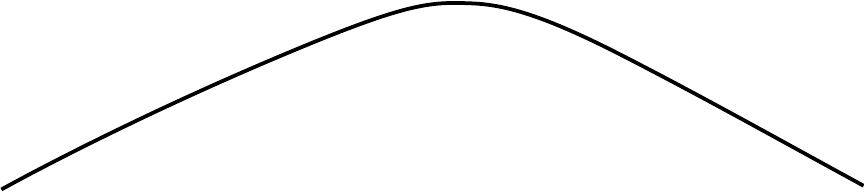}
    \hfill
    \raisebox{15pt}{$\leftrightarrow$}
    \hfill
    \includegraphics[width=0.4\textwidth]{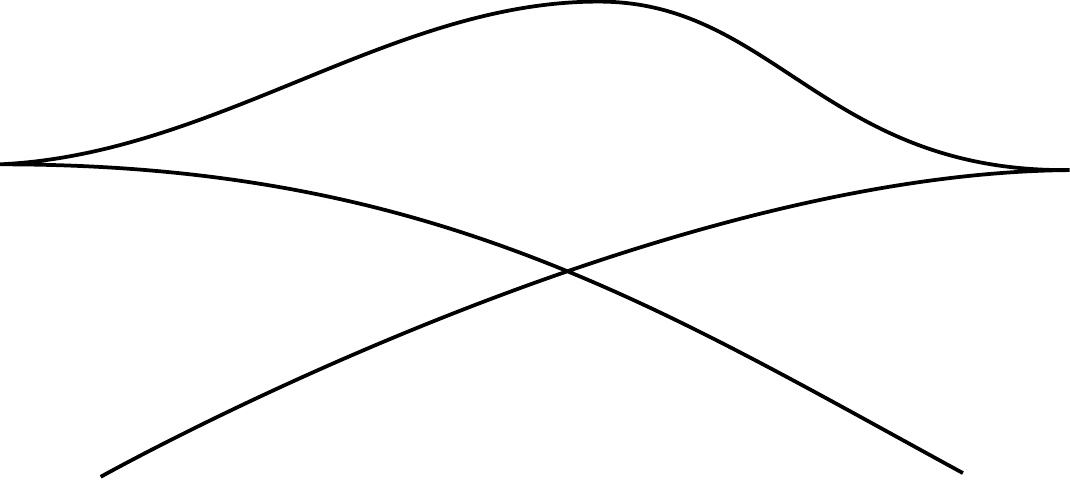}
    \caption{RI.}
    \label{fig:R1}
\end{subfigure}
\hfill
\begin{subfigure}{0.51\textwidth}
    \includegraphics[width=0.4\textwidth]{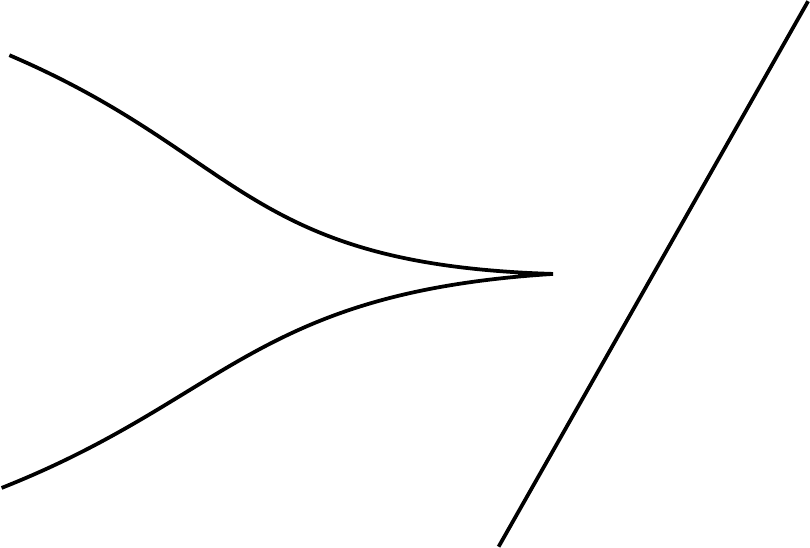}
    \hfill
    \raisebox{25pt}{$\leftrightarrow$}
    \hfill
    \includegraphics[width=0.3\textwidth]{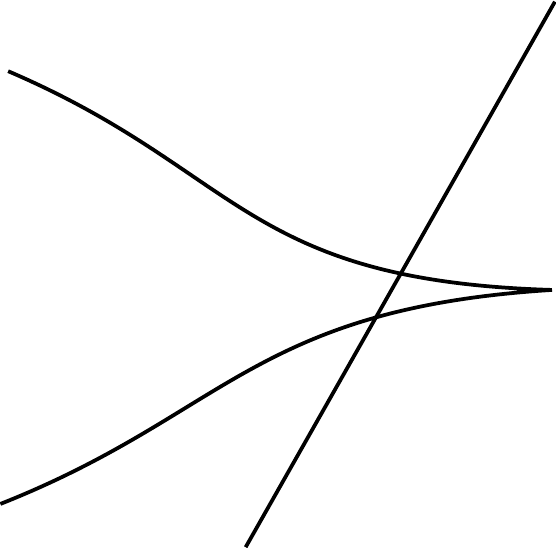}
    \caption{RII.}
    \label{fig:R2}
\end{subfigure}
\hfill\begin{subfigure}{0.51\textwidth}
    \includegraphics[width=0.4\textwidth]{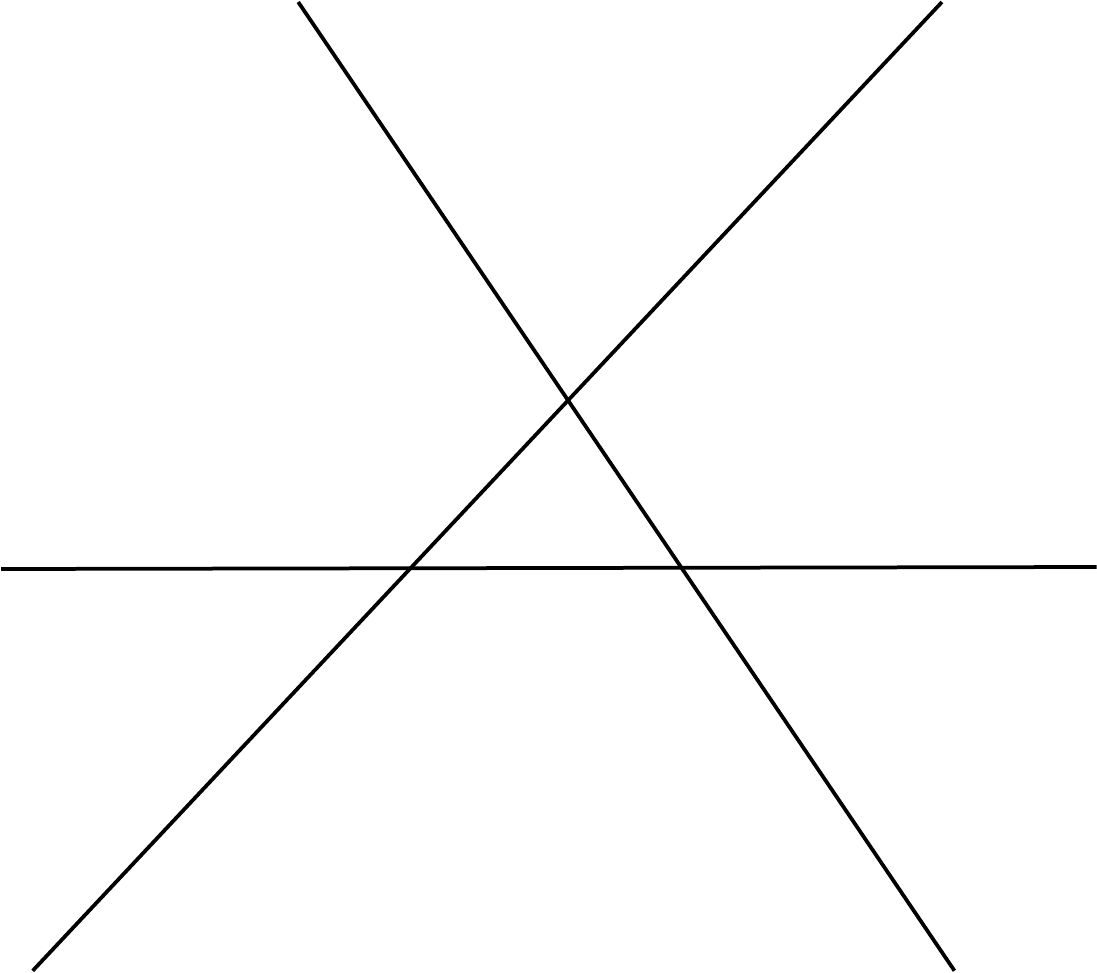}
    \hfill
    \raisebox{35pt}{$\leftrightarrow$}
    \hfill
    \includegraphics[width=0.4\textwidth]{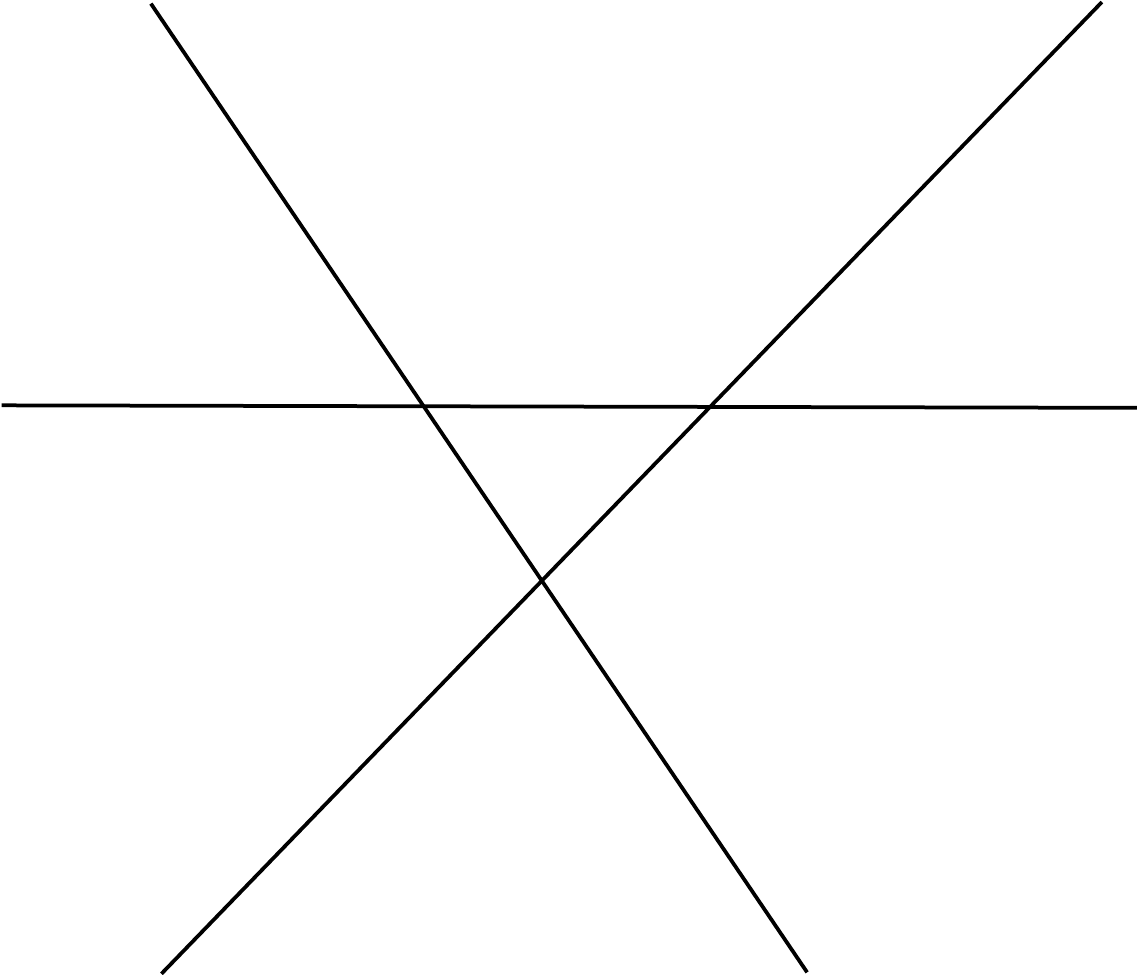}
    \caption{RIII.}
    \label{fig:R3}
\end{subfigure}
\hfill\begin{subfigure}{0.51\textwidth}
    \includegraphics[width=0.4\textwidth]{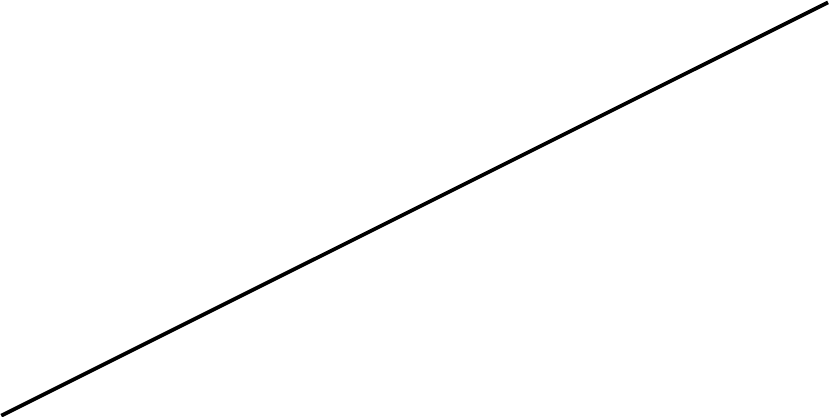}
    \hfill
    \raisebox{15pt}{$\leftrightarrow$}
    \hfill
    \includegraphics[width=0.45\textwidth]{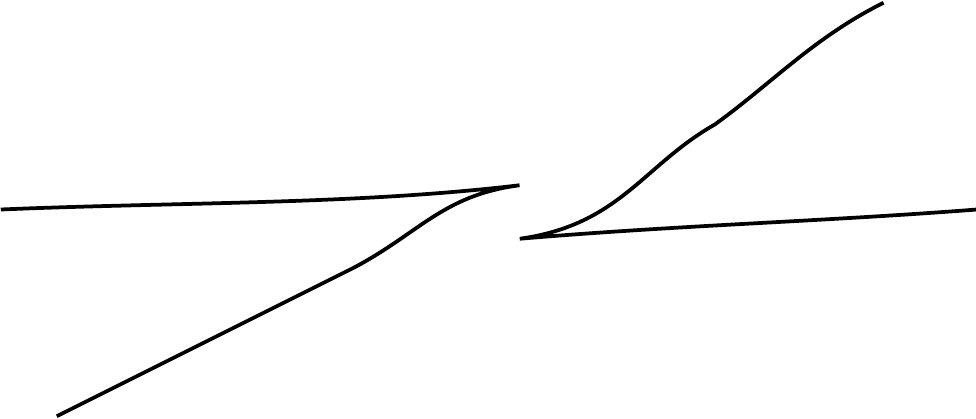}
    \caption{RIV. Only applies if the Legendrian lifts are contained in the region $\{0< \rho < \pi\}$.}
    \label{fig:R4}
\end{subfigure}
\hfill\begin{subfigure}{0.51\textwidth}
    \includegraphics[width=0.4\textwidth]{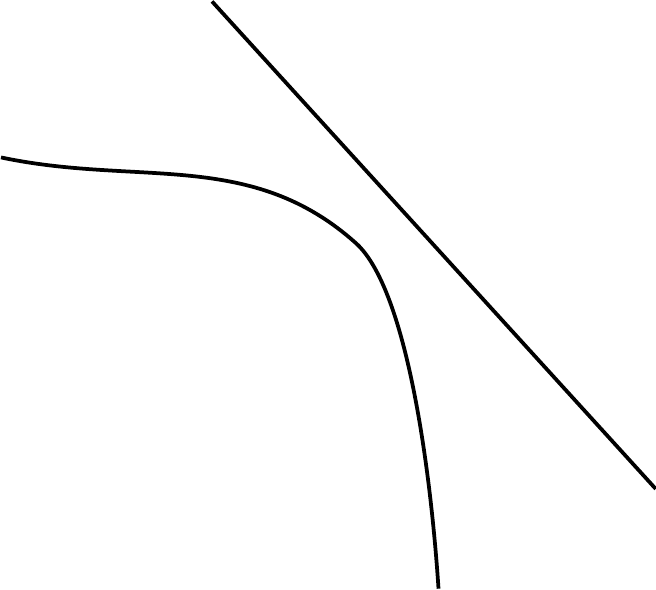}
    \hfill
    \raisebox{35pt}{$\leftrightarrow$}
    \hfill
    \includegraphics[width=0.35\textwidth]{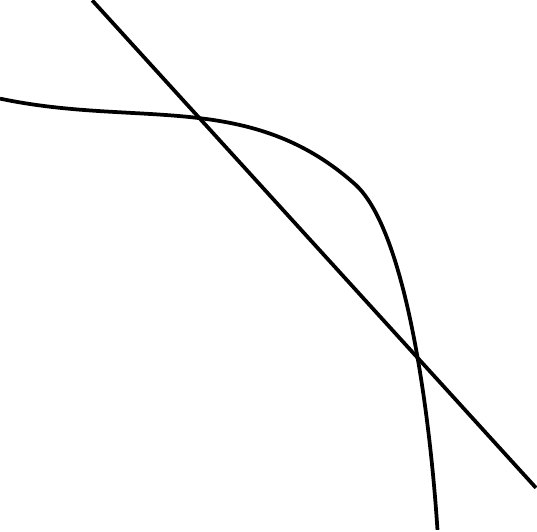}
    \caption{RV. Only applies if the Legendrian lifts of the two Legendrian arcs are contained in different regions $\{(k-1)\pi < \rho < k\pi\}$.}
    \label{fig:R5}
\end{subfigure}

\caption{The Reidemeister moves for Legendrian fronts under the cylindrical projection. The first three moves are the standard Reidemeister moves for Legendrian fronts and are local in $z$ and $\theta$. The fourth move lifts to a Legendrian that passes through the $z$-axis. It is no longer local in $\theta$, since the almost horizontal part of the front goes once around the $S^1$-factor. The fifth move describes a Legendrian whose front has a self-tangency at a point $p$ such that the two lifts of $p$ differ.}
\label{fig:Reidemeister}
\end{figure}

\subsection{Loose Legendrians in standard position} \label{ssec:standardPosition}

Let us begin in dimension $3$. Consider, for some $\rho_0 > \pi$ and $z_0 > 0$, the neighbourhood $U = \{\rho \leq \rho_0, |z|\leq z_0\} \subseteq \R^3$ of the standard overtwisted disk $\Delta^\textnormal{std}_\textnormal{ot}\subseteq \R^3$. We then let $f_\std:[0,1] \to U$
be a Legendrian arc whose front is shown in Figure \ref{fig:f_st}. Concretely: its boundary lies on $\partial U$, it intersects $\Delta_{\textnormal{ot}}^\textnormal{std}$ solely in the point $(x,y,z) = (\pi,0,0)$ where it is tangent to $\partial \Delta_{\textnormal{ot}}^\textnormal{std}$, and everywhere else satisfies $\pi < \rho \circ f_\std < 2\rho_0$.

For each larger dimension $2n+1$, $n \geq 1$, we define 
\[ U := U \times \left([-(1+\eps),1+\eps] \times [-C_0/2,C_0/2] \right)^{n-1} \]
recursively; we apologise for the overload in notation. Similarly, we define the Legendrian embedding 
\begin{equation*}
\begin{split}
f_\std: [0,1] \times [-(1+\eps),1+\eps]^{n-1} \quad& \longrightarrow \quad U \\
(s,x_1,...,x_{n-1}) \quad & \mapsto \quad (f_\std(s),x_1,0,x_2,0,...,x_{n-1},0)
\end{split}
\end{equation*}
for some $C_0 > C^*$ and $\eps > 0$.

\begin{figure}
\centering
\begin{subfigure}{0.30\textwidth}
    \includegraphics[width=0.7\textwidth]{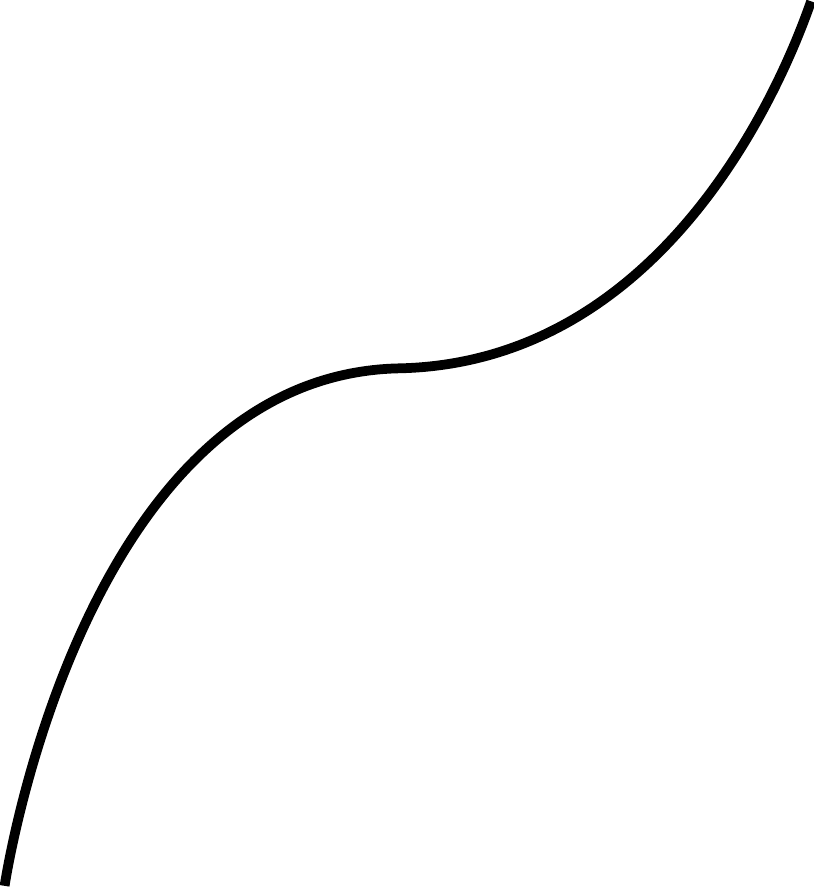}
    \caption{The front of $f_\std$.}
    \label{fig:f_st}
\end{subfigure}
\hfill
\begin{subfigure}{0.30\textwidth}
    \includegraphics[width=0.7\textwidth]{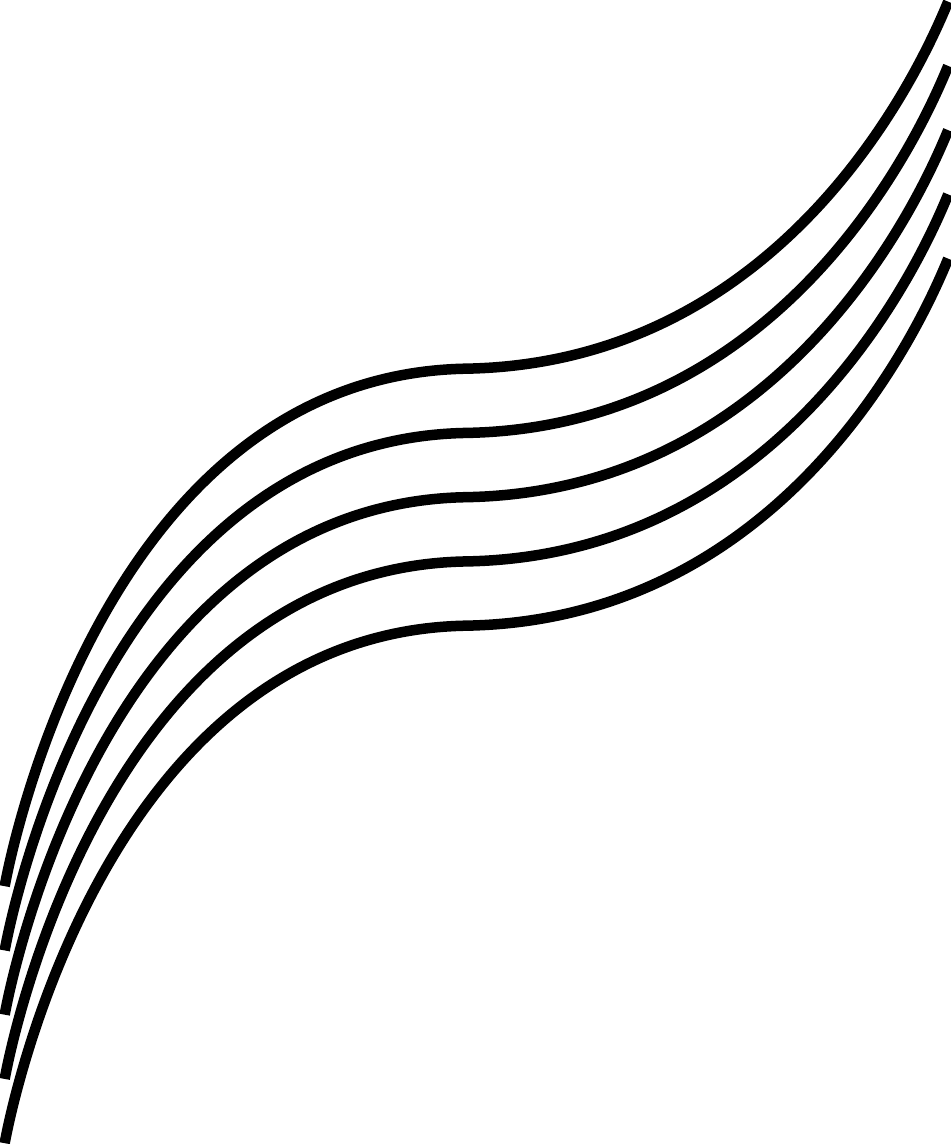}
    \caption{The front of $f^{(k)}_\std$.}
    \label{fig:f_st^k}
\end{subfigure}
\hfill
\begin{subfigure}{0.30\textwidth}
    \includegraphics[width=\textwidth]{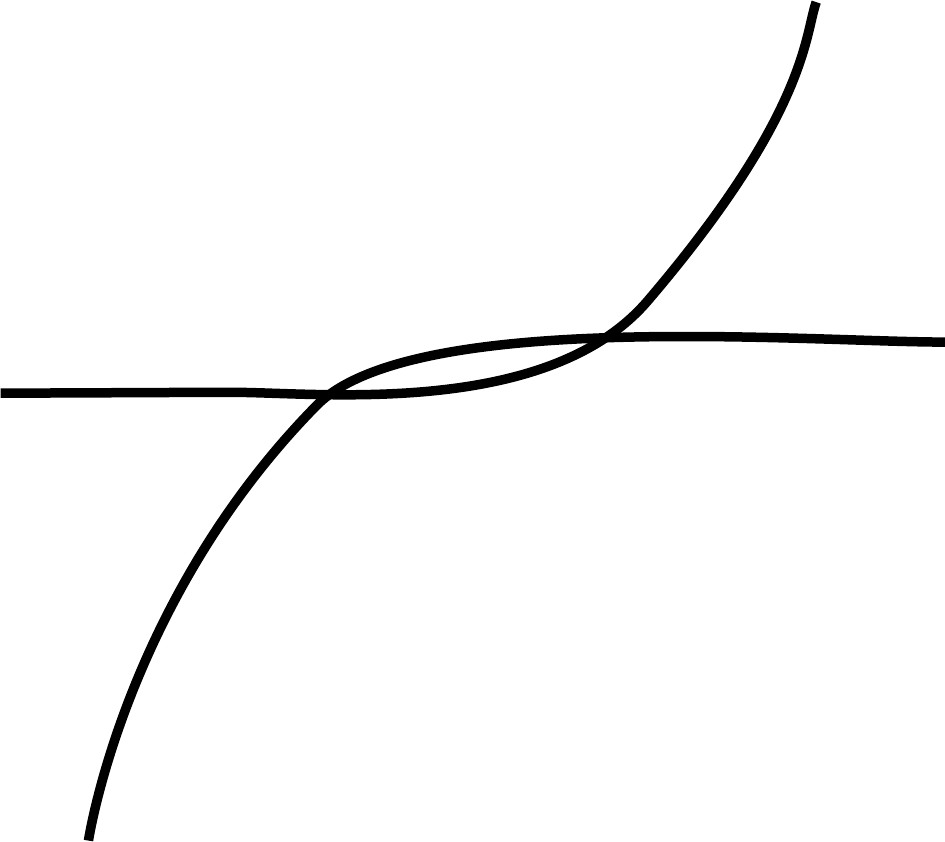}
    \caption{The front of $f_\std'^{(1)}$.}
    \label{fig:f'_st}
\end{subfigure}

\caption{The Legendrian standard models near $\Delta_\OT^\textnormal{std}$. The image of $f_\std$ is contained in the region $\{\pi \leq \rho < 2 \pi \}$ and intersects the overtwisted disk in a single point at which the front is horizontal. The Legendrian $f^{(k)}_\std$ consists of $k$ parallel copies of $f_\std$. The Legendrian $f_\std'^{(1)}$ is the connected sum of $f_\std$ and the boundary of the overtwisted disk; in Subfigure (c), the left side is identified with the right side. The Legendrian $f_\std'^{(k)}$ consists of $k$ parallel copies of $f_\std'^{(1)}$.}
\label{fig:Legendrian standard model}
\end{figure}

In our argument, the pair $(U,f_\std)$ will play the role of a standard model for a Legendrian near an overtwisted disk. Up to legendrian isotopy (see the upcoming subsections) this can be achieved for any Legendrian in the complement of an overtwisted disk. We allow $\rho_0 > \pi$, $z_0 > 0$, $C_0 > C^*$, and $\eps >0$ to be replaced by a smaller number if needed.

In dimension $3$, we will also need $k \in \N$ parallel copies of this standard model as follows:
\begin{equation}\label{eq:U^(k)}
    U^{(k)} \coloneqq \left\{\left(\rho,\phi,z + \frac{iz_0}{k}\right)|\, \rho \leq \rho_0, |z| \leq \frac{z_0}{3k}, i \in \{0,1,..., k-1\}\right\} \subseteq U\,,
\end{equation}
which contains the Legendrian embedding $f_\std^{(k)}$ with boundary on $\partial U^{(k)}$, which is a union of parallel copies of $f_\std$, one in each of the $k$ components of $U^{(k)}$, see Figure \ref{fig:f_st^k}.

In this case, denote by $f_\std'^{(k)} \subseteq U^{(k)}$ the Legendrian obtained by gluing to each of the $k$ components of $f_\std$ the boundary of the parallel copy of the overtwisted disk inside of the corresponding component of $U^{(k)}$, see Figure \ref{fig:f'_st} for the case $k=1$. It is well-known that $f_\std^{(k)}$ is Legendrian isotopic to a $k$-fold stabilization (with one stabilization on each connected component) of $f_\std'^{(k)}$ \cite{MNPS}, see Section \ref{sec:QuantitativeIsotopiesInDimension3} for an explicit description of such an isotopy.

\subsection{Standardising at one point} \label{ssec:standardisingPoint}

The following notation will be convenient: we say that $J^1(L,[0,\infty))$ is the upper hemisphere of $J^1(L,\R)$. We similarly call $J^1(L,(-\infty,0])$ the lower hemisphere. We can see a Darboux ball $B$ as a neighbourhood of the zero section in $J^1(\D^n,\R)$ and thus speak of its hemispheres $B^+$ and $B^-$ accordingly.

\begin{lemma} \label{lem:FormalLegendrianStandardPosition}
There is a function $C(r)>0$ going to zero as $r$ goes to zero, such that:

Fix a strict contact manifold $(M^{2n+1},\alpha)$, an open strict Darboux ball $(B,\alpha_\std) \subset (M,\alpha)$ of radius $r$, and a family $(f_t,F_{s,t}): L \to M \setminus B, t \in [0,1],$ of formal Legendrian embeddings which is Legendrian for $t \in \{0,1\}$. Assume that $M \setminus B^-$ is path-connected.

Then, there exist families of Legendrian embeddings $a_t,b_t:L \to M$ and a family of formal Legendrian embeddings $(g_t,G_{s,t}):L \to M$ with $t \in [0,1],$ such that
\begin{enumerate}
    \item  $a_0 = f_0$, $a_1=g_0$, $b_0 = g_1$, and $b_1 = f_1$.
    \item the family $(f_t,F_{s,t})$ is homotopic, relative endpoints and through formal Legendrian embeddings, to the concatenation $a_t * (g_t,G_{s,t}) * b_t$,
    \item $|\alpha(\partial_t a_t)|, |\alpha(\partial_t b_t)| < C(r)$,
    \item $g_t^{-1}(B)$ and $(g_t,G_{s,t})|_{g_t^{-1}(B)}$ do not depend on $t$, and $g_t(L)$ intersects $B$ only in its upper hemisphere $B^+$ and contains the disk $\D^n_{r/2} \subset B$.
\end{enumerate}
In particular, $(g_t,G_{s,t})|_{g_t^{-1}(B)}$ is Legendrian.
\end{lemma}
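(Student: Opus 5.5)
We construct the three pieces directly. Since the energy of $a_t$ and $b_t$ must be of order $C(r)$ while $f_0,f_1$ lie outside $B$, the only cheap motion available is a \emph{local} one: we first drag a small piece of $f_0$ (resp.\ $f_1$) into $B^+$ along a Legendrian path that costs little, and then run the formal family $f_t$ on the rest.

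\emph{Step 1: a thin Legendrian finger into $B$.} Fix a point $x_0\in L$ and a small disk $D\subset L$ around it. Since $M\setminus B^-$ is path-connected, choose an embedded path $\gamma$ in $M\setminus (B^-\cup f_0(L\setminus D))$ from $f_0(x_0)$ to the centre of the upper face of $B^+$, generic so that it avoids $f_0(L)$ except at its start. By Lemma~\ref{lem:Serre} and Lemma~\ref{lem:Serre2}, lift $\gamma$ to a path of Legendrian disks $\psi_\tau$ with $\psi_0=f_0|_D$ and $\psi_1$ the standard disk $\D^n_{r/2}\subset B^+$ (in the upper hemisphere, as required). First shrink $f_0|_D$ by the Legendrian Alexander trick so that it has tiny diameter $\delta$. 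Then realise the motion along $\gamma$ by a Legendrian isotopy of $f_0$ supported near $D$, which we call a ``finger move''. Being tubular with cross-section of size $\delta$, the finger can be realised by a Legendrian isotopy whose $\alpha(\partial_t\cdot)$ is controlled. Indeed, we move the tip along a Legendrian approximation of $\gamma$, which is $C^0$-close to $\gamma$ and has Legendrian tangent direction, so $\alpha(\partial_t)$ vanishes to leading order. The remaining error comes only from the tip dilation inside the Darboux ball, where by the computation in Lemma~\ref{lem:boundedCost} (rescaling by the Liouville/contact dilation) the Hamiltonian is bounded by a multiple of $r$. This gives $a_t$ with $|\alpha(\partial_t a_t)|<C(r)$ and $a_1(L)\cap B=\D^n_{r/2}$. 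Do the same for $f_1$ with a path $\gamma'$, producing $b_t$ backwards. To match the statement, we need the \emph{same} germ inside $B$ at both ends, and we arrange this by choosing $x_0$ and the final disk identically.

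\emph{Step 2: the middle formal family.} Define $(g_t,G_{s,t})$ as follows. Outside a neighbourhood of $D$, apply the family $f_t$; near $D$, keep the finger. Concretely, use parametric isotopy extension (for formal isotopies this is just smooth isotopy extension with bundle data) for $f_t|_{L\setminus D'}$, which lives in $M\setminus B$ and can be taken supported away from $\overline B$. Carry the finger's base along it while keeping the part of the finger inside $B$ fixed. The finger outside $B$ is then only a smooth (formal) family, which is all that is required. This gives (4). Item (1) holds by construction, and the concatenation $a_t*g_t*b_t$ differs from $f_t$ by retracting the finger, which yields the formal homotopy in (2). The mismatch at $t=1$ between the transported finger and the finger of $b$ is handled by a formal homotopy: both are formal disks joining $f_1(x_0)$ to the same germ in $B^+$. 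Their paths may differ, but the space of such fingers is connected up to the formal homotopy of item (2), which is allowed to be arbitrary.

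\emph{Main obstacle.} The delicate point is the energy estimate in Step 1: a Legendrian finger dragged a long way along $\gamma$ must have $\alpha(\partial_t a_t)$ uniformly small, independently of the length of $\gamma$ and of the geometry of $M$. I would handle this by parametrising the finger as a thin Legendrian tube over a Legendrian curve $C^0$-approximating $\gamma$, using a Weinstein neighbourhood of that curve in which $\alpha=dz+\lambda$. The motion then pushes along the zero section, and $\alpha(\partial_t)=0$ exactly, except at the tip. At the tip the contribution is controlled by $\delta$, which we take smaller than $C(r)$. The factor $r$ enters only through the final unfolding of the tip to $\D^n_{r/2}$ inside $B$, whose cost is $O(r)$ by the Darboux-ball scaling. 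This yields $C(r)=O(r)$.
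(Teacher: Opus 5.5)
Your energy mechanism in Step 1 is essentially the paper's: drag a tiny Legendrian disk along an isotropic approximation of a path, so that the cut-off Hamiltonian is close to $\alpha(\gamma')=0$, and pay only the bounded cost of $B$ at the end. The gap is in Step 2, at $t=1$.

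\textbf{Why Step 2 fails.} Item (1) requires $g_1=b_0$ exactly. Item (2) is not an arbitrary homotopy: it must join $(f_t,F_{s,t})$ to $a_t*(g_t,G_{s,t})*b_t$ \emph{relative to endpoints}, which is a real constraint once $a$ and $b$ are fixed. To see this, evaluate the formal Lagrangian frame at $x_0$. Since $a_1(x_0)\in B$ and (4) fixes $g_t^{-1}(B)$ together with the data there, this frame is constant along any admissible $(g_t,G_{s,t})$, equal to the frame $\tau$ of the fixed germ. Consider the loop in $\LagMon(\R^n,\ker\alpha)$ (the bundle over $M$) that runs from $\tau$ back along $a$ to $d_{x_0}f_0$, then along $t\mapsto F_{1,t}(x_0)$, then along $b$ reversed back to $\tau$. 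A homotopy as in (2) forces this loop to be null-homotopic.

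With $\gamma'$ and its Legendrian lift chosen independently of $\gamma$, nothing makes this loop trivial. Its projection $\bar\gamma*(f_t(x_0))_t*\gamma'$ can be essential in $\pi_1(M)$. Even when it is not, the Lagrangian frames along a lift provided by Lemma~\ref{lem:Serre2} can twist by any element of $\pi_1(U(n))\cong\mathbb{Z}$. So the claim that ``the space of such fingers is connected up to the formal homotopy of item (2), which is allowed to be arbitrary'' is false. Appending some formal isotopy rel $B$ from the transported finger to $b_0$ does not produce (2).

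\textbf{How the paper repairs this.} The $t=1$ finger must be the transport of the $t=0$ finger, and the paper arranges this as follows.
\begin{enumerate}
\item Make $(f_t,F_{s,t})$ Legendrian near a point $p$.
\item Choose a \emph{continuous family} of embedded isotropic paths $\gamma_t$, $t\in[0,1]$, from $f_t(p)$ into $B^+$, avoiding $B^-$ and independent of $t$ inside $B$.
\item Lift the two-parameter family $(t,u)\mapsto\gamma_t(u)$ to Legendrian disks $w_{t,u}$, using the Serre fibration of Lemma~\ref{lem:Serre} relative to $\{u=0\}\cup\{t=0\}\cup\gamma_t^{-1}(B)$.
\item Realise these by contact isotopies $\psi_{t,u}$ whose Hamiltonians are cut off near $\gamma_t(u)$, hence arbitrarily small, and set $f_{t,u}=\psi_{t,u}\circ f_t$.
\end{enumerate}
Then $a=f_{0,\cdot}$ and $b$ is the reverse of $f_{1,\cdot}$, each followed by a $t$-independent standardisation inside $B$ whose cost tends to $0$ with $r$ by Lemma~\ref{lem:boundedCost}. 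The middle family is $g_t=f_{t,1}$, likewise standardised. The square $(t,u)\mapsto f_{t,u}$ is then itself the homotopy in (2), so the frame loop above is null-homotopic by construction.

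Composing the formal family with contact isotopies also gives the formal data $G_{s,t}$ for free. In your Step 2, by contrast, the formal data on a finger moved by smooth isotopy extension must be built by hand, compatibly with $F_{s,t}$ and with the fixed germ in $B$. That is exactly where the frame class above would have to be controlled.
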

\begin{proof}
A first homotopy, relative to endpoints in the parameter, allows us to assume that $(f_t,F_{s,t})$ is Legendrian near a point $p \in L$, for all $t$. Then, by the existence h-principle for isotropic paths, we can find a family of embedded isotropic paths $\gamma_t: [0,1] \to M$ with $\gamma_t(0) = f_t(p)$ and $\gamma_t(1)$ in the interior of $B^+$ such that $\gamma_t^{-1}(B)$ and $\gamma_t|_{\gamma_t^{-1}(B)}$ do not depend on $t$. The path $\gamma_t$ can be assumed to be disjoint from $B^-$, due to the path-connectedness of $M \setminus B^-$, and disjoint from $f_t(L)$ away from $0$. 

We think of each path $\gamma_t$ as a homotopy of the point $\gamma_t(0) = f_t(p)$. Then, the contact isotopy extension theorem produces a $t$-dependent family of contact isotopies that pushes $\gamma_t(0)$ to $\gamma_t(1)$ and is supported in a neighbourhood of $\gamma_t$. However, we can be more explicit and provide an isotopy that has arbitrarily small energy. This is where the assumption that $\gamma_t$ is isotropic will be used.

First, we invoke Lemma \ref{lem:Serre} and the homotopy lifting property. This implies that the family of points $(\gamma_t(u))_{t,u \in [0,1]}$ can be thickened to a family of Legendrian embeddings $w_{t,u}: U \rightarrow (M,\alpha)$ such that $w_{t,0} = f_t|_U$, where $U \cong \D^n \subset L$ is a sufficiently small neighbourhood of $p$.

Moreover, we can assume that $w_{t,u}$ does not depend on $t$ in the region $\gamma_t^{-1}(B)$. Indeed, homotopy lift $\gamma_0$ first to yield the family $w_{0,u}$. Since the paths $\gamma_t$ do not depend on $t$ over $\gamma_t^{-1}(B)$, we can thicken $\gamma_t|_{\gamma_t^{-1}(B)}$ to $w_{0,u}$, for all $t$. Lastly, apply Lemma \ref{lem:Serre} and homotopy lifting to $(\gamma_t(u))_{t,u \in [0,1]}$ but now relative to $\{u=0\} \cup \{t=0\} \cup \gamma_t^{-1}(B)$. We apply Lemma \ref{lem:smoothingFamilies} to smooth out the families obtained in this manner.

Apply contact isotopy extension to obtain a family of contact isotopies $\psi_{t,u}$ generating $w_{t,u}$. Note that $u$ is the time parameter of the isotopy and $t$ serves as an additional parameter. We can cut off the corresponding Hamiltonian so their support is an arbitrarily small neighbourhood of $\gamma_t(u)$. In doing so, their value becomes arbitrarily close to $\alpha(\gamma_t'(u)) = 0$, since we chose the paths to be isotropic. It follows that these isotopies have arbitrarily small cost.

We obtain thus a family $f_{t,u} := \psi_{t,u} \circ f_t$ of formal Legendrian embeddings, all of them Legendrian over $U$. For $u$ close to $1$, they meet $B$ over some small ball $U'$ contained in $U$, where they do not depend on $t$.

We conclude using Lemma \ref{lem:Serre2}. We apply it to $f_{t,1}|_{U'}$ in order to further contact isotope, within $B$, to a family $f_{t,2}$ satisfying Property (4). This is a consequence of the fact that the space of Lagrangian frames in $B$ is connected. A priori, this isotopy has unknown energy, but we can now use the fact that $B$ is of bounded cost (Lemma \ref{lem:boundedCost}) to replace it by an isotopy whose cost is universally bounded (and in fact going to zero with the radius $r$). This concludes the proof.
\end{proof}

\begin{remark}\label{rmk:StdPositionDisconnected}
    As is clear from its proof, Lemma \ref{lem:FormalLegendrianStandardPosition} admits the following refinement. Assume that $L$ has finitely many connected components $L_\iota$ with $\iota \in I$, and we have chosen a corresponding collection of mutually disjoint Darboux balls $(B_\iota, \alpha_{\textnormal{std}}) \subseteq (M,\alpha)$ disjoint from the image of $f_t$. Then we may assume that Property (4) holds for every $\iota$ with $L$ and $B$ replaced by $L_\iota$ and $B_\iota$ with the additional condition that $g_t(L_{\iota'})$ does not intersect $B_\iota$ for $\iota \neq \iota'$.
\end{remark}

\subsection{Standardizing near an overtwisted disk}  \label{ssec:standardisingOTDisc}

If we apply Lemma \ref{lem:FormalLegendrianStandardPosition} just to a Legendrian, we see that any Legendrian embedding can be brought into a standard position in $B$ by an isotopy of arbitrarily small energy. This observation can be used to also bring Legendrians in different types of standard positions. In particular, we have the following:

\begin{lemma}\label{lem:IntoStdPosNearOTDisk}
    Let $(M^{2n+1},\alpha)$ be a connected, overtwisted, strict  contact manifold. Let $\psi: V \subseteq M \to U \subseteq \R^3 \times \R^{2n-2}$ be a contactomorphism from a compact neighbourhood of an overtwisted disk in $M$ to the standard model. Let $f:L \to M \setminus V$ be a proper Legendrian embedding. Let $\eps > 0$ and, if $n=1$, let $k \in \N$. Then there exists a proper Legendrian isotopy $g_t$ such that
    \begin{itemize}
        \item[] 1) $g_0 = f$,
        \item[] 2) $\Vert g_t \Vert_\alpha < \eps$,
        \item[] 3) If $n > 1$, $g_1|_{g_1^{-1}(V)}$ agrees with a reparametrization of the standard model $f_\std$ under the identification given by $\psi$.
        \item[] 4) If $n=1$, $g_1|_{g_1^{-1}(\psi^{-1}(U^{(k)}))}$ agrees with a reparametrization of $f^{(k)}_\std$ under the identification given by $\psi$. 
    \end{itemize}
\end{lemma}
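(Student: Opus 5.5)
The plan is to reduce Lemma \ref{lem:IntoStdPosNearOTDisk} to Lemma \ref{lem:FormalLegendrianStandardPosition}, applied with the constant path (so no formal part is needed), followed by an explicit rescaling argument inside the model $U$.

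\emph{Step 1: choose a small Darboux ball.} Near the point of $\Delta$ where $f_\std$ is supposed to touch $\partial\Delta$ (the point $(\pi,0,0)$, times the origin in the extra $\R^{2n-2}$ factors), but just outside $\Delta$ and disjoint from $f(L)$, we pick a strict Darboux ball $B$ of radius $r$. We take $r$ small enough that $C(r)<\eps/2$ in Lemma \ref{lem:FormalLegendrianStandardPosition}. Since $\Delta$ has codimension $1$ and $M$ is connected of dimension at least $3$, $M\setminus B^-$ is path-connected.

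\emph{Step 2: apply Lemma \ref{lem:FormalLegendrianStandardPosition}.} We apply it to the constant family $f_t=f$ and take the isotopy $a_t$. This gives a Legendrian isotopy of energy below $\eps/2$ from $f$ to a Legendrian $g$ whose intersection with $B$ is the standard Legendrian disk $\D^n_{r/2}$ in $B^+$. For the disconnected case we use Remark \ref{rmk:StdPositionDisconnected}. When $n=1$ we use $k$ disjoint balls, one for each of the $k$ levels of $U^{(k)}$, and bring $k$ sheets, possibly from the same component, to them. For this we run the proof of Lemma \ref{lem:FormalLegendrianStandardPosition} with $k$ isotropic paths starting at $k$ distinct points $p_1,\dots,p_k\in L$.

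\emph{Step 3: shrink the complement.} In $U$ the standard arc $f_\std$ (or $f^{(k)}_\std$) is a germ of a Legendrian arc near its tangency point together with tails running out to $\partial U$. We place $B$ so that the disk $\D^n_{r/2}\subset B$ is precisely a neighbourhood of the tangency germ of $f_\std$. It then remains to push the rest of $g$ out of $V$ while fixing this germ, with small energy. Here we use the contact vector field whose flow is the radial/scaling symmetry of the model. In dimension $3$, $\alpha_\OT^\std$ has the $\partial_z$ Reeb-type translation and the rotation $\partial_\phi$. In higher dimensions we use the Liouville dilation in the $(x_i,y_i)$ factors, which, as in Lemma \ref{lem:boundedCost}, has cost bounded by the size of the region it acts on. We shrink $V$ to a tiny neighbourhood of $\Delta\cup\D^n_{r/2}$, conjugate by such flows, and relabel $\psi$. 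The pieces of $g$ inside $V$ other than the standard germ are isotroped off $V$ along isotropic paths, exactly as in the proof of Lemma \ref{lem:FormalLegendrianStandardPosition}. That argument gives arbitrarily small cost because the Hamiltonian, cut off near an isotropic path, is close to $\alpha(\gamma')=0$. Finally we extend the germ to the full tails of $f_\std$ out to $\partial V$ by another isotropic-path push, which again has arbitrarily small cost.

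\emph{Main obstacle.} The hardest step is Step 3. We must ensure that the tails of $f_\std$ reach $\partial U$ in the prescribed way, and in dimension $\geq5$ that the product factor $[-(1+\eps),1+\eps]^{n-1}$ is matched, all without any uncontrolled energy. My first attempt would be to allow the neighbourhood $V$ and the constants $\rho_0,z_0,\eps$ to shrink, as the paper permits. The point is that the standard model then becomes a germ along the Legendrian disk in $B$, so everything outside this germ can be moved by isotropic-path isotopies of arbitrarily small energy.
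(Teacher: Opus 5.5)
There is a genuine gap, and it is exactly the step you flag as the main obstacle. The isotropic-path argument in Lemma~\ref{lem:FormalLegendrianStandardPosition} is cheap only because the moving piece is a tiny disk around one point. It says nothing about pushing extended strips of $g$ out of $V$, or about bending such a strip onto the prescribed tails of $f_\std$. Indeed, a Legendrian isotopy with $\alpha(\partial_t f_t)\equiv 0$ is just a reparametrization, so extended pieces cannot be moved for free.

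The flows you propose do not supply the missing control either:
\begin{itemize}
\item $\partial_z$ and $\partial_\phi$ do not shrink anything.
\item The Liouville dilation of the $(x_i,y_i)$-factors is not a contact vector field of $\alpha_\OT^\std+\sum y_i\,dx_i$; Lemma~\ref{lem:boundedCost} is about contactizations $ds+\lambda$.
\item Conjugating the identification $\psi$ by a flow changes the statement, which only allows shrinking the model constants.
\end{itemize}
The trouble originates in Step~1. Placing $B$ next to $\Delta$, i.e.\ inside $V$, forces the finger produced by Lemma~\ref{lem:FormalLegendrianStandardPosition} to cross $V$ along an uncontrolled path. You are then left with an arbitrary Legendrian strip in $V$ that must be turned into $f_\std$ (or $f^{(k)}_\std$), with no energy bound available.

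The missing ingredient is the bounded cost of a small Darboux ball. The paper's argument runs as follows.
\begin{itemize}
\item Take a Darboux ball $D$ about the tangency point $(\pi,0,0)$ that is disjoint from $f$, contains the image of $f_\std$ (after shrinking the model), and has bounded cost $\eps$ (Lemma~\ref{lem:boundedCost}). After shrinking $V$, one has $D\setminus V\neq\emptyset$.
\item Place the tiny ball $B$ in $D\setminus V$ and join it to $M\setminus(V\cup D)$ by a thin tube $T\subset D\setminus V$.
\item Apply Lemma~\ref{lem:FormalLegendrianStandardPosition} in $(M\setminus(D\cup V))\cup B\cup T$, so the cheap finger move never enters $V$.
\item Everything else is done inside $D$: Reidemeister moves and front isotopies push part of the finger into $V$ so that it meets $V$ exactly as $f_\std$, or as $f^{(k)}_\std$ when $n=1$. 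This is a compactly supported isotopy of $D$, so it can be replaced, with the same endpoint, by one of energy at most $\eps$.
\end{itemize}
Since $g$ is disjoint from $V$ before this last step and the step is supported in $D$, the final Legendrian meets $V$ precisely in the standard model.
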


\begin{proof}
    On $V$, we use the coordinates induced by $\psi$. Let $D$ be a Darboux ball in $M$ about the point $(x,y,z) = (\pi,0,0)$. We may assume that $D$ does not intersect the image of $f$, has bounded cost equal to $\eps$, and contains the image of $f_\std$. After possibly shrinking $V$, we may assume that $D \setminus V \neq \emptyset$. Now pick a tiny Darboux ball $B \subseteq D \setminus V$, and connect it to $M \setminus (V \cup D)$ via a thin tube $T$ around an arc in $D \setminus V$. By Lemma \ref{lem:FormalLegendrianStandardPosition} applied to $B \subseteq (M \setminus (D \cup V)) \cup B \cup T$, there exists a Legendrian isotopy $g_t$ satisfying 1) and 2) such that it intersects $B^-$ in the disk $\D_{r/2}^n$, where $r$ denotes the radius of $B$. Note that $B$ can be chosen so that the intersection of the lower and upper hemisphere of $B$ is almost horizontal in the Darboux chart for $D$ with the upper hemisphere being above the lower one. Then we can use Reidemeister moves in the front projection plus an isotopy of the front with compact support in $D$ to isotope the front of $g_t$ until it intersects $V$ as the desired standard model for the Legendrians (see Figure \ref{fig:standardizing near ot disk}). As $D$ has bounded cost equal to $\eps$, the energy of this isotopy can be assumed to be smaller than $\eps$. As $\eps>0$ was arbitrary, this finishes the proof.
\end{proof}

\begin{figure}
\centering
\begin{subfigure}{0.450\textwidth}
    \includegraphics[width=1\textwidth]{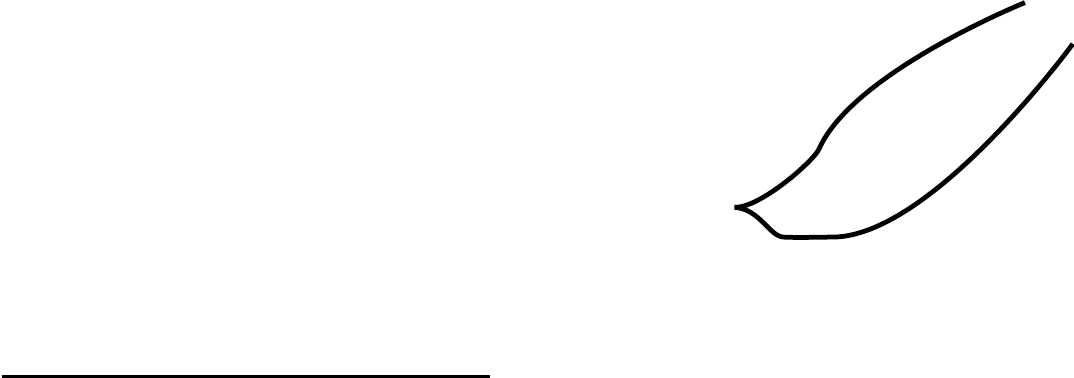}
    \caption{A Legendrian arc near the overtwisted disk.}
    \label{fig:arc near ot disk}
\end{subfigure}
\hfill
\begin{subfigure}{0.45\textwidth}
    \includegraphics[width=1\textwidth]{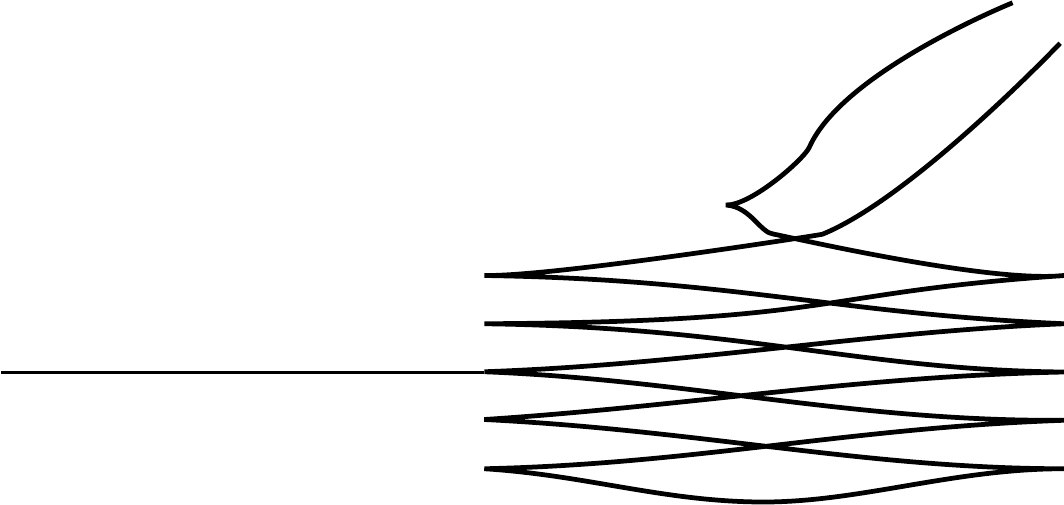}
    \caption{After a Legendrian isotopy, we may assume that the Legendrian arc intersects a small neighbourhood of the overtwisted disk as $f^{(k)}_\std$.}
    \label{fig:arc near ot disk as f_st}
\end{subfigure}

\caption{This figure shows how a Legendrian arc near an overtwisted disk can be isotoped to intersect a smaller neighbourhood as the standard model $f^{(k)}_\std$. We use the front projection in a Darboux chart with contact form $dz - y dx$, where the overtwisted disk has coordinates $\{z=0, x \leq 0\}$.}
\label{fig:standardizing near ot disk}
\end{figure}

\begin{remark}\label{rmk:IntoStdPosNearOTDiskDisconnected}
    If $L$ has finitely many connected components $L_\iota$, then we can use Remark \ref{rmk:StdPositionDisconnected} to show the following refinement of Lemma \ref{lem:IntoStdPosNearOTDisk}. Namely, for mutually disjoint $V_\iota$ and contactomorphisms $\psi_\iota:V_\iota \to U$ to the standard overtwisted model, we may assume that 3) and 4) hold with $V$ and $\psi$ replaced by $\bigcup_\iota V_\iota$ and $\psi_\iota$ with the additional condition that $g(L_{\iota'})$ does not intersect $V_\iota$ for $\iota' \neq \iota$. 
\end{remark}

\begin{remark}\label{rmk:StdPosOrientation}
    If $n = 1$ and we pick orientations of $f_\std^{(k)}$ and $f$, then for any tuple of signs $\sigma = (\sigma_1,...,\sigma_k) \in \{-1,+1\}^k$ we can choose $g_t$ in such a way that the orientation of $g_1$ induced by the orientation of $f$ agrees with $\sigma_i$ times the orientation of the $i$-th component of $f_\std^{(k)}$ under the identification given by $\psi$. Indeed, if we add one Reidemeister I move to the Legendrian shown in Figure \ref{fig:arc near ot disk as f_st}, then the local orientation changes. Thus, by adding more Reidemeister I moves in between the ones shown in Figure \ref{fig:arc near ot disk as f_st}, we can attain any prescribed orientation.
\end{remark}

\subsection{The overtwisted size is local in nature}  \label{ssec:oneStabilization}

A direct consequence of Lemma \ref{lem:IntoStdPosNearOTDisk} and Remark \ref{rmk:IntoStdPosNearOTDiskDisconnected} is that the overtwisted size is actually a property of the germ of contact form around the overtwisted disk. 
\begin{lemma} \label{lem:OTsizeIsLocal}
Let $\Delta \subseteq (M,\alpha)$ be a proper closed subset admitting a neighbourhood basis consisting of path-connected opens. Then
\begin{equation} \label{eq:sizeIsLocal}
\SS(\Delta) = 2\limsup_U \sup_f \SC(f;U,U \setminus \Delta)
\end{equation}
where the limit is taken over the poset of all path-connected open neighbourhoods $U$ of $\Delta$ (i.e.\,the limit as we take them smaller) and $f$ ranges over all proper Legendrian embeddings into $U$ that avoid $\Delta$, with finitely many components.
\end{lemma}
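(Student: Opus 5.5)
The inequality ``$\geq$'' in \eqref{eq:sizeIsLocal} is immediate, since a $\limsup$ over the poset of shrinking neighbourhoods is bounded above by the supremum over all neighbourhoods. For ``$\leq$'' we need a transfer step: fix a path-connected open neighbourhood $U$ of $\Delta$, a proper Legendrian $f:L\to U\setminus\Delta$ with finitely many components, and an arbitrarily small path-connected open neighbourhood $V\subseteq U$ of $\Delta$. The plan is to produce a proper Legendrian $f'$ in some $V'\subseteq V$ with finitely many components, avoiding $\Delta$, such that
\[ \SC(f;U,U\setminus\Delta)\le \SC(f';V',V'\setminus\Delta)+\eps \]
for any prescribed $\eps>0$. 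Taking suprema over $f$, and then letting $V$ shrink, will give the claim.

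The first step is to bring $f$ into a standard position near $\Delta$ at negligible cost. For each component $L_\iota$ of $L$, pick a tiny strict Darboux ball $B_\iota\subseteq V\setminus(\Delta\cup f(L))$, with the $B_\iota$ mutually disjoint. Apply Lemma \ref{lem:FormalLegendrianStandardPosition}, with the refinement of Remark \ref{rmk:StdPositionDisconnected}, to the constant family $f$. This yields a Legendrian isotopy $a_t$ from $f$ to some $g$, of energy at most $C(r)<\eps$. The Legendrian $g$ meets each $B_\iota$ exactly in a standard disk $\D^n_{r/2}$ in the upper hemisphere of $B_\iota$, and only the component $L_\iota$ meets $B_\iota$. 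We must also keep $a_t$ away from $\Delta$. For this I would apply the lemma inside the path-connected manifold $U\setminus\Delta$, using that removing $\Delta$ does not disconnect $U$ (for a hyperdisk in a connected manifold this holds after shrinking). The point I expect to need some care is exactly this: checking that the hypotheses of Lemma \ref{lem:FormalLegendrianStandardPosition} hold in $U\setminus\Delta$, in particular that $(U\setminus\Delta)\setminus B^-_\iota$ is path-connected.

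The second step chooses $V'$ so that $g$ meets it only in finitely many disks. Since $g(L)$ is closed in $U$ and disjoint from the closed set $\Delta$, there is a path-connected open neighbourhood $V''\subseteq V$ of $\Delta$ disjoint from $g(L)$. Let $V'$ be the union of $V''$ with the balls $B_\iota$ and with thin tubes inside $V\setminus g(L)$ joining each $B_\iota$ to $V''$. Then $V'$ is a path-connected open neighbourhood of $\Delta$. Moreover $f':=g|_{g^{-1}(V')}$ is a proper Legendrian embedding into $V'$, avoiding $\Delta$, whose components are the finitely many disks $g(L)\cap B_\iota$.

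The third step transfers stabilizations. Take a compactly supported isotopy in $V'$ from $f'$ to an $\eps'$-stabilization of $f'$ inside $V'\setminus\Delta$, with energy within $\eps$ of $\SC(f';V',V'\setminus\Delta)$. Extend it by the identity to an isotopy of $U$. Outside $V'$ it leaves $g$ unchanged, so it carries $g$ to a Legendrian that stabilizes every component of $g$. The remaining check is that the result is an $\eps'$-stabilization in the sense of Definition \ref{def:stabilization}. To see this, glue the height-$\eps'$ Weinstein neighbourhood of the stabilized piece (of the Legendrian $h'$ it stabilizes) to a thin height-$\eps'$ Weinstein neighbourhood of the unchanged part of $g$, interpolating on the overlap. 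This gives a strict Weinstein neighbourhood of height $\eps'$ of a Legendrian $h$ agreeing with $g$ outside $V'$, contained in $U\setminus\Delta$, of which the result is a stabilization. In dimension $3$, the same gluing carries an $(N,N)$-fold stabilization to an $(N,N)$-fold one componentwise.

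Concatenating with $a_t$, the triangle inequality for $d_\alpha$ gives $\SC(f;U,U\setminus\Delta)\le \eps+\SC(f';V',V'\setminus\Delta)+\eps$, uniformly in $N$ in dimension $3$. Since $\eps$ is arbitrary, this proves the transfer step and hence the lemma.
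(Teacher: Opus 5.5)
Your proof is correct and follows the paper's argument. You use Lemma \ref{lem:FormalLegendrianStandardPosition} to move $f$, at cost $C(r)\to 0$, so that it meets an arbitrarily small neighbourhood of $\Delta$ only inside tiny Darboux balls, restrict to that neighbourhood, and transfer stabilizing isotopies back by extending them by the identity; the paper differs only cosmetically, centring a single ball on $\partial V$ for a $V$ already disjoint from $f$ instead of attaching balls by tubes, and it is less explicit than you about several components and the gluing of Weinstein neighbourhoods. The connectivity point you flag does not need $\Delta$ to be a hyperdisk: since $U$ is connected, every path component of $U\setminus\Delta$ accumulates on $\Delta$ and hence meets $V$, so you can place each $B_\iota$ in the component containing $f(L_\iota)$ and apply the lemma inside that component.
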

\begin{proof}
It is clear from the definitions that the left-hand side in \eqref{eq:sizeIsLocal} is at least as large as the right-hand side. It remains to prove the opposite inequality. 

Consider then an arbitrary Legendrian $f$ with image in some path-connected $U$ and disjoint from $\Delta$. Fix some other $V$, a path-connected neighbourhood of $\Delta$ with smooth boundary  $\partial V$, disjoint from $f$. Place a Darboux ball $B \subseteq U \setminus \Delta$ centered at a point in $\partial V$. We can isotope $f$ so it intersects $V$ only within $B$. According to Lemma  \ref{lem:FormalLegendrianStandardPosition}, this can be done with arbitrarily small cost if we allow ourselves to shrink $B$.

Write $g$ for the restriction of $f$ to $f^{-1}(V)$. It follows that the stabilization cost of $f$ is bounded above, up to an arbitrarily small error, by the stabilization cost of $g$ within $V$. This proves the claim.
\end{proof}

Moreover, for the standard overtwisted disk, we claim that the overtwisted size depends solely on being able to stabilize the standard model $f_\std$ (as introduced in Subsection \ref{ssec:standardPosition}). Even further, in dimension $3$ it suffices to consider the cost of stabilizing a single time. Let us introduce some terminology.

In dimension 3, we defined $\mathcal{C}^{N_+,N_-}(f;M,U)$ as the energy that it takes to realize an oriented Legendrian embedding $f:L \to (M,\alpha)$ as an arbitrarily small $(N_+,N_-)$-fold stabilization inside of $U \subset M$. If $L$ is connected, let us define
\begin{equation*}
    \mathcal{C}^1(f;M,U) \coloneqq \min(\mathcal{C}^{1,0}(f;M,U),\mathcal{C}^{0,1}(f;M,U)),
\end{equation*} 
the energy that it takes to realize $f$ as an arbitrarily small 1-fold stabilization. If $L$ is disconnected, we extend this definition by also taking the maximum over all components. Then:
\begin{lemma}\label{lem:OTsizeViaStdModel}
    Let $\Delta_\OT \subseteq (M,\alpha)$ be contactomorphic to the standard overtwisted disk $\Delta_\OT^\std$. If $\dim M = 3$, the overtwisted size of $\Delta_\OT$ is given by
    \begin{equation}\label{eq:OTSizeviaStdModel}
        \SS(\Delta_\OT) = 2\limsup_U \SC^1(\phi \circ f_\std;U)
    \end{equation}
If $\dim M \geq 5$, we have instead:
     \begin{equation}\label{eq:OTSizeviaStdModelgeq5}
        \SS(\Delta_\OT) = 2\limsup_U \SC(\phi \circ f_\std; U)
    \end{equation}
In both cases, the limit is taken over the poset of all path-connected open neighbourhoods $U$ of $\Delta_\OT$ (i.e.\,it is the limit as we take them smaller), and $\phi$ denotes the corresponding identification of an open neighbourhood of $\Delta_\OT^\std$ with $U$.
\end{lemma}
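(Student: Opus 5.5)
We prove the two inequalities separately, using Lemma \ref{lem:OTsizeIsLocal} to work only with small path-connected neighbourhoods $U$ of $\Delta_\OT$, which we identify via $\phi$ with neighbourhoods of the standard model.

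\emph{Lower bound.} The right-hand sides of \eqref{eq:OTSizeviaStdModel} and \eqref{eq:OTSizeviaStdModelgeq5} are bounded above by $\SS(\Delta_\OT)$ because $\phi\circ f_\std$ is one admissible Legendrian in Lemma \ref{lem:OTsizeIsLocal}. In dimension $3$ we must first fix the set in which the stabilization takes place, and we choose $U\setminus\Delta_\OT$. This is harmless: the arc $f_\std$ touches $\Delta_\OT$ only at a single tangency point, so we may push it off by an arbitrarily small-energy isotopy. We also need to compare $\SC^1$ with $\SC=\limsup_N \SC^{N,N}$. Stabilizing in a small Weinstein neighbourhood costs at most its height, so an $(N,N)$-fold $\eps$-stabilization is itself an $\eps$-close $1$-fold stabilization up to cost $O(\eps)$. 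This gives $\SC^1\le \SC$ up to errors that vanish as $\eps\to0$.

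\emph{Upper bound.} Fix $U$ and a proper Legendrian $f$ in $U$ avoiding $\Delta_\OT$. Shrink to a compact neighbourhood $V\subset U$ of $\Delta_\OT$ that is disjoint from $f$ and identified with the standard model. Lemma \ref{lem:IntoStdPosNearOTDisk} (or Remark \ref{rmk:IntoStdPosNearOTDiskDisconnected} if $L$ has several components, applied to disjoint parallel copies of the disk inside $V$) gives an isotopy of energy $<\eps$ after which $f$ meets $V$ exactly in $f_\std$. In dimension $\ge5$ the case $\dim\ge 5$ then follows directly. We apply the isotopy that realizes $f_\std$ as a small stabilization inside $V$, extended by the identity outside $V$. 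The result is an $\eps'$-stabilization of $f$, because a Weinstein neighbourhood of the stabilized piece glues to a thin Weinstein neighbourhood of the untouched part of $f$. Hence $\SC(f;U,U\setminus\Delta)\le \SC(\phi\circ f_\std;V)+\eps$, and taking the $\limsup$ over $V$ gives the claim.

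\emph{Dimension 3.} Here we need $(N,N)$-fold stabilizations with $N$ large, while the right-hand side only controls a single stabilization. We use Lemma \ref{lem:IntoStdPosNearOTDisk} with $k=2N$ to put $f$ in position $f^{(2N)}_\std$, that is, $2N$ parallel copies in the thin slabs $U^{(2N)}$. Remark \ref{rmk:StdPosOrientation} lets us prescribe the orientations so that $N$ copies carry one sign and $N$ the other. Each slab is a parallel copy of the standard model and contains its own overtwisted disk. Translation in $z$ is strict for $\alpha_\OT^\std$, since the form does not depend on $z$. Therefore in each slab there is a one-fold stabilization of the same cost as for $f_\std$, and because orientations can be reversed both $\SC^{1,0}$ and $\SC^{0,1}$ are available. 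All $2N$ isotopies have disjoint supports, so we run them simultaneously. The energy of a simultaneous isotopy is the maximum over the pieces, not the sum, so its cost is essentially $\SC^1(\phi\circ f_\std;\cdot)$.

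The main obstacle is that the slabs have height $z_0/(3k)\to0$. So we must show that the one-fold stabilization cost of $f_\std$ inside the thinner neighbourhood $U^{(k)}$ does not exceed its $\limsup$ over shrinking neighbourhoods. This is exactly why the formula is written as a $\limsup_U$: every slab is a standard neighbourhood, so its cost is bounded by a term of the limsup. A secondary, routine check is that the modified Legendrian is a genuine $(N,N)$-fold $\eps$-stabilization, namely that the stabilized pieces sit in a single Weinstein neighbourhood of small height of one Legendrian $h$. For this we take $h$ to be $f$ with each standard piece replaced by its destabilized version $f'^{(1)}_\std$.
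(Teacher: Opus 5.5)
\textbf{Verdict: the strategy is right, but there is one genuine, easily repaired gap.}

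Your approach matches the paper's. The lower bound is immediate. For the upper bound you bring $f$ into standard position with Lemma \ref{lem:IntoStdPosNearOTDisk} and Remark \ref{rmk:IntoStdPosNearOTDiskDisconnected}. You then use $z$-translated parallel copies and run translates of one near-optimal stabilizing isotopy simultaneously, so the energy is a maximum rather than a sum. In dimension $3$ you prescribe signs via Remark \ref{rmk:StdPosOrientation}.

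\textbf{The gap: the stabilization must avoid $\Delta_\OT$.} In the definition of $\SS(\Delta_\OT)$, the stabilized Legendrian $g$ and its Weinstein neighbourhood must lie in $U\setminus\Delta_\OT$. The right-hand sides of \eqref{eq:OTSizeviaStdModel} and \eqref{eq:OTSizeviaStdModelgeq5}, however, use the convention $\SC(f;U)=\SC(f;U,U)$, so the stabilizations counted there may sit anywhere in $U$, including on $\Delta_\OT$.

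In your dimension-$\geq 5$ argument for connected $L$, the standard piece is $f_\std$ at $\Delta_\OT$ itself; it even touches $\partial\Delta_\OT$. The isotopy realizing $\SC(\phi\circ f_\std;V)$ may therefore end at a stabilization meeting $\Delta_\OT$. So your inequality $\SC(f;U,U\setminus\Delta)\le \SC(\phi\circ f_\std;V)+\eps$ is not justified. The same problem occurs in dimension $3$ for the slab $i=0$ of $U^{(2N)}$, which contains $\Delta_\OT$.

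Your lower-bound remark may mean you read $\SC^1(\phi\circ f_\std;U)$ as a cost of stabilizing inside $U\setminus\Delta_\OT$. In that case you prove an identity whose right-hand side is a priori larger than the stated one, and the stated upper bound still does not follow.

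\textbf{The fix.} This is exactly what the paper's parallel copies are for, even when $L$ is connected. Standardize $f$ near $z$-translates $\Delta^i_\OT=\tau_i(\Delta_\OT)$. Conjugate the near-optimal stabilizing isotopy of $\phi\circ f_\std$ in a small neighbourhood $U'$ of $\Delta_\OT$ by $\tau_i$. It is then supported in $U^i=\tau_i(U')$, a neighbourhood of $\Delta^i_\OT$ disjoint from $\Delta_\OT$ and from the other copies. Every stabilization now takes place in $U\setminus\Delta_\OT$. Its cost is controlled by $\SC(\phi\circ f_\std;U')$, a term of the (monotone) $\limsup$. You already use these translations in dimension $3$, so the repair is within reach.

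\textbf{Two smaller points.}

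First, $\Delta_\OT$ is only contactomorphic to the model. On the chart, $\alpha=\chi\,\alpha_\OT^\std$, so $z$-translation is not strict for $\alpha$. By \eqref{eq:DistanceAndConformalFactor}, costs in the translated neighbourhoods differ by a factor of at most $\sup(\chi\circ\tau_i/\chi)$. This factor tends to $1$ as $z_0$ shrinks, so your ``same cost'' holds only up to this factor, as the paper notes.

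Second, in your final check, build $h$ by replacing each standard piece with the Legendrian $h_i$ of which the defining near-optimal isotopy produces a stabilization. The piece $f'^{(1)}_\std$ plays that role only for the explicit isotopy of Proposition \ref{prop:controlledIsotopy}, not for an arbitrary near-optimal one.
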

\begin{proof}
    The left-hand side in \eqref{eq:OTSizeviaStdModel} and \eqref{eq:OTSizeviaStdModelgeq5} is at least as large as the right-hand side. We prove the opposite inequality. 
    
    Take arbitrarily many disjoint copies $\Delta_\OT^{i}$ of $\Delta_\OT \subseteq (M,\alpha)$. In coordinates coming from the identification with $\Delta_\OT^\std$, these are simply small translations in the $z$-direction. If these copies are sufficiently close to each other, the contact forms on their open neighbourhoods differ by multiplication by a function that is close to the constant function $1$. 
    
    Let us assume that $\dim M\geq 5$. Let $U$ be some path-connected open neighbourhood of $\Delta_\OT$. We fix a contact isotopy $\phi_t$ supported in $U$, of energy $\SC(\phi \circ f_\std, U) + \eps$ for some small $\eps >0$, such that $\phi_1 \circ f_\std$ is an $\eps$-stabilization.
    
    Consider then $f:L \to U \setminus\Delta_\OT \subseteq (M ,\alpha)$, a Legendrian embedding with $l$ connected components. Take $l$ nearby copies $\Delta_\OT^i$ of $\Delta_\OT$. After a Legendrian isotopy of $f$ of arbitrarily small energy, we may assume by Lemma \ref{lem:IntoStdPosNearOTDisk} and Remark \ref{rmk:IntoStdPosNearOTDiskDisconnected} that each component of the image of $f$ intersects a neighbourhood of one of the $\Delta_\OT^i$ in a copy of the standard model $f_\std$. We can now use translations in $z$ to consider $l$ copies of $\phi_t$, that take place simultaneously. We see them as a single contact isotopy $\Phi_t$, supported on small disjoint neighbourhoods of the $\Delta_\OT^i$, all of them contained in $U$. Simultaneity implies that its energy is $\SC(\phi \circ f_\std;U^i)$, approximately, where $U^i$ is a neighbourhood of $\Delta_\OT^i$ disjoint from the other copies and disjoint from $\Delta_\OT$. Invoking the cofinality Lemma \ref{lem:OTsizeIsLocal} concludes the proof.

    If $\dim M = 3$, we argue in the same way, except that now we assume that $f$ intersects neighbourhoods of the copies of $\Delta_\OT$ as $f_\std^{(k)}$. It again follows that for any $k \in \N$, we can realize $f$ as a $k$-fold $\eps$-stabilization in the complement of $\Delta_\OT$ after an isotopy of energy $\SC^1(\phi \circ f_\std;U)$, approximately. By Remark \ref{rmk:StdPosOrientation}, we can, after fixing an orientation of $f$, prescribe the type of each of the $k$-stabilizations (positive or negative).
\end{proof}

\begin{remark}\label{rmk:OTSizeViaStdModelConvex}
    The proof relied on the fact that we can find arbitrarily many nearby copies of $\Delta_\OT$ in any small neighbourhood. It follows that the statement applies as well whenever we have a subset $\Delta \subseteq (M,\alpha)$ that is \emph{convex}, meaning that there is a contact vector field that immediately displaces it from itself, and has connected complement in $M$. We leave the details to the reader.
\end{remark}

\section{The model quantitative isotopy of loose Legendrians in dimension $3$}\label{sec:QuantitativeIsotopiesInDimension3}

This whole section is dedicated to the proof of Proposition \ref{prop:controlledIsotopy} and its generalization Proposition \ref{prop:SizeOfOTDisk}. This will also establish the upper bound claimed in Theorem \ref{thm:SizeOfOtDisk}.

\subsection{Proof of Proposition \ref{prop:controlledIsotopy}} \label{ssec:constructionIsotopy}

Let $f: [0,2] \to (\R^3,\alpha_\OT^\std)$ be a Legendrian embedding intersecting $\Delta_\OT^\std$ only in the point $(\rho,\theta,z) = (\pi,0,0) = f(1)$, where it is tangent to $\Delta_\OT^\std$. We may take $f$ to be a reparametrization of the standard Legendrian $f_\std$ defined in Section \ref{sec:bringing Legendrian in standard position}.

We will now construct an isotopy of Legendrian embeddings $(f_t)_{t \in [0,3]}$ with $f_0 = f$ satisfying the conclusions of Proposition \ref{prop:controlledIsotopy}.

\subsubsection{A piecewise object} \label{sssec:OTorange}

We will construct a family of piecewise smooth curves $(g_t)_{t \in [0,3]}: [0,6] \rightarrow \R^3$ that we will then desingularise to yield the $f_t$. It is convenient to first define the images $\Orange_t := g_t([1,5])$. We use cylindrical coordinates $(\rho,\theta,z)$ throughout.
\begin{itemize}
    \item For $t \in [0,1]$, $\Orange_t$ is the radial segment $\{\rho \in \pi[1-t,1];\, \theta,z=0\}$.
    \item For $t \in [1,2]$, $\Orange_t$ is the union of the circular segment $\{\rho =\pi;\, \theta \in [-\pi(t-1),\pi(t-1)] ;\, z=0\}$ and the radial segments $\{\rho \in [0,\pi];\, \theta = \pm \pi(t-1);\, z=0\}$.
    \item For $t \in [2,3]$, $\Orange_t$ is the union of the circle $\{\rho =\pi;\, z=0\}$ and the radial segment $\{\rho \in \pi[t-2,1];\, \theta = \pi,z=0\}$.
\end{itemize}
In particular, $\Orange_0$ consists of the point $(\pi,0,0)$ and $\Orange_3$ is the boundary of the overtwisted disk. We write $\Orange$ for the union of all the $\Orange_t$, which we regard as a subspace of $\Delta_\OT^\std \times [0,3] \subset \R^3 \times [0,3]$.

\subsubsection{A family of piecewise Legendrians}

We now define the promised family $(g_t)_{t \in [0,3]}: [0,6] \rightarrow \R^3$. Each individual curve will be piecewise smooth and tangent to $\xi_\OT^\std$, with image in $\Orange_t \cup f([0,2])$. Moreover, all of them will be given by
\begin{equation*}
    s \mapsto \begin{cases}
        f(s) \quad & s \in [0,1]\,, \\
        f(s-4) \quad & s \in [5,6]\,.
    \end{cases}
\end{equation*}
at the ends of the domain. The initial curve $g_0$ is the reparametrisation of $f_0 = f$ satisfying $g_0(s) := (\pi,0,0)$ if $s \in [1,5]$.

For $t \in [0,1]$, $g_t$ is the map:
\begin{equation*}
    s \mapsto \begin{cases}
        \big(\pi,0,0\big) \quad & s \in [1,2]\,,\\
        \big((1-(s-2)t)\pi,0,0\big) \quad & s \in [2,3]\,,\\
        \big((1-(4-s)t)\pi,0,0\big) \quad & s \in [3,4]\,,\\
        \big(\pi,0,0\big) \quad & s \in  [4,5]\,, \\
    \end{cases}
\end{equation*}
For $t \in [1,2]$, $g_t$ is given by the expression:
\begin{equation*}
    s \mapsto \begin{cases}
        \Big(\pi, \pi(t-1)(s-1),0\Big) \quad & s \in [1,2]\,,\\
        \Big(\pi(3-s),\pi(t-1),0\Big) \quad & s \in  [2,3]\,, \\
        \Big(\pi(s-3),-\pi(t-1),0\Big) \quad & s \in  [3,4]\,, \\
        \Big(\pi, \pi(t-1)(s-5),0\Big) \quad & s \in  [4,5]\,.
    \end{cases}
\end{equation*}
Lastly, for $t \in [2,3]$, $g_t$ is described by:
\begin{equation*}
    s \mapsto \begin{cases}
        \Big(\pi,\pi(s-1),0\Big) \quad & s \in  [1,2]\,, \\
        \big(\pi - (s-2)(3-t)\pi,\pi,0\big) \quad & s \in  [2,3]\,, \\
        \big(\pi - (4-s)(3-t)\pi,\pi,0\big) \quad & s \in  [3,4]\,, \\
        \Big(\pi,\pi(s-5),0\Big) \quad & s \in [4,5]\,.
    \end{cases}
\end{equation*}

\begin{figure}
\centering
\begin{subfigure}{0.30\textwidth}
    \includegraphics[width=0.6\textwidth]{figures/h1_cropped.pdf}
    \caption{$t=0$.}
    \label{fig:h1}
\end{subfigure}
\hfill
\begin{subfigure}{0.30\textwidth}
    \includegraphics[width=0.6\textwidth]{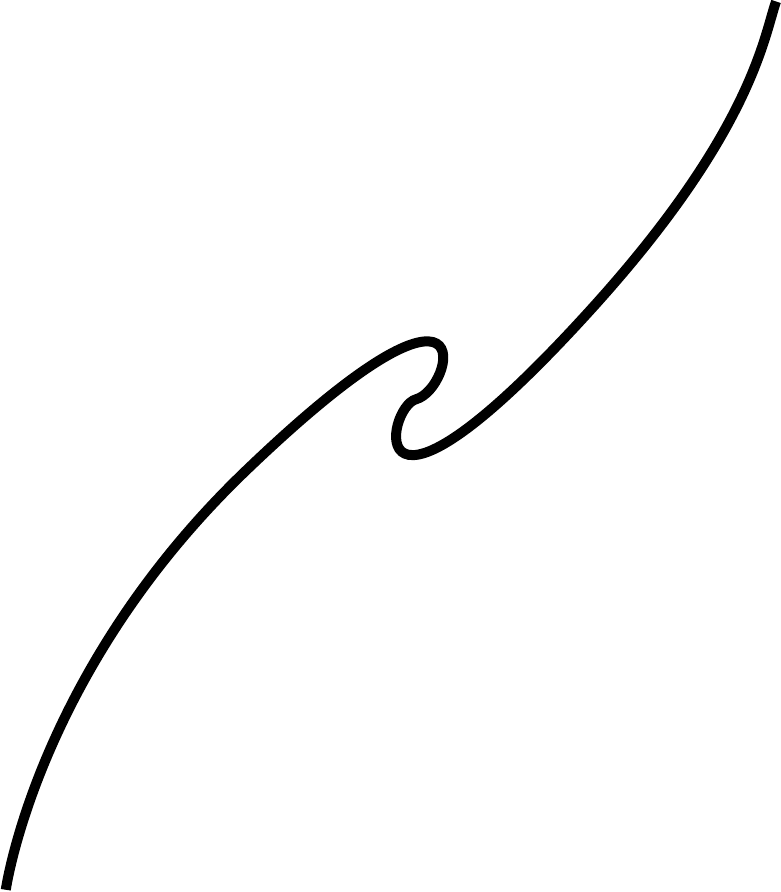}
    \caption{$0 < t < 1$.}
    \label{fig:h2}
\end{subfigure}
\hfill
\begin{subfigure}{0.30\textwidth}
    \includegraphics[width=\textwidth]{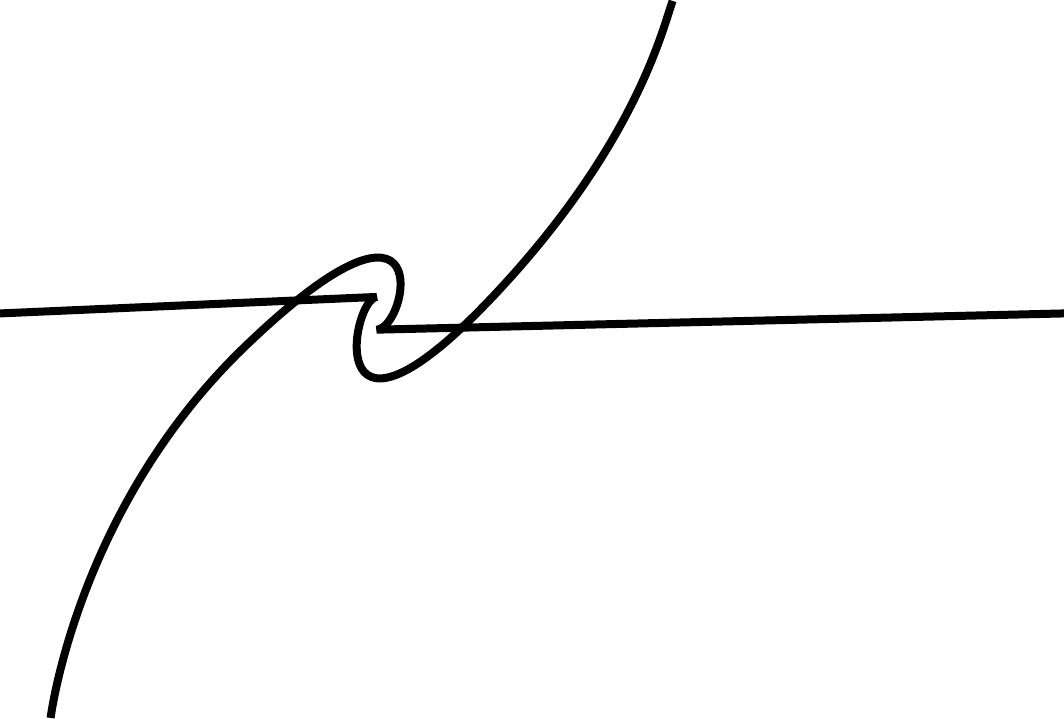}
    \caption{$1 < t \approx 1$.}
    \label{fig:h3}
\end{subfigure}
\hfill
\begin{subfigure}{0.30\textwidth}
    \includegraphics[width=\textwidth]{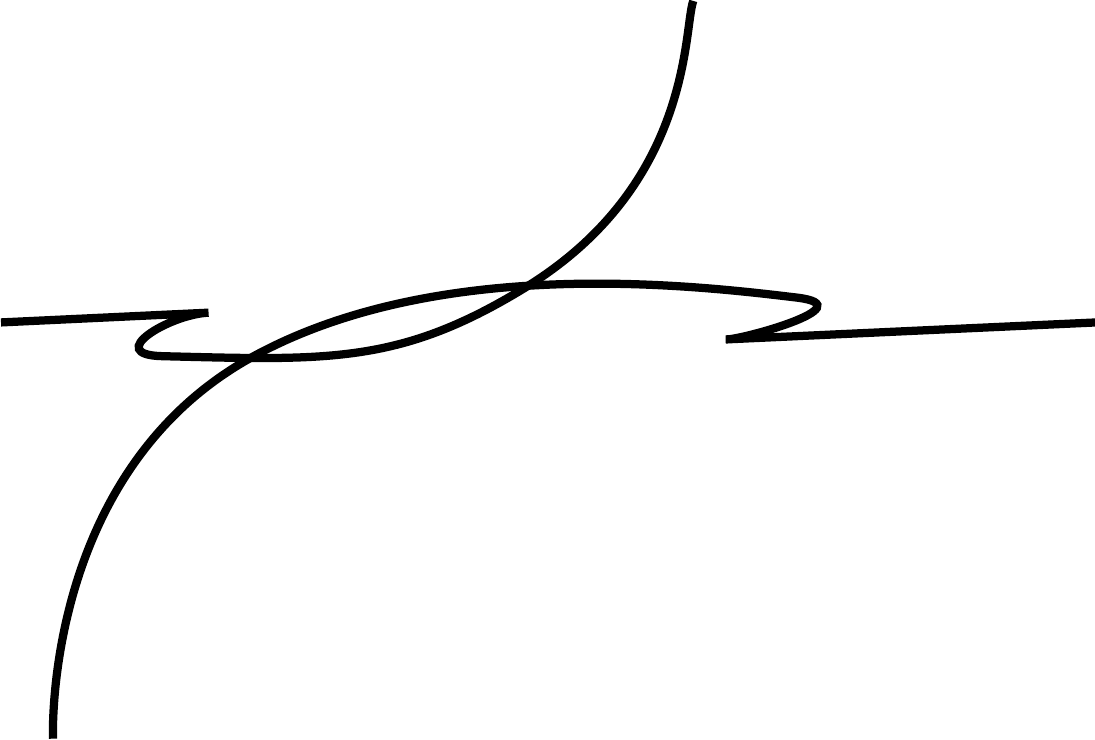}
    \caption{$1< t < 2$.}
    \label{fig:h4}
\end{subfigure}
\hfill
\begin{subfigure}{0.30\textwidth}
    \includegraphics[width=\textwidth]{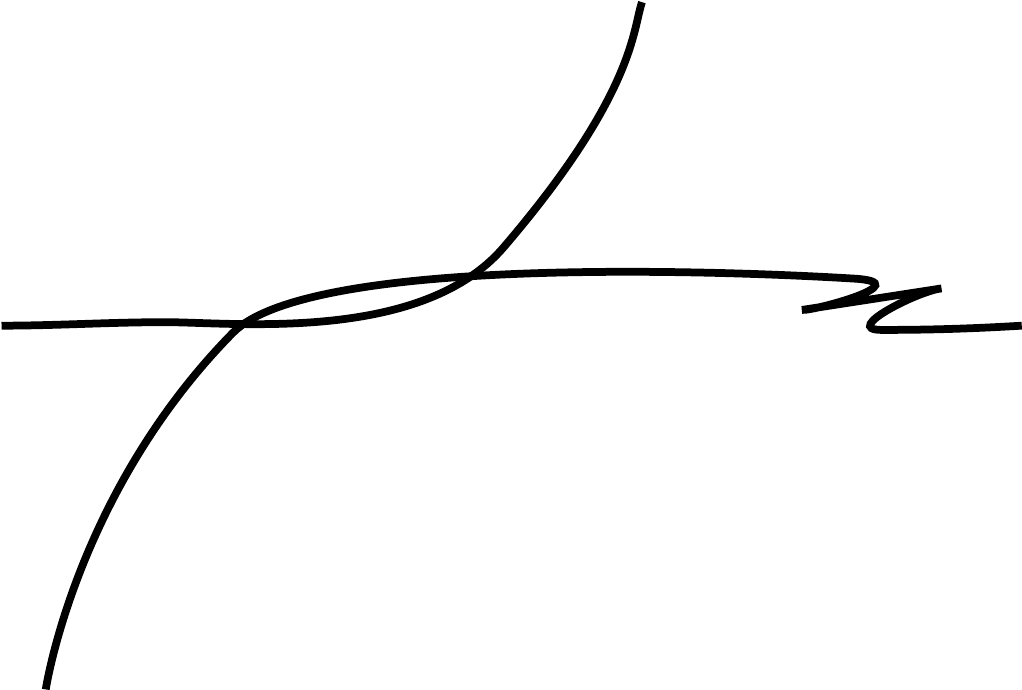}
    \caption{$t = 2$.}
    \label{fig:h5}
\end{subfigure}
\hfill
\begin{subfigure}{0.30\textwidth}
    \includegraphics[width=\textwidth]{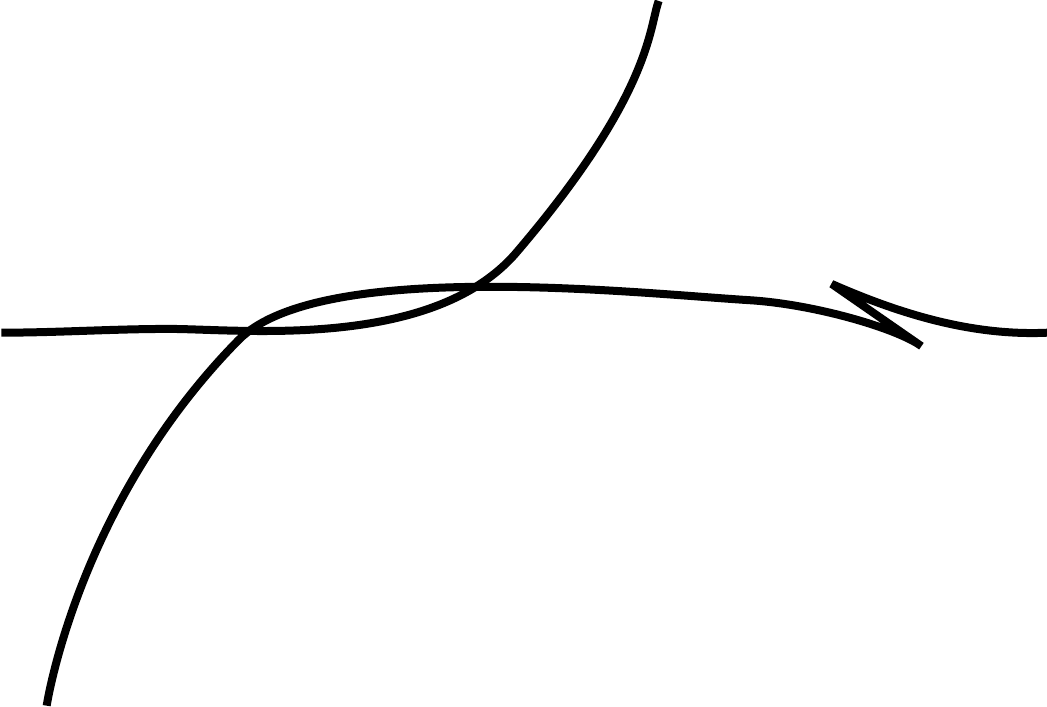}
    \caption{$t = 3$.}
    \label{fig:h6}
\end{subfigure}

\caption{The front of $h_t$ for different values of $t$. At $t = 0$, the image of $h_t$ is contained in the region $\{\pi \leq \rho < 2 \pi\}$ and intersects the overtwisted disk in a single point. For $t \in (0,1)$, $h_t$ moves a small part of the arc towards the origin; the front is twisting. Subfigure (c) shows the front of $h_t$ right after crossing the $z$-axis. To go from (b) to (c), we applied the Reidemeister move RIV. For $t \in (1,2)$, the sub-arcs connecting the set $\{\rho = \pi\}$ to $\{\rho \approx 0\}$ rotate in the $\pm \theta$-direction. To go from (c) to (d), we applied RV (twice) and then RII (also twice). For $t \in (2,3)$, the sub-arcs connecting  $\{\rho = \pi\}$ to $\{\rho \approx 0\}$ contract to $\{\rho = \pi\}$; the front (in particular, the stabilization it contains) untwists. At $t=3$, we find a Legendrian embedding that is a stabilization of a connected sum of $h_0$ and the boundary of the overtwisted disk.}
\label{fig:Stabilizing_isotopy_3d}
\end{figure}

\subsubsection{Smoothing and end of the proof}

Using the family $g_t$ we now construct a family of smooth Legendrians $h_t$. This is done in two steps: (1) smoothing the corners, (2) perturbing to achieve the embedding condition. These modifications take place in an arbitrarily small neighbourhood of $\Orange_t$.

Providing an explicit formula for $(h_t)_{t \in [0,3]}$ is not particularly instructive, nor practical or necessary. Instead, we have opted to depict it in Figure \ref{fig:Stabilizing_isotopy_3d}, using the cylindrical projection. Moreover, the knot diagrams shown in the figure can be assumed to satisfy the following properties:
\begin{itemize}
    \item $h_t|_{[1,5]}$ is arbitrarily close to  $\Orange_t$, and in particular arbitrarily close to $\Delta_\OT^\std$.
    \item $h_3$ is obtained from $h_0$ by surgery with an arbitrarily small stabilization of $\partial\Delta_\OT^\std$.
    \item Away from the singularities of $g_t$, the curves $h_t$ and $g_t$ are $C^1$-close.
    \item $h_t$ agrees with $f$ at the ends of its domain.
    \item $h_0$ is a reparametrisation of $f$.
\end{itemize}
The last item implies that there is a Legendrian isotopy connecting the two with arbitrarily small cost. To conclude the proof we have to establish the claimed quantitative bound for the isotopy just described.

First we claim that for any given $\eps > 0$ we can choose $h_t$ satisfying $|\alpha_\OT^\std(\partial_t h_t)| < \eps$ for every $t \not\in [1,2]$. Indeed, over this time region the isotopy $h_t$ is supported in an arbitrarily small neighbourhood of a radial arc, which is Legendrian. Corollary \ref{cor:boudnednessOfStrictWeinsteinNbhdOfHeightC} then implies that the isotopy can be assumed to have arbitrarily small energy.

We thus focus on $t \in [1,2]$. Note that the family $g_t$, restricted to $s \in [1,2] \cup [4,5]$, spans the boundary of the overtwisted disk. As such, we can apply  Corollary \ref{cor:boudnednessOfStrictWeinsteinNbhdOfHeightC} again and bound the energy contribution by $\eps$. It follows that the interesting interval is $s \in [2,4]$, where $g_t$ consists of two rotating radial arcs. Over each arc we obtain the estimate:
\begin{equation*}
    |\alpha_\OT^\std(\partial_t g_t)| = \pi |\rho \sin(\rho) d\phi(\partial_\phi)| = \pi |\rho \sin(\rho)|.
\end{equation*}
We claim that this applies to $h_t$ as well, up to an error that can be made arbitrarily small by shrinking the smoothing area: Indeed, the smoothing points are contained in neighbourhoods of legendrian arcs, so the energy contribution there can be assumed to be small by shrinking said neighbourhoods.

In summary, the energy of this isotopy, up to small error, is given by
    \begin{equation} \label{eq:CostStandardDisc}
       \int_0^3 \max |\alpha_\OT^\std(\partial_t h_t)| dt \approx \pi \max_{\rho \in [0,\pi]} \rho \sin(\rho) = \frac{C^*}{2}.
    \end{equation}
This concludes the proof of Proposition \ref{prop:controlledIsotopy}. \hfill \qedsymbol


This implies, by Lemma \ref{lem:OTsizeViaStdModel}, that the overtwisted size of the standard $3$-dimensional overtwisted disk is bounded from above by $C^*$, as claimed in Theorem \ref{thm:SizeOfOtDisk}.

\subsection{Overtwisted disks with arbitrary contact form}

\begin{proof}[Proof of Proposition \ref{prop:SizeOfOTDisk}]
Recall the key quantity
\[  c_\textrm{Hofer} = \int_0^{2\pi} c(\theta) d\theta, \qquad c(\theta) := \max_{\rho \in [0,\pi]} |\alpha(\rho,\theta)(\partial_\theta)|. \]

The argument is exactly the same as for the standard disk. The isotopy we constructed still has energy concentrated in the time interval $t \in [1,2]$ and domain interval $s \in [2,4]$, where it is given by two rotating radial arcs. It follows, reasoning as above, and up to small error, that the cost of our isotopy is given by the following analogue of Equation \ref{eq:CostStandardDisc}:
\[  c_\textrm{stab} := \int_0^{\pi} \max\{c(\theta),c(-\theta)\} d\theta, \]
but with this we only get the estimate:
\[ c_\textrm{stab} \leq \int_0^{\pi} c(\theta) + c(-\theta) d\theta = 
\int_0^{\pi} c(\theta) + \int_\pi^{2\pi} c(\theta) d\theta = c_\textrm{Hofer}. \]
By definition, the overtwisted size is bounded above by twice the cost of the isotopy we constructed, so we need $c_\textrm{stab} \leq \frac{c_\textrm{Hofer}}{2}$. In order to obtain the improved estimate, we modify the construction of the isotopy as follows: Let $\theta_0 \in [0,2\pi]$ be such that 
\begin{equation*}
    \int_0^{\theta_0}c(\theta) d\theta = \int_{\theta_0}^{2\pi} c(\theta)d\theta = \frac{1}{2} \int_0^{2\pi}c(\theta) d\theta.
\end{equation*}
We let further $\phi_+:[1,2] \to [0,\phi_0]$ and $\phi_-:[1,2] \to [\theta_0,2\pi]$ be reparametrizations, where $\phi_+$ is orientation preserving and $\phi_-$ is orientation reversing, such that 
\begin{equation*} 
    c(\phi_+(t))\phi_+'(t) = -c(\phi_-(t))\phi_-'(t) = \frac{1}{2} \int_0^{2\pi}c(\theta) d\theta.
\end{equation*}
Then we modify the definition of $g_t$, in the above construction of the isotopy, such that (a) during the time interval $t \in [1,2]$, the two radial segments are at angle $\phi_+(t)$ and $\phi_-(t)$ instead of $\pi(t-1)$ and $-\pi(t-1)$, (b) during the time interval $[2,3]$, the isotopy is supported near $\{\theta = \theta_0\}$.

The cost of the resulting isotopy is then given by $\frac{1}{2}\int_0^{2\pi} c(\theta)d\theta$, as required. This finishes the proof.
\end{proof}

\section{Creating a small loose chart and the model overtwisted disk in higher dimensions}\label{sec:ProofStabOtSize}

We now study the trace of the isotopy produced in Proposition \ref{prop:controlledIsotopy}. This will establish Theorem \ref{thm:OtDiskOfSizeCTimesBall}.

\begin{proof}[Proof of Theorem \ref{thm:OtDiskOfSizeCTimesBall}]
Let $\eps > 0$. Consider
\[ M^{(n)}_\eps := M \times \left([-1-\eps,1+\eps] \times \left[-(1+\eps)\left(\frac{C}{2}+\eps\right),(1+\eps)\left(\frac{C}{2}+\eps\right) \right]\right)^n; \]
we take coordinates $(x_i,y_i)$ in the second factor and set
\[ \alpha^{(n)} := \alpha + \sum_{i=1}^n y_i dx_i. \]
For each open subset $U \subset M$ we can define $U^{(n)}_\eps$ analogously. Lastly, we also introduce:
\[ \Delta^{(n)} := \Delta \times \left([-1,1] \times \left[-\frac{C}{2},\frac{C}{2}\right]\right)^n \subset M^{(n)}_\eps. \]

We must show that every proper Legendrian $F$ in some $U^{(n)}_\eps$, disjoint from $\Delta^{(n)}$, can be isotoped, with cost at most $\frac{C}{2} + \eps$, to exhibit a explicit small stabilization.

\textbf{Step 1: the model.} We first study the case of a certain model family of Legendrians.

Since $\Delta \subseteq (M,\alpha)$ has overtwisted size $C$, it follows that for any neighbourhood $V \subseteq M$ of $\Delta_\OT$ and any proper Legendrian embedding $f:L\to V \setminus \Delta$, there exists a Legendrian isotopy $f_t$, with compact support and energy less than $C/2 + \eps$, from $f$ to a stabilization contained in an arbitrarily small Weinstein neighbourhood of some other Legendrian embedding. It follows that, up to time reparametrization, we may assume $|\alpha(\partial_t f_t)| \leq \frac{C}{2} + \eps$.

Fix one such $f$. Fix $\zeta:[-1-\eps,1+\eps] \to [0,1]$, a smooth function that is equal to $0$ near $[-1-\eps,-1]$ and $[1,1+\eps]$, equal to $1$ near $0$, and satisfies $|\zeta'| < 1 + \eps$. Then we construct Legendrian isotopies for $n\geq1$
\begin{equation*}
\begin{split}
    f^{(n,\eps)}_t: L \times [-1-\eps,1+\eps]^n \quad&\longrightarrow\quad \left(V^{(n)}_\eps,\alpha^{(n)} \right) \\
    (q,x_1,...,x_n) \quad &\mapsto\quad \left(f^{(n-1,\eps)}_{t \zeta(x_n)}(q,x_1,...,x_{n-1}), x_n, -\alpha^{(n-1)}\left(\frac{d}{dx_n}f^{(n-1,\eps)}_{t \zeta(x_n)}(q,x_1,...,x_{n-1})\right) \right),
\end{split}
\end{equation*}
with $f^{(0,\eps)}_t := f_t$.

\textbf{The meaning of this formula.} Recall that the Legendrian trace of a Legendrian isotopy is defined as follows: we see $t$ as an extra position variable $x_1$, meaning that we introduce a new dual momentum variable $y_1$ as well. Setting $y_1$ to be minus the Hamiltonian generating the isotopy yields then a higher-dimensional Legendrian.

The formula provided above amounts to taking the Legendrian trace in multiple directions. Moreover, we cut off with the function $\zeta$ in order to yield a Legendrian that is a product in certain regions. We also retain $t$ as a time parameter, since our goal is to produce a stabilizing isotopy in higher dimensions.

\textbf{Step 2. Properties of the model.} Induction on $n$ shows that $f^{(n,\eps)}_t$ is indeed Legendrian for all $t$. We also have to verify that this definition is well-posed. Namely, we must show that the image is indeed in $V^{(n)}_\eps$ (meaning that the $y_n$-coordinate of $f^{(n,\eps)}_t$ lies in $[-(1+\eps)(C/2+\eps),(1+\eps)(C/2+\eps)]$). We do this last. First we compute the generating Hamiltonian of $f^{(n,\eps)}_t$:
\begin{equation*}
    \alpha^{(n)}(\partial_t f^{(n,\eps)}_t(q,x_1,...,x_n)) = \zeta(x_n) \alpha^{(n-1)}(\partial_s f_s^{(n-1,\eps)}(q,x_1,...,x_{n-1}))|_{s = t \zeta(x_n)}
\end{equation*}
which, by induction, shows that $|\alpha^{(n)}(\partial_t f^{(n,\eps)}_t(q,x_1,...,x_n))| < C/2 + \eps$. In particular, the energy of the isotopy $f_t^{(n,\eps)}$ is bounded by $\Vert f_t^{(n)} \Vert_{\alpha^{(n)}} < \frac{C}{2}+\eps$. This implies the promised bound: 
\begin{align*}
    \left|\alpha^{(n-1)}\left(\frac{d}{dx_n}f^{(n-1,\eps)}_{t \zeta(x_n)}(q,x_1,...,x_{n-1})\right)\right| &     = \left|t \zeta'(x_n) \alpha^{(n-1)}\left(\frac{d}{ds}f^{(n-1,\eps)}_s|_{s=t \zeta(x_n)}(q,x_1,...,x_{n-1})\right)\right| \\
    & \leq (1+\eps)\left(\frac{C}{2}+\eps\right).
\end{align*}

Lastly, $f^{(n)}_1$ has a loose chart if $n \geq 1$ whose size can be made arbitrarily small by choosing the stabilization of $f_1$ to be sufficiently small.

\textbf{Step 3. General case.} Consider then an arbitrary Legendrian embedding $F$ mapping into $U^{(n)}_\eps$ and disjoint from $\Delta^{(n)}$. Fix a smaller closed neighbourhood $V \subset U$ of $\Delta$ and some $0 < \delta < \eps$ such that $F$ is disjoint from $V^{(n)}_{2\delta}$. We claim that there is a proper Legendrian embedding $f$ of the compact disk into $V$ such that, after a contact isotopy of arbitrarily small cost, $F$ meets $V^{(n)}_{\delta}$ exactly in $f^{(n,\delta)}_0$ for some Legendrian embedding $f$. Then the result will follow from the model case.

For the standard overtwisted disk, this claim is Lemma \ref{lem:IntoStdPosNearOTDisk}. In this more general setting, the proof is completely analogous. Indeed, the model Legendrian embedding $f_\std$ introduced in Subsection \ref{ssec:standardPosition} can be assume to have image in a small Darboux ball, in a way that the trace $(f_\std)^{(n,\eps)}_0$ of the model Legendrian in higher dimensions is again equal to the model Legendrian $f_\std$ in higher dimensions. Thus the same construction that was used in Section \ref{sec:bringing Legendrian in standard position} applies here.
\end{proof}

This result immediately implies the upper bounds for the overtwisted size of the standard overtwisted disk claimed in Theorem \ref{thm:SizeOfOtDisk}. The lower bound requires other methods, which we will introduce in Section \ref{sec:ClosedReebOrbitsInOTCtcMflds}. We postpone the proof until then.

\section{Small energy isotopies of stabilized Legendrians} \label{sec:smallEnergyIsotopies}

In this section we explain how stabilizations allow us to homotope Legendrian isotopies so that they have small energy. We address the problem first assuming that the Hamiltonian generating the isotopy is non-negative; this is done in Subsection \ref{ssec:nonNegativeIsotopies}. The general case is then tackled in Subsection \ref{ssec:generalIsotopies}.

This section is technical. Its consequences are spelled out in the next section.

\subsection{Preliminaries: Approximating by piecewise positive/negative Hamiltonians} \label{sec:positiveNegative}

In order to reduce the general case to the setting of non-negative isotopies, we now prove that Hamiltonian isotopies can be discretised: i.e.\,we can approximate them by concatenations of non-negative and non-positive isotopies.
\begin{lemma}\label{lem:ApproxByPWPosNegHamiltonian} 
Fix constants $C, s_0 > 0$. Then, there is a natural number $n$, depending on $C$ and $s_0$, with the following property:

Let $(M,\alpha)$ be a strict contact manifold and $N \subset M$ a compact subdomain. Suppose that the Reeb flow $\rho_s$ of $\alpha$, applied to $N$, exists over the interval $[0,s_0]$. Then, for any time-dependent Hamiltonian $(H_t)_{t \in [0,1]}: (M,\alpha) \rightarrow (-C,C)$, supported on $N$, there is a  piecewise smooth Hamiltonian $G_t: (M,\alpha) \rightarrow (-2C,2C)$ such that:
\begin{itemize}
    \item Over each $[i/n,(i+1)/n] \subset [0,1]$, $G_t \circ \phi^G_t$ is smooth and non-negative or non-positive.
    \item $\phi^G_{2i/n} = \phi^H_{2i/n}$ for all $i$.
\end{itemize}
\end{lemma}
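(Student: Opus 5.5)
The idea is to add and subtract a large multiple of the Reeb flow. The Reeb flow $\rho_s$ is a strict contact isotopy with Hamiltonian identically $1$; cut off outside a neighbourhood of $N$, its Hamiltonian is some $K$ with $0\le K\le 1$, equal to $1$ near the relevant region.

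On each double interval $[2i/n,(2i+2)/n]$ we factor the piece of $\phi^H$ as a push in the Reeb direction followed by a correction. Write $\phi^{(i)}_t := \phi^H_t\circ(\phi^H_{2i/n})^{-1}$ for $t\in[2i/n,(2i+2)/n]$. On $[2i/n,(2i+1)/n]$ we run the (cut-off) Reeb flow at speed $C n$ for time $1/n$, so it reaches $\rho_{C}$; this needs $C\le s_0$. Otherwise we split into enough sub-steps, which is exactly where $n$ depends on both $C$ and $s_0$. On $[(2i+1)/n,(2i+2)/n]$ we run $\phi^{(i)}$ composed with the backward Reeb flow, i.e.\ the path $t\mapsto \phi^{(i)}_{t'}\circ\rho_{-C(1-\tau)}$, where $t'$ and $\tau$ are the reparametrised times.

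The Hamiltonian of the first half is $Cn\,K\ge0$. The Hamiltonian of the second half, by the composition formula for contact Hamiltonians, is
\[
2\,H\circ(\text{stuff}) \;-\; Cn\,\chi\, K\circ(\text{stuff}),
\]
where $\chi$ is the conformal factor of $\phi^{(i)}$ and the factor $2$ comes from compressing $H$ into half the time. Evaluated along the flow, this is $\le 0$ provided $Cn\,\chi\ge 2C$. Since the conformal factors of $\phi^H_t$ on $[0,1]$ are close to $1$ over short time intervals, $n\ge 3$ with a margin would suffice in that case. The endpoints then agree: $\phi^G_{2i/n}=\phi^H_{2i/n}$, because $\rho_C$ is undone at the end of each double interval.

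The stated sup-norm bound $|G|<2C$ forces a different balance. The Reeb speed cannot be of size $Cn$. Instead we use the affine trick: the time-reparametrised Hamiltonian $2H+2C$ is positive and $2H-2C$ is negative. So we choose $G$ on the first half of each double interval to be the Hamiltonian of $\rho_{\cdot}$ at speed $2C\cdot(\text{normalised})$, with magnitude at most $2C$. On the second half, $G$ is the Hamiltonian of $\phi^{(i)}$ (sped up by $2$, so magnitude $<2C$) minus the Reeb speed, and it must be non-positive.

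This is where the main obstacle lies: making the speeds match up so that each half stays within $(-2C,2C)$ and has a sign. The clean way is not to concatenate in time but to overlap. On the first half the Hamiltonian is $(H_t+C)$, and the flow is $\phi^H\circ\rho_{Ct}$-type; on the second half it is $(H_t-C)$. Using that $\rho$ is strict and commutes with nothing in general, we work instead with the path $\psi_t=\rho_{\sigma(t)}\circ\phi^H_{\tau(t)}$. Its Hamiltonian is $\sigma'(t)+\tau'(t)\,H_{\tau(t)}\circ\rho_{-\sigma(t)}$.

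Choose $\tau$ to move at speed $1$ throughout, and $\sigma$ to have slope $+C$ on the odd subintervals and $-C$ on the even ones. Then $\sigma$ returns to $0$ at every $2i/n$, and $\psi_{2i/n}=\phi^H_{2i/n}$. The Hamiltonian is $\pm C+H\circ\rho_{-\sigma}$, which lies in $(0,2C)$ on the odd subintervals and in $(-2C,0)$ on the even ones, as required. The requirement that $\rho_{\sigma}$ exist on $N$ with $|\sigma|\le C/n\le s_0$ determines $n$. The flow $\rho$ is cut off outside $\bigcup_{|s|\le C/n}\rho_s(N)$ to keep compact support. That set is compact by assumption; negative times are handled symmetrically, or by shifting $\sigma\in[0,C/n]$. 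Finally we smooth the corners in $t$ at the $i/n$, which leaves the values at the $2i/n$ unchanged.
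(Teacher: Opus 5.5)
Your final construction is correct and is exactly the paper's argument: $\psi_t=\rho_{\sigma(t)}\circ\phi^H_t$, with the cut-off Reeb flow driven by a zig-zag $\sigma$ of slopes $\pm C$ and $n\ge C/s_0$, is the paper's $\phi^G_t=\phi^F_t\circ\phi^H_t$ with $F_t=\pm F_+$ and $F_+=C$ on the swept region; the earlier attempts can simply be discarded. Two small corrections: take only the option $\sigma\in[0,C/n]$, since the hypothesis gives the Reeb flow only for forward times (taking $n$ even also makes $t=1$ one of the points $2i/n$, as the later application needs), and drop the final smoothing of corners, which the lemma does not ask for and which, if done by smoothing $\sigma$, would force $\sigma'$ through $0$ near each $i/n$ and destroy the sign condition.
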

\begin{proof}
Consider the subset $A \subset M$ swept by $N$ as we flow it using the Reeb flow for times in $[0,s_0]$. Fix $F_+:M \to [0,C]$, a compactly supported autonomous Hamiltonian which is equal to $C$ over $A$. 

Any even integer $n$ greater than $C/s_0$ works; we fix such an $n$ now. We divide $[0,1]$ into $n$ intervals of size $1/n$. We define the piecewise autonomous Hamiltonian $F_t:M \to [-C,C]$ as $F_+$ for $t \in [2i/n,(2i+1)/n]$ and as $-F_+$ for $t \in [(2i+1)/n,(2i+2)/n]$. Then we have that $\phi^F_{2i/n} = \textnormal{id}_M$ for all $i$. The isotopy $\phi^F_t$ shifts $N$ within $A$, along the Reeb flow.

We set $\phi^G_t \coloneqq \phi^F_t \circ \phi^H_t$ which is generated by the Hamiltonian
\begin{equation*}
    G_t = F_t + H_t \circ \left(\phi^F_t\right)^{-1};
\end{equation*}
observe that conformal factors do not appear in the equation because the Reeb flow is strict. The isotopy $G_t$ has the required properties, since $H_t\circ \left(\phi^F_t\right)^{-1}$ is supported  on $A$.
\end{proof}
One can think of this argument as a form of convex integration. As we increase $n$ (keeping $C$ and $s_0$ fixed) the isotopy $\phi^G_t$ will converge to $\phi^H_t$ in $C^0$.

\subsection{The case of non-negative isotopies} \label{ssec:nonNegativeIsotopies}

 Let $\alpha_\std = dz + p dq$ denote the standard contact from on $T^*S^1$. The reader should think of the upcoming lemma as allowing us to create Legendrian isotopies whose Hamiltonian we can prescribe up to a $C^0$-error. The price to pay is that the Legendrian being isotoped is not chosen by us (if the statement was about the zero section in jet space it would apply to any Legendrian, thanks to the Weinstein neighbourhood theorem), but is rather an $N$-fold stabilization of it.
\begin{lemma}\label{lem:ApproximatePositiveHamiltonians}
Given any $C>0$ and $\eps > 0$, there exists a natural number $N = N(C,\eps) \in \N$ with the following property.

Fix a family of functions $H_t: J^1S^1 \to [0,C)$, $t \in [0,1]$, and a neighbourhood $W$ of the zero section in $J^1S^1$. Then, there exists a Legendrian isotopy $f_t:S^1 \to W$ such that $\alpha_\std(\partial_t f_t)$ is $\eps$-close to $H_t \circ f_t$, for all $t \in [0,1]$.

Moreover, each Legendrian $f_t$ is an $N$-fold stabilization of the zero section within $W$. All stabilizations are of the same sign (either all positive or all negative for the standard orientation on $S^1$).
\end{lemma}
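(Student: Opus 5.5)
We construct the isotopy in the front projection of $J^1S^1 = T^*S^1\times\R$, coordinates $(q,p,z)$, with $\alpha_\std = dz + p\,dq$. A Legendrian is determined by its front $z = u(q)$, multivalued near cusps. The Hamiltonian of a front isotopy at a point equals $\partial_t z$, i.e.\ the vertical speed of the front at that point. So we must design a moving front whose vertical velocity at each branch point $(q,z)$, with $p=-\partial_q z$, approximates $H_t(q,p,z)$ to within $\eps$. A non-negative Hamiltonian pushes the front upward everywhere. A closed front cannot simply keep rising while staying in $W$, since the zero section would have to climb to height $\int_0^1 \max H_t\,dt$, which may exceed the height of $W$. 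The stabilizations are what allow the front to rise locally while staying bounded.

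\textbf{Model: a staircase front.} Divide $S^1$ into $N$ arcs. On each arc, take a zig-zag (stabilization) whose three branches are nearly horizontal, with small slopes so that $|p|$ is tiny and the Legendrian stays close to the zero section in $p$. Consider a front made of $N$ zig-zags of the same sign, arranged as a sawtooth: slowly rising branches connected by cusps to descending branches. This is the familiar picture that a stabilized front admits a \emph{positive} isotopy that translates the sawtooth pattern along $q$ while the curve itself stays in a bounded region. The translating pattern is a ``rolling'' motion: a point on a gently rising branch, with slope of order $h/\ell$ where $h$ is the height of the teeth and $\ell = 1/N$ is their width, moves up as the pattern slides sideways. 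Its vertical speed is (slope)$\times$(horizontal speed of the pattern). For the cusps to be consistent with same-sign stabilizations, the descending branches are traversed by the translation with the cusp orientation reversed, and their speed must also be non-negative. I would arrange this by making the descending branch steep in $z$ but moving it \emph{backwards} along $q$ relative to the pattern, so its vertical velocity is again non-negative. The first step is to check, in a single period, that a front with three branches per tooth can be translated with vertical speed on every branch equal to a prescribed constant $c\in[0,C)$ up to $\eps$, with all $|p|$ and $|z|$ bounded by $O(C/N)$. Choosing $N \gtrsim C/(\eps\cdot\text{size of }W)$ keeps everything in $W$.

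\textbf{From constants to general $H_t$.} Since the teeth are tiny, $H_t$ is nearly constant on each tooth, to within $\eps/2$ by uniform continuity on a compact neighbourhood of the zero section in $W$ (we may shrink $W$). We then let each tooth carry its own local rolling speed $c_j(t) \approx H_t$ at that tooth. Different speeds in adjacent teeth are absorbed by letting the tooth widths and heights vary slowly. The speed of an individual branch can be tuned by its slope, so we choose slopes pointwise as $\partial_t z / (\text{horizontal speed})$ with $\partial_t z := H_t$. Equivalently, and more cleanly, we write the front as a graph over a moving parameter and solve the transport-type equation $\partial_t z = H_t(q,-\partial_q z,z)$ branchwise. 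This is a first-order Hamilton--Jacobi equation, whose multivalued (Legendrian) solution is given by the contact flow of $H$. The real content is to keep that solution $C^0$-close to the zero section by periodically resetting with stabilizations. The number $N$ depends only on $C$ and $\eps$ (plus the scale of $W$), because the height gain per tooth per unit time is bounded by $C$.

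\textbf{Main obstacle.} The hard step is the cusp regions. Near a cusp, $p$ is large unless the teeth are flat, and the vertical velocity of the cusp point is forced by the motion of the two branches meeting there. The sign requirement that all stabilizations have the same sign means the pattern can only roll in one direction, and keeping $\alpha(\partial_t f_t) \geq -\eps$ there is delicate. I would first try the explicit periodic model in $J^1\R$ with constant $H\equiv c$, verify the cusp estimates by hand, and then glue teeth using a cutoff in $q$ whose error is $O(C/N)$.
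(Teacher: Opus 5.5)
\textbf{Verdict: there is a genuine gap, already in the constant case, and it comes from the shape of your model.}

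You correctly note that the Hamiltonian of a front isotopy is the vertical speed of the front at fixed $q$. For a $q$-translation at speed $v$, which is the strict contact isotopy generated by $vp$, this speed is $v\,p$, i.e.\ $-v$ times the slope of the front. So uniform speed on every branch forces all branches to have nearly the same slope. A sawtooth with gently rising branches and steep (or oppositely sloped, where $p$ even changes sign) descending branches therefore cannot be translated with uniform speed. Letting the descending branches move at a different horizontal speed is not a construction: branches of slopes $s_1\neq s_2$ rising at rate $c$ would have to slide at the different speeds $c/|s_1|$ and $c/|s_2|$, so the teeth cannot keep their shape over unit time.

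The missing idea is that the $N$-fold stabilization can be drawn with \emph{all} branches, left- and right-going, tilted the same way, with $p\in[(1-\eps)\delta,(1+\eps)\delta]$ along the whole Legendrian. Then $F=p/\delta$, which generates $(q,p,z)\mapsto(q+t/\delta,p,z)$, gives $\alpha_\std(\partial_t f_t)\in[1-\eps,1+\eps]$. It also keeps $f_t$ inside the $q$-invariant set $\{|p|,|z|\le(1+\eps)\delta\}\subseteq W$. Cusps need no special care: $p$ is smooth along the Legendrian and at a cusp equals minus the common tangent slope, so your ``main obstacle'' does not arise.

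What forces many stabilizations is $\oint p\,dq=-\oint dz=0$. With $p>0$ confined to a band of relative width $\eps$, the front must wind back and forth around $S^1$ roughly $1/\eps$ times while staying within height about $\delta$. This is what requires $N>1/\eps$ zig-zags, a number independent of $\delta$ and hence of $W$: shrinking $W$ only means shrinking $\delta$ and translating faster. Your scaling $N\gtrsim C/(\eps\cdot\text{size of }W)$ breaks the quantifier order of the statement. Uniformity in $W$ is exactly what is used later, when Weinstein neighbourhoods are shrunk with $N$ fixed.

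\textbf{The passage to general $H_t$ is also not carried out.} Tooth-dependent speeds run into the same slope/speed coupling. The Hamilton--Jacobi solution given by the contact flow of $H_t$ is precisely what leaves $W$, and any ``resetting'' isotopy carries its own Hamiltonian, destroying the $\eps$-closeness.

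The paper's device is different. It first perturbs to $H_t>0$ and then passes to the time-dependent form $\alpha_t=\alpha_\std/H_t$, whose Reeb field is the contact vector field of $H_t$ with respect to $\alpha_\std$. A time-dependent strict Weinstein neighbourhood $\phi_t\colon(W_t,\alpha_t)\to(\widetilde W,\alpha_\std)$ turns the flow of $H_t$ into the Reeb flow. So the $H\equiv 1$ model $f_t$ in $\widetilde W$ yields $\phi_t^{-1}\circ f_t$ with $\alpha_\std\bigl(\partial_t(\phi_t^{-1}\circ f_t)\bigr)=(H_t\circ\phi_t^{-1}\circ f_t)\cdot\bigl(\alpha_\std(\partial_t f_t)+\alpha_t((\partial_t\phi_t^{-1})\circ f_t)\bigr)$. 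The last term vanishes on the zero section and becomes negligible after compressing $\widetilde W$. This handles dependence on $(t,q,p,z)$ all at once and gives $N(C,\eps)=N(1,\eps/C)$, again independent of $W$ and of $H_t$.
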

Observe that we cannot simply let $f_t$ be induced by the contact isotopy generated by $H_t$. Indeed, said isotopy may not restrict to a contact isotopy of $W$. 

\begin{proof}
    \textbf{Step 1:} We claim that it is enough to consider the case $H_t > 0$.

    Assume that Lemma \ref{lem:ApproximatePositiveHamiltonians} holds for positive Hamiltonians. Then for any sufficiently small $\eps'> 0$ we can apply Lemma \ref{lem:ApproximatePositiveHamiltonians} to $C + \eps'$ and $\eps - \eps'$ to construct $f_t$ such that  $\alpha_\std(\partial_t f_t)$ is $\eps - \eps'$-close to $\eps' + H_t \circ f_t$ for all $t \in [0,1]$. It follows that  $\alpha_\std(\partial_t f_t)$ is $\eps$-close to $H_t \circ f_t$.
    
    \textbf{Step 2:} From now on assume that $H_t > 0$. We claim that it is enough to consider the case of the autonomous Hamiltonian $H = 1$.

    Let $\eps' > 0$ be any positive number smaller than $\eps$. Assuming that the assertion holds for $H=1$, there exists a Legendrian, which is an $N(1,\frac{\eps'}{C})$-fold stabilization of the zero section, and a corresponding Legendrian isotopy $f_t:S^1 \to \widetilde{W}$ such that $\alpha_\std(\partial_t f_t) \in [1-\frac{\eps'}{C},1+\frac{\eps'}{C}]$. Here $\widetilde{W}$ is some small neighbourhood of the zero section, which we will fix below.

    Suppose then that we have some arbitrary family $H_t$. Consider the time-dependent contact form $\alpha_t \coloneqq \frac{1}{H_t} \alpha_\std$ on $J^1S^1$. Recall that its Reeb vector field is precisely the Hamiltonian vector field associated to $H_t$ with respect to $\alpha_\std$. This is precisely where positivity of $H_t$ enters the argument.
    
    According to the Weinstein neighbourhood theorem (Proposition \ref{thm:StrictWeinsteinNeighbrouhoods}), the zero section admits a time-dependent neighbourhood $(W_t,\alpha_t) \subset W$ that is strictly contactomorphic via $\phi_t: W_t \to \widetilde{W}$ to a neighbourhood $\widetilde{W}$ of the zero section in $(J^1S^1,\alpha_\std)$. As such, the Hamiltonian vector field of $H_t$ in $W_t$ is identified, via $\phi_t$, with the Reeb vector field in $\widetilde{W}$, which is generated by the Hamiltonian $H=1$. It follows that the Legendrian isotopy $\phi_t^{-1} \circ f_t$ satisfies the conclusions of the lemma. Indeed, using that $\phi_t$ is a strict contactomorphism, we see that 
    \begin{equation*}
    \begin{split}
        \alpha_\std\left(\partial_t \left(\phi^{-1}_t \circ f_t \right) \right) &= H_t \circ \left(\phi^{-1}_t \circ f_t \right) \cdot \alpha_t\left(\partial_t \left(\phi^{-1}_t \circ f_t \right) \right) \\
        &= \left(H_t \circ \phi^{-1}_t \circ f_t \right) \cdot \left(\alpha_\std(\partial_t f_t) + \alpha_t((\partial_t \phi^{-1}_t) \circ f_t) \right)\\
        &\in \left[\left(1-\frac{\eps}{C}\right)H_t \circ \left(\phi^{-1}_t \circ f_t \right), \left(1+\frac{\eps}{C}\right)H_t \circ \left(\phi^{-1}_t \circ f_t \right) \right] \\
        &\subseteq [H_t \circ \left(\phi^{-1}_t \circ f_t \right) -\eps,H_t \circ \left(\phi^{-1}_t \circ f_t \right) +\eps],
    \end{split}
    \end{equation*}
    where the $``\in"$ and $``\subseteq"$ are understood pointwise in $S^1$. Here we use that $\eps' < \eps$ and the fact that $\alpha_t(\partial_t \phi^{-1}_t)$ is zero along the zero section so we can assume $\alpha_t((\partial_t \phi^{-1}_t) \circ f_t)$ to be arbitrarily small by compressing $\widetilde W$. This proves the claim.

    \begin{figure}
    \centering	\includegraphics[width=0.5\textwidth,height=0.9\textheight,keepaspectratio]{"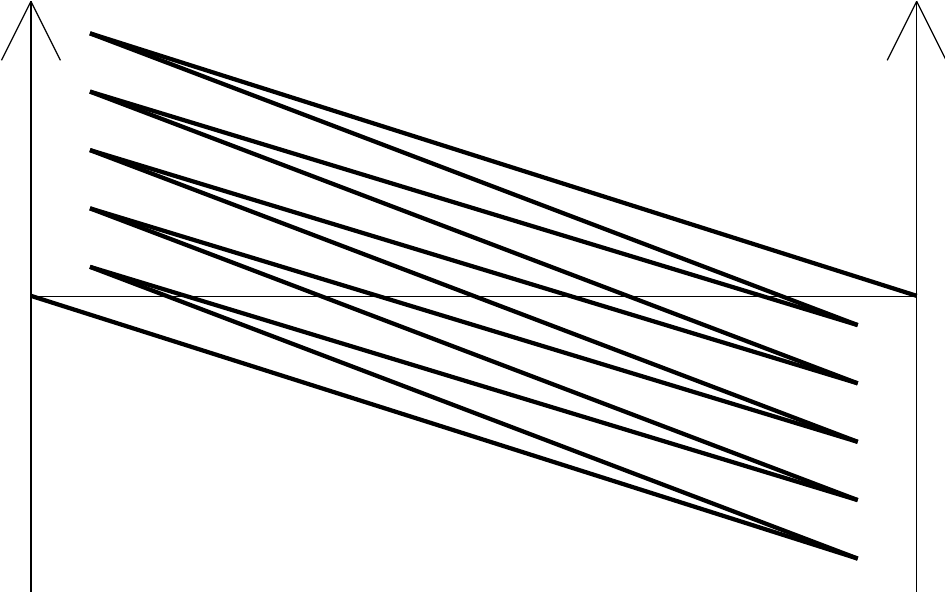"}
	\caption{The front of the stabilized Legendrian $f:S^1 \to J^1S^1$ used in the Lemma \ref{lem:ApproximatePositiveHamiltonians}, Step 3.}
	\label{fig:S1-stabilized}
    \end{figure}

    \textbf{Step 3:} It remains to establish the case of $H_t=1$ autonomous. We assume $\eps < 1$. Choose $N > \frac{1}{\eps}$. Choose $\delta>0$ so small that $\{|p|, |z|\leq (1+\eps)\delta \} \subseteq W$, where $(q,p,z)$ are the coordinates on $J^1S^1$.
    
    Then let $f:S^1 \to W$ be an $N$-fold stabilization of the zero section in $J^1S^1$ such that minus the slope of the front of $f$ is contained in $[(1-\eps)\delta,(1+\eps)\delta]$, see Figure \ref{fig:S1-stabilized}. Let $F$ be the Hamiltonian $F(q,p,z) = \frac{p}{\delta}$ whose associated contact isotopy is given by $\phi^F_t(q,p,z)=(q+\frac{t}{\delta},p,z)$. In particular, $\phi^F_t(f(S^1)) \subseteq W$ for all times, and after cutting off $F$ outside a neighbourhood of the image of $\cup_t \phi^F_t(f(S^1))$, we may assume that $F$ has compact support in $W$. Since the $p$-coordinate of $f$ is contained in $\subseteq [(1-\eps)\delta,(1+\eps)\delta]$, we obtain 
    \begin{equation*}
        \alpha_\std(\partial_t (\phi^F_t \circ f)) = F \circ \phi_t \circ f \in [1-\eps,1+\eps],
    \end{equation*}
    which finishes the proof.
\end{proof}

One can also establish an analogue of Lemma \ref{lem:ApproximatePositiveHamiltonians} for $J^1I$, with $I$ an arbitrary $1$-dimensional manifold. This is Lemma \ref{lem:ApproximatePositiveHamiltoniansOnR} below. It will follow from the following auxiliary result, which deals with the particular case in which $I = \R$, the Hamiltonian is the indicator function of $[0,1]$, and we allow some (big but controlled) error.
\begin{lemma} \label{lem:ApproximatePositiveHamiltoniansOnRAux}
Fix $\eps >0$. There exists $N = N(\eps) \in \N$ such that for any neighbourhood $W\subseteq J^1\R$ of $\R$ there exists a compactly supported family of Legendrian embeddings $f_t: \R \to W$ with $t \in [0,1]$ satisfying 
\begin{align*}
\alpha_\std(\partial_t f_t)(x) \in [1-\eps,1+\eps] &\quad\text{ if } \pi_q(f_t(x)) \in [0,1] \\
\alpha_\std(\partial_t f_t)(x) \in [-10,10] &\quad\text{ if } \pi_q(f_t(x)) \in \R \setminus (0,1),
\end{align*}
where $\pi_q$ denotes the projection onto the $q$-coordinate. Moreover, each $f_t$ is an $N$-fold stabilization of $\R \subset W$. All stabilizations have the same sign.
\end{lemma}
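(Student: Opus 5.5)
The plan is to adapt Step 3 of the proof of Lemma \ref{lem:ApproximatePositiveHamiltonians}, replacing the circle by a bounded piece of $\R$ and adding controlled ``end caps''. As in that step, fix $\delta>0$ so small that $\{|p|,|z|\le 10\delta\}\times\{q\in[-3,4]\}$ is contained in $W$. On $J^1S^1$ we translated, in the $q$-direction, a front of nearly constant slope $-\delta$. Here the analogue is a front over a long interval of $q$ whose $p$-coordinate lies in $[(1-\eps)\delta,(1+\eps)\delta]$ wherever the $q$-projection lies in $[0,1]$. This is arranged by zig-zags: over a region of length $\ell$ the front descends with slope $-\delta$ along long strands and then climbs back via stabilization cusps, which keeps $|z|$ below roughly $\delta\ell/N$. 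Choosing $N>1/\eps$ zig-zags per unit length keeps the front inside $W$. This gives $N(\eps)$ independent of $W$, since only $\delta$ depends on $W$.

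Next I would take the Hamiltonian $F=p/\delta$, cut off in $q$. Its flow is $\phi_t(q,p,z)=(q+t/\delta,p,z)$, so $\alpha_\std(\partial_t(\phi_t\circ f))=p/\delta$, which lies in $[1-\eps,1+\eps]$ at every point with $p\in[(1-\eps)\delta,(1+\eps)\delta]$. The difficulty is that a literal translation of the whole front is not compactly supported, and the translation distance $1/\delta$ is huge. I would fix this by an equivalent \emph{conveyor belt} picture. Instead of translating the curve, we modify it by a compactly supported Legendrian isotopy that is the translation on the window $\pi_q^{-1}[0,1]$. Strands entering the window from the left are fed in, strands leaving on the right are absorbed, and the number of zig-zags in the window stays essentially constant, so each $f_t$ is still an $N$-fold stabilization (all of the same sign, as in Figure \ref{fig:S1-stabilized}). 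Concretely, I would realize this by letting the front outside $[0,1]$, over $[-2,0]\cup[1,3]$, carry the recycling: the zig-zags exit at $q=1$, are pushed back up to height near $0$, and reenter at $q=0$. This is a loop of fronts in a compact region, which is periodic in $t$ with period roughly $\delta N^{-1}$, so over $t\in[0,1]$ it repeats many times.

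The main obstacle is the bound $|\alpha_\std(\partial_tf_t)|\le 10$ on the recycling region. In the Hamiltonian $F$ there, the $p$-coordinate may range over $[-10\delta,10\delta]$. The ``return'' motion carries a zig-zag of $z$-height $O(\delta/N)$ back across a $q$-distance of order $1$ in time of order $\delta/N$, so its speed in $q$ is $O(N/\delta)$. That speed times slopes of order $\delta/N$ gives a contribution $O(1)$, plus a $z$-speed of order $1$. I would arrange the geometry so that the return strands have slope of absolute value at most about $3\delta$, travel at speed at most $3/\delta$ (bounding the $p\,\dot q$ contribution by $9$), and have $|\dot z|\le 1$, which yields the constant $10$. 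Outside $[-2,3]$ the embedding is the zero section and stationary, so the family is compactly supported. A final smoothing via Lemma \ref{lem:smoothingFamilies} keeps these estimates up to arbitrarily small error.
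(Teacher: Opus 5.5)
\textbf{Gap: the recycling step.} Your picture on the window is right: translation by the flow of $p/\delta$, with $p\in[(1-\eps)\delta,(1+\eps)\delta]$ over $[0,1]$. You are also right that stabilizations leaving at $q=1$ must be recycled. But the recycling step, which is the whole difficulty, does not work as described.

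In $J^1\R$ there is no way back from $\{q\ge 1\}$ to $\{q\le 0\}$ that avoids the window. The region $[-2,0]\cup[1,3]$ where you place the recycling is disconnected. Cusps move continuously in the front plane, so your returning zig-zags must pass over $\pi_q^{-1}(0,1)$. There the lemma requires $\alpha_\std(\partial_t f_t)\in[1-\eps,1+\eps]$ at \emph{every} point, including points of the returning zig-zags.

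Your return strands have $|p|$ up to $3\delta$ and move leftward at speed up to $3/\delta$. Their Hamiltonian $\dot z+p\dot q$ can therefore lie anywhere in roughly $[-10,10]$; for $p>0$ and leftward motion it is negative. This violates the window condition. It also contradicts your own stipulation that the isotopy be the rightward translation on $\pi_q^{-1}[0,1]$. The timing is inconsistent too: a return over a $q$-distance of order $1$ at speed at most $3/\delta$ cannot take time of order $\delta/N$. So your bound $10$ only estimates the Hamiltonian on a region the returning zig-zags cannot stay inside.

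\textbf{The missing idea.} You need a way to make the return essentially free, so that it can be superimposed on the translation over the window. The paper does this with two stabilization packages of $K$ zig-zags each, so $N=2K$, with $p\in[p_-,p_+]$, $p_+/p_-<1+\eps$ and $|z|\le 2p_-$. The mechanism has three steps.
\begin{itemize}
\item While one package covers the window and is translated at speed $1/p_-$, with Hamiltonian $\pi_p/p_-\in[1,1+\eps]$, the other has left the window on the right. It is compressed by the fibrewise scaling $\eta_{\zeta}$ while lying entirely in $\{q>1\}$. This adds at most $|\zeta'|\,|z|/p_-\le 8$ to the Hamiltonian, which is where the constant $10$ comes from.
\item Once compressed, it lies in a strict Weinstein neighbourhood of height $\eps_4$ of the remaining Legendrian $h$. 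It is transported back to the left \emph{along} $h$, hence through the strands of the uncompressed package over the window. By Corollary \ref{cor:boudnednessOfStrictWeinsteinNbhdOfHeightC}, this transport has Hamiltonian $<\eps_4$ however fast it is. Composing with the translation $\tau$, every point over $[0,1]$, including points of the returning package, has Hamiltonian $\pi_p/p_-+O(\eps_4)\in[1-\eps,1+\eps]$.
\item The package is then decompressed inside $\{q\le 0\}$ before re-entering the window, and the two packages swap roles with period $2p_-$.
\end{itemize}
Your scheme lacks this compress, transport-in-a-thin-Weinstein-neighbourhood, decompress mechanism, and the second package that keeps the window covered meanwhile. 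Without them it does not give the estimate on the window.
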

\begin{proof}
As in the proof of Lemma \ref{lem:ApproximatePositiveHamiltonians}, we take a number of stabilizations stacked on top of each other, and we move those stabilizations in one direction (determined by the stabilization sign). The issue is that this sideways pushing will take the stabilizations outside of $J^1[0,1]$. This forces us to stop the pushing as we reach its boundary. Moreover, one may worry that once a stabilization has been moved across $J^1[0,1]$ (and has stopped) it is no longer of use to us, so we may need more stabilizations the smaller $W$ is.

To address these issues we use two ``stabilization packages''. A package is shifted sideways to approximate $H=1$. As we stop it, and the other one takes its place, we also shrink it. Once it is very small, it can be shifted in the other direction, with arbitrarily small cost, to be reused. This shifting happens within a small Weinstein neighbourhood of the second package. With the strategy in mind, let us proceed to the proof.

\textbf{Step 1}: We first construct the model ``stabilization package'' $\widetilde{g}$ in $J^1\R$. It is sufficient that we describe the front of $\widetilde{g}$; it is depicted in Figure \ref{fig:g_tilde}.  Indeed, parameterizations are not very important, since reparameterizations can be performed with zero cost.

We use coordinates $(q,p,z)$ on $J^1\R$, which is endowed with the contact form $\alpha_\std = dz +pdq$. We write $\pi_q$, $\pi_z$, and $\pi_p$ for each of the coordinate projections.

The construction of $\widetilde{g}$ is qualitative. We fix $\delta > 0$ so small that for every $q \in [-3,4]$, the fiber of $W$ over $q$ contains $\{|p| < 2\delta, |z| < 2\delta\}$. We then fix $p_+,p_- > 0$ satisfying $\delta > p_+ > p_-$ and $\frac{p_+}{p_-} < 1 + \eps$. These are meant to be the slopes of the different branches of the front; in particular, traversing two consecutive branches of length $l$ yields a vertical displacement of size $l(p_+-p_-)$. We also leave some buffer space at each side of the package; its size $\eps_2 > 0$ satisfies
\[ (2-2 \eps_2)((K+1)p_- - Kp_+) = -2\eps_2 p_- \]
for some $K \in \N$. Equivalently, the quantity
\[ K = \dfrac{p_-}{p_+-p_-}\left(1+\dfrac{2 \eps_2}{2-2\eps_2}\right), \]
can be chosen to be any integer larger than $\frac{1}{\eps}$. It represents the amount of stabilizations used by the package.

Consider thus the front projection $\R^2$, endowed with coordinates $(q,z)$. There we consider the union of segments:
    \begin{equation*}
    \begin{split}
        &\bigcup_{i=0}^K \left\{\left(q, -p_-q + i \left(2-2 \eps_2\right) (p_+-p_-)\right)  \,|\, q \in \left[\eps_2,2 - \eps_2 \right] \right\}\\ \cup &\bigcup_{i=1}^K \left\{\left(q, -p_+q + (i \left(2-2 \eps_2\right) + \eps_2)(p_+-p_-)\right)  \,|\, q \in \left[\eps_2,2-\eps_2\right] \right\}\\
        \cup  &\left\{\left(q,-p_- q + K\left(2-2 \eps_2\right) (p_+-p_-)\right) \,|\,q \in \left[2-\eps_2,2\right] \right\}\\
        \cup &\left\{(q,-p_- q) \,|\, q \in [0,\eps_2] \right\},
    \end{split}
    \end{equation*}
which we view as an embedded curve with corners, see Figure \ref{fig:g_tilde}. Observe that its $z$-coordinate satisfies $|z| \leq  2p_-$ by construction.

We modify the corners to cusps so that the slope of the resulting front is still contained in the interval $[p_-,p_+]$ and the $z$-coordinate is contained in $\left[-2p_-,2p_-\right]$. The modification is performed in a small neighbourhood of the corners, at the expense of using a bit of the buffer. We then let the claimed $\widetilde{g}$ be a parametrization of the Legendrian lift of the resulting cuspidal front. We assume that the domain of $\widetilde{g}$ is $[0,2]$. Its left endpoint is $\widetilde{g} = (0,p_-,0)$ and its right endpoint is $\widetilde{g} = (2,p_-,0)$. 

\begin{figure}
    \begin{tikzpicture}
        \node[anchor=south west, inner sep=0] (fig) at (0,0)
            {\includegraphics[width=0.5\textwidth]%
           {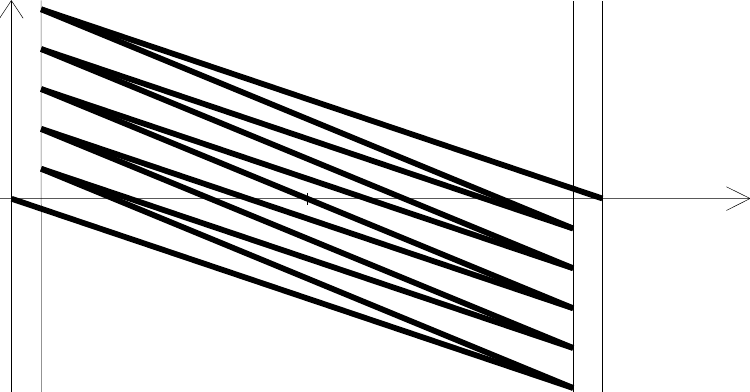}};
        \node at (0.0175\textwidth, - 0.007\textwidth) {\adjustbox{scale={0.53}}{$\underbrace{\hphantom{}}$}};
        \node at (0.019\textwidth, - 0.021\textwidth) {$\eps_2$};
        \node at (0.3925\textwidth, - 0.007\textwidth) {\adjustbox{scale={0.53}}{$\underbrace{\hphantom{}}$}};
        \node at (0.3925\textwidth, - 0.021\textwidth) {$\eps_2$};
        \node at (-0.001\textwidth, 0.117\textwidth) {$0$};
        \node at (-0.001\textwidth, 0.243\textwidth) {$z$};
        \node at (0.415\textwidth, 0.117\textwidth) {$2$};
        \node at (0.475\textwidth, 0.117\textwidth) {$q$};
    \end{tikzpicture}
    \caption{The front of the Legendrian $\widetilde{g}$ introduced in the proof of Lemma \ref{lem:ApproximatePositiveHamiltoniansOnRAux}.}
	\label{fig:g_tilde}
\end{figure}

\textbf{Step 2:} We now construct the claimed stabilized Legendrian $f_0$; it is built by putting together two stabilization packages. Its front is depicted in Figure \ref{fig:f1}.

Given $q_0 \in \R$, consider the contactomorphism that lifts the $q_0$-translation in the base $\R$:
    \begin{equation*}
        \begin{split}
            \tau_{q_0}: J^1\R &\to J^1\R,\\ (q,p,z) &\mapsto (q + q_0,p,z).
        \end{split}
    \end{equation*}
    Similarly, given $\lambda \in (0,1]$, consider the contactomorphism lifting the fibrewise scaling by $\lambda$:
    \begin{equation*}
        \begin{split}
            \eta_\lambda: J^1\R &\to J^1\R,\\ (q,p,z) &\mapsto (q,\lambda p,\lambda z).
        \end{split}
    \end{equation*}
We define Legendrian embeddings $g_\lambda^{q_0}(x) \coloneqq \tau_{q_0} \circ \eta_\lambda \circ \widetilde{g}(x-q_0)$; this means that the domain is $[q_0,q_0+2]$. For $q_0 \in \R$, we also define the Legendrian embeddings
    \begin{alignat*}{2}
        g^{q_0}_- :(-\infty,q_0] \quad&\to\quad J^1\R,\\
        x \quad&\mapsto\quad (x,0,0)\\ 
        g^{q_0}_+ :[q_0,\infty) \quad&\to\quad J^1\R.\\ 
        x \quad&\mapsto\quad (x,0,0)
    \end{alignat*}
    Let $q_0 + 2 = q_1$, and consider the front formed by concatenating $g_{\lambda_0}^{q_0}$ and $g_{\lambda_1}^{q_1}$. This new front has a corner at the concatenation point, which we can smooth out by a small modification in an arbitrarily small neighbourhood $V$. Moreover, we can assume that the slope over $V$ of the smoothed front lies in between 
    $\lambda_- p_-$ and $\lambda_+ p_-$ and that its $z$-coordinate, over $V$, lies in $\pi_z\big(g_{\lambda_0}^{q_0}(V\cap [q_0,q_1]) \cup g_{\lambda_1}^{q_1}(V \cap [q_1,q_1+2])\big)$.

    Let us spell this out. Fix a smooth function $\chi: [-1,1] \to [0,1]$ such that $\chi|_{[-1,-1/2]}=0$, $\chi(x)|_{[1/2,1]}=x$, and $\chi' \in [0,1]$. Then, the family of functions
    \[ \chi_{a,b,C}(x) := ax + (b-a)\cdot C \cdot \chi(x/C) \]
    satisfies the following properties:
    \begin{itemize}
        \item $\chi_{a,b,C}$ interpolates from slope $a$ to slope $b$ in the interval $[-C/2,C/2]$.
        \item $C^0$-converges to the lines of slope $a$ and $b$ as $C$ goes to zero.
        \item The image of $\chi_{a,b,C}|_{[-C/2,C/2]}$ is contained in $[-a \cdot C/2,b \cdot C/2]$.
        \item Over $[-C/2,C/2]$ we have bounds 
        \[ |\partial_a \chi_{a,b,C}| = |x - C \cdot \chi(x/C)| \leq 2C \quad\text{and}\quad|\partial_b \chi_{a,b,C}| = C \cdot\chi(x/C) \leq C. \]
    \end{itemize}
    Then, the union of $g_{\lambda_0}^{q_0}$ and $g_{\lambda_1}^{q_1}$ can be smoothed out by replacing a neighbourhood of the corner by a neighbourhood of $(q_1,0)$ with the image of $\tau_{q_1} \circ j^1\chi_{(\lambda_0 p_-,\lambda_1 p_-,C)}$. We can choose $C < \eps_2$ to guarantee that this happens within the buffer regions of the stabilization packages. We define $g_{\lambda_0}^{q_0} * g_{\lambda_1}^{q_1}$ to be a parametrization of the Legendrian lift of the resulting front. The parametrization agrees with the parametrizations of the pieces away from the smoothing region of the corner.
    
    The exact same construction can be carried out whenever we have two fronts meeting at a corner. For instance, we can define $g_-^{q_0}*g^{q_0}_\lambda$ and $g^{q_0}_\lambda*g_+^{q_1}$ by using the same function $\chi_{(a,b,C)}$ with either $a = 0$ or $b=0$.

    Concatenating and smoothing several fronts yields our starting stabilized Legendrian:
     \begin{equation*}
        f_0 := g_-^{-1}*g^{-1}_1*g^{1}_1*g_+^{3}.
    \end{equation*}

\begin{figure}
\centering
\begin{subfigure}{0.49\textwidth}
    \begin{tikzpicture}
        \node[anchor=south west, inner sep=0] (fig) at (0,0)
            {\includegraphics[width=\textwidth]%
           {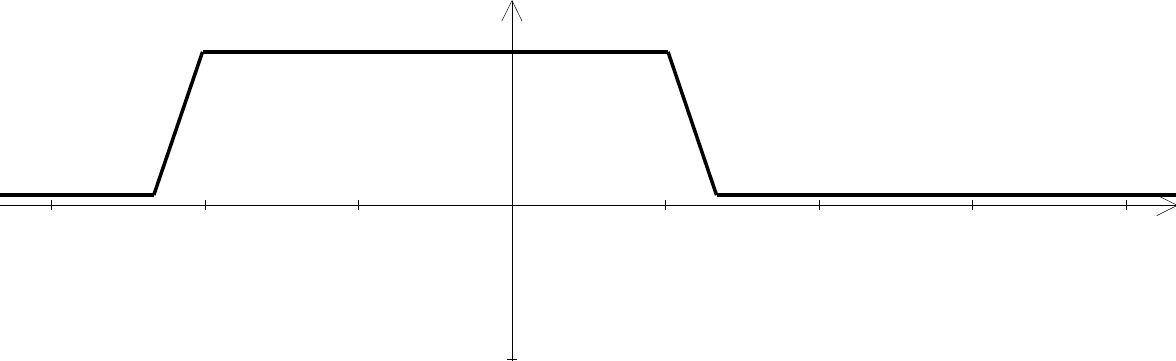}};
        \node at (0.412\textwidth, 0.29\textwidth) {$z$};
        \node at (0.98\textwidth, 0.1\textwidth) {$q$};
    \end{tikzpicture}
    \caption{The graph of $\zeta$.}
    \label{fig:zeta}
\end{subfigure}
\hfill
\begin{subfigure}{0.49\textwidth}
    \begin{tikzpicture}
        \node[anchor=south west, inner sep=0] (fig) at (0,0)
            {\includegraphics[width=\textwidth]%
           {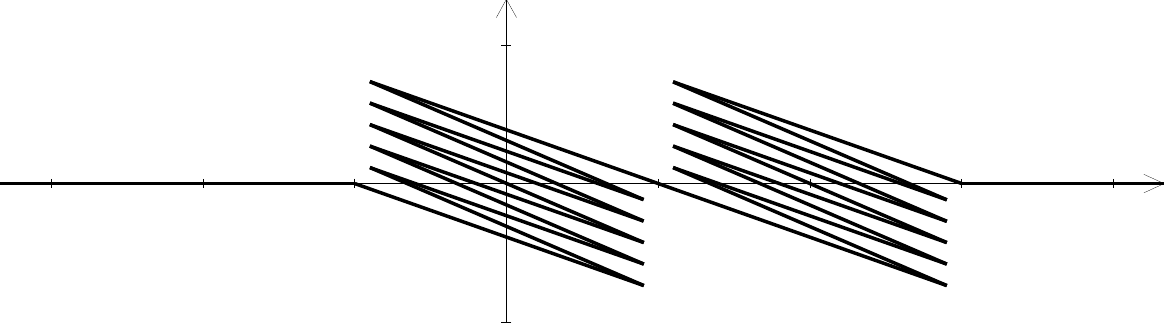}};
        \node at (0.412\textwidth, 0.26\textwidth) {$z$};
        \node at (0.97\textwidth, 0.088\textwidth) {$q$};
    \end{tikzpicture}
    \caption{The front of $f_t$ at time $t = 0$.}
    \label{fig:f1}
\end{subfigure}
\hfill
\begin{subfigure}{0.49\textwidth}
   \begin{tikzpicture}
        \node[anchor=south west, inner sep=0] (fig) at (0,0)
            {\includegraphics[width=\textwidth]%
           {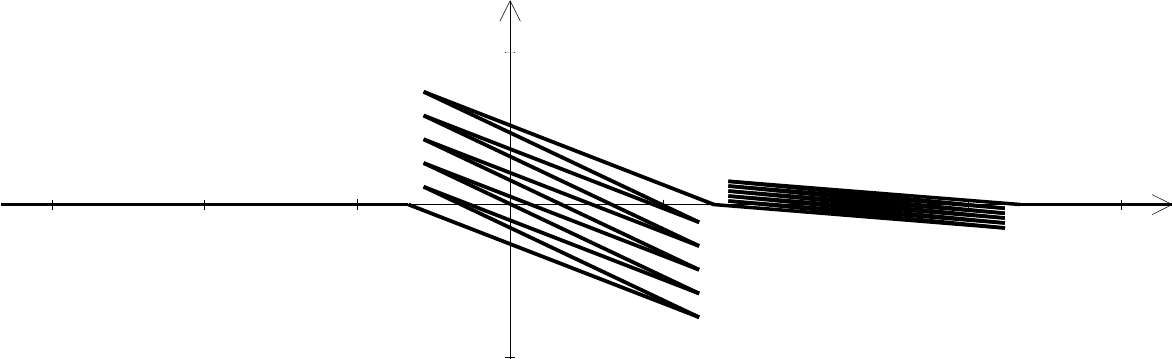}};
        \node at (0.412\textwidth, 0.29\textwidth) {$z$};
        \node at (0.98\textwidth, 0.1\textwidth) {$q$};
    \end{tikzpicture}
    \caption{The front of $f_t$ at time $t= \frac{p_-}{3}$.}
    \label{fig:f2}
\end{subfigure}
\hfill
\begin{subfigure}{0.49\textwidth}
    \begin{tikzpicture}
        \node[anchor=south west, inner sep=0] (fig) at (0,0)
            {\includegraphics[width=\textwidth]%
           {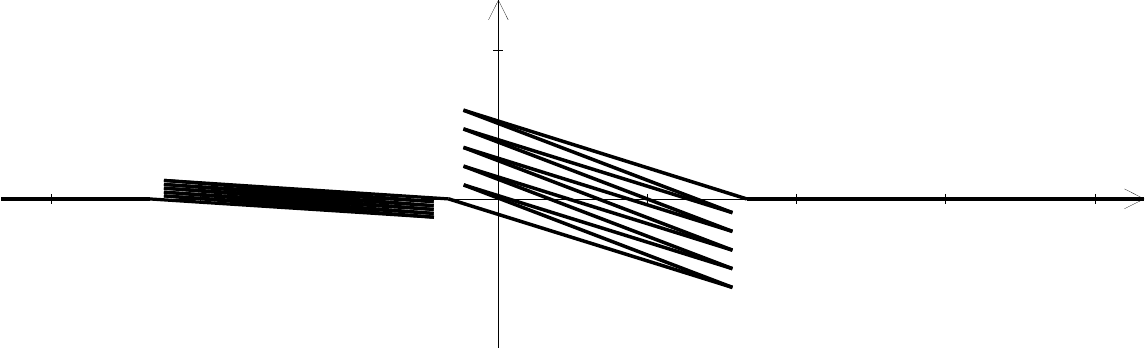}};
        \node at (0.412\textwidth, 0.29\textwidth) {$z$};
        \node at (0.98\textwidth, 0.1\textwidth) {$q$};
    \end{tikzpicture}
    \caption{The front of $f_t$ at time $t= \frac{2p_-}{3}$.}
    \label{fig:f3}
\end{subfigure}
        
\caption{Some of the elements appearing in the proof of Lemma \ref{lem:ApproximatePositiveHamiltoniansOnRAux}. The function $\zeta$ can be obtained by smoothly approximating the piecewise affine function shown in (a). Subfigures (b)-(d) show the front of $f_t$ (before smoothing) for different times of $t$. For $t \in [0,\frac{p_-}{3}]$, the front moves to the right and the right copy of $\widetilde{g}$ is compressed. For $t \in [\frac{p_-}{3},\frac{2p_-}{3}]$, the compressed copy of $\widetilde{g}$ moves along the uncompressed copy to the left, while the uncompressed copy moves to the right. For $t \in [\frac{2p_-}{3},2p_-]$, the front moves to the right while the left copy of $\widetilde{g}$ decompresses.}
\label{fig:twoLegendrianPackages}
\end{figure}

\textbf{Step 3a:} We now define the Legendrian isotopy $f_t$, starting at the given $f_0$. It will be $2 p_-$-periodic in time and will satisfy the required properties with $N = 2K$. We build it in stages, corresponding to different time intervals. As we go along, we also show that the family satisfies the properties claimed in the statement.

Consider $t \in \left[0,\frac{p_-}{3}\right]$. During this time interval, the isotopy will compress the second stabilization package. To do so, what we must do is decrease its slope. This decrease is governed by a constant $\eps_3 > 0$. Moreover, in order to interpolate between the two slopes we use a function $\zeta: \R \to (0,1]$ such that
    \begin{itemize}
        \item[(i)] $\zeta(q) = 1$ for $q \in \left[-2-\eps_3,1+\eps_3\right]$,

        \item[(ii)] $\{(q,p,z) \in J^1\R \,\mid\, |p|, |z| < \max\limits_{q'\in[q-2,q]}2\zeta(q') \delta\} \subseteq W$,

        \item[(iii)] $|\zeta'(q)| < 4$ for all $q \in \R$,

        \item[(iv)] $\zeta(q) < \eps_3$ for $q \in \R \setminus \left(-\frac{7}{3} + \eps_3, \frac{4}{3} - \eps_3 \right)$,
    \end{itemize}
see Figure \ref{fig:zeta}.

Introduce then the quantity $s_t \coloneqq \frac{t}{p_-}$. Its role is twofold: First, it will be used to push the packages to the right by $1/3$, which we will use to approximate the Hamiltonian $1$. Secondly, and crucially for this step, it will compress the height of the second package. Concretely, we set:
    \begin{equation*}
        f_t(x) \coloneqq g_-^{-1+s_t}*g^{-1+s_t}_1*g^{1+s_t}_{\zeta(1+s_t)}*g_+^{3+s_t}(x+s_t).
    \end{equation*}
See Figures \ref{fig:f1} and \ref{fig:f2}.

We now measure the cost of this process. Let $V \subset \R$ be a small open neighbourhood of the concatenation points of $f_t$; note that this does not depend on $t$. We claim that the Hamiltonian generating $f_t$, at points $x \notin V$, reads:
    \begin{equation*}
        \alpha_\std(\partial_t f_t)(x) = \begin{cases} 
            0 &\textnormal{if } x \in (-\infty, -1] \\
            \dfrac{\pi_p\circ f_t(x)}{p_-}  &\textnormal{if } x \in [-1, 1] \\
            \dfrac{\zeta'(1+s_t)}{p_-} \pi_z \circ g^{1+s_t}_{\zeta(1+s_t)}(x+s_t) + \dfrac{\pi_p \circ f_t(x)}{p_-}  &\textnormal{if } x \in [1,3]\\
            0 &\textnormal{if } x \in [3,\infty) \\
        \end{cases}
    \end{equation*}
    Indeed, the cost associated to the leftmost package is due it being translated at speed $1/p_-$, in the direction of $q$. An analogous term appears for the rightmost package, plus an additional term corresponding to the squeezing of the height. For $x \notin V$, it follows that 
    \begin{equation*}
        |\alpha_\std(\partial_t f_t)(x)| \leq \frac{p_+}{p_-} + \frac{4}{p_-} 2 p_- < 1+\eps +8 < 10,
    \end{equation*}
    as long as we assume $\eps < 1$. Here we used that $0 \leq \pi_p(f_t(x)) \leq p_+$, $\frac{p_+}{p_-} < 1 +\eps$, $|\zeta'| < 4$, and $|\pi_z(g^{1+s_t}_{\zeta(1+s_t)}(x+s_t))|\leq 2 p_-$.

    Suppose now that $x \in V$. Over each component of $V$, we have that $f_t = \tau_{s_t} \circ j^1\chi_{(a_t,b_t,C)}$,  for some functions $a_t$, and $b_t$ satisfying:
    \[ |a_t|, |b_t| \leq p_- \quad\text{and}\quad |\partial_t a_t|, |\partial_t b_t| \leq \max(|\zeta'|)\frac{1}{p_-}. \]
    This, together with $\partial_t s_t = 1/p_-$ and assuming that $C$ is sufficiently small, implies that
    \begin{equation*}
        |\alpha_\std(\partial_t f_t)| \leq \frac{p_-}{p_-} + 3 \max(|\zeta'|)\frac{1}{p_-} C < 2,
    \end{equation*}
    where we used the fourth property of $\chi_{a,b,C}$.

    Lastly, we observe that stronger estimates hold over $[-1,1] \setminus V$. Indeed, the computations above particularise to:
    \begin{equation*}
        \alpha_\std(\partial_t f_t)(x) \in \frac{1}{p_-}[p_-,p_+] = [1,1+\eps].
    \end{equation*}
    Note that this is also true near $1 \in [-1,1]$ as long as $s_t < \eps_3$, since then $\zeta(1 + s_t) = 1$ and the union of the fronts $g_1^{-1+s_t}$ and $g^{1+s_t}_{\zeta(1+s_t)}$ is already smooth and equals $g_1^{-1+s_t}*g^{1+s_t}_{\zeta(1+s_t)}$.

\textbf{Step 3b:} We now define $f_t$ for $t \in \left[\frac{p_-}{3},\frac{2p_-}{3}\right]$. During this step, the already-compressed second package gets transported backwards in order to become the ``new first package''.

Observe first that $f_{\frac{p_-}{3}}$ is given by the expression
    \begin{equation*}
        f_{\frac{p_-}{3}}(x) = g_-^{-2/3}*g^{-2/3}_1*g^{4/3}_{\eps_3}*g_+^{10/3}(x+1/3).
    \end{equation*}
In particular, we can think of it as being a stabilization of 
    \begin{equation*}
        h := g_-^{-2/3}*g^{-2/3}_1*g_+^{4/3}.
    \end{equation*}

Let $\widetilde W \subseteq W$ be a strict Weinstein neighbourhood of $h$, of height $\eps_4$ for some $\eps_4>0$. If $\eps_3$ is small enough, then the image of $f_{\frac{p_-}{3}}$ is contained in $\widetilde W$. Moreover, there exists an isotopy $\widetilde f_t$ with $t \in \left[\frac{p_-}{3},\frac{2p_-}{3}\right]$ of Legendrian embeddings from $f_{\frac{p_-}{3}}$ to 
\[ \widetilde f_{\frac{2p_-}{3}}(x) = g_-^{-8/3}*
    g^{-8/{3}}_{\eps_3}*
    g^{-2/{3}}_1*
    g_+^{4/{3}}(x-5/3) \]
that is supported on $\widetilde W$. By Corollary \ref{cor:boudnednessOfStrictWeinsteinNbhdOfHeightC}, we may assume that $|\alpha_\std (\partial_t \widetilde f_t)| < \eps_4$.

We have thus transported the second package backwards, but we still need to introduce some translating to approximate $H=1$. By taking $\widetilde W$ sufficiently small, we can assume that the $p$-coordinate of $\widetilde f_t$ is contained in $[0,p_++\eps_4]$ and, over $q \in [0,1]$, it is contained in $[p_--\eps_4,p_+ +\eps_4]$. We can thus set: 
    \begin{equation*}
        f_t \coloneqq \tau_{\frac{t}{p_-}-\frac{1}{3}} \circ \widetilde f_t
    \end{equation*}
    for $t \in \left[\frac{p_-}{3},\frac{2p_-}{3}\right]$; see Figures \ref{fig:f2} and \ref{fig:f3}.

    Since $\tau_q$ is a strict contactomorphism, it follows that
    \begin{equation*}
        \alpha_\std(\partial_tf_t) = \frac{\pi_p(f_t)}{p_-} + \alpha_\std\left(\partial_t \widetilde{f}_t\right),
    \end{equation*}
    whose absolute value is bounded by $\frac{p_++\eps_4}{p_-}+\eps_4 < 2$ if $\eps_4$ is small enough. If $\pi_q(f_t(x)) \in [0,1]$, then we find that
    \begin{equation*}
        \alpha_\std(\partial_tf_t(x)) \in \left[\frac{p_--\eps_4}{p_-}-\eps_4,\frac{p_++\eps_4}{p_-}+\eps_4\right] \subseteq[1-\eps,1+\eps]
    \end{equation*}
    if $\eps_4$ is small enough.

\textbf{Step 3c.} During the interval $t \in \left[\frac{2p_-}{3},2p_-\right]$ we proceed as in Step 3a. The original second package, which now is on the left, and is still compressed, gets decompressed so that $f_{2p_-} = f_0$; see Figures \ref{fig:f3} and \ref{fig:f1}. The same bounds as in Step 3a hold. We have thus constructed a $2 p_-$-periodic family of Legendrian embeddings. By restricting to $t \in [0,1]$, we find the family claimed in the statement.
\end{proof}

And now the general case:
\begin{lemma}\label{lem:ApproximatePositiveHamiltoniansOnR}
   Given any $C>0$ and $\eps > 0$, there exists a natural number $N = N(C,\eps) \in \N$ with the following property:
   
   Fix an open interval $I \subseteq \R$, a family of functions $H_t:J^1I \to [0,C)$, $t \in [0,1]$, with uniformly compact support, and a neighbourhood $W$ of the zero section in $J^1I$. Then, there exists a compactly supported Legendrian isotopy $f_t:I \to W$ such that $\alpha_\std(\partial_t f_t)$ is $\eps$-close to $H_t \circ f_t$ for all $t \in [0,1]$.
   
   Moreover, each of the Legendrians $f_t$ is an $N$-fold stabilization of the zero section inside of $W$. All stabilizations are of the same sign.
\end{lemma}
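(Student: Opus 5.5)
The argument follows the proof of Lemma \ref{lem:ApproximatePositiveHamiltonians}, with Lemma \ref{lem:ApproximatePositiveHamiltoniansOnRAux} taking the place of the Step 3 model, used many times side by side.

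\textbf{Step 1: reductions.} As in Step 1 of Lemma \ref{lem:ApproximatePositiveHamiltonians}, it suffices to treat $H_t>0$ on a neighbourhood of the uniformly compact support. Concretely, replace $H_t$ by $H_t+\eps'$ cut off far away; the rigid-translation errors this produces will be absorbed later.

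\textbf{Step 2: a step-function Hamiltonian.} Rather than rescale the contact form globally, I would first build a model isotopy approximating a step function. Choose a fine partition of a compact interval $[a,b]\subset I$ containing the support into intervals $J_j$ of length $\ell$. On each $J_j$, apply a rescaled copy of Lemma \ref{lem:ApproximatePositiveHamiltoniansOnRAux} via the contactomorphism $(q,p,z)\mapsto(\ell q+c_j,p,\ell z)$ of $J^1\R$. This conjugation multiplies the Hamiltonian by $\ell$, so reparametrizing time by $1/\ell$ restores it. I place these copies in windows whose cores $[0,1]$ tile $[a,b]$.

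The difficulty is that the auxiliary lemma only controls the Hamiltonian, up to the crude bound $10$, outside the core $[0,1]$, so neighbouring copies would interfere there. To handle this I would order the windows so that the supports of the uncontrolled regions overlap in a controlled way, for instance by taking supports disjoint from one another and using two alternating families of cores. A cleaner alternative is to choose the package construction so that the auxiliary isotopy is constant outside $J^1[-\eta,1+\eta]$. Then the $[-10,10]$ region has length $O(\eta)$, and concatenating packages end to end yields a Hamiltonian within $\eps$ of a prescribed step profile except on a set of length $O(\eta\ell)$.

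To obtain profile values $c_j\in(0,C)$ instead of $1$, I would either use a time-dependent slope, multiplying $p_\pm$ by $c_j$, which is the same as the $1/p_-$ speed scaling in that proof, or feed the step profile into the time-dependent reparametrization.

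\textbf{Step 3: general $H_t$.} Adapt Step 2 of Lemma \ref{lem:ApproximatePositiveHamiltonians}. Set $\alpha_t=\alpha_\std/H_t$ near the zero section, and use the parametric strict Weinstein neighbourhood theorem (Proposition \ref{thm:StrictWeinsteinNeighbrouhoods}) to identify $(W_t,\alpha_t)$ strictly with a neighbourhood of the zero section in $(J^1I,\alpha_\std)$. Under this identification the Hamiltonian $H_t$ becomes $1$. Outside the support of $H_t$ we had set $H_t=\eps'$ there, which is still positive, and I would modify it to be $1$ near infinity so that the neighbourhood theorem applies properly. Transporting the model isotopy for the constant Hamiltonian $1$ back then gives $\alpha_\std(\partial_t f_t)\approx H_t\circ f_t$, up to the small term $\alpha_t(\partial_t\phi_t^{-1})$, which vanishes along the zero section.

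The model for $H=1$ on a compact window inside $I$ is given by Step 2. Outside the window, where $H_t$ was small, a zero Hamiltonian is within $\eps$, so the model only needs to equal $1$ on the window and lie in $[0,1+\eps]$ elsewhere. The count $N$ is the number of packages, roughly $(b-a)/\ell$ times $2K$. I expect the main obstacle to be making $N$ depend only on $(C,\eps)$ and not on $I$ or the support. My remedy is to exploit the rescaling in Step 2: taking $\ell$ large, or equivalently conjugating by a base dilation of $J^1\R$, lets a single pair of packages cover the whole support while only rescaling heights. Since $W$ is arbitrary, the height loss is harmless, and $N=2K$ with $K>1/\eps$, suitably enlarged to account for $C$, as in Lemma \ref{lem:ApproximatePositiveHamiltoniansOnRAux}.
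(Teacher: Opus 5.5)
Your Step 3 (pass to $\alpha_t=\alpha_\std/H_t$, identify strictly with a neighbourhood of the zero section, transport a model for the constant Hamiltonian $1$) is the mechanism of the paper's proof. However, the model you feed into it is never actually obtained, and the observation that closes the argument is missing.

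\textbf{What fails.} You require a model whose Hamiltonian is $\approx 1$ on the window and lies in $[0,1+\eps]$ elsewhere. Lemma~\ref{lem:ApproximatePositiveHamiltoniansOnRAux} does not give this. Off $\pi_q\in[0,1]$ it only guarantees values in $[-10,10]$, and that crude region is large. In its construction a whole package is compressed, carried back and decompressed. So the isotopy moves over an interval several times longer than $[0,1]$, and the compression term takes both signs and is several times larger than $1$.

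Your Step~2 does not produce such a model either. The condition $\alpha_\std(\partial_t f_t)\approx H_t\circ f_t$ is pointwise along the Legendrian. Hence no branch of one rescaled copy may lie over another copy's core while it is in its crude phase. Since each copy's crude region is several times longer than its core, the cores cannot tile $[a,b]$. Alternating families would have to run simultaneously and would hit the same obstruction.

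A package that is stationary outside $J^1[-\eta,1+\eta]$ would be a new lemma, not a consequence of the given one. In that design the crude region contains a whole package, so confining it to length $O(\eta)$ means packages of length $O(\eta)$. Each would still need more than $1/\eps$ zig-zags, since that count is governed by the slope ratio $p_+/p_-$ alone. Then $N$ would grow like $1/\eta$, which is the very dependence you then try to remove. Your final fallback, a single copy rescaled so that its core covers the support, is the right construction, but it violates the requirement you imposed.

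\textbf{What closes the argument.} The key point, and the heart of the paper's proof, is that the crude region needs nothing beyond boundedness, provided the positive perturbation is tiny there.
\begin{itemize}
\item Reparametrize the base by a strict cotangent lift so that $I=\R$ and $H_t=0$ for $q\notin[0.1,0.9]$.
\item Pick $G_t:\R\to(0,\infty)$ with $|G_t-H_t(q,0,0)|\le\eps_1$ and $G_t\equiv\eps_1$ on $\Op(\R\setminus[0,1])$, shrinking $W$ so that the fibrewise variation of $H_t$ is negligible.
\item Transport the auxiliary isotopy, built with some $\eps_3$, through the strict identification of $(W_t,\alpha_\std/G_t)$ with a neighbourhood of the zero section in $(J^1\R,\alpha_\std)$. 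Over $\R\setminus(0,1)$ this identification can be taken to be the explicit scaling $(q,p,z)\mapsto(q,p/\eps_1,z/\eps_1)$, so there is no need to make $G_t$ equal to $1$ near infinity.
\end{itemize}
Now $\alpha_\std(\partial_t f_t)=G_t\,\alpha_t(\partial_t f_t)$, and $\alpha_t(\partial_t f_t)$ equals the model Hamiltonian up to a term vanishing along the zero section. Over $[0,1]$ the deviation from $G_t$ is therefore at most about $\eps_3(C+\eps_1)$. Over the crude region, where $G_t=\eps_1$ and $H_t=0$, the Hamiltonian is at most about $10\eps_1$ in absolute value.

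Taking $\eps_1<\eps/10$ and $\eps_3<\eps/(C+\eps_1)$ proves the lemma with one copy of Lemma~\ref{lem:ApproximatePositiveHamiltoniansOnRAux}. Thus $N=N(\eps_3)$ depends only on $C$ and $\eps$, and your Step~2 can be discarded.
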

\begin{proof}
    After a reparametrization, we may assume that $I = \R$ and that $H_t(q,p,z) = 0$ for $q \not\in [0.1,0.9]$ for all $t \in [0,1]$. We then argue as in the proof of Lemma \ref{lem:ApproximatePositiveHamiltonians}: by a small perturbation we can work with a family $G_t$ of positive functions. Positivity then allows us to change the contact form in order to deal solely with ``normalized'' Hamiltonians.
    
    Let $G_t:\R \to (0,\infty)$ be a positive smooth function such that $G_t(q) = \eps_1$ for all $q \in \Op(\R \setminus [0,1])$ and $|G_t(q) - H_t(q,0,0)| \leq \eps_1$ for all $q \in \R$ for all $t \in [0,1]$ and some small positive number $\eps_1 > 0$. If $\eps_1$ is sufficiently small, then after possibly shrinking $W$ so that the variation of $H_t$ in the fiber direction is small, it will be enough to construct $f_t$ so that $\alpha_\std(\partial_t f_t)$ is $\eps$-close to $G_t \circ \pi_q \circ f_t$ after possibly replacing $\eps$ by a smaller number. Here, $\pi_q : J^1 \R \to \R$ denotes the projection onto the zero section.

    Choose a time-dependent strict Weinstein neighbourhood $W_t \subseteq W$ of the zero-section $\R$ with respect to the time-dependent contact form $\alpha_t = \frac{\alpha_\std}{G_t}$ strictly contactomorphic via $\phi_t: W_t \to \widetilde{W}$ to a neighbourhood $\widetilde{W}$ of the zero section in $(J^1S^1,\alpha_\std)$. According to Proposition \ref{thm:StrictWeinsteinNeighbrouhoods} we may choose $(\phi_t,W_t)$, outside of $(0,1)$, to be equal to 
    \begin{equation*}
    \begin{aligned}
        \phi_t:W_t \cap \pi_q^{-1}(\R \setminus (0,1)) = W \cap \pi_q^{-1}(\R\setminus (0,1)) &\to \Bigg\{\left(q, \frac{p}{\eps_1}, \frac{z}{\eps_1}\right)|\, (q,p,z) \in W, q \in \R \setminus (0,1)\Bigg\} \,.\\
        (q,p,z) &\mapsto \bigg(q,\frac{p}{\eps_1},\frac{z}{\eps_1}\bigg)
    \end{aligned}
    \end{equation*}
    Suppose $f_t$ is a Legendrian isotopy in $W_t$ such that $\alpha_t(\partial_t f_t(x))$ is $\beta(\pi_q(f_t)(x))$-close to 1 for some function $\beta: \R \to (0,\infty)$ and all $x \in \R$. Then $\alpha_\std(\partial_t f_t)(x)$ is $((\beta \cdot G_t) \circ \pi_q \circ f_t)(x)$-close to $(G_t \circ \pi_q \circ f_t)(x)$. Let $\beta$ be such that $\beta(q) = \eps_2$ for $q \in [0,1]$  and some $\eps_2 > 0$, and $\beta \leq 10$ everywhere. Such an $f_t$ exists for any choice of $\eps_2$ according to Lemma \ref{lem:ApproximatePositiveHamiltoniansOnRAux}: if $\widetilde f_t$ denotes the family of Legendrian embeddings in Lemma \ref{lem:ApproximatePositiveHamiltoniansOnRAux} applied to some $\eps_3 < \eps_2$, then we can define $f_t = \phi_t^{-1} \circ \widetilde f_t$. As in the proof of Lemma \ref{lem:ApproximatePositiveHamiltonians}, one computes that $f_t$ has the required properties after possibly shrinking $\widetilde W$.
    
    
    The proof is complete once we set $\eps_1 < \eps/10$ and $\eps_2 := \frac{\eps}{C+\eps_1}$ since $|G_t| \leq C + \eps_1$.
\end{proof}

\begin{remark}\label{rmk:ApproxForBothSingsAndStabTypes}
    These lemmata also hold for non-positive Hamiltonians (by reversing the time parameter in the construction) and for both types of stabilizations (by reversing both the $q$-coordinate and the time coordinate) after fixing an orientation.
\end{remark}

\begin{remark}
    The number $N$ in Lemma \ref{lem:ApproximatePositiveHamiltonians} can be taken to be any integer larger than $\frac{C}{\eps}$. The number $N$ in Lemma \ref{lem:ApproximatePositiveHamiltoniansOnR} can be taken to be any integer larger than $2\frac{C}{\eps}$.
\end{remark}

Suppose we are given a positive isotopy of Legendrians. We can use the previous statements to construct a second isotopy (locally defined in a moving strict Weinstein neighbourhood) so that their costs cancel upon composition. This allows us to produce low cost isotopies:
\begin{lemma}\label{lem:SmallEnergyIsotopiesOfNon-negLegWithManyStabilizations}
    Let $C>0$ and $\eps>0$. There exists a natural number $N=N(C,\eps)$ such that the following statement holds.

    Let $(M,\alpha)$ be a $3$-dimensional strict contact manifold. Fix a compactly supported family of proper and connected Legendrian embeddings $f_t:L \to M$, $t \in [0,1]$, satisfying 
    \[ 0 \leq \alpha(\partial_t f_t) < C. \]
    Let $\psi_t:W \to M$ be a strict Weinstein neighbourhood of $f_t$, of height less than $\eps' > 0$ and parametric in $t$. Let $f^N_0$ and $f^N_1$ be $N$-fold stabilizations of $f_0$ and $f_1$ inside of $\psi_0(W)$ and $\psi_1(W)$. We assume that all stabilizations are of the same type (they can be chosen to be either positive or negative after fixing an orientation of $L$). Then, we have that:
    \begin{equation*}
        d_\alpha(f^N_0,f^N_1) < \eps + 2 \eps'.
    \end{equation*}
\end{lemma}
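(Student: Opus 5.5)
The plan is to transport $f_t$ together with its stabilizations inside the moving Weinstein neighbourhood $\psi_t(W)$, and to use Lemma \ref{lem:ApproximatePositiveHamiltoniansOnR} (or Lemma \ref{lem:ApproximatePositiveHamiltonians} if $L\cong S^1$) to make the stabilizations move \emph{against} the isotopy. The two Hamiltonians should then cancel up to $\eps$.

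First I fix the model. In the Weinstein chart, $\psi_t$ identifies a neighbourhood $W$ of the zero section in $J^1L$ with a neighbourhood of $f_t$, strictly for $\alpha$. The family $\psi_t$ is itself generated by a contact isotopy of $M$: extend $f_t$ to an ambient strict-compatible isotopy, or simply differentiate $\psi_t$ along $W$. Let $K_t := \alpha(\partial_t \psi_t)\circ\psi_t$, a function on $W$. Along the zero section $K_t$ equals $\alpha(\partial_t f_t)\in[0,C)$. After shrinking $W$ (allowed, since we only need small height), $K_t$ is uniformly close to its zero-section values.

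Next I apply Lemma \ref{lem:ApproximatePositiveHamiltoniansOnR}, in the non-positive version of Remark \ref{rmk:ApproxForBothSingsAndStabTypes}, in $J^1L$ with the Hamiltonian $-K_t$ and with the prescribed stabilization sign. This produces an isotopy $e_t: L\to W$ of $N$-fold stabilizations of the zero section with $|\alpha_\std(\partial_t e_t)+K_t\circ e_t|<\eps$. If $L$ is a union of intervals or circles, I apply it componentwise; properness is handled by the uniformly compact support of the Hamiltonian. Then I set $F_t:=\psi_t\circ e_t$. By the chain rule,
\[ \alpha(\partial_t F_t)=\alpha_\std(\partial_t e_t)+K_t\circ e_t, \]
using strictness of $\psi_t$, so $\|F_t\|_\alpha<\eps$. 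Here $F_0$ and $F_1$ are $N$-fold stabilizations of $f_0$ and $f_1$ of the prescribed sign inside $\psi_0(W)$ and $\psi_1(W)$.

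Finally I compare $F_0$ with $f^N_0$ and $F_1$ with $f^N_1$. In dimension $3$, two $N$-fold stabilizations of the same sign of the zero section inside a Weinstein neighbourhood are Legendrian isotopic within that neighbourhood, since stabilizations commute and slide along the Legendrian. By Corollary \ref{cor:boudnednessOfStrictWeinsteinNbhdOfHeightC}, each such isotopy costs at most the height, less than $\eps'$. The triangle inequality then gives $d_\alpha(f^N_0,f^N_1)<\eps+2\eps'$.

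The main obstacle is the first step: choosing $W$ and the moving chart so that $K_t$ is honestly defined on a fixed $W$, has values in $[0,C)$ up to a small error, and has uniformly compact support. I would handle this by shrinking $W$ via Proposition \ref{thm:StrictWeinsteinNeighbrouhoods} and absorbing the resulting error into $\eps$, replacing $C$ by $C+\eps$ when choosing $N$.
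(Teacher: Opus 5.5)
Your proposal is correct and is essentially the paper's proof. Your chart Hamiltonian $K_t$ is the paper's $H_t\circ\phi^H_t$ pulled back by $\psi_0$, and your isotopy $\psi_t\circ e_t$ is the paper's $\phi^H_t\circ\phi^G_t\circ g$, where $G_t$ approximates $-H_t\circ\phi^H_t$ via Lemma \ref{lem:ApproximatePositiveHamiltonians} or \ref{lem:ApproximatePositiveHamiltoniansOnR} together with Remark \ref{rmk:ApproxForBothSingsAndStabTypes}; the endpoints are compared using Corollary \ref{cor:boudnednessOfStrictWeinsteinNbhdOfHeightC} exactly as you do. The only cosmetic difference is that you check the cancellation by the chain rule in the moving chart, whereas the paper uses the composition formula for contact Hamiltonians with trivial conformal factor.
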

\begin{proof}
Let $H_t:M \to \R$ be such that $\phi^H_t \circ \psi_0 = \psi_t$. After possibly shrinking $W$, we may assume that $|H_t| < C$. In particular, the conformal factor $\chi^H_t$ of $H_t$ is equal to 1 along $\psi_0(W)$ for all $t \in [0,1]$ by strictness of $\psi_t$. By Lemma \ref{lem:ApproximatePositiveHamiltonians} (if $L = S^1$) or Lemma \ref{lem:ApproximatePositiveHamiltoniansOnR} (if $L=\R$), and by Remark \ref{rmk:ApproxForBothSingsAndStabTypes}, there exists a contact Hamiltonian $G_t:M \to \R$ with compact support in $\psi_0(W)$ and a Legendrian embedding $g:L \to \psi_0(W)$ Legendrian isotopic to $f_0^N$ such that 
    \begin{equation*}
        |G_t \circ \phi^G_t \circ g + H_t \circ \phi^H_t \circ \phi^G_t \circ g| < \eps.
    \end{equation*}
    Indeed, since $ H_t \circ \phi^H_t \circ f_0 = \alpha(\partial_t f_t) \geq 0$ is non-negative, we can apply Lemma \ref{lem:ApproximatePositiveHamiltonians} or Lemma \ref{lem:ApproximatePositiveHamiltoniansOnR} to an auxiliary Hamiltonian $F_t \geq 0$ whose restriction $F_t|_W$ can be assumed to be arbitrarily close to $H_t \circ \phi^H_t|_W$ after possibly shrinking $W$. 
    
    The contact isotopy $\phi^H_t \circ \phi^G_t$ is generated by the Hamiltonian 
    \begin{equation*}
        H_t + \left(\chi^H_t \cdot G_t\right) \circ \left(\phi^H_t\right)^{-1} = H_t + G_t \circ (\phi^H_t)^{-1}.
    \end{equation*}
    In particular, we have that 
    \begin{equation*}
        |\alpha\left(\partial_t\left(\phi^H_t \circ \phi^G_t \circ g\right)\right)| = |H_t \circ \phi^H_t \circ \phi^G_t \circ g + G_t \circ \phi^G_t \circ g| < \eps,
    \end{equation*}
    which implies that 
    \begin{equation*}
        d_\alpha\left(g, \phi^H_1 \circ \phi^G_1 \circ g\right) < \eps.
    \end{equation*}
    By Corollary \ref{cor:boudnednessOfStrictWeinsteinNbhdOfHeightC}, we also have that 
    \begin{equation*}
        d_\alpha\left(f^N_0,g\right), d_\alpha\left(\phi^H_1 \circ \phi^G_1 \circ g, f^N_1\right) < \eps'.
    \end{equation*}
    This finishes the proof by the triangle inequality. 
\end{proof}

\begin{remark}\label{rmk:SmallEnergyIsotopiesOfLegWithManyStabilizationsDisconnected}
    Note that the constructions in the proof of Lemma \ref{lem:SmallEnergyIsotopiesOfNon-negLegWithManyStabilizations} are local around the Legendrian embeddings. This implies that  Lemma \ref{lem:SmallEnergyIsotopiesOfNon-negLegWithManyStabilizations} also holds for proper Legendrian embeddings with possibly disconnected domains if the Weinstein neighbourhoods of the components are taken to be mutually disjoint. In this case, every component of $f$ needs to be stabilized $N$ times. Recall that an $N$-fold stabilization of a disconnected Legendrian is precisely defined in this way. In addition, the type of the stabilization can be chosen for each component of $L$ after fixing an orientation of $L$.
\end{remark}

\subsection{The general case} \label{ssec:generalIsotopies}

The results we have obtained for non-negative isotopies can now be combined with the discretization approach of Subsection \ref{sec:positiveNegative} in order to deal with arbitrary isotopies.

\begin{corollary}\label{cor:SmallEnergyIsotopiesOfLegWithManyStabilizations}
    Fix $C> 0$ and $\eps > 0$. Let $(M,\alpha)$ be a strict contact manifold of dimension 3 and $U \subseteq M$ be a precompact open subset. There exists a natural number $N_0 = N_0(C,\eps,U)$ with the following property.
    
    Fix a family of proper Legendrian embeddings $f_t:L \to M$, $t \in [0,1]$, with compact support in $U$ satisfying
    \[ |\alpha(\partial_t f_t)| < C  .
    \]
    Then, for any $N \geq N_0$ we have that
    \begin{equation*}
        d_\alpha(f_0^N,f_1^N) < \eps.
    \end{equation*}
    Here, $f_i^N$ denotes an $N$-fold stabilization of $f_i$ inside of a sufficiently small strict Weinstein neighbourhood. All stabilizations are of the same type over each component of $L$.
\end{corollary}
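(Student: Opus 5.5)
The plan is to discretise the isotopy into sign-definite pieces using Lemma \ref{lem:ApproxByPWPosNegHamiltonian}, and then apply Lemma \ref{lem:SmallEnergyIsotopiesOfNon-negLegWithManyStabilizations} to each piece. The triangle inequality then sums the errors.

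\textbf{Step 1 (discretisation).} By the contact isotopy extension theorem, realise $f_t$ as $\phi^H_t\circ f_0$. Cut off $H_t$ so that it is supported in a fixed compact $N\subset U$ and satisfies $|H_t|<C$ near the image of the isotopy; this is possible up to an arbitrarily small error, since only the values of $H_t$ along $f_t$ matter. Since $\overline U$ is compact, we can choose $s_0>0$ depending only on $U$ such that the Reeb flow of $N$ exists on $[0,s_0]$. Lemma \ref{lem:ApproxByPWPosNegHamiltonian} then gives an integer $n=n(C,s_0)$ and a piecewise Hamiltonian $G_t$ with $|G_t|<2C$. On each interval $[i/n,(i+1)/n]$ the function $G_t\circ\phi^G_t$ has a constant sign, and $\phi^G_{2i/n}=\phi^H_{2i/n}$. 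In particular $\phi^G_1=\phi^H_1$, so $g_t:=\phi^G_t\circ f_0$ is a Legendrian isotopy from $f_0$ to $f_1$. Each piece $g_t$, $t\in[i/n,(i+1)/n]$, is reparametrised to unit length and satisfies either $0\le\alpha(\partial_t g_t)<2nC$ or the analogous non-positive bound. The non-positive case is handled by reversing time, as in Remark \ref{rmk:ApproxForBothSingsAndStabTypes}.

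\textbf{Step 2 (apply the non-negative lemma piecewise).} Choose a parametric strict Weinstein neighbourhood of $g_t$ of height $\eps'<\eps/(4n)$, with disjoint neighbourhoods for different components as in Remark \ref{rmk:SmallEnergyIsotopiesOfLegWithManyStabilizationsDisconnected}. Set $N_0:=N(2nC,\eps/(2n))$, the constant from Lemma \ref{lem:SmallEnergyIsotopiesOfNon-negLegWithManyStabilizations}. Fix the stabilisation type on each component, and let $g^N_{i/n}$ be $N$-fold stabilisations inside these neighbourhoods. For every $i$, Lemma \ref{lem:SmallEnergyIsotopiesOfNon-negLegWithManyStabilizations} gives
\[ d_\alpha\bigl(g^N_{i/n},g^N_{(i+1)/n}\bigr)<\tfrac{\eps}{2n}+2\eps'. \]
Summing over the $n$ pieces gives $d_\alpha(f^N_0,f^N_1)<\eps$, up to adjusting constants.

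\textbf{Step 3 (endpoints and $N\ge N_0$).} The stabilisations $f^N_0,f^N_1$ are taken in some sufficiently small Weinstein neighbourhoods of $f_0,f_1$, which need not coincide with those chosen for $g_0=f_0$ and $g_1=f_1$. Two $N$-fold stabilisations of the same type inside nested small Weinstein neighbourhoods differ by an isotopy inside a neighbourhood of small height. By Corollary \ref{cor:boudnednessOfStrictWeinsteinNbhdOfHeightC} this costs arbitrarily little. To pass from $N_0$ to all $N\ge N_0$, I would check that the constant in Lemma \ref{lem:ApproximatePositiveHamiltonians} and Lemma \ref{lem:ApproximatePositiveHamiltoniansOnR} works for every larger $N$. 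Any integer above $C/\eps$ is allowed there, so this holds; alternatively, the extra stabilisations can be parked in a tiny neighbourhood and carried along at negligible cost.

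\textbf{Main obstacle.} The main obstacle is the uniformity of $N_0$: it may depend only on $(C,\eps,U)$ and not on the particular isotopy. This is exactly why $s_0$, and hence $n$, must be chosen from $U$ alone, using precompactness. A secondary issue is the piecewise smoothness of $G_t$ at the times $i/n$. I would handle it by reparametrising each piece to be constant near its endpoints, so that the pieces concatenate smoothly without changing the energy.
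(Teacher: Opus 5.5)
Your proposal is correct and follows the paper's own proof essentially step for step. You discretise with Lemma \ref{lem:ApproxByPWPosNegHamiltonian}, apply Lemma \ref{lem:SmallEnergyIsotopiesOfNon-negLegWithManyStabilizations} on each sign-definite piece and sum via the triangle inequality, compare the endpoint stabilisations using Corollary \ref{cor:boudnednessOfStrictWeinsteinNbhdOfHeightC}, and handle $N>N_0$ by stabilising the whole isotopy $N-N_0$ times first (your ``parking'' alternative, which is the paper's argument). One harmless slip: rescaling a piece of length $1/n$ to unit length divides the Hamiltonian by $n$, so the bound is $2C/n$ rather than $2nC$, though your weaker bound still suffices.
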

\begin{proof}
    In the case that $\alpha(\partial_t f_t)$ is either non-negative or non-positive, Corollary \ref{cor:SmallEnergyIsotopiesOfLegWithManyStabilizations} follows from Lemma \ref{lem:SmallEnergyIsotopiesOfNon-negLegWithManyStabilizations} and Remark \ref{rmk:SmallEnergyIsotopiesOfLegWithManyStabilizationsDisconnected}. We use Lemma \ref{lem:ApproxByPWPosNegHamiltonian} to reduce to this special case.

    By Lemma \ref{lem:ApproxByPWPosNegHamiltonian} applied to a generating Hamiltonian $H_t:M \to (-C,C)$ with $t \in [0,1]$ and support in $U$, there exists a Hamiltonian $G_t$ on $M$ and a natural number $n = n(C,U) \in \N$ such that 
    \begin{itemize}
    \item Over each $[i/n,(i+1)/n] \subset [0,1]$, $G_t \circ \phi^G_t$ is smooth and non-negative or non-positive.
    \item $\phi^G_{2i/n} = \phi^H_{2i/n}$ for all $i$,
    \item $|G| < 2C$.
\end{itemize}
    We apply Lemma \ref{lem:SmallEnergyIsotopiesOfNon-negLegWithManyStabilizations} in the case of a non-negative/non-positive Hamiltonian to each interval $[i/n,(i+1)/n]$ to obtain a number $N_0 = N(2C,\frac{\eps}{n})\in \N$ such that 
    \begin{equation*}
        d_\alpha(\phi^G_{i/n} \circ f_0^{N_0},\phi^G_{(i+1)/n}\circ f_0^{N_0}) < \frac{\eps}{n},
    \end{equation*}
    where we applied Lemma \ref{lem:SmallEnergyIsotopiesOfNon-negLegWithManyStabilizations} to a slightly smaller $\eps$ and a sufficiently small $\eps'$.
    By the triangle inequality, this implies
    \begin{equation*}
        d_\alpha(f_0^{N_0},\phi^G_1 \circ f_0^{N_0}) < \eps.
    \end{equation*}
    We may choose $f_0^{N_0}$ such that $\phi^G_1 \circ f_0^{N_0}$ is an ${N_0}$-fold stabilization of $f_1$ inside of a strict Weinstein neighbourhood of $f_1$ of height less than $\eps_1$ for some $\eps_1 > 0$. If $f_1^{N_0}$ is another such ${N_0}$-fold stabilization, then 
    \begin{equation*}
        d_\alpha(\phi^G_1 \circ f_0^{N_0},f_1^{N_0}) < \eps_1
    \end{equation*} 
    by Corollary \ref{cor:boudnednessOfStrictWeinsteinNbhdOfHeightC}. As $\eps,\eps_1 > 0$ were arbitrary, this finishes the proof for $N = N_0$ by another application of the triangle inequality. The general case $N > N_0$ follows by simply applying the case $N = N_0$ to an $(N - N_0)$-fold stabilization of the Legendrian isotopy $f_t$ inside of a sufficiently small Weinstein neighbourhood.
\end{proof}

\begin{remark}
    The number $N = N(C,\eps,U)$, in fact, only depends on $C$, $\eps$, and a number $s_0 >0$ such that the Reeb flow of $\alpha$ applied to $U$ exists for times in $[0,s_0]$. If all components of $L$ are closed, we can choose $N$ as any natural number larger than $\frac{4C}{\eps}\left\lceil \frac{C}{2s_0}\right\rceil$. Otherwise we can choose $N$ as any natural number larger than $\frac{8C}{\eps}\left\lceil \frac{C}{2s_0}\right\rceil$. In particular, $N$ does not depend on $U$ if the Reeb flow of $(M,\alpha)$ is complete.
\end{remark}

\begin{remark}\label{rmk:SmallEnergyIsotopiesOfStabLegFixedIsotopyClass}
   We can strengthen Corollary \ref{cor:SmallEnergyIsotopiesOfLegWithManyStabilizations} in the following way: there exists a contact isotopy from $f_0^N$ to $f_1^N$ of energy less than $\eps$ which is isotopic with fixed endpoints to $f_t^N$, where $f_t^N$ denotes a Legendrian isotopy which for fixed $t$ is an $N$-fold stabilization of $f_t$ inside of a time-dependent Weinstein neighbourhood of $f_t$. Indeed, it is enough to prove the corresponding generalization of Lemma \ref{lem:SmallEnergyIsotopiesOfNon-negLegWithManyStabilizations}. For this, note that in Corollary \ref{cor:boudnednessOfStrictWeinsteinNbhdOfHeightC}, we may also prescribe the isotopy class of the contact isotopies, see Definition \ref{def:boundedCost}.
\end{remark}


\section{The overtwisted size controls the size of all Legendrian isotopies} \label{sec:energyBoundsLegendrianIsotopies} 

\subsection{The case of a single overtwisted disk}

We now establish Theorem \ref{thm:BoundednessInComplementOfOtDiskOfSizeC}. It states that the overtwisted size provides universal control over all Legendrian isotopies in the complement.

\begin{proof}[Proof of Theorem \ref{thm:BoundednessInComplementOfOtDiskOfSizeC}]
Let $\Delta \subseteq (M,\xi)$ be a hyperdisk of overtwisted size $C$ and let $\eps > 0$. Let $f_0,f_1:L \to M \setminus \Delta$ be two proper Legendrian embeddings that are formally Legendrian isotopic and agree outside of a compact set; this isotopy can be assumed to be disjoint from $\Delta$, since the latter is contractible. Let $V$ be a sufficiently small open neighbourhood of $\Delta$, disjoint from $f_0$ and $f_1$, such that any properly embedded Legendrian in $V \setminus \Delta$ has stabilization cost less than $\frac{C}{2} + \eps$.

Let $(f_t,F_{s,t})$ be a formal Legendrian isotopy from $f_0$ to $f_1$. We may assume by Lemma \ref{lem:FormalLegendrianStandardPosition} that this formal isotopy is a concatenation of the form $(f_t,F_{s,t}) =g^0_t*(g_t,G_{s,t}) *g^1_t$ such that $g^0_t$ and $g^1_t$ are Legendrian isotopies of energy $< \eps$, both $g_t^{-1}(V)$ and $(g_t,G_{s,t})|_{g_t^{-1}(V)}$ are independent of $t$, and every component of $L$ intersects $g_t^{-1}(V)$. In particular, the restriction of $(g_t,G_{s,t})$ to $g_t^{-1}(V)$ is Legendrian, and we may also assume that the restriction of $(g_t,G_{s,t})$ to $L \setminus \overline{g_t^{-1}(V)}$ is a compactly supported formal Legendrian isotopy which is Legendrian outside of a compact set. 

Assume first that $n \geq 2$. Then $g_0$ has loose size $< C + 2\eps$ by Lemma \ref{lem:Nakamura}. Since $d_\alpha(g_0,f_0) < \eps$, it follows that $f_0$ has loose size $< C + 4 \eps$. As $\eps > 0$ was arbitrary, $f_0$ has loose size at most $C$. Similarly, $f_0$ has loose size at most $C$. Theorem \ref{thm:BoundednessInComplementOfOtDiskOfSizeC} then follows from Theorem \ref{thm:LooseLegQuantitativeFlexibility}.

Now we assume that $n=1$. By the h-principle for Legendrian links (Theorem \ref{thm:hprincipleLegLinks}), there exists a natural number $N > 0$ such that an $(N,N)$-fold stabilization of $g_0|_{L \setminus \overline{g_t^{-1}(V)}}$ is Legendrian isotopic to an $(N,N)$-fold stabilization of $g_1|_{L \setminus \overline{g_t^{-1}(V)}}$, both viewed as proper Legendrian embeddings into $M \setminus \overline{V}$. By Corollary \ref{cor:SmallEnergyIsotopiesOfLegWithManyStabilizations}, and after possibly increasing $N$, we may also assume that the energy of such an isotopy is smaller than $\eps$, provided that the stabilizations are contained inside of a sufficiently small Weinstein neighbourhood.

Let $\phi_t$ be a contact isotopy with support in $V$ such that $\Vert \phi_t \Vert_\alpha < \frac{C}{2}+\eps$ and $\phi_1 \circ g_t|_{g_t^{-1}(V)}$ is $(N,N)$-fold stabilized inside of some strict Weinstein neighbourhood of height $< \eps$. By Corollary \ref{cor:boudnednessOfStrictWeinsteinNbhdOfHeightC}, there exist Legendrian isotopies, of energy less than $\eps$, from $\phi_1\circ g_0$ to a Legendrian embedding $h_0$ and from a Legendrian embedding $h_1$ to $\phi_1\circ g_1$, such that the part of $h_i$ that has image in $M \setminus \overline{V}$ is an $(N,N)$-fold stabilization of $g_i|_{L \setminus \overline{g_t^{-1}(V)}}$ inside of an arbitrarily small strict Weinstein neighbourhood, and $h_0$ and $h_1$ coincide on the part that has image in $\overline{V}$. It follows from our choice of $N$ that $d_\alpha(h_0,h_1) < \eps$. Combining all estimates, we get
\begin{equation*}
\begin{split}
    d_\alpha(f_0,f_1) &\leq d_\alpha(f_0,g_0)+ d_\alpha(g_0,\phi_1 \circ g_0) + d_\alpha(\phi_1 \circ g_0,h_0) + d_\alpha(h_0,h_1)\\
    & \quad + d_\alpha(h_1, \phi_1 \circ g_1) + d_\alpha(\phi_1 \circ g_1,g_1) + d_\alpha(g_1,f_1) \\
    & < \eps + \left(\frac{C}{2} + \eps\right) + \eps + \eps + \eps + \left(\frac{C}{2} + \eps\right) + \eps = C + 7 \eps.
\end{split}
\end{equation*}
As $\eps> 0$ was arbitrary, this finishes the proof of Theorem \ref{thm:BoundednessInComplementOfOtDiskOfSizeC}. 
\end{proof}

\begin{remark}\label{rmk:SmallEnergyIsotopiesWithFixedHmtpClass}
In the setting of Theorem \ref{thm:BoundednessInComplementOfOtDiskOfSizeC}, there exists for any $\eps > 0$ a contact isotopy of energy at most $C + \eps$ in any fixed homotopy class of (formal) Legendrian isotopies from $f_0$ to $f_1$. Indeed, in dimension 3, after possibly shrinking the neighbourhood $V$ in the proof of Theorem \ref{thm:BoundednessInComplementOfOtDiskOfSizeC}, we may assume that there exists a slightly larger neighbourhood $\widetilde{V}$ containing $V$ and a contact isotopy $\psi_t$ with support in $\widetilde{V}$ such that $h_i = \psi_1 \circ \phi_1 \circ g_i$. It follows that the formal Legendrian isotopy $(f_t,F_{s,t})$ is isotopic with fixed end-points to $(\psi_{t} \circ \phi_{t} \circ g^0_t)*(\psi_1 \circ \phi_1 \circ (g_t,G_{s,t})) *(\psi_{1-t} \circ \phi_{1-t} \circ g^1_t)$. By the h-principle for Legendrian links in the complement of an overtwisted disk (Theorem \ref{thm:h-principleLooseLegDim3}) and Remark \ref{rmk:SmallEnergyIsotopiesOfStabLegFixedIsotopyClass}, the isotopy from $h_0 = \psi_1 \circ \phi_1 \circ g_0$ to $h_1 = \psi_1 \circ \phi_1 \circ g_1$ of energy at most $\eps$ constructed in the above proof can be chosen to lie in any prescribed (formal) Legendrian isotopy class. 

    In higher dimensions, we may assume by the definition of the overtwisted size that there exists a contact isotopy $\phi_t$ of energy less than $\frac{C}{2} + \eps$ such that $\phi_1 \circ g_0$ and $\phi_1 \circ g_1$ have loose sizes smaller than $\eps$ in the complement of $\Delta$. By Theorem \ref{thm:LooseLegQuantitativeFlexibility}, 
    \begin{equation*}
        d_\alpha(\phi_1 \circ g_0, \phi_1 \circ g_1) < \eps.
    \end{equation*}
    Furthermore, the Legendrian isotopy from $\phi_1 \circ g_0$ to $\phi_1 \circ g_1$ that is constructed using the arguments in \cite{Nak24} is in the formal isotopy class of the original (formal) isotopy. Since $\phi_1 \circ g_0$ and $\phi_1 \circ g_0$ have image in the complement of $\Delta$, the constructed Legendrian isotopy class can be chosen to lie in a prescribed Legendrian isotopy class by the h-principle for Legendrians in the complement of an overtwisted disk (Theorem \ref{thm:h-principleLooseLegInComplementOfOTDisk}).
\end{remark}

\subsection{The case of two overtwisted disks}

Now we proceed to Theorem \ref{thm:BoundMovingOtDisk}. It is the generalisation of Theorem \ref{thm:BoundednessInComplementOfOtDiskOfSizeC} where we consider two overtwisted disks of different costs (and, for each, a Legendrian in its complement).

\begin{proof}[Proof of Theorem \ref{thm:BoundMovingOtDisk}]
In the case that $n \geq 2$, Theorem \ref{thm:BoundMovingOtDisk} follows from Lemma \ref{lem:Nakamura} and Theorem \ref{thm:LooseLegQuantitativeFlexibility}. Let us now assume that $n = 1$.

For $i \in \{0,1\}$, let $\Delta_i \subseteq (M,\xi)$ be a hyperdisk of overtwisted size $C_i$ and let $f_i:L \to M \setminus \Delta_i$ be a proper Legendrian embedding such that $f_0$ and $f_1$ are formally Legendrian isotopic and agree outside of a compact set and there exists a compactly supported contact isotopy $\psi_t$ satisfying $\psi_1(\Delta_0) = \Delta_1$. Fix $\eps >0$. Let $V$ be a sufficiently small open neighbourhood of $\Delta_0$ disjoint from the image of $f_0$ such that any properly embedded Legendrian $f$ in $V \setminus \Delta_0$ has stabilization cost $\mathcal C(f;V,V\setminus \Delta_0) < \frac{C_0}{2} + \eps$ and any properly embedded Legendrian $f$ in $\psi_1(V) \setminus \Delta_1$ has stabilization cost $\mathcal C(f;\psi_1(V),\psi_1(V) \setminus \Delta_1) < \frac{C_1}{2} + \eps$.

Let $(\widetilde f_t,F_{s,t})$ be a formal Legendrian isotopy from $f_0$ to $\psi_1^{-1}\circ f_1$; we can assume it is disjoint $\Delta_0$. We may also assume (Lemma \ref{lem:FormalLegendrianStandardPosition}) that this formal isotopy is a concatenation of the form $(\widetilde f_t,F_{s,t}) =g^0_t*(g_t,G_{s,t}) *g^1_t$ such that $g^0_t$ and $g^1_t$ are Legendrian isotopies of energy less than $\eps$, both $g_t^{-1}(V)$ and $(g_t,G_{s,t})|_{g_t^{-1}(V)}$ are independent of $t$, and every component of $L$ intersects $g_t^{-1}(V)$. In particular, the restriction of $(g_t,G_{s,t})$ to $g_t^{-1}(V)$ is Legendrian, and we may also assume that the restriction of $(g_t,G_{s,t})$ to $L \setminus \overline{g_t^{-1}(V)}$ is a compactly supported formal Legendrian isotopy which is Legendrian outside of a compact set. By applying Lemma \ref{lem:FormalLegendrianStandardPosition} to a smaller $\eps$, we may furthermore assume that also the energy of $\psi_1 \circ g^1_t$ is smaller than $\eps$.

Let $U \subseteq M$ be a connected, open, and pre-compact subset containing the support of $\psi_t$, $\overline V$, and the support of all of the (formal) Legendrian isotopies considered above. Denote by $\chi_t$ and $G_t$ the conformal factor and the generating Hamiltonian of the contact isotopy $\psi_t$. Define 
\begin{equation*}
    C \coloneqq \max_{(t,x) \in [0,1]\times M} |G_t(x)| + \left(C_0 + \eps \right) \max_{(t,x) \in [0,1] \times M} \chi_t(x),
\end{equation*}
and let $N = N(C,\eps,U)$ be the number in Corollary \ref{cor:SmallEnergyIsotopiesOfLegWithManyStabilizations}.

For $i \in \{0,1\}$, there exist by our choice of $V$ $N$-fold $\eps$-stabilizations $h_i:L \to M\setminus \Delta_i$ (with all stabilizations of the same type for each component of $L$) such that
\begin{equation*}
    d_\alpha(h_0,g_0) < \frac{C_0}{2} + \eps \quad\text{and}\quad
    d_\alpha(h_1,\psi_1 \circ g_1) < \frac{C_1}{2}+\eps .
\end{equation*}
Then $h_0$ and $\psi_1^{-1} \circ h_1$ are two formally isotopic Legendrian embeddings into the complement of $\Delta_0$. By Theorem \ref{thm:BoundednessInComplementOfOtDiskOfSizeC} and Lemma \ref{lem:EquiparametrizedHamiltonian}, there exists a Hamiltonian $H_t$ with $|H_t| < C_0+\eps$ such that $\phi^H_1 \circ h_0 = \psi_1^{-1} \circ h_1$. We may choose $H_t$ to be supported in $U$. Then the contact isotopy $\psi_t \circ \phi_t^H$ is generated by the Hamiltonian $G_t + \left(\chi_t H_t \right)\circ \psi_t^{-1}$, which satisfies the bound
\begin{equation*}
    \left|G_t + \left(\chi_t H_t \right)\circ \psi_t^{-1} \right| < \max_{(t,x) \in [0,1]\times M} |G_t(x)| + \left(C_0 + \eps \right) \max_{(t,x) \in [0,1] \times M} \chi_t(x) = C.
\end{equation*}
By Corollary \ref{cor:SmallEnergyIsotopiesOfLegWithManyStabilizations} applied to the Legendrian isotopy $\psi_t \circ \phi^H_t \circ h_0$ from $h_0$ to $h_1$, we obtain that
\begin{equation*}
    d_\alpha(h_0,h_1) < 3 \eps,
\end{equation*}
where we also used the triangle inequality and the observation that after an isotopy of energy less than $\eps$, we may assume by Corollary \ref{cor:boudnednessOfStrictWeinsteinNbhdOfHeightC} that the stabilizations of $h_i$ are contained in an arbitrarily small Weinstein neighbourhood.

Collecting the bounds, we can conclude that
\begin{equation*}
\begin{split}
    d_\alpha(f_0,f_1) &\leq d_\alpha(f_0,g_0) + d_\alpha(g_0,h_0) + d_\alpha(h_0,h_1) + d_\alpha(h_1,\psi_1 \circ g_1) + d_\alpha(\psi_1 \circ g_1, f_1) \\
    & < \eps + \left(\frac{C_0}{2} + \eps\right) + 3 \eps + \left(\frac{C_1}{2} + \eps\right) + \eps = \frac{C_0}{2} + \frac{C_1}{2} + 7 \eps.
\end{split}
\end{equation*}
As $\eps > 0$ was arbitrary, this finishes the proof.
\end{proof}

\begin{remark}\label{rmk:SmallEnergyIsotopiesWithFixedHmtpClassMovingOtDisk}
    By similar arguments as in Remark \ref{rmk:SmallEnergyIsotopiesWithFixedHmtpClass}, we can prescribe in Theorem \ref{thm:BoundMovingOtDisk} the (formal) Legendrian isotopy class of the Legendrian isotopy from $f_0$ to $f_1$. 
\end{remark}

\subsection{Consequences for positive loops and Shelukhin-Chekanov-Hofer distance}

The previous results imply that there are overtwisted contact manifolds in which the Shelukhin-Chekanov-Hofer distance is degenerate for every loose Legendrian isotopy class.
\begin{proof}[Proof of Corollary \ref{cor:LegegendriansWithVanishingd_alpha}]
    We choose a family $(\Delta_\textnormal{ot}^k)_{k \in \N}$ of mutually disjoint overtwisted disks in $(M,\xi)$ so that any compact subset $K \subseteq M$ intersects only finitely many of the disks $\Delta_\textnormal{ot}^k$. We choose a contact form $\alpha$ on $(M,\xi)$ such that $\Delta_\OT^k$ has overtwisted size $1/k$. Let $f_t: L \to M$ with $t \in [0,1]$ be an isotopy of closed Legendrian submanifolds. For any $\eps > 0$, we can find a number $k \in \N$ such that $f_t(L) \cap \Delta_\textnormal{ot}^k = \emptyset$ for all $t$ and $1/k < \eps$. It follows from Theorem \ref{thm:BoundednessInComplementOfOtDiskOfSizeC} that $d_\alpha(f_0,f_1) \leq 1/k < \eps$. As $\eps > 0$ was arbitrary, Corollary \ref{cor:LegegendriansWithVanishingd_alpha} is proven.   
\end{proof}

In a different direction, we can construct positive loops of controlled energy:
\begin{proof}[Proof of Corollary \ref{cor:PosLoops}]
    Let $\Delta \subseteq (M,\alpha)$ be a hyperdisk of size $C$ and fix $\eps > 0$. Let $f:L \to M$ be a closed Legendrian embedding into the complement of $\Delta$. We denote the time-$t$ Reeb flow of $\alpha$ by $\rho_t$. Then the image of the Legendrian embedding $\rho_{-(C+\eps)}\circ f$ is contained in the complement of the hyperdisk $\rho_{-(C+\eps)}(\Delta)$, which has overtwisted size $C$ by strictness of $\rho_t$. By Theorem \ref{thm:BoundMovingOtDisk}, $d_\alpha(f,\rho_{-(C+\eps)}\circ f) \leq C$; so there exists a Legendrian isotopy $(f_t)_{t \in [0,1]}$ from $f$ to $\rho_{-(C+\eps)} \circ f$ with $|\alpha(\partial_t f_t)| < C + \eps$. Then the loop $\left(\widetilde f_t\right)_{t \in [0,1]} = (\rho_{(C+\eps)t} \circ f_t)_{t \in [0,1]}$ satisfies 
    \begin{equation*}
        \alpha\left(\partial_t \widetilde f\right) = C + \eps + \alpha(\partial_t f) \in (0,2C +2 \eps).
    \end{equation*}
    By Remark \ref{rmk:BoundMovingOtDiskFixedHmtpClass}, the Legendrian isotopy $f_t$ can be chosen to be isotopic with fixed end-points to $\rho_{-(C+\eps)t} \circ f$. Then the loop $\widetilde{f}_t$ is contractible.
\end{proof}

\section{The exact size of the standard overtwisted disk} \label{sec:ClosedReebOrbitsInOTCtcMflds}

In this section we establish Theorem \ref{thm:SizeOfOtDisk}, which computes the exact size of the standard overtwisted disk.

The main ingredient behind this result is Theorem \ref{thm:ClosedReebOrbitsInOTCtcMflds}. It tells us that the overtwisted size is related to the Reeb action spectrum. Concretely: assuming that the manifold contains a sufficiently large strict Darboux ball, there are contractible Reeb orbits whose action is bounded above by the overtwisted size.

\subsection{Overtwisted size and Reeb spectrum}

We address the higher dimensional case first. The three dimensional case requires separate handling due to the need to stabilise arbitrarily many times.

\begin{proof}[Proof of Theorem \ref{thm:ClosedReebOrbitsInOTCtcMflds} for the case $\dim M \geq 5$]
    Fix $C > 0$. If $R$ is large enough, then for any sufficiently small $\eps > 0$ there exist two closed Legendrian embeddings $f,g:L\to B(R)$ such that
    \begin{enumerate}
        \item $f$ and $g$ are formally Legendrian isotopic,
        \item the image of $f(L)$ under the Reeb flow is contained in $B(R)$ for all times in  $[-\frac{C}{2} - \frac{\eps}{2},\frac{C}{2} + \frac{\eps}{2}]$,
        \item the minimal action of Reeb chords of $f(L)$ is larger than $C + \eps$,
        \item there are no Reeb chords between $f(L)$ and $g(L)$,
        \item $g$ admits a loose chart of size at most $\frac{\eps}{2}$.
    \end{enumerate}
    
    Now let $(M,\alpha)$ be a strict contact manifold containing a hyperdisk $\Delta$ of overtwisted size $C$, and assume that there exists a strict contact embedding $i: B(R) \to (M \setminus \Delta,\alpha)$. Write $\hbar_c$ for the minimal action among all Reeb chords of $(i \circ f)(L)$ and $\hbar_o$ for the minimal action among all contractible closed Reeb orbits of $(M,\alpha)$. We let $\hbar$ be the minimum of the two. Using items (2) and (3), we see that $\hbar_c > C + \eps$.
    
    According to Lemma \ref{lem:Nakamura}, the composition $i \circ f$ has loose size at most $C$. Theorem \ref{thm:LooseLegQuantitativeFlexibility} then implies:
    \begin{equation*}
        d_\alpha(i\circ f, i \circ g) \leq \frac{C}{2} + \frac{\eps}{4}.
    \end{equation*}
    Fix thus an isotopy of cost less than $\frac{C}{2} + \frac{\eps}{2}$. Write $H_t$ for the Hamiltonian generating it. Recall that $\Vert H_t \Vert_{osc} \leq 2 \Vert \phi^H_t \Vert_\alpha$.
    
    According to the energy-capacity inequality (Theorem \ref{thm:energy-capacity inequality}) one of the following statements must be true:
    \begin{itemize}
        \item $\hbar \leq \Vert H_t \Vert_{osc}$.
        \item $\hbar > \Vert H_t \Vert_{osc}$ and there is a Reeb chord between $(i \circ f)(L)$ and $(i \circ g)(L)$ of action less than $\Vert \phi^H_t \Vert_\alpha < \frac{C}{2} + \frac{\eps}{2}$.
    \end{itemize}
    The second situation cannot happen: According to items (2) and (4), the minimal action of Reeb chords between $(i \circ f)(L)$ and $(i \circ g)(L)$ is larger than $C + \eps$ and thus larger than $\Vert \phi^H_t \Vert_\alpha$. We are thus in the first situation, which implies that $\hbar \leq 2\Vert \phi^H_t \Vert_\alpha =< C+\eps$. Since $\hbar_c > C + \eps$ it follows that $\hbar_o < C + \eps$. The result then follows by noting that $\eps$ was arbitrary.

    The construction shows that $R$ can be chosen to be small if $C$ is sufficiently small.
\end{proof}
    
\begin{proof}[Proof of Theorem \ref{thm:ClosedReebOrbitsInOTCtcMflds} for the case $\dim M = 3$]
    We use the same setup as in higher dimensions. The difference is that it no longer makes sense to demand that $g$ has a small loose chart. Instead, we have to more carefully analyze the proof of Theorem \ref{thm:BoundednessInComplementOfOtDiskOfSizeC}.
    
    Given $C, \eps > 0$, choose $R$, $f$, $g$, and $i:B(R) \to M \setminus \Delta$ as above, satisfying Conditions (1)-(4). We then modify $g$ as follows: We fix a path $\gamma:[0,1] \to M$ from $g(L)$ to $\Delta$, in the complement of $(i \circ f)(L)$ and $(i \circ g)(L)$. We assume that there are no Reeb chords between $\gamma$ and $(i \circ f)(L)$ completely contained in the image of $i$.
    
    The proof of Theorem \ref{thm:BoundednessInComplementOfOtDiskOfSizeC} then implies that $i \circ g$ is Legendrian isotopic to some $\tilde g$ such that the following hold:
    \begin{enumerate}
        \item[(5a) ] $\tilde g(L)$ is contained in an arbitrarily small neighbourhood of $(i\circ g)(L) \cup \gamma([0,1]) \cup \Delta$.
        \item[(5b) ] $d_\alpha(i\circ f,\tilde g) < \frac{C}{2} + \frac{\eps}{4}$.
    \end{enumerate}
    Roughly, $\tilde g$ is obtained from $i \circ g$ by performing an isotopy that takes a point in $i \circ g$ to the vicinity of $\Delta$ and then a further isotopy to add many stabilizations. These are very small, ensuring Condition (5b).
    
    Our assumptions on $g$ and $\gamma$ imply that the minimal action among Reeb chords between $\tilde g(L)$ and $(i\circ f)(L)$ is greater than $C/2+\eps/2$. As before, the minimal action of Reeb chords of $(i \circ f)(L)$ satisfies $\hbar_c > C + \eps$. By the same argument as in the higher-dimensional case, it follows that $\hbar_0 = \hbar \leq \Vert H_t \Vert_{osc} \leq C + \eps$, which implies that $\hbar_0 \leq C$, since $\eps > 0$ was arbitrary.
\end{proof}

\subsubsection{The case of manifolds with boundary}

We now handle Corollary \ref{cor:closedReebOrbit}, the generalisation of Theorem \ref{thm:ClosedReebOrbitsInOTCtcMflds} to the case of manifolds with boundary. We first need the following auxiliary lemma:
\begin{lemma} \label{lem:contactDouble}
Let $(M,\xi)$ be a compact, coorientable contact manifold with boundary. Then, there is a codimension-$0$ embedding of $M$ into a closed, coorientable contact manifold $(N,\xi')$ such that $\xi'|_M = \xi$. In addition, we may assume that the induced map $\pi_1(M) \to \pi_1(N)$ of fundamental groups is injective.
\end{lemma}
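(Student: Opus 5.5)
We would build $N$ as a contact double of $M$. The first step is to make the boundary convex. By a $C^\infty$-small perturbation of $\partial M$ (Giroux's genericity of convex hypersurfaces in dimension $3$, and Honda--Huang in higher dimensions), there is a collar of $\partial M$ inside $M$ on which a contact vector field $X$ is transverse to $\partial M$, pointing outward. Shrinking $M$ slightly is harmless, since everything is required only up to a codimension-$0$ embedding. The contact form can then be written on the collar as $\alpha = f\,dt + \beta$ with $t \in (-\delta,0]$, where $f,\beta$ are independent of $t$ on $\partial M$, and $\partial M = \Gamma$-split into $R_\pm$ with dividing set $\Gamma = \{f=0\}$.

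Next we extend past the boundary. Using the $t$-invariance, we extend the form to $\partial M \times (-\delta,\delta)$ and glue a second copy $\overline{M}$ along $t \mapsto -t$. The reflection $t\mapsto -t$ sends $f\,dt+\beta$ to $-f\,dt+\beta$, which is a contact form of the same coorientation class only after composing with a coorientation-reversing identification. So we take the second copy to be $(M,-\xi)$ glued via $(x,t)\mapsto (x,-t)$. Indeed $-(f\,d(-t)+\beta) = f\,dt - \beta$, and we must check that it matches $f\,dt+\beta$. It does not in general. Instead we use the standard contact-double construction: $N = \partial(M \times [-1,1])$, smoothed, or equivalently the boundary of a symplectization piece.

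The cleanest route is the following. Regard $M\times[-1,1]_s$ with the form $\alpha + s\,d\theta$-type structures. Alternatively, take the known fact that the convex boundary $\partial M$ with its dividing set is also the convex boundary of the ``reversed'' manifold $(M, \xi)$ with the orientation reversed and the form $\iota^*\alpha$, where $\iota$ is an anti-contact involution of the $t$-invariant collar, namely $(x,t)\mapsto(x,-t)$ combined with $f\mapsto -f$. Here $R_+$ and $R_-$ are swapped, which is exactly the compatibility needed for two convex boundaries to glue. Gluing $M$ to a copy $M'$ whose convex boundary has $R_\pm$ interchanged along the $t$-invariant collars produces a closed cooriented contact manifold $N$ containing $M$.

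For the copy $M'$ we would use $M$ with the opposite coorientation ($\ker(-\alpha)$) and the orientation-reversing collar identification. One checks in the collar that $-\alpha$ pulled back under $t\mapsto -t$ equals $f\,dt-\beta$. This is contact with the right orientation precisely in the sense that the dividing data swap, so the two halves glue. This sign bookkeeping, in particular making the glued form globally contact and cooriented rather than only its kernel matching locally, is the main obstacle. If it fails in general dimension, the fallback is to cap off by the convex-gluing theorem: any convex boundary admits a contact filling on the other side after taking the double of the dividing data.

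Finally we treat $\pi_1$-injectivity. $N = M\cup_{\partial M} M'$ with $M'\cong M$, and the folding retraction $N\to M$ (identity on $M$, the reflection on $M'$) is continuous. It restricts to the identity on $M$, so $\pi_1(M)\to\pi_1(N)$ has a left inverse and is injective. This works component-wise if $\partial M$ is disconnected, because the retraction is defined globally.
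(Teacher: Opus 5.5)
\textbf{There is a genuine gap:} nothing in your proposal actually produces a positive, cooriented contact structure on the complementary half $N\setminus M$.

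The reflect-and-glue route fails for a structural reason, not a bookkeeping one. Your own collar computation shows this. On one side of $\partial M$ the mirrored structure has kernel $\ker(f\,dt-\beta)$, while $M$ has $\ker(f\,dt+\beta)$ on the other side. These hyperplanes agree only where $f=0$ or $\beta=0$, so the glued hyperplane field is not even continuous. Reversing the coorientation does not change the kernel. Moreover, in dimensions $4k+3$ (e.g.\ dimension $3$) the reflected copy $(\overline{M},\iota_*\xi)$ is a \emph{negative} contact structure. Already $z\mapsto -z$ turns $dz+x\,dy$ into $-dz+x\,dy$, and $(-dz+x\,dy)\wedge d(-dz+x\,dy)=-dx\wedge dy\wedge dz$.

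Your fallback is not an available theorem. Gluing results for convex hypersurfaces glue pieces that already exist; they do not manufacture a cap realising a prescribed boundary germ. Viewing $N$ as the smoothed boundary $\partial(M\times[0,1])$ of the piece $(M\times[0,1],d(e^t\alpha))$ is the right topological picture. However, for this Liouville form only the face $M\times\{1\}$ is of contact type. The Liouville field $\partial_t$ points inward along $M\times\{0\}$ and is tangent to $\partial M\times[0,1]$. So in general this gives only an \emph{almost} contact structure on $N$.

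\textbf{The missing ingredient is an existence $h$-principle relative to $M$.} This is exactly how the paper argues:
\begin{enumerate}
\item First enlarge $M$ slightly past its boundary.
\item The non-degenerate $2$-form $d\alpha+dt\wedge\alpha$ on $M\times[0,1]$, with corners rounded, restricts to a maximal-rank form on the double $\partial W$. This gives an almost contact structure on $\partial W$ agreeing with $\xi$ over a copy of $M$.
\item Insert an overtwisted disk in $\partial W\setminus M$ and apply Borman--Eliashberg--Murphy relative to $M$. This yields a genuine contact structure $\xi'$ with $\xi'|_M=\xi$.
\end{enumerate}
With this route your convex-boundary preparation is unnecessary. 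Note also that you must enlarge $M$, not shrink it as you propose, since the original $(M,\xi)$ has to embed and a shrunken copy need not be contactomorphic to it.

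Your $\pi_1$-injectivity argument via the folding retraction is correct and is the same as the paper's.
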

\begin{proof}
We first prove the statement for almost contact structures. Extend $M$ a bit beyond its boundary to a contact manifold $M'$ (diffeomorphic to $M$) with contact form $\alpha$. Consider the manifold $W$ obtained from $M' \times [0,1]$ by rounding off the corners. Its boundary $\partial W$ is the double of $M'$. Endow $W$ with the $2$-form $d\alpha + dt \wedge \alpha$, where $t$ is the coordinate in $[0,1]$. This form is non-degenerate, so it restricts to a $2$-form of maximal rank on $\partial W$. This form defines, uniquely up to homotopy, an almost contact structure on $\partial W$ which, by construction, may be assumed to agree with $\xi$ over $M$. This almost contact structure is coorientable since $\partial W$ has trivial normal bundle in $W$.

To conclude, we introduce an overtwisted disk in $\partial W \setminus M$ and invoke the $h$-principle for overtwisted contact structures \cite{BEM}, relative to $M$.

The last claim follows from the fact that the inclusion of a manifold with boundary into its double is always injective on $\pi_1$. Indeed, $N = \partial (M \times [0,1])$ admits a projection onto $M$ which is a left-inverse of the inclusion $M \to N$.
\end{proof}

\begin{proof}[Proof of Corollary \ref{cor:closedReebOrbit}]

    It is enough to consider the case that $M$ is compact with boundary after possibly replacing $M$ by a compact submanifold of $M$ with boundary. The assumptions and Lemma \ref{lem:contactDouble} give us a compact, strict contact manifold $(N,\alpha)$, an equidimensional contact embedding $M \subset N$, a hyperdisk $\Delta \subseteq M$  of overtwisted size $C$, and an open neighbourhood $U \subseteq M$  of $\Delta$ with $\overline{U} \subseteq M^\circ$.
    
    For $x \in N \setminus M^\circ$, we define $\hbar_x$ as the supremum over all $T \geq 0$ such that 
    \begin{align*}
        \left[-\frac{T}{2},\frac{T}{2}\right] &\rightarrow N \setminus U \\
        t &\mapsto \rho_{t}(x) 
    \end{align*}
    is defined and embedded, where $\rho_t(x)$ denotes the Reeb flow in $N \setminus U$, at time $t$, applied to $x$. Let $\eps > 0$ be any number such that
    \begin{equation*}
        C< \hbar := \inf_{x \in N \setminus M^\circ} \hbar_x/\eps.
    \end{equation*}
    After possibly decreasing $\eps$, we may assume that $(N,\frac{1}{\eps}\alpha)$ contains a strict Darboux ball of radius $R > 0$ in the complement of $U$, where $R$ is such that Theorem \ref{thm:ClosedReebOrbitsInOTCtcMflds} applies to $R$ and hyperdisks of overtwisted size at most $C$.
    
    Let $\chi:N \to (0,\infty)$ be a smooth function such that $\chi|_{N \setminus U} \equiv 1/\eps$ and $\chi|_\Delta = 1$. The overtwisted size of $\Delta$ with respect to $\chi \alpha|_\Delta$ is still bounded above by $C$. In particular, Theorem \ref{thm:ClosedReebOrbitsInOTCtcMflds} implies that $(N,\chi\alpha)$ contains a contractible closed Reeb orbit of action at most $C$. Furthermore, by the definition of $\hbar$, any closed Reeb orbit of $\chi \alpha$ that intersects $N \setminus M^\circ$ has action at least $\hbar$, since $\chi|_{N \setminus U} = 1/\eps$. It follows that the contractible closed Reeb orbit has to be contained in $M$. By the last part of Lemma \ref{lem:contactDouble}, we may assume that this Reeb orbit is contractible also in $M$.
\end{proof}

\subsection{Computing the overtwisted size exactly}

\begin{proof}[Proof of Theorem \ref{thm:SizeOfOtDisk}]
We have shown (Proposition \ref{prop:controlledIsotopy}) that the size of the standard overtwisted disk in dimension 3 is bounded from above by $C^*$. We apply Theorem \ref{thm:OtDiskOfSizeCTimesBall} to the standard overtwisted disk $\Delta_\OT^\std \subseteq \R^3$ to conclude that any neighbourhood of
\begin{equation*}
    \Delta_\OT^\std \coloneqq \{z=0, \rho \leq \pi\} \times \left([-1,1] \times \left[-\frac{C^*}{2},\frac{C^*}{2}\right]\right)^{n-1} \subseteq \left(\R^{2n+1}, \alpha^\std_\OT + \sum_{i=1}^{n-1} y_i dx_i \right)
\end{equation*}
is overtwisted, and that also the size of the overtwisted disk $\Delta_\OT^\std$ in higher dimensions is bounded from above by $C^*$.

The lower bound follows from Theorem \ref{thm:ClosedReebOrbitsInOTCtcMflds} as soon as we construct a contact manifold containing $\Delta_\OT^\std$, a sufficiently large Darboux ball, such that the minimal action of closed contractible Reeb orbits is $C^*$. This is the content of Lemma \ref{lem:Closed ctc mfds with std ot disk} below.
\end{proof}

\begin{lemma}\label{lem:Closed ctc mfds with std ot disk}
    For any $n \geq 1$ and any $R > 0$, there exists a closed strict $2n+1$-dimensional contact manifold $(M,\alpha)$ with the following properties:
    \begin{enumerate}
        \item $(M,\alpha)$ contains a strict Darboux ball $B(R)$ of radius $R$,
        \item there exists a strict contact embedding of a neighbourhood of $\Delta^\std_\OT$ into $(M \setminus B(R),\alpha)$,
        \item the minimal action of closed Reeb orbits in $(M,\alpha)$ is given by $C^*$.
    \end{enumerate}
\end{lemma}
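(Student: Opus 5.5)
We build $(M,\alpha)$ explicitly, as a product in which the only short Reeb orbits come from the three-dimensional overtwisted factor. For the $3$-dimensional part, start from $(\R^3,\alpha_\OT^\std)$. On $\{\rho\le \rho_1\}$ with $\rho_1$ slightly larger than $\pi$, the Reeb field of $\alpha_\OT^\std$ is a combination of $\partial_z$ and $\partial_\phi$ with $\rho$-independent coefficients on each torus $\{\rho=\textnormal{const}\}$. The closed orbits are the core $\{\rho=0\}$ (which is not closed before compactifying $z$) and the orbits on tori where the characteristic slope is rational. The simple closed orbit at the maximum of $\rho\sin\rho$, i.e. at $\rho\approx 2.03$, where the Reeb field is a multiple of $\partial_\phi$, has action $C^*$.

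\textbf{Step 1.} Compactify the $z$-direction: quotient by $z\mapsto z+T$ for a huge period $T$. This lets us later place a big Darboux ball and makes the orbits winding in $z$ have action at least of order $T\min|\cos\rho|$, or larger. Then glue in, on $\{\rho\ge\rho_1\}$, a contact form of the shape $f(\rho)dz+g(\rho)d\phi$ on a solid torus $\times S^1$, closing up the manifold as a lens space or $S^1\times S^2$-type Lutz-twisted closed manifold. Choose $(f,g)$ so that the curve $(f,g)$ rotates monotonically; this is the contact condition $fg'-f'g>0$.

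\textbf{Step 2.} Do the action bookkeeping. On each torus, closed Reeb orbits are in class $(a,b)\in\Z^2$ (meaning $\phi$-winding $a$, $z$-winding $b$) with action $|a\,g+b\,f\,T|$, up to the orientation dictated by the torus. By taking $T$ large and keeping $|g|\ge C^*/2\pi$ wherever the slope $g'/f'$ makes orbits with $b=0$ possible, we get that every closed orbit other than the $b=0$ ones has action $>C^*$. Among the $b=0$ orbits, the tori where $f'=0$ have $g(\rho)$ at a critical value. Inside the disk this value is $2\pi\max\rho\sin\rho=C^*$. In the glued region we design $f$ to have no further critical points with $|g|<C^*/2\pi$ — for instance by making $|g|$ increase past $C^*/2\pi$ before any further rotation. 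The core circles of the closing solid tori are handled by choosing their lengths $\geq C^*$. This yields a closed $3$-manifold whose minimal Reeb action is exactly $C^*$ and which contains the disk neighbourhood. The Darboux ball of radius $R$ sits in the region $\{\rho<\pi/4\}$: there $\alpha\approx dz+\tfrac{\rho^2}{1}d\phi$ is nearly standard, and the $z$-length $T$ is long, so a strict embedding of $B(R)$ exists once $T\gg R^2$, using Darboux rescaling along the Reeb direction.

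\textbf{Step 3.} Pass to higher dimensions. Take $M=M^3\times P$ with $\alpha=\alpha_3+\lambda$, where $P$ should be a closed manifold with an exact-enough form giving no short orbits. Since no closed exact symplectic manifold exists, instead use the prequantization/Bourgeois-type construction. Alternatively, take $M^3\times T^{2n-2}$ with form $\alpha_3+\sum_i(\cos\theta_i\,dx_i+\sin\ldots)$ à la Giroux/Bourgeois, scaled so that all the new Reeb orbits have action $>C^*$. Then check that a neighbourhood of $\Delta^\std_\OT\times([-1,1]\times[-C^*/2,C^*/2])^{n-1}$ and of $B(R)$ embed strictly. The latter holds because the Reeb field near the original disk is $R_3$ plus terms on $T^{2n-2}$ of size controlled by the torus scaling, and a large torus contains the needed box.

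\textbf{Main obstacle.} I expect the main obstacle to be this higher-dimensional step: keeping the product region strictly of the form $\alpha_3+\sum y_idx_i$ on a box of width $C^*$ while ensuring the closed manifold has no Reeb orbits shorter than $C^*$. My first attempt would be to scale all torus directions by a large factor $\Lambda$ and verify that orbits wrapping torus directions have action of order $\Lambda$.
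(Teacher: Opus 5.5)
Your proposal has two genuine gaps: the Darboux ball in dimension $3$ is placed where it cannot fit, and the higher-dimensional step contains no actual construction.

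\textbf{The Darboux ball in dimension $3$.} Your toric construction ($z$-period $T$, monotone $(f,g)$, closing core of action $>C^*$) is sound in outline. It is essentially the paper's construction, since the scaled prequantization form $A\alpha_\std$ on $S^3$ with a Lutz twist in one Hopf cylinder is also toric. Two corrections to your bookkeeping:
\begin{itemize}
\item The orbits without $z$-winding sit where $g'=0$, not $f'=0$.
\item Orbits with $z$-winding are long because their period is at least $T/\max|dz(R)|$, not because of $\min|\cos\rho|$, which is $0$.
\end{itemize}
The real problem is that $B(R)$ cannot be put in $\{\rho<\pi/4\}$. The rescaling $(x,y,z)\mapsto(\lambda x,\lambda y,\lambda^2 z)$ is conformal, not strict. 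Enlarging $T$ only adds room in the Reeb direction: the transverse $d\alpha$-area of that region is $2\pi\cdot\frac{\pi}{4}\sin\frac{\pi}{4}\approx 3.5$ for every $T$.

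In fact, a strict ball lifts to the $z$-cover, and there the slice $\{z=0\}$ is a global section of the Reeb flow. So the region is strictly the contactization of a disk of area about $3.5$. An energy--capacity argument then excludes strict Darboux balls of large radius there: displace that disk with energy about $3.5$, and test on a Legendrian unknot in the ball whose only chord is longer than that. You need a region whose transverse area and Reeb length both grow. The paper uses the opposite cylinder $B(\sqrt A)\times S^1_A$; in your model, this would be a neighbourhood of the other core after making both $T$ and the final value of $|g|$ large.

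\textbf{Higher dimensions.} Step 3 is not a construction. A product $M^3\times P$ with $\alpha_3+\lambda$ cannot be closed, as you note. For a Bourgeois-type form on $M^3\times T^{2n-2}$ you do neither of the two things needed:
\begin{itemize}
\item You do not exhibit a region where the form is strictly $\alpha_\OT^\std+\sum y_i\,dx_i$ on a box whose $y_i$-sides have length $C^*$. In such forms the torus coordinates enter only through $d\theta_i$, with coefficients that are functions on $M^3$.
\item You do not analyse its Reeb flow to exclude closed orbits of action $<C^*$.
\end{itemize}

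The missing idea is to avoid any global explicit model. The paper builds a Reeb-invariant local model
\[ N=\{s\le R^2\}\subset\R^2\times S^1_L\times\R^{2n-2},\qquad s=\rho^2+|x|^2+|y|^2, \]
with $\beta_D=h(s)\bigl(a(\rho)\,dz+b(\rho)\,d\theta+\sum y_i\,dx_i\bigr)$. Here $(a,b)$ is a Lutz-type profile as in dimension $3$. The function $h$ satisfies:
\begin{itemize}
\item $h\equiv 1$ on $\{s\le R^2/2\}$, which contains the box around the disk;
\item $h\equiv D$ near $\partial N$;
\item $0\le h'/h^2\le\delta$, with $R\sim\delta^{-1/2}$.
\end{itemize}
The Reeb flow preserves $\rho$ and $|x|^2+|y|^2$. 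Where $h'\neq 0$, every orbit rotates nontrivially either in $\theta$ or in the $(x,y)$-plane, at a rate bounded by a multiple of $\delta R$. Hence its period tends to infinity as $\delta\to 0$, uniformly in $D$.

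The model embeds in any closed overtwisted manifold by the $h$-principle, since it is formally a neighbourhood of a transverse curve. One extends the form arbitrarily to some $\alpha_1$ and replaces it by $D\alpha_1$ outside $N$. Because $N$ is Reeb-invariant, orbits outside $N$ have action at least $D$ times the minimal period of $\alpha_1$. The complement also contains strict Darboux balls of radius of order $\sqrt D$. This gives properties (1) and (3) at once, which is exactly what your torus-scaling idea fails to deliver.
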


We first give the proof in three dimensions, since it is technically easier than the general construction. 
\begin{proof}[Proof of Lemma \ref{lem:Closed ctc mfds with std ot disk} in the $3$-dimensional case]
The idea is the following: We consider the standard contact structure $\alpha_\std$ on the 3-sphere. Its Reeb orbits are the fibres of the Hopf fibration $\pi: S^3 \rightarrow S^2$; they have period $2\pi$. The scaled form $\alpha_A := A\alpha_\std$ has thus orbits of period $2\pi A$. Moreover, $(S^3,\alpha_A)$ contains a strict Darboux ball of radius $\sqrt{A}$, disjoint from a given Hopf fibre. The proof will conclude by adding a Lutz twist along one of the fibres. Doing this carefully will allow us to keep track of all Reeb periods and yield the result.

We remark that the exact same proof would work for any prequantization bundle over a surface.

\textbf{Step 1:} Write $\omega_\std$ for the standard area form on $S^2$ of total area $2\pi$. We think of $(S^3,\alpha_\std)$ as the prequantization associated to the Hopf fibration $\pi: S^3 \to S^2$. The prequantization contact form $\alpha_\std$ satisfies $d \alpha_\std = \pi^* \omega_\std$ and its time-$t$ Reeb flow is given by $t$-translation along the Hopf fibres, which have period $2\pi$.

Let $B(1) \subset S^2$ be one of the hemispheres; it is identified with the unit disk and has area $\pi$. Restricting $\pi$ to $B(1)$ yields an identification
\[ (S^3,\alpha_\std)|_{B(1)} = (C(1) := B(1) \times S^1, \lambda_\std := dz + \frac{1}{2}\rho^2 d\phi); \]
here we use polar coordinates $(\rho,\phi)$ on $B(1)$ and $z \in [0,2\pi]$ is the coordinate on $S^1$.

Fix then a positive integer $A$ and write $\alpha_A := A\alpha_\std$ and $\lambda_A := A\lambda_\std$. We have thus the change of coordinates given by the strict contactomorphism:
    \begin{equation*}
    \begin{split}
        \Psi: (S^3,\alpha_A)|_{B(1)} \cong(C(1), \lambda_A) \quad&\longrightarrow\quad (C(A) :=B(\sqrt{A}) \times S^1_A, \lambda_\std = dz + \frac{1}{2} \rho^2 d\phi)\\
        (\rho,\phi,z) \quad&\mapsto\quad (\sqrt{A} \rho,\phi,Az).
    \end{split}
    \end{equation*}
Here $S^1_A$ is the circle of length $2\pi A$.

\textbf{Step 2:} We now modify the contact form $\lambda_A$ over $C(1)$ by introducing a Lutz twist. Concretely, using $\Psi$-coordinates (and thus working on $C(A)$), we replace $\lambda_\std$ by the contact form
    \begin{equation*}
        \lambda_\OT := f_1(\rho) dz + f_2(\rho) d \phi
    \end{equation*}
for two functions $f_1,f_2:[0,\infty) \to \R$ satisfying the following conditions:
    \begin{enumerate}
        \item $(f_1(\rho),f_2(\rho)) \neq (0,0)$ for all $\rho \geq 0$.
        \item For some $\eps >0$, $(f_1(\rho),f_2(\rho)) = (\cos(\rho),\rho \sin(\rho))$ for $\rho \in [0, \pi + \eps]$.
        \item The angular coordinate of $(f_1,f_2) \in \R^2$ is strictly increasing. Equivalently, $f_1 f_2' - f_1' f_2 > 0$.
        \item $(f_1(\rho), f_2(\rho)) = (1, \frac{1}{2} \rho^2)$ for all $\rho \geq 2 \pi$.
        \item Whenever $f_2'(\rho)=0$ for some $\rho > \pi$, it holds that $|2 \pi f_2(\rho)| > C^*$.
    \end{enumerate}
These functions do not depend on $A$, but we have to assume that $\sqrt{A}>2\pi$.

Such a pair $(f_1,f_2)$ of functions can be constructed as follows. Item (2) defines them for $\rho \in [0, \pi + \eps]$. In particular, $(f_1(\pi),f_2(\pi)) = (-1,0)$ and $(f_1'(\pi),f_2'(\pi)) = (0,-\pi)$. We extend $f_1$ to all of $[0,\infty)$ such that $f_1'> 0$ on $(\pi,2\pi)$ and $f_1 \equiv 1$ on $[2\pi,\infty)$. Then there exists a unique $\rho_0 \in [\pi,2\pi]$ so that $f_1(\rho_0) = 0$. We then extend $f_2$ to all of $[0,\infty)$ in such a way that $f_2' < 0$ on $(\pi,\rho_0)$, $f_2(\rho_0) = -2 < -\frac{C^*}{2\pi}$ is a local minimum of $f_2$, $f_2' > 0$ on $(\rho_0,\infty)$, and $f_2(\rho) = \frac{1}{2}\rho^2$ on $[2\pi,\infty)$.

We define $\alpha_\OT$ to be the contact form obtained from $\alpha_A$ by locally replacing $\lambda_A$ for $\lambda_\OT$.

\textbf{Step 3:} We claim that $(S^3,\alpha:=\alpha_\OT)$ satisfies the properties claimed in the statement. We must compute its Reeb periods.

Within the model $C(1)$, using the identification with $C(A)$ given by $\Psi$, we see that the Reeb vector field of $\lambda_\OT$ reads
    \begin{equation*}
        R = \frac{f_2'}{f_1f_2'-f_1'f_2} \partial_z - \frac{f_1'}{f_1f_2'-f_1'f_2} \partial_\phi.
    \end{equation*}
First, we look at its horizontal orbits. This happens when the $z$-component of $R$ vanishes, i.e. when $f_2'(\rho)=0$. In that case we have that $R = \frac{1}{f_2(\rho)} \partial_\phi$. At such a value of $\rho$ we get a closed Reeb orbit of action $|2 \pi f_2(\rho)|$. If additionally $\rho < \pi$ holds, then $2 \pi f_2(\rho) = C^*$. If instead $\rho \geq \pi$, then $|2\pi f_2(\rho)| > C^*$ by assumption.

At all other $\rho$, the orbits have a non-zero vertical component and must thus complete at least one vertical loop. The vertical displacement is given by the vector $\frac{f_2'}{f_1f_2'-f_1'f_2} \partial_z$; its length over $C(2\pi)$ is bounded from above, by compactness. Since the vertical has length $2\pi A$, it follows that the minimal period among all non-horizontal closed orbits grows linearly with $A$. We then take $A$ large enough to guarantee their periods are greater than $C^*$.

Outside of $C(2\pi) \subset C(A)$ (and in particular in the rest of $S^3$, away from the model) all Reeb orbits have not been modified, so they have period $2\pi A$. This is greater than $C^*$ if $A$ is large enough.

The argument is complete by noting that the cylinder in $S^3$ lying over the southern hemisphere of $S^2$ contains a strict Darboux ball of radius $\sqrt{A}$.
\end{proof}

\begin{proof}[Proof of Lemma \ref{lem:Closed ctc mfds with std ot disk} in arbitrary dimension]
The proof will establish that any closed overtwisted contact manifold $(M,\xi)$ admits a contact form $\alpha$ with the properties claimed. To prove this, it is sufficient to construct a family of local models $(N,\beta_D)$, $D \in [1,\infty)$, with the following properties:
\begin{itemize}
    \item[A. ] Close to $\partial N$, $\beta_D = D\beta_1$.
    \item[B. ] The minimal Reeb period within $N$ is $C^*$.
    \item[C. ] The model is closed under the Reeb flow and foliated by Reeb orbits.
    \item[D. ] Any overtwisted contact manifold admits a contact embedding of the model.
    \item[E. ] There exists a strict contact embedding of a neighbourhood of $\Delta^{\textnormal{std}}_\textnormal{ot}$ into $(N,\beta_D)$ for all $D$.
\end{itemize}
Indeed, once we have this, we define $(M,\alpha_1)$ to be an arbitrary extension of $(N,\beta_1)$ to the rest of $M$. Then we can define $\alpha_D$ to be $D\alpha_1$ in the complement of $N$ and $\beta_D$ over $N$. If we let $\hbar_c$ be the minimal Reeb period of $\alpha_1$, we have that the minimal Reeb period of $D\alpha_1$ is $D\hbar_c$, which can be assumed to be greater than $C^*$ if $D$ is large enough. This means that the minimal Reeb period of $\alpha_D$ is $C^*$ and the proof is complete.

The rest of the proof amounts to constructing $(N,\beta_D)$.

\textbf{Step 1:} Our construction of the models involves the constant $D\geq 1$, as well as auxiliary constants $\delta>0$, $R>0$, $\eps>0$, $L>0$. We will choose them as we go along. It may be helpful to keep in mind that $\delta$ and $\eps$ need to be sufficiently small and $R$ and $L$ need to be sufficiently large. Moreover, $R$ will eventually be chosen (Equation \ref{eq:choiceOfR}) to be a function of $\delta$. None of these will depend on $D$.

Consider the cylinder
\begin{equation*}
N' := \R^2 \times S^1_L \times \R^{2n-2} 
\end{equation*}
endowed with coordinates $(\rho,\theta,z,x_i,y_i)$, such that $(\rho,\theta)$ are polar coordinates in the plane, $(x,y)$ are standard symplectic coordinates in $\R^{2n-2}$, and $z$ is the coordinate in the circle $S^1_L$ of length $L$. We denote $s := \rho^2+\sum_{i=1}^{n-1}(x_i^2+y_i^2)$ and $\lambda:=\sum_{i=1}^{n-1} y_i\,dx_i$ for the standard Liouville form on $\R^{2n-2}$. We then set $N(R) := \{s \leq R^2\} \subset N'$.

We impose $R>2\pi$. Consider then the $1$-forms:
\[ \alpha = \mu + \lambda,\qquad \beta := h(s)\alpha, \qquad \mu := a(\rho)dz + b(\rho)d\theta. \]
Here $a$, $b$, and $h$ are functions to be fixed as we go along. In the case of $a$ and $b$, we just need that they satisfy the following properties:
    \begin{enumerate}
        \item $(a(\rho),b(\rho)) \neq (0,0)$ for all $\rho \geq 0$.
        \item For some $\eps >0$, $(a(\rho),b(\rho)) = (\cos(\rho),\rho \sin(\rho))$ for $\rho \in [0, \pi + \eps]$.
        \item The angular coordinate of $(a,b) \in \R^2$ is strictly increasing. Equivalently, $a b' - a' b > 0$.
        \item $(a(\rho), b(\rho)) = (1,\rho)$ for all $\rho \geq 2 \pi$.
        \item Whenever $b'(\rho)=0$ for some $\rho > \pi$, it holds that $|2 \pi b(\rho)| > C^*$.
    \end{enumerate}
These properties guarantee, in particular, that $\alpha$ is an overtwisted contact form as long as $R$ is large enough. Indeed, if that is the case, we have an obvious strict contact embedding $\Delta_\std^\OT \longrightarrow (N(R),\alpha)$, i.e.\,Condition (E) holds.

Moreover, $(N(R),\alpha)$ is formally homotopic to the Weinstein neighbourhood around a transverse curve. This implies that it can be formally embedded into any formal contact manifold. As a consequence of the $h$-principle, the model admits a contact embedding into any overtwisted $(M,\xi)$, i.e.\,the model satisfies Condition (D).

\textbf{Step 2:} We now compute the Reeb flow of $\beta$ and make a choice for the function $h$. For notational convenience we introduce the auxiliary functions
\[ W(\rho):=a(\rho)b'(\rho)-a'(\rho)b(\rho),
\qquad \gamma(s):=\frac{h'(s)}{h(s)}, 
\qquad s_y:=\sum_{i=1}^{n-1} y_i^2. \]
Consider then the vector fields
\[ R_0 := \frac{b'(\rho)}{W(\rho)}\partial_z - \frac{a'(\rho)}{W(\rho)}\partial_\theta, \quad\textrm{and}\quad J:=\sum_{i=1}^n \left(-y_i\partial_{x_i} + x_i\partial_{y_i}\right). \]
The former is the Reeb vector field of $\mu$. The latter is Hamiltonian for $d\lambda$ with Hamiltonian function $\frac{1}{2}\sum_i x_i^2 + y_i^2$. Then, the Reeb of $\beta$ reads
\[ R_\beta = \frac{1}{h(s)}
\left[ \left(1+2\gamma(s)s_y\right)R_0 + 
\frac{2\rho\,\gamma(s)}{W(\rho)}\left(b(\rho)\partial_z - a(\rho)\partial_\theta\right)\right] +
\frac{2\gamma(s)}{h(s)}J. \]
We write this as $R_\beta = R_\beta' + R_\beta''$, with the second summand the term proportional to $J$. We see then that $\rho$ and $x^2+y^2$ (and thus $s$) are conserved quantities. In particular, the dynamics of $R_\beta$ are a lift of the dynamics of $R_\beta''$. Since $s$ is a conserved quantity, Condition (C) holds for the model.

Given any $\delta>0$ small, we claim that there is a radius $R = R(\delta)>0$ large enough such that the following holds: Given any constant $D>1$ we can find $h$ satisfying:
\begin{enumerate}
    \item $h$ is identically $1$ in $[0,R^2/2]$.
    \item $h$ is identically $D$ in $[R^2,\infty)$.
    \item $h'>0$ in $(R^2/2,R^2)$.
    \item $|\gamma(s)/h(s)| \leq \delta$ for all $s$.
\end{enumerate}
Note that Item (2) implies Condition (A) for the model. An explicit formula for $R$ in terms of $\delta$ is given below in Equation \ref{eq:choiceOfR}.

Now, consider the auxiliary function $h_0(s): [R^2/2,R^2] \rightarrow [1,D]$ given by the expression 
\[ h_0(s) := \frac{DR^2}{DR^2+2(1-D)(s-R^2/2)}. \]
It satisfies $h_0(R^2/2) = 1$ and $h_0(R^2) = D$. Its derivative $h_0'$ is increasing and takes values in 
\[ \left[\dfrac{2(D-1)}{DR^2},\dfrac{2D(D-1)}{R^2}\right]. \]
The function $h_0$ is therefore increasing itself. Moreover, we have
\[ \frac{\gamma_0(s)}{h_0(s)} = \frac{2(D-1)}{DR^2}, \]
which is thus constant and can be assumed to be smaller than $\delta/2$, independently of $D$, by taking 
\begin{align} \label{eq:choiceOfR}
R(\delta) := 2\sqrt{1/\delta}.  
\end{align}
We extend $h_0$ to $[0,\infty)$, as a piecewise function, by letting it be $1$ over $[0,R^2/2]$ and $D$ over $[R^2,\infty)$.

We then let $h$ be a smoothing of $h_0$ satisfying Items (1), (2), and (3). We must show that this can be done while maintaining Item (4). To do so, we replace the derivative $h_0'$ by some $h'$, which we then integrate to yield $h$. The smoothing is done region by region. 

We let $\eps>0$ be a constant that we will take to be small. Over $[R^2/2,R^2/2+\eps]$ we let $h'$ be a function that interpolates from $0$ to $h_0'$, is positive if $s>R^2/2$, and is bounded above by $h_0'$. By construction, the integrals $h|_{[R^2/2,R^2/2+\eps]}$ and $h_0|_{[R^2/2,R^2/2+\eps]}$ differ by some quantity $d_1$ that may be assumed to be arbitrarily small by choosing $\eps$ small.

Over $[R^2/2+\eps,R^2-\eps]$ we set $h' := h_0'$. It follows that $h|_{[R^2/2,R^2-\eps]}$ is still below $h_0$ and their difference is still bounded by $d_1$.

Over $[R^2-\eps,R^2]$ we require that $0 \leq h' \leq 2h_0'$ and that $h'(s)=0$ only at $s=R^2$, where it is flat. We claim that we can adjust the area under the curve $h'|_{[R^2-\eps,R^2]}$ so that, upon integration, we get $h(R^2) = D$. Indeed, consider a $1$-parametric family of functions satisfying the properties above. Let one of them be bounded above by $h'_0$ and another one have value $2h_0'$ over most of the interval; the area under the latter is thus greater than $d_1$ plus the area under $h_0'|_{[R^2-\eps,R^2]}$. Using the intermediate value theorem, it follows that one of them yields the desired value at the endpoint. Our assumption $h' <2h_0'$ implies that $h$ satisfies Item (4) if $\eps$ is sufficiently small.

\textbf{Step 3:} It remains to verify that the model $(N(R+1),\beta)$ satisfies Condition (B), as long as $\delta$ is sufficiently small (and therefore $R(\delta)$ is sufficiently large).

There are two situations. First, it could be that $\gamma(s)=0$. In that case, the Reeb vector field reads $R_\alpha = R_0/h(s)$. This means that the shortest closed orbits are those with $h(s)=1$. The computation is now like in the $3$-dimensional case: the shortest orbit will have period $C^*$ as soon as we take $L$, the length of the $z$-coordinate, large enough (independently of $D$).

Suppose otherwise that $\gamma(s) \neq 0$. This means $s \in (R^2/2,R^2)$. We then separate into two cases. The first is that $\rho \in [2\pi,R]$. Then, the formula for the Reeb vector field particularises to:
\[ R_\alpha = \frac{1}{h(s)}
\left[ (1+2\gamma(s)(\rho^2+s_y))\partial_z
- 2\gamma(s)\rho\,\partial_\theta + 2\gamma(s)J\right]. \]
Since $\gamma(s) \neq 0$, the motion in $\partial_\theta$ is non-trivial. Its angular speed is bounded above by $2\delta\rho \leq 2\delta R$. This means, according to Equation \ref{eq:choiceOfR}, that the period of any closed Reeb orbit is larger than 
\[ \dfrac{2\pi}{2\delta R} = \dfrac{\pi}{2\sqrt{\delta}} > C^*, \] 
as long as $\delta$ is small enough.

The second possibility is that $\rho < 2\pi$, in which case $x^2+y^2 > R^2/2 - 4\pi^2$, which we can assume is larger than $R^2/4$ if $R$ is large enough. Then, the motion in $J$ is non-trivial and its speed is bounded above by $2\delta R$. Note that the motion in $J$ is circular and its trajectories have euclidean length at least $\pi R$. This implies that their periods are at least $\pi/{2\delta}$, which can be assumed to be larger than $C^*$ if $\delta$ is small enough. This concludes the proof.
\end{proof}

\section{Products} \label{sec:products}

\subsection{Products with Liouville manifolds}

We will now prove Corollary \ref{lem:OTdiskInLiouvilleProd}, which states that products with Liouville manifolds can only decrease the overtwisted size. We first need the following lemma:
\begin{lemma} \label{lem:bigLiouvilleBall}
Let $(N,\lambda)$ be a complete $2k$-dimensional Liouville manifold and $C,D>0$ constants. Then, for any sufficiently large $a > 0$ there exists an embedding
\[ \left(([-1,1] \times [-C,C])^{k-1} \times ([-1,1] \times [a,a+D]) \subset \R^{2k}, \sum_{i=1}^k y_idx_i\right) \quad\longrightarrow\quad (N,\lambda) \]
that preserves Liouville forms.
\end{lemma}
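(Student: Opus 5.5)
The plan is to build the embedding from a large Liouville-convex piece of $N$, and the natural choice is a neighbourhood of the skeleton. Since $(N,\lambda)$ is complete, a neighbourhood of infinity is symplectomorphic to the positive symplectization $([r_0,\infty)\times Y, r\alpha_Y)$ of its ideal boundary $(Y,\alpha_Y)$, with $\lambda = r\alpha_Y$. Here the Liouville flow is $r\partial_r$, which scales $\lambda$ by $e^t$. The target model
\[ P_a := ([-1,1]\times[-C,C])^{k-1}\times([-1,1]\times[a,a+D]) \]
is a compact subset of $(\R^{2k},\sum y_i\,dx_i)$. I intend to realise it near a point at large $r$.

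\textbf{Step 1: local Darboux form.} Fix a point $y_0\in Y$ and choose Darboux coordinates on $Y$ near $y_0$, so that $\alpha_Y = dw - \sum_{i=1}^{k-1}q_i\,dp_i$ there. On the symplectization, $\lambda = r\,dw - \sum r q_i\,dp_i$. Setting $x_k := w$, $y_k := r$, $x_i := p_i$, $y_i := -rq_i$ for $i<k$, we get
\[ \lambda = \sum_{i=1}^k y_i\,dx_i \]
exactly, with no exact correction term. This is a coordinate change, and it is a diffeomorphism onto its image provided $r>0$.

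\textbf{Step 2: fitting the box.} The image of these coordinates contains every point with $|x_i|<\eta$ for $i<k$, $|x_k|<\eta$, $y_k=r\in[r_0,\infty)$ and $|y_i|<r\eta$. Two issues remain. First, the Darboux chart only allows $|x|<\eta$, whereas the box needs $x_i\in[-1,1]$. Second, $y_i$ ranges over $[-C,C]$, which fits as soon as $r\eta>C$, that is, for $r$ large.

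To fix the $x$-width I rescale. The map $(x,y)\mapsto(\eta x,y/\eta)$ preserves $\sum y_i\,dx_i$, and it sends $[-1,1]\times[-C,C]$ into $[-\eta,\eta]\times[-C/\eta,C/\eta]$. It also sends $[a,a+D]$ into $[a/\eta,(a+D)/\eta]$. So it suffices to have $[-\eta,\eta]^k$ in the $x$-chart, $|y_i|\le C/\eta$ for $i<k$, and $y_k\in[a/\eta,(a+D)/\eta]\subset[r_0,\infty)$.

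The first and third conditions hold once $a\ge \eta r_0$. The second requires $C/\eta< r\eta$ for every $r\ge a/\eta$, which holds once $a>C$. Hence every $a>\max(\eta r_0,\,C)$ works, and this gives the embedding.

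\textbf{Main obstacle.} The step needing the most care is that the Darboux chart on $Y$ is genuinely of the stated form, with the $q_i$-range of size $\eta$ independent of $r$. It is, because the chart lives on $Y$ and the scaling in $r$ enlarges the $y_i$-range for free. This is exactly why $a$ must be large. Completeness of $N$ is used only to guarantee the cylindrical end $[r_0,\infty)\times Y$ with $\lambda=r\alpha_Y$. If $N$ is not of finite type, I would instead run the argument along a single complete Liouville trajectory going to infinity, using a local contact-type slice transverse to it.
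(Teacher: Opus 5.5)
Your computation is correct when $N$ is of finite type (the completion of a Liouville domain, or a global symplectization as in Remark~\ref{rmk:OTdiskInSymplectizationProd}). In that setting it is a tidy, explicit version of the construction, and it even imposes no relation between $a$ and $D$. The gap is your first sentence. The lemma is stated, and used in Corollary~\ref{lem:OTdiskInLiouvilleProd}, for an exact symplectic manifold whose Liouville flow is merely complete, and completeness does not produce a cylindrical end $([r_0,\infty)\times Y, r\alpha_Y)$.

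Your fallback for the general case is only gestured at, and as phrased it asks for too much. Flowing out a small transverse contact slice over an unbounded time interval need not be injective, because trajectories of a complete Liouville flow can return to the slice. For example, $g(z,x,y,r)=(z/c,x/\sqrt{c},y/\sqrt{c},cr)$ with $c>1$ preserves $r(dz+\sum_i y_i\,dx_i)$ on $\R^{2k-1}\times(0,\infty)$ and commutes with the flow of $r\partial_r$. So on the quotient by $\langle g\rangle$ the Liouville flow is complete and the orbit of $(0,0,0,1)$ is periodic. Injectivity along your chosen ray would therefore need an argument you have not supplied.

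The missing observation, which is what the paper uses, is that you never need an unbounded range of $r$. The interval $[a/\eta,(a+D)/\eta]$ you actually use has endpoint ratio $1+D/a\to 1$. So it suffices to do the following:
\begin{itemize}
\item Take a thin embedded collar $(Y\times[1,1+\delta], s\lambda|_Y)$ of a transverse slice $Y$ carrying your Darboux chart of size $\eta$.
\item Push it forward by the time-$t$ Liouville flow $\phi^t$. This is a global diffeomorphism of $N$ with $(\phi^t)^*\lambda=e^t\lambda$, and this is precisely where completeness enters.
\item This gives strict embeddings of $(Y\times[e^t,e^t(1+\delta)], r\lambda|_Y)$ for every $t$, with embeddedness automatic.
\item Your Steps~1--2 then go through verbatim on this piece. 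Take $e^t=a/\eta$; this works for all $a\geq D/\delta$ with $a>C/\eta$.
\end{itemize}

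Two small corrections to Step~2 as well. First, the requirement $C/\eta<r\eta$ for all $r\geq a/\eta$ means $a>C/\eta$, not $a>C$; this is harmless, since $a$ is large. Second, you should rescale by slightly less than $\eta$, so that the closed box lands inside the open chart $\{|x|<\eta\}$.
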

\begin{proof}
The idea is to start with a small hypercube. Since the Liouville vector field is expanding, this small cube will become arbitrarily big if we flow for sufficiently long times.

Let $Y \subset N$ a hypersurface transverse to the Liouville vector field $X_\lambda$. Such a $Y$ can be found by considering a point $q \in N$ in which $X_\lambda(q) \neq 0$ and letting $Y$ be a sufficiently small transverse hyperdisk passing via $q$. According to Darboux's theorem, there exists a strict contact embedding 
    \begin{equation*}
    \left([-\eps,\eps]_z \times ([-\eps,\eps]_{x} \times [-\eps,\eps]_y)^{k-1}, dz + \sum_{i=1}^{k-1} y_i d x_i\right) \quad\longrightarrow\quad (Y,\lambda|_Y)
    \end{equation*}
for some $\eps > 0$.

We can then apply the Liouville flow for an amount of time $\ln(1/\eps^2)$ to $Y$; we still denote it as $Y$. In doing so we change the contact form to $\frac{1}{\eps^2}\left(dz + \sum_{i=1}^{k-1} y_i d x_i\right)$. We will assume that $\eps <1$. Consider then the strict contact embedding:
    \begin{equation*}
    \begin{split}
        \left[-1,1\right] \times \left([-1,1] \times \left[-1,1\right]\right)^{k-1} \quad&\longrightarrow\quad [-\eps,\eps] \times ([-\eps,\eps] \times [-\eps,\eps])^{k-1}, \\
        (z,x_i,y_i) \quad&\mapsto\quad (\eps^2 z, \eps x_i, \eps y_i)
    \end{split}
    \end{equation*}
where the contact form on the target is $\frac{1}{\eps^2}\left(dz + \sum_{i=1}^{k-1} y_i d x_i\right)$ and the contact form on the domain is $dz + \sum_{i=1}^{k-1} y_i d x_i$. In this manner we see the domain as a subset of $(Y,\lambda|_Y)$.

Now we choose $\delta>0$ small so that there is a strict Liouville embedding $(Y \times [1,1+\delta], s \lambda|_Y)$ into $(N,\lambda)$. Recall that the Liouville flow acts by multiplication on the $s$-coordinate, i.e. the time-$t$ flow sends $s$ to $se^t$. Moreover, the Liouville flow expands the Liouville form. Using that the Liouville flow is complete, it follows that we can obtain a strict Liouville embedding of 
\begin{equation*}
    \left(\left[-1,1\right] \times \left([-1,1] \times \left[-1,1\right]\right)^{k-1} \times [b,b(1+\delta)], s\left(dz + \sum_{i=1}^{k-1} y_i d x_i\right)\right)
\end{equation*}
into $(N,\lambda)$, for any $b=e^t > 1$.

Assume now that $b\eps \geq 1$. Then, the following embedding is well-defined:
    \begin{equation*}
    \begin{split}
        [-1,1] \times \left([-1,1] \times \left[-\frac{1}{\eps},\frac{1}{\eps}\right]\right)^{k-1} \times [b\eps,b\eps(1+\delta)] & \to \left[-1,1\right] \times \left([-1,1] \times \left[-1,1\right]\right)^{k-1} \times [b,b(1+\delta)]\\
        (z,x_i,y_i,s) &\mapsto \left(\eps z,x_i,\eps\frac{y_i}{s},\frac{s}{\eps}\right),
    \end{split}
    \end{equation*}
Moreover, it pulls back the Liouville form $s\left(dz + \sum_{i=1}^{k-1} y_i d x_i\right)$ to $s dz + \sum_{i=1}^{k-1} y_i d x_i$. Now we impose that $\eps$ is sufficiently small so that  $C < \frac{1}{\eps}$. We also require $b$ to be large enough to guarantee $D < b\eps\delta$. Lastly, we set $a = b\eps$.
\end{proof}

\begin{proof}[Proof of Corollary \ref{lem:OTdiskInLiouvilleProd}]
Fix a constant $C > 0$ and an open set $U \subseteq M$. We claim that there exists a strict contact embedding
\[ (U \times \left([-1,1] \times [-C,C]\right)^{k}, \alpha + \sum_i y_i dx_i) \longrightarrow (M \times N, \alpha+\lambda). \]
Corollary \ref{lem:OTdiskInLiouvilleProd} then follows from the claim as an application of Theorem \ref{thm:OtDiskOfSizeCTimesBall}.

Let us prove the claim. Denote by $V \subseteq M$ the trace of $U$ under the Reeb flow $\rho_t$, for all $t \in \R$. Fix a constant $\gamma > 0$. Computing on Darboux charts we see then that the map
    \begin{equation*}
        \begin{split}
            U \times ([-1,1] \times [-C,C])^k &\to V \times ([-1,1] \times [-C,C])^{k-1} \times [-1,1] \times [-C+\gamma, C + \gamma]\\
            (p,x_i,y_i) &\mapsto \left(\rho_{-\gamma x_k}(p),x_1,y_1,...,x_{k-1},y_{k-1},x_k,y_k + \gamma\right)
        \end{split}
    \end{equation*}
is a strict embedding with respect to the contact form $\alpha + \sum_{i = 1}^k y_i d x_i$ both on the target and the domain. We then apply the auxiliary Lemma \ref{lem:bigLiouvilleBall}. We choose the numbers $a$ and $\gamma$ in such a way that $-C + \gamma > a$, and $D > C + \gamma$. The claim follows and the proof is complete.
\end{proof}

\begin{remark}\label{rmk:OTdiskInSymplectizationProd}
Consider the special case of the symplectization
\[ (N,\lambda) := (SY = Y \times (0,\infty), s \alpha_Y) \]
of a strict contact manifold $(Y,\alpha_Y)$. Then, it follows from the proof of Corollary \ref{lem:OTdiskInLiouvilleProd} that $(M \times SY, \alpha + s \alpha_Y)$ contains an overtwisted disk of size $\leq C$ inside of $M \times Y \times [R,\infty)$ for any $R>0$ by choosing the variable $a$ in the proof to be sufficiently large provided that $(M,\alpha)$ admits an overtwisted disk of size $C$.
\end{remark}

\subsection{Contact products}

\begin{proof}[Proof of Theorem \ref{thm:vanihsingOTsizeInCtcProd}]
    Let $(M_1,\alpha_1)$ and $(M_2,\alpha_2)$ be two strict contact manifolds, and let $\Delta_\textnormal{ot} \subseteq M_1$ have overtwisted size $C$. Observe that $(M_1 \times M_2 \times (0,\infty), s\alpha_2 - \alpha_1)$ is the product of $(M_1,-\alpha_1)$ with the symplectization $(SM_2, s \alpha_2)$. As such, Corollary \ref{lem:OTdiskInLiouvilleProd} and Remark \ref{rmk:OTdiskInSymplectizationProd} apply to conclude that for any $R>0$, there exists a disk of overtwisted size $\leq C$ inside of $(M_1 \times M_2 \times [R,\infty),s\alpha_2 - \alpha_1)$. The contactomorphism
    \begin{equation*}
         \phi:(M_1 \times M_2 \times (0,1/R], \alpha_2 - s\alpha_1) \to (M_1 \times M_2 \times [R,\infty),s\alpha_2 - \alpha_1)
    \end{equation*}
    induced by the map $s \mapsto 1/s$ has conformal factor $1/s \geq R$, so $\phi^{-1}(\Delta_\textnormal{ot})$ is a disk of overtwisted size $\leq \frac{C}{R}$. This completes the proof, since $R>0$ can be chosen to be arbitrarily large.
\end{proof}


\bibliographystyle{amsalpha}
\bibliography{references}

@article{AA25a ,
	AUTHOR = {Allais, Simon and Arlove, Pierre-Alexandre},
	TITLE = {{$C^1$-Local Flatness and Geodesics of the Legendrian Spectral Distance}},
	YEAR = {2025},
	journal = {Proc. Amer. Math. Soc.},
	volume = {153},
	pages = {2139-2152}
}

@article{AA25,
author = {Allais, Simon and Arlove, Pierre-Alexandre},
title = {Spectral selectors and contact orderability},
journal = {Communications in Contemporary Mathematics},
volume = {28},
number = {6},
year = {2026},
doi = {10.1142/S0219199725500713}
}

@article{Can23,
	author = {Cant, Dylan},
	year = {2023},
	month = {08},
	pages = {},
	title = {Remarks on the oscillation energy of {L}egendrian isotopies},
	volume = {217},
	journal = {Geometriae Dedicata},
}

@article{CN25,
	title={{An overtwisted convex hypersurface in higher dimensions}},
	author={Chiang, River and Niederkrüger-Eid, Klaus},
	journal={Algebraic \& Geometric Topology},
	year={2025},
	volume={25},
	number={7},
	pages = {3813-3831}
}

@article{DRS20,
	author = {Dimitroglou Rizell, Georgios and Sullivan, Michael},
	year = {2020},
	month = {11},
	pages = {},
	title = {The persistence of the {C}hekanov-{E}liashberg algebra},
	volume = {26},
	journal = {Selecta Mathematica}
}

@article{DRS24,
	author = {Dimitroglou Rizell, Georgios and Sullivan, Michael},
	year = {2024},
	month = {08},
	pages = {2145--2206},
	title = {The persistence of a relative {R}abinowitz-{F}loer complex},
	volume = {28},
	journal = {{G}eometry \& {T}opology},
}

@article{Dy,
author = {K. {Dymara}},
    title = {{Legendrian knots in overtwisted contact structures on $S^3$}},
		journal = {Ann. Global Anal. Geom.},
		year = 2001,
		VOLUME = {19},
		NUMBER = {3},
		PAGES = {293--305}
}

@article{EliFras,
author = {Y. {Eliashberg} and M. {Fraser}},
    title = {{Classification of topologically trivial Legendrian knots}},
		journal = {CRM Proc. Lecture Notes},
		year = 1998,
		VOLUME = {15},
		PAGES = {17--51}
}

@article {etn13,
	AUTHOR = {John B. {Etnyre}},
	TITLE = {On knots in overtwisted contact structures},
	JOURNAL = {Quantum Topol.},
	VOLUME = {4},
	Number = {3},
	YEAR = {2013},
	PAGES = {229–264}
}

@article{El89,
	author = {Y. {Eliashberg}},
	title = {Classification of overtwisted contact structures on 3-manifolds},
	journal = {Invent. Math.},
	volume = {98},
	year = {1989},
	number = {3},
	pages = {623--637}
}

@article{BEM,
	author = {M.S. {Borman} and Y. {Eliashberg} and E. {Murphy}},
	title = {Existence and classification of overtwisted contact structures in all dimensions},
	journal = {Acta Math.},
	volume = {215},
	year = {2015},
	number = {2},
	pages = {281--361}
}

@article{Mur,
	author = {E. {Murphy}},
	title = {Loose Legendrian Embeddings in High Dimensional Contact Manifolds},
	year = {2012},
	journal = {arXiv e-prints},
	archivePrefix = "arXiv",
	eprint = {1201.2245}
}

@book{Gei2,
	author = {H. {Geiges}},
	title = {An Introduction to Contact Topology},
	series = {Cambr. Studies in Adv. Math.},
	volume = {109},
	publisher = {Cambr. Univ. Press},
	year = {2008}
}

@article{ah09,
	author = {Albers, Peter and Hofer, Helmut},
	year = {2009},
	pages = {429–436},
	title = {{On the Weinstein conjecture in higher dimensions}},
	volume = {84},
	journal = {Commentarii Mathematici Helvetici}
}

@article{Cha22,
	title = {Links in overtwisted contact manifolds},
	journal = {Expositiones Mathematicae},
	volume = {40},
	number = {2},
	pages = {231-248},
	year = {2022},
	author = {Chatterjee, Rima}
}

@article{Gir20,
	title = {{On some examples and constructions of contact manifolds}},
	journal = {Math. Ann.},
	volume = {376},
	pages = {957–1008},
	year = {2020},
	author = {Gironella, Fabio}
}

@article{HCMMP20,
	author = {Hern\'andez-Corbato, Luis and Mart\'in-Merch\'an, Luc\'ia and Presas, Francisco},
	title = {{Tight neighborhoods of contact
 submanifolds}},
	journal = {Journal of symplectic geometry},
	volume = {18},
	number = {6},
	pages = {1629-1646},
	year = {2020}
}

@article{Ho,
	author = {H. {Hofer}},
	title = {Pseudoholomorphic curves in symplectizations with applications to the {Weinstein} conjecture in dimension three},
	journal = {Invent. math.},
	volume = {114},
	pages = {515--563},
	year = {1993}
}

@article{NiederkrugerPresas2010,
  author    = {Klaus Niederkrüger and Francisco Presas},
  title     = {Some remarks on the size of tubular neighborhoods in contact topology and fillability},
  journal   = {Geometry \& Topology},
  volume    = {14},
  number    = {2},
  pages     = {719--754},
  year      = {2010},
  doi       = {10.2140/gt.2010.14.719},
  eprint    = {0812.2108},
  archivePrefix = {arXiv},
  primaryClass  = {math.SG}
}

@unpublished{Mur13,
	author = {Murphy, Emmy},
	NOTE = {preprint, arXiv:1304.6620},
	year = {2013},
	title = {Closed exact {L}agrangians in the symplectization of contact manifolds}
}

@article{Nak24,
	author = {Nakamura, Lukas},
	title = {{Small-energy isotopies of loose Legendrian submanifolds}},
	journal = {Geom. \& Top.},
	volume = {28},
    issue = {9},
	pages = {4233–4255},
	year = {2024}
}

@unpublished{Nak23,
	AUTHOR = {Nakamura, Lukas},
	NOTE = {preprint,  	arXiv:2307.10905},
	TITLE = {A new metric on the contactomorphism group of orderable contact manifolds},
	YEAR = {2023}
}

@article{Nak24a,
	author = {Nakamura, Lukas},
	year = {2024},
	pages = {147–151},
	title = {{Legendrians with vanishing
	Shelukhin-Chekanov-Hofer metric}},
	volume = {62},
	journal = {Ark. Mat.}
}

@article{Nie,
	author = {K. {Niederkr\"uger}},
	title = {The plastikstufe: A generalization of the overtwisted disk to higher dimensions},
	journal = {Algebr. Geom. Topol.},
	volume = {6},
	pages = {2473--2508},
	year = {2006}
}

@article{NR,
	author = {K. {Niederkr\"uger} and A. {Rechtman}},
	title = {The {Weinstein} conjecture in the presence of submanifolds having a Legendrian foliation},
	journal = {J. Topol. Anal.},
	volume = {3},
	year = {2011},
	number = {4},
	pages = {405--421}
}

@article{FT,
    author = {Fuchs, Dmitry and Tabachnikov, Serge},
    year = {1997},
    month = {9},
    pages = {1025-1053},
    title = {{Invariants of Legendrian and transverse knots in the standard contact space}},
    volume = {36},
    issue = {5},
    journal = {Topology}
}

@article{shelukhin2017hofer,
  author       = {Shelukhin, Egor},
  title        = {{The Hofer norm of a contactomorphism}},
  journal      = {Journal of Symplectic Geometry},
  volume       = {15},
  number       = {4},
  pages        = {1173--1208},
  year         = {2017},
  doi          = {10.4310/JSG.2017.v15.n4.a7},
}

@article{liu2020positive,
  author       = {Liu, Guogang},
  title        = {Positive loops of loose Legendrian embeddings and applications},
  journal      = {Journal of Symplectic Geometry},
  volume       = {18},
  number       = {3},
  pages        = {867--887},
  year         = {2020},
  doi          = {10.4310/JSG.2020.v18.n3.a7},
}

@article{DRSC0Limits,
	author = {Dimitroglou Rizell, Georgios and Sullivan, Michael},
	year = {2024},
	pages = {2904-2915},
	title = {${C}^0$-limits of {L}egendrians and positive loops},
	volume = {160},
	issue = {12},
	journal = {Compositio Mathematica}
}

@article{RoZhChekanovsDichotomy,
	title={Chekanov's dichotomy in contact topology},
	author={Daniel Rosen and Jun Zhang},
	journal={Mathematical Research Letters},
	year={2020},
	volume={27},
	number={4},
	pages={1165-1193}
}

@article{cf24,
	author = {Cardona, Robert and Presas, Francisco},
	title = {An h-principle for embeddings transverse to a contact structure},
	journal = {Journal of Topology},
	volume = {17},
	number = {1},
	pages = {e12326},
	year = {2024}
}

@unpublished{HS24,
	author = {Hedicke, Jakob and Shelukhin, Egor},
	title={{Non-orderability and the contact Hofer norm}},
	note={preprint, 	arXiv:2411.19887},
	year={2024}
}

@article{PP15,
	author = {A. {del Pino} and F. {Presas}},
	title = {The foliated {Weinstein} conjecture},
	journal = {Int. Math. Res. Not.},
	volume = {16},
	pages = {5148--5177},
	year = 2018,
}

@article{MNPS,
    author = {E. {Murphy} and K. {Niederkr{\"u}ger} and O. {Plamenevskaya} and A. {Stipsicz}},
    title = {{Loose Legendrians and the plastikstufe}},
    volume = {17},
    journal = {Geometry \& Topology},
    number = {3},
    publisher = {MSP},
    pages = {1791 -- 1814},
    year = {2013}
}

@article{Hu17,
author = {Y. {Huang}},
title = {On plastikstufe, bordered Legendrian open book and overtwisted contact structures},
journal = {Journal of Topology},
volume = {10},
number = {3},
pages = {720-743},
year = {2017}
}

@article{Hu20,
author = {Y. {Huang}},
title = {Existence of coLegendrians in contact 5-manifolds},
journal = {arXiv:2006.11844},
year = {2020}
}

@article{CMP,
author = {R. {Casals} and E. {Murphy} and F. {Presas}},
title = {Geometric Criteria for Overtwistedness},
journal = {J. Amer. Math. Soc.},
volume = {32},
pages = {563-604},
year = {2019}
}

@article{Mon,
	author = {R. {Montgomery}},
	title = {{Engel} deformations and contact structures},
	series = {Amer. Math. Soc. Transl. Ser.},
	PUBLISHER = {American Mathematical Society},
  ADDRESS = {Providence, RI},
	volume = {196},
	year = {1999},
	pages = {103--117}
}

@article{Vo3,
	author = {T. {Vogel}},
	title = "Non-loose unknots, overtwisted discs and the contact mapping class group of $S^3$",
	journal = {Geom. Funct. Anal.},
    volume = {28},
	year = {2018},
    pages = {228–288}
}

@article{CP,
	author = {R. {Casals} and A. {del Pino}},
	title = "Classification of {Engel} knots",
  journal = {Math. Ann.},
	volume = {371},
	year = {2018},
	pages = {391--404}
}

\end{document}